\newtheorem{theorem}{Theorem}[section]
\newtheorem{lemma}[theorem]{Lemma}
\newtheorem{corollary}[theorem]{Corollary}
\newtheorem{remark}[theorem]{Remark}
\newtheorem{definition}[theorem]{Definition}
\renewcommand \theequation {%
\ifnum \c@section>\z@ \@arabic\c@section.%
\fi\@arabic\c@equation} \@addtoreset{equation}{section}
\providecommand{\abs}[1]{\left\vert#1\right\vert}
\providecommand{\nm}[1]{\left\Vert#1\right\Vert}
\providecommand{\br}[1]{\left\langle #1 \right\rangle}
\providecommand{\vnm}[2]{\left\Vert#1\right\Vert_{L^{\nu}_{#2}}}
\providecommand{\vnnm}[1]{\left\vert\kern-0.25ex\left\vert\kern-0.25ex\left\vert#1\right\vert\kern-0.25ex\right\vert\kern-0.25ex\right\vert_{\nu}}
\providecommand{\vnnmz}[1]{\left\vert\kern-0.25ex\left\vert\kern-0.25ex\left\vert#1\right\vert\kern-0.25ex\right\vert\kern-0.25ex\right\vert_{\nu,0}}
\providecommand{\tnm}[2]{\left\Vert#1\right\Vert_{L^2_{#2}}}
\newcommand{\vertiii}[1]{{\left\vert\kern-0.25ex\left\vert\kern-0.25ex\left\vert #1 \right\vert\kern-0.25ex\right\vert\kern-0.25ex\right\vert}}
\newcommand{\nnm}[1]{{\left\vert\kern-0.25ex\left\vert\kern-0.25ex\left\vert #1 \right\vert\kern-0.25ex\right\vert\kern-0.25ex\right\vert}}
\def\ud{\mathrm{d}}
\def\dt{\partial_t}
\def\p{\partial}
\def\ls{\lesssim}
\def\gs{\gtrsim}
\def\rt{\rightarrow}
\def\r{\mathbb{R}}
\def\no{\nonumber}
\def\ue{\mathrm{e}}
\def\ds{\displaystyle}
\def\s{\mathbb{S}}
\def\e{\varepsilon}
\def\o{\omega}
\def\th{\theta}
\def\rp{\r_+}
\def\t{\mathbb{T}}
\def\m{\mu}
\def\mm{\mathcal{M}}
\def\vh{\varrho}
\def\q{\mathscr{Q}}
\def\qq{\mathcal{Q}}
\def\mh{\mm^{\frac{1}{2}}}
\def\mhh{\mm^{-\frac{1}{2}}}
\def\l{L}
\def\g{\Gamma}
\def\d{\delta}
\def\k{\kappa}
\def\nx{\nabla_x}
\def\nv{\nabla_v}
\def\pk{\mathbf{P}}
\def\ik{\mathbf{I}}
\def\npk{\ik-\pk}
\def\nnpk{(\ik-\pk)}
\def\id{\mathbf{1}}
\def\ee{\mathcal{E}}
\def\fs{F^{\#}}
\def\qs{Q^{\#}}
\def\rs{R^{\#}}
\def\lls{\ell^{\#}}
\def\uus{u^{\#}}
\begin{document}

\title{On the Quantum Boltzmann Equation\\ near Maxwellian and Vacuum}
\date{}
\author[1]{Zhimeng Ouyang\thanks{zhimeng\_ouyang@brown.edu}}
\affil[1]{Department of Mathematics, Brown University}
\author[2]{Lei Wu\thanks{lew218@lehigh.edu}}
\affil[2]{Department of Mathematics, Lehigh University}

\maketitle

\vspace{-5pt}
\begin{abstract}
We consider the non-relativistic quantum Boltzmann equation for fermions and bosons. Using the nonlinear energy method and mild formulation, we justify the global well-posedness when the density function is near the global Maxwellian and vacuum. This work is a generalization and adaptation of the classical Boltzmann theory. Our main contribution is a detailed analysis of the nonlinear operator $Q$ in the quantum context. This is the first piece of a long-term project on the quantum kinetic equations. 

\smallskip
\textbf{Keywords: fermions; bosons; energy method; stability}
\end{abstract}

\setcounter{tocdepth}{2}
\tableofcontents

\newpage


\section{Introduction}

\subsection{Problem Setup}

We consider the quantum Boltzmann equation in three dimensions with hard-sphere collisions:
    \begin{align}\label{equation: boltzmann}
        \dt F+v\cdot\nx F=Q[F,F;F],
    \end{align}
where the collision operator
    \begin{align} \label{collision operator}
        Q[F,F;F]:=&\int_{\r^3}\int_{\s^2}q(\omega,\abs{v-u})\Big[F(u')F(v')\big(1+\th F(u)\big)\big(1+\th F(v)\big)\\
        &-F(u)F(v)\big(1+\th F(u')\big)\big(1+\th F(v')\big)\Big]\ud\o\ud u.\no
    \end{align}
for $q(\omega,\abs{v-u})=\big|\o\cdot(v-u)\big|$ ($\o\in\s^2$). 
Here the unknown $F(t,x,v)$ is the density function (for quantum particles),
at time $t\in\rp$, at the position $x\in\t^3\ \text{or}\ \r^3$, and having the velocity $v\in\r^3$. 

In the integral of \eqref{collision operator}, 
$(u,v)$ denotes the pre-collision velocity, and $(u',v')$ the post-collision velocity with $u'=u+\omega\big(\omega\cdot(v-u)\big)$, $v'=v-\omega\big(\omega\cdot(v-u)\big)$. They satisfy the conservation of momentum $u+v=u'+v'$ and energy $\abs{u}^2+\abs{v}^2=\abs{u'}^2+\abs{v'}^2$. $\th=\pm 1$ corresponds to the fermions ($-$) or bosons ($+$), respectively. The equation is equipped with initial data
\begin{align}
    F(0,x,v)=F_0(x,v).
\end{align}
The collision operator $Q$ is essentially cubic, since the cancellation reveals that
\begin{align}
    Q[F,F;F]=&\int_{\r^3}\int_{\s^2}q(\omega,\abs{v-u})\Big[F(u' )F(v')\big(1+\th F(u)+\th F(v)\big)\\
    &-F(u)F(v)\big(1+\th F(u')+\th F(v')\big)\Big]\ud\o\ud u.\no
\end{align}
For fermions ($\th=-1$), we require that $0\leq F_0\leq 1$. For bosons ($\th=1$), we require that $F_0\geq 0$. 

In this paper, we intend to study the global well-posedness and decay of the solution $F$ when it is close to the global Maxwellian or the vacuum.

\begin{remark}
When $\th=0$, the cubic terms in $Q$ vanish and the equation \eqref{equation: boltzmann} reduces to the classical Boltzmann equation.
\end{remark}


\smallskip
\subsection{Background and Modelling}

In this section, we briefly discuss various quantum kinetic equations. We refer to \cite{Spohn1994, Benedetto.Castella.Esposito.Pulvirenti2007} for more detailed discussion.

Quantum kinetic theory concerns the dynamics of a large number of quantum particles, including fermions and bosons. The equation arising from first principles to describe $N$ identical interacting particles is the $N$-body Schr\"odinger equation. Under proper scaling and in suitable regime, asymptotically the overall behavior of this particle system can be characterized by the quantum kinetic equations.
\ \\

\noindent\textbf{Model 1: Low density scaling:} $\text{time}\sim\e^{-1}$, $\text{volume}\sim \e^{-3}$ and $N\sim\e^{-2}$.
    \begin{itemize}
        \item 
        The particles are typically far apart and the difference between classical, Fermi, or Bose gas is irrelevant. All three statistics should be described by the same equation.
        \item
        The interacting potential $\phi(x)$ is short-ranged.
        \item
        The mean free path and mean free time is of $O(\e^{-1})$.
        \item
        In the classical regime, this model corresponds to dilute gas dynamics, which is described by the classical Boltzmann equation.
        \item
        In the quantum regime, this model is described by the quantum Boltzmann equation
        \begin{align}
            \dt F+v\cdot\nx F=Q_L[F,F],
        \end{align}
        where
        \begin{align}
            Q_L[F,F]=\int_{\r^3}\int_{\s^2}B_L(\o,v-u)\big[F(u')F(v')-F(u)F(v)\big]\ud\o\ud u.
        \end{align}
        Here the collision kernel
        \begin{align}
            B_L(\o,v-u)\sim q(\omega,\abs{v-u})\abs{\hat\phi\Big(\o\big(\o\cdot(v-u)\big)\Big)}^2+\sum_{n\geq3}B_L^{(n)}(\o,v-u),
        \end{align}
        for $\hat\phi(k)$ as the Fourier transform of $\phi(x)$, and higher-order Born approximation $B_L^{(n)}$ (see \cite{ Benedetto.Castella.Esposito.Pulvirenti2007}). 
        \item
        The quantum Boltzmann equation has the same structure as the classical Boltzmann equation, but the collision kernel may be different.
    \end{itemize}

\noindent\textbf{Model 2: Weak coupling scaling:} $\text{time}\sim\e^{-1}$, $\text{volume}\sim \e^{-3}$ and $N\sim\e^{-3}$.
    \begin{itemize}
        \item
        The gas is dense, but the coupling of neighboring particles is weak.
        \item
        There are two possible scaling for the 
        potential $\phi(x)$. \begin{itemize}
            \item 
            Scaling $\e^{\frac{1}{2}}\phi(x)$.
            \item
            Scaling $\phi(\e^{-\frac{1}{2}}x)$.
        \end{itemize}
        \item
        For classical particles, the first scaling gives rise to the classical Landau equation and the second scaling is equivalent to the low-density limit (which means that it will lead to the classical Boltzmann equation). 
        \item
        However, for quantum particles, both scalings are genuine weak coupling. This is described by the quantum Boltzmann equation (or the so-called Uehling-Uhlenbeck equation \cite{Uehling.Uhlenbeck1933})
        \begin{align}
            \dt F+v\cdot\nx F=Q_W[F,F;F],
        \end{align}
        where
        \begin{align}
            Q_W[F,F;F]=&\int_{\r^3}\int_{\s^2}B_W(\o,v-u)\Big[F(u')F(v')\big(1+\th F(u)\big)\big(1+\th F(v)\big)\\
            &-F(u)F(v)\big(1+\th F(u')\big)\big(1+\th F(v')\big)\Big]\ud\o\ud u.\no
        \end{align}
        Here $\th=1$ corresponds to Bose-Einstein statistics (for bosons), $\th=-1$ corresponds to Fermi-Dirac statistics (for fermions), and $\th=0$ corresponds to Maxwell-Boltzmann statistics (for classical particles). The collision kernel
        \begin{align}
            B_W(\o,v-u)\sim \big|\o\cdot(v-u)\big|\abs{\hat\phi(v'-v)+\th\hat\phi(v'-u)}^2.
        \end{align}
    \end{itemize}

\noindent\textbf{Model 3: Mean field scaling:} $\text{time}\sim\e^{-1}$, $\text{volume}\sim \e^{-3}$ and $N\sim\e^{-3}$.
    \begin{itemize}
        \item
        Particles are affected not only by its neighboring particles, but all the others.
        \item
        The potential $\phi(x)$ can be short- or long-ranged. The scaling is $\e^3\phi(\e x)$.
        \item
        In classical mechanics, this corresponds to the Vlasov equation, which is applicable to the long-ranged potential. 
        \item
        In quantum mechanics, this is described by the quantum Vlasov equation
        \begin{align}
            \dt F+v\cdot\nx F+\int_{\r^3}\int_{\r^3}F(y,u)\big[\nx\phi(y-x)\cdot \nv F(x,v)\big]\ud u\ud y=0.
        \end{align}
    \end{itemize}
    
\noindent In this paper, we focus on the weak coupling model with hard-sphere potential $\phi(x)=\delta_0(x)$, which yields $\hat\phi(k)=1$. Hence, the collision kernel reduces to $q(\omega,\abs{v-u})$, which is exactly the same as the classical one. 

\begin{remark}
Some comments regarding the models above:
\begin{enumerate}
    \item 
    We will discuss the more general models (e.g. inverse power laws) in the subsequent work. Note that a direct computation reveals that for $\phi(x)\sim \abs{x}^{-p}$ with $p\geq1$, unlike the hard-sphere case, the quantum collision kernel is not the same as the classical hard/soft potential one. Hence, some of the work in the related literature using classical hard/soft potential may need reexamination.
    \item
    The low density model with hard-sphere potential will roughly reduce to the classical Boltzmann equation (if we ignore the higher-order Born approximation terms), which is less appealing in the quantum context. Note that for general potential $\phi$, the low density model is also of interest. There are much less work in this direction.
    \item
    For classical particles, the Boltzmann equation describes dilute gas and the Landau equation describes dense gas. However, for quantum particles, both cases give rise to the quantum Boltzmann equations. In other words, in the quantum regime, Boltzmann equation can be applied to more general scenarios.
    \item
    Similar to the classical Landau equation, quantum Landau equation can be derived by taking grazing collision limit in quantum Boltzmann equation. The typical quantum Landau equation is 
    \begin{align}
        \dt F+v\cdot\nx F=Q_{\text{Lan}}[F,F;F],
    \end{align}
where the collision operator
    \begin{align} 
        Q_{\text{Lan}}[F,F;F]:=&\nv\cdot\int_{\r^3}\Phi(v-u)\Big[F(u)\big(1+\th F(u)\big)\nv F(v)\\
        &-F(v)\big(1+\th F(v)\big)\nabla_uF(u)\Big]\ud u,\no
    \end{align}
    for a semi-positive definite matrix
    \begin{equation}
        \Phi(v)=\abs{v}^{\gamma+2}\left(I-\frac{v\otimes v}{\abs{v}^2}\right)
        \;\text{ with } -3\leq\gamma\leq 1.
    \end{equation}
\end{enumerate}
\end{remark}





\smallskip
\subsection{Motivation and Previous Results}

The study of the quantum Boltzmann equation dates back to Uehling-Uhlenbeck \cite{Uehling.Uhlenbeck1933}. Since then, there are many papers devoted to its theory and applications in physics, chemistry and engineering. Here, we briefly summarize the relevant literature regarding its mathematical theory.

So far, the study mainly focuses on two types of problems: the derivation of quantum Boltzmann equations and homogeneous equations.

The rigorous derivation of classical/quantum equation is an outstanding problem in kinetic theory (see Vallani \cite{Villani2002}). The formal derivation of quantum Boltzmann equation from $N$-body Schr\"odinger equation can be found in Erd\H{o}s-Salmhofer-Yau \cite{Erdos.Salmhofer.Yau2004} and Spohn \cite{Spohn1994}. In a series of papers \cite{Benedetto.Castella.Esposito.Pulvirenti2004, Benedetto.Castella.Esposito.Pulvirenti2005, Benedetto.Castella.Esposito.Pulvirenti2006, Benedetto.Castella.Esposito.Pulvirenti2008}, Benedetto-Castella-Esposito-Pulvirenti partially derived the quantum Boltzmann equation both in the low-density limit and weak-coupling limit. The brief surveys of their results can be found in \cite{ Benedetto.Castella.Esposito.Pulvirenti2007} and \cite{Pulvirenti2006}, and the references there are also very informative. Some recent progress can be found in Colangeli-Pezzotti-Pulvirenti \cite{Colangeli.Pezzotti.Pulvirenti2015} and Chen-Guo \cite{Chen.Guo2015}. In summary, this problem is still largely open so far. 

Just like in the classical Boltzmann theory, the homogeneous quantum equation 
\begin{align}
        \dt F=Q[F,F;F],
    \end{align}
is a good starting point to develop the whole theory. There are quite a few results in this direction. We refer to Lu \cite{Lu2001}, Lu-Wennberg \cite{Lu.Wennberg2003} for fermions, and Lu \cite{Lu2000, Lu2004, Lu2005}, Lu-Zhang \cite{Lu.Zhang2011}, Briant-Einav \cite{Briant.Einav2016} for bosons. Their results basically follow from the moment-entropy approach and are based on $L^1$ theory. The theory has been developed to treat both the isotropic and anisotropic cases.

One of the most interesting facts about the quantum Boltzmann equation is for bosons. Physicists have long predicted the existence of the so-called Bose-Einstein condensation and it was observed in 1995. Since then, the mathematical justification of such phenomenon has attracted a lot of attention. Mathematically, it means that for bosons, the solution to the quantum Boltzmann equation may have singularity (e.g. $\delta$-function) at certain spatial point. The existence of equilibria with $\delta$-function has been justified by minimizing the entropy functional by Escobedo-Mischler-Valle \cite{Escobedo.Mischler.Valle2005}.

On one hand, it will be thrilling to justify the generation of such singular solutions when the temperature is sufficiently low (physical requirement). Escobedo-Vel\'azquez \cite{Escobedo.Velazquez2015} justifies that for some well-prepared initial data (e.g. smooth, radial, and sufficiently localized), the solution will become unbounded (i.e. blow up) within finite time. The work is done for the homogeneous equation with isotropic data. Such localization restriction for the initial data has been removed in Cai-Lu \cite{Cai.Lu2019}. We refer to Escobedo-Mischler-Vel\'azquez \cite{Escobedo.Mischler.Velazquez2007, Escobedo.Mischler.Velazquez2008}, Escobedo-Vel\'azquez \cite{Escobedo.Velazquez2008}, Spohn \cite{Spohn2010}, Lu \cite{Lu2013, Lu2014}, Bandyopadhyay-Vel\'azquez \cite{Bandyopadhyay.Velazquez2015} for more information. 

On the other hand, studying the general well-posedness and regularity becomes much harder. We have to develop measure solutions for the Boltzmann equation. There are a lot of progress in this direction for the homogeneous equation (see Lu \cite{Lu2016, Lu2018}, Li-Lu \cite{Li.Lu2019}). In particular, the stability of Bose-Einstein distribution has also been studied in the sense of measure.

There are also some results regarding the dynamics of the excited states and interactions with the condensed states, see Arkeryd-Nouri \cite{Arkeryd.Nouri2012, Arkeryd.Nouri2013, Arkeryd.Nouri2015} and Nguyen-Tran \cite{Nguyen.Tran2019}.

Compared with the homogeneous equation, there are much fewer results on the non-homogeneous equation, see Dolbeault \cite{Dolbeault1994}, Zhang-Lu \cite{Zhang.Lu2002,Zhang.Lu2002(=)}, Lu \cite{Lu2006, Lu2008} and Arkeryd-Nouri \cite{Arkeryd.Nouri2017}. In particular, all of the known results so far regarding the Bose-Einstein condensation concerns the homogeneous equation. Also, we refer to the nice introduction to the quantum Landau equation by Lemou \cite{Lemou2000}. The semi-classical limit from quantum Boltzmann equation to quantum Landau equation can be found in He-Lu-Pulvirenti \cite{He.Lu.Pulvirenti2020}.

As far as we are aware of, so far there are very limited study on the non-homogeneous quantum Boltzmann equation, and the Bose-Einstein condensation. In this paper, we plan to utilize the well-known nonlinear energy method (see Guo \cite{Guo2002, Guo2002(=), Guo2010, Guo2003, Guo2003(=), Guo2004, Guo2012}, Strain-Guo \cite{Strain.Guo2004, Strain.Guo2006, Strain.Guo2008}, Kim-Guo-Hwang \cite{Kim.Guo.Hwang2020}) and mild formulation (see Guo \cite{Guo2001}, Illner-Shinbrot \cite{Illner.Shinbrot1984}, Duan-Yang-Zhu \cite{Duan.Yang.Zhu2005} and the references in \cite{Glassey1996}) to investigate the global well-posedness of solutions near the Maxwellian and the vacuum.

When we are preparing this work, we are aware of the recent preprint by Bae-Jang-Yun \cite{Bae.Jang.Yun2021} on the relativistic quantum Boltzmann equation. They used the nonlinear energy method similar to ours. However, there are some key differences:
\begin{itemize}
    \item 
    They only consider the system in the periodic setting near the Maxwellian. On the other hand, we consider both the periodic and whole space settings near the Maxwellian and the whole space setting near the vacuum. This provides a more comprehensive picture of the solutions in these classical frameworks.
    \item
    More importantly, the nonlinear estimates are different. While in \cite{Bae.Jang.Yun2021} they can bound the nonlinear term in the relativistic case without using $L^{\infty}$ bounds in velocity, such an estimate is absent in the non-relativistic scenario. Therefore, we must take velocity derivatives and use Sobolev embedding, which introduces a lot of technical difficulties.
\end{itemize}


\smallskip
\subsection{Main Results}

\subsubsection{Near the Maxwellian}

For the case $\Omega=\t^3$, let the multi-indices $\gamma$ and $\beta$ be
\begin{align}
    \gamma=(\gamma_1,\gamma_2,\gamma_3),\quad \beta=(\beta_1,\beta_2,\beta_3).
\end{align}
Denote the differential operator by
\begin{align}
    \p^{\gamma}_{\beta}=\p_{x_1}^{\gamma_1}\p_{x_2}^{\gamma_2}\p_{x_3}^{\gamma_3}\p_{v_1}^{\beta_1}\p_{v_2}^{\beta_2}\p_{v_3}^{\beta_3}.
\end{align}
If each component of $\th$ is not greater than $\bar\th$, we denote by $\th\leq \bar\th$.

Let $N\geq 8$.
Denote
\begin{align}
    \vertiii{f(t)}&=\sum_{\abs{\gamma}+\abs{\beta}\leq N}\tnm{\p_{\beta}^{\gamma}f(t)}{x,v},\\
    \vertiii{f(t)}_{\nu}&=\sum_{\abs{\gamma}+\abs{\beta}\leq N}\vnm{\p_{\beta}^{\gamma}f(t)}{x,v},
\end{align}
where $\vnm{f}{x,v}:\sim \|f\|_{2,\frac{1}{2}}$ is defined in Definition \ref{nu-norm}.
Denote also
\begin{align}
    \ee[f(t)]=\vertiii{f(t)}^2+\int_0^t\vertiii{f(s)}_{\nu}^2 \,\ud s,
\end{align}
and
\begin{align}
    \ee[f_0]=\vertiii{f_0}^2.
\end{align}

\begin{theorem}[Periodic Case]
Let $\Omega=\t^3$. 
Assume that the initial data $F_0(x,v)=\m(v)+\mh(v)f_0(x,v) \geq 0$ with $\mu(v)$ and $\mm(v)$ defined in \eqref{Maxwellian} and \eqref{Maxwellian-M}, and $f_0(x,v)$ satisfying
\begin{align}
    \ee[f_0]\leq \frac{M_0}{2}
\end{align}
for some $M_0>0$, as well as the conservation laws \eqref{CL-Mass}\,--\,\eqref{CL-Energy}.
Then there exists a unique global solution $F(t,x,v)=\m(v)+\mh(v)f(t,x,v) \geq 0$ to the quantum Boltzmann equation \eqref{equation: boltzmann} such that
\begin{align}
    \ee[f(t)]\leq M_0
\end{align}
for any $t\in[0,\infty)$. Also, the perturbation $f(t,x,v)$ satisfies
\begin{align}
    \nnm{f(t)}\leq C\ue^{-kt}\nnm{f_0}
\end{align}
for some constant $C,K>0$. 
\end{theorem}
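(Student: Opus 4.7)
The plan is to execute the nonlinear energy method of Guo, adapted to the cubic quantum collision operator. First I would substitute $F = \m(v) + \mh(v) f(t,x,v)$ into \eqref{equation: boltzmann} to derive a perturbation equation of the form
\begin{align*}
\dt f + v\cdot\nx f + L f = \g[f,f,f],
\end{align*}
where $L$ is the linearization of $Q$ around $\m$ and $\g$ collects the remaining quadratic and genuinely cubic nonlinear contributions, the cubic part being the new feature arising from the $1+\th F$ factors. I would then establish the standard spectral properties of $L$: self-adjointness on $L^2_v$, non-negativity, and coercivity $\br{Lf,f}\gtrsim \vnm{\npk f}{v}^2$, with five-dimensional null space $\operatorname{span}\{\mh, v_i\mh, |v|^2\mh\}$ spanned by the collision invariants multiplied by $\mh$. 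Since the linearization at $\m$ absorbs the quantum correction factors into a structurally classical kernel, this step parallels the classical theory.

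For the a priori energy estimate, I would apply $\p^\gamma_\beta$ to the perturbation equation for every $|\gamma|+|\beta|\leq N$, pair with $\p^\gamma_\beta f$, integrate in $(x,v)$, and sum. Pure spatial derivatives ($|\beta|=0$) give at once
\begin{align*}
\tfrac{1}{2}\tfrac{d}{dt}\sum_{|\gamma|\leq N}\tnm{\p^\gamma f}{x,v}^2 + c_0\sum_{|\gamma|\leq N}\vnm{\npk \p^\gamma f}{x,v}^2 \leq \text{(nonlinear)},
\end{align*}
while mixed derivatives with $|\beta|>0$ produce, via the commutator $[\p^\gamma_\beta, v\cdot\nx]$, lower-order terms in $|\beta|$ that are absorbed through a weighted induction on $|\beta|$ in the usual fashion. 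Combining with the macro-micro splitting $f = \pk f + \npk f$ together with the conservation laws \eqref{CL-Mass}\,--\,\eqref{CL-Energy} (which recover dissipation on the hydrodynamic part through the macroscopic-equation technique), I aim to close an inequality of the form $\tfrac{d}{dt}\vertiii{f}^2 + c\vertiii{f}_\nu^2 \leq (\text{nonlinear})$.

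The principal obstacle is the cubic nonlinear estimate. Unlike the classical case where $\g$ is quadratic, the quantum $\g$ contains genuine cubic contributions of schematic form $\mh\,f\,f\,f$ integrated against $q(\o,|v-u|)$. To control such terms at the $H^N_{x,v}$ level, I would distribute derivatives via Leibniz, place at most two of the three factors pointwise through the Sobolev embedding $H^{N-k}_{x,v}\hookrightarrow L^\infty_{x,v}$, and pair the remaining factor against $\p^\gamma_\beta f$ in the $L^2_{x,v}$–$\nu$-norm duality. Since the Sobolev embedding on $\t^3\times\r^3$ requires more than three derivatives on each of two factors, and additional room is needed to accommodate velocity weights and the derivative loss intrinsic to $\g$, this is precisely where the hypothesis $N\geq 8$ enters. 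The Gaussian weight $\mh$ supplies all the needed velocity decay. The resulting estimate should read $|(\g[f,f,f], \p^\gamma_\beta f)| \leq C\sqrt{\ee[f]}\,\vertiii{f}_\nu^2$, which for $M_0$ small is absorbed into the linear dissipation.

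Finally, from the closed inequality $\tfrac{d}{dt}\vertiii{f}^2 + (c_0 - C\sqrt{M_0})\vertiii{f}_\nu^2 \leq 0$, a standard local existence result --- constructed by iteration in the mild/gain–loss formulation $Q = Q^+ - \nu(F)F$, which simultaneously propagates the non-negativity $F\geq 0$ --- combined with a continuity argument yields the global bound $\ee[f(t)]\leq M_0$; uniqueness follows by running the same energy estimate on the difference of two solutions. For the exponential decay, on $\t^3$ the conservation laws eliminate the zero spatial mode of $\pk f$, so that Poincaré's inequality upgrades the coercivity of $L$ to $\vertiii{f}^2 \lesssim \vertiii{f}_\nu^2$; substituting into the energy inequality gives $\tfrac{d}{dt}\vertiii{f}^2 + 2K\vertiii{f}^2 \leq 0$, whence $\nnm{f(t)} \leq C\ue^{-Kt}\nnm{f_0}$ by Gronwall.
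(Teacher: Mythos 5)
Your proposal is correct in outline and shares the paper's backbone: the same perturbation $F=\m+\mh f$ with the redesigned weight $\mm=\m(1+\th\m)$, the same properties and microscopic coercivity of $\l$, the same mechanism for the cubic nonlinearity (velocity derivatives plus Sobolev embedding to obtain $\sup_v$ control, which is exactly why $N\geq 8$ and the $\beta$-derivatives appear), and the same positivity-preserving gain--loss iteration for local existence followed by a continuity argument. Where you genuinely diverge is in how dissipation of the hydrodynamic part $\pk f$ is recovered and how decay is extracted. The paper never closes an instantaneous inequality $\tfrac{\ud}{\ud t}\vertiii{f}^2+c\vertiii{f}_{\nu}^2\leq(\text{nonlinear})$ in the periodic case; instead it proves a time-integrated coercivity over unit time intervals (Lemma \ref{global-lemma:positivity}) by a compactness/contradiction argument --- normalization, the averaging lemma to pass to a strong limit in the $K$-terms, identification of the limit with an element of $\mathbf{N}(\l)$, and only then the macroscopic equations, conservation laws and Poincar\'e --- and it deduces exponential decay from the resulting integrated inequality via the Maslova/Had\v{z}i\'c--Guo device $V(s)=\int_s^\infty\nnm{f(\tau)}^2\ud\tau$, not by Gronwall on a differential inequality. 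Your constructive macro--micro route (macroscopic equations plus an interaction functional, then Poincar\'e and Gronwall) is essentially what the paper deploys for $\Omega=\r^3$ in Section 3.5, and the authors themselves note it also works on $\t^3$; but as implemented there it requires including temporal derivatives in $\gamma$ (cf.\ Lemma \ref{whole-lemma:linear}) and replacing $\vertiii{f}^2$ by an equivalent modified energy containing $\int_{\Omega} a(\nx\cdot b)$-type terms, so your sketch should acknowledge both adjustments, since with purely spatial--velocity multi-indices the $\ell$-coefficients in the macroscopic equations are not directly controlled. Finally, your nonlinear step is the right idea but understates the actual obstruction: the mixed-type integrals of the form $\int f(u)f(u')$ and $\int f(v)f(v')$ defeat the pre--post change of variables, and ``two factors in $L^\infty$'' must be implemented with the $\nu$-weight bookkeeping and Jacobian bounds of Lemmas \ref{prelim-lemma:derivative} and \ref{prelim-lemma: nonlinear-no-derivative}, which is precisely what forces the $\sup_v$-based structure of the paper's estimates.
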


For the case $\Omega=\r^3$, let the multi-indices $\gamma$ and $\beta$ be
\begin{align}
    \gamma=(\gamma_0,\gamma_1,\gamma_2,\gamma_3),\quad \beta=(\beta_1,\beta_2,\beta_3).
\end{align}
Denote the differential operator by
\begin{align}
    \p^{\gamma}_{\beta}=\dt^{\gamma_0}\p_{x_1}^{\gamma_1}\p_{x_2}^{\gamma_2}\p_{x_3}^{\gamma_3}\p_{v_1}^{\beta_1}\p_{v_2}^{\beta_2}\p_{v_3}^{\beta_3}.
\end{align}
If each component of $\th$ is not greater than $\bar\th$, we denote by $\th\leq \bar\th$.

Let $N\geq 8$.
Denote
\begin{align}
    \vertiii{f(t)}&=\sum_{\abs{\gamma}+\abs{\beta}\leq N}\tnm{\p_{\beta}^{\gamma}f(t)}{x,v},\\
    \vertiii{f(t)}_{\nu}&=\sum_{\abs{\gamma}+\abs{\beta}\leq N}\vnm{\p_{\beta}^{\gamma}f(t)}{x,v}.
\end{align}
Denote also
\begin{align}
    \ee[f(t)]=\vertiii{f(t)}^2+\int_0^t\vertiii{f(s)}_{\nu}^2 \,\ud s,
\end{align}
and
\begin{align}
    \ee[f_0]=\vertiii{f_0}^2.
\end{align}

\begin{theorem}[Whole Space Case]
Let $\Omega=\r^3$. 
Assume that the initial data $F_0(x,v)=\m(v)+\mh(v)f_0(x,v) \geq 0$ with $\mu(v)$ and $\mm(v)$ defined in \eqref{Maxwellian} and \eqref{Maxwellian-M}, and $f_0(x,v)$ satisfying
\begin{align}
    \ee[f_0]\leq \frac{M_0}{2}
\end{align}
for some $M_0>0$, as well as the conservation laws \eqref{CL-Mass}\,--\,\eqref{CL-Energy}.
Then there exists a unique global solution $F(t,x,v)=\m(v)+\mh(v)f(t,x,v) \geq 0$ to the quantum Boltzmann equation \eqref{equation: boltzmann} such that
\begin{align}
    \ee[f(t)]\leq M_0
\end{align}
for any $t\in[0,\infty)$. 
\end{theorem}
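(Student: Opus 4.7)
The plan is to adapt Guo's nonlinear energy method to the quantum setting in $\r^3$. Writing $F = \m + \mh f$, substitution into \eqref{equation: boltzmann} produces the perturbed equation
\begin{align}
    \dt f + v \cdot \nx f + L f = \g[f, f; f],
\end{align}
where $L$ is the linearized collision operator, coercive on its microscopic component, and $\g$ collects the quadratic and cubic corrections produced by the quantum factors $(1 + \th F)$. For each multi-index $(\gamma,\beta)$ with $|\gamma|+|\beta| \leq N$, I would apply $\p^\gamma_\beta$ to the equation and take the $L^2_{x,v}$ inner product with $\p^\gamma_\beta f$. Summing and using the spectral-gap estimate $\br{L h, h} \gs \|(\npk) h\|_\nu^2$ gives the basic energy inequality for $\vertiii{f(t)}^2$ with dissipation of the microscopic part.

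The key whole-space complication is that $\r^3$ admits no Poincar\'e inequality, so the linear coercivity on $(\npk)f$ does not control the hydrodynamic part $\pk f$. This is precisely why $\p_t$ is included among the $\gamma$-derivatives. Using the equation, $\p_t f$ can be expressed as a transport term plus a nonlinear contribution; projecting the hierarchy of differentiated equations onto the hydrodynamic modes produces a system for the macroscopic coefficients $a(t,x), b(t,x), c(t,x)$ of $\pk f$. A Guo-type interpolation argument then bounds $\|\nx \pk f\|_{H^{N-1}}$ by the dissipation of the $\p_t$-derivatives, yielding an equivalent energy functional $\bee[f]$ whose time derivative is controlled by the full dissipation $\vertiii{f}_\nu^2$ up to a cubic remainder.

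The principal obstacle is the nonlinear estimate on $\g[f,f;f]$, which in the quantum case contains genuinely cubic contributions of the form $\mh f \cdot f \cdot f$. Since the non-relativistic setting does not admit the uniform-in-$v$ bounds employed in \cite{Bae.Jang.Yun2021}, velocity derivatives must be taken and the Sobolev embedding $H^4_{x,v} \hookrightarrow L^\infty_{x,v}$ (valid in dimension six) invoked. For each distribution of derivatives on a triple product, the factor carrying the most derivatives is placed in the $\nu$-weighted $L^2_{x,v}$ while the other two are placed in $L^\infty_{x,v}$; the condition $N \geq 8$ guarantees that at least two of the three factors always carry at most $N-4$ derivatives and are thus controlled by their $H^N$ norm via Sobolev. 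The resulting bound is of order $\ee[f]^{3/2} + \ee[f]^{5/2}$ in terms of the dissipation and is absorbed by the linear coercivity once $M_0$ is chosen small enough.

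Combining the a priori estimate with a local existence theorem (constructed by iterating the mild formulation, with a monotone scheme that preserves $F \geq 0$ by exploiting the gain/loss structure of $Q$) and a standard continuity argument produces a global solution satisfying $\ee[f(t)] \leq M_0$ for every $t \geq 0$. Uniqueness follows from a Gronwall estimate on the difference of two solutions in the base norm $\tnm{\cdot}{x,v}$, applying the same Sobolev-based cubic estimate in a simpler form.
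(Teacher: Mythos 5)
Your proposal follows the paper's overall strategy: perturbing around the Maxwellian, using coercivity of $\l$ on the microscopic part, including $\dt$ among the admissible derivatives in $\gamma$, deriving macroscopic equations for $a,b,c$, estimating nonlinearities via $H^4_{x,v}\hookrightarrow L^\infty_{x,v}$ with $N\geq 8$, and closing with a continuity argument. However, two mechanisms that the paper must use specifically because $\Omega=\r^3$ are not visible in your sketch, and one of them is not covered by the estimate scheme you describe. First, the macroscopic estimate for $\nx a$ at $|\gamma|=0$ produces an interactive energy term $\frac{\ud}{\ud t}\int_{\r^3}a\,(\nx\cdot b)$, which must be absorbed by replacing $\vertiii{f}^2$ with a weighted equivalent energy; you hint at this with ``$\bee[f]$'' but never say where the correction comes from. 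Second, and more substantively, your rule ``put the heaviest factor in $\nu$-weighted $L^2_{x,v}$ and the other two in $L^\infty_{x,v}$'' fails for the purely macroscopic cubic term with no derivatives, $\g[\pk f,\pk f;\pk f]$: none of the three factors is in the dissipation $\vnnmz{f}$ (which by design excludes $\pk f$ undifferentiated), so this strategy only yields $\vertiii{f}^3$, which cannot be absorbed. The paper resolves this by bounding the macroscopic coefficients via $\|a\|_{L^6_x}\ls\|\nx a\|_{L^2_x}$ (and similarly for $b,c$), an inequality that holds on $\r^3$ but not on $\t^3$, and $\nx a,\nx b,\nx c$ {\it are} in the dissipation. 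This is precisely where the whole-space structure enters, and it is the step your proposal is missing. Finally, the paper's local well-posedness near the Maxwellian uses a non-monotone linear iteration built on the $Q_p/Q_q$ splitting (and positivity is checked by hand on that iteration), not the monotone Illner--Shinbrot scheme you describe, which the paper reserves for the near-vacuum case; a monotone scheme around a non-zero Maxwellian is not straightforward, especially for fermions where $1-F$ enters.
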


\begin{remark}
In the whole space case, since the dissipation lacks the lowest-order term, we cannot easily obtain decay estimates. This loss of lowest-order terms is purely due to the absence of the Poincar\'e-type inequality. The optimal $t^{-\frac{3}{4}}$ decay may be obtained under other norms based on a different framework (see Glassey \cite{Glassey1996}, Duan-Strain \cite{Duan.Strain2011, Duan.Strain2011(=)}). However, it is beyond the scope of our paper.
\end{remark}

\begin{remark}\label{main remark 1}
For both $\Omega=\t^3$ and $\Omega=\r^3$, the global solution satisfies the following positivity bounds:
\begin{itemize}
    \item 
    for fermions $\th=-1$, if $0\leq F_0(x,v)\leq 1$, then $0\leq F(t,x,v)\leq 1$;
    \item
    for bosons $\th=1$, if $F_0(x,v)\geq 0$, then $F(t,x,v)\geq 0$.
\end{itemize}
See Theorem \ref{local-theorem}. This is a byproduct of the standard iteration argument.
\end{remark}

\subsubsection{Near the Vacuum}

Given $\beta>0$, define 
\begin{align}
    S=\Big\{F\in C^0(\rp\times\r^3\times\r^3): \abs{F(t,x,v)}\leq c\ue^{-\beta(\abs{x}^2+\abs{v}^2)} \text{ for some } c>0 \Big\},
\end{align} 
equipped with the norm
\begin{align}
    \nnm{F}:=\sup_{t,x,v}\Big(\ue^{\beta(\abs{x}^2+\abs{v}^2)}\abs{F(t,x,v)}\Big).
\end{align}
Define also the solution set
\begin{align}
    S_R=\big\{F\in S: \nnm{F}\leq R\big\}.
\end{align}

\begin{theorem}[Whole Space Case]
There exists an $R_0>0$ sufficiently small such that if $\nnm{F_0}\leq \frac{R_0}{2}$, then the equation \eqref{equation: boltzmann} has a unique mild solution $F\in S_{R_0}$.
\end{theorem}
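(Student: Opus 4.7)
The plan is to adapt the classical Illner--Shinbrot framework to the cubic quantum collision operator. First I would pass to the characteristic formulation $\fs(t,x,v):=F(t,x+tv,v)$, which turns \eqref{equation: boltzmann} into $\dt\fs=\qs[F,F;F]$ and integrates to the mild form
\begin{align*}
    \fs(t,x,v)=F_0(x,v)+\int_0^t\qs[F,F;F](s,x,v)\,\ud s.
\end{align*}
Then I would set up the Picard iteration $F^{n+1,\#}(t,x,v)=F_0(x,v)+\int_0^tQ[F^n,F^n;F^n]^{\#}(s,x,v)\,\ud s$ starting from $F^0\equiv 0$, with the aim of showing $\{F^n\}\subset S_{R_0}$ and that the sequence is contractive.

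The crux is the following trajectorial dispersive estimate: for any $F\in S_R$,
\begin{align*}
    \sup_{t\geq 0,\,x,v\in\r^3}\ue^{\beta(\abs{x}^2+\abs{v}^2)}\int_0^t\bigl|Q[F,F;F]\bigr|^{\#}(s,x,v)\,\ud s\leq C_1R^2+C_2R^3.
\end{align*}
For the quadratic Boltzmann part this is the classical Illner--Shinbrot bound: the gain term admits the change of variables $(u,\o)\mapsto(u',v')$ with Jacobian proportional to $\abs{v-u}$, after which one uses the identity $\abs{x+sv'}^2+\abs{x+su'}^2=2\abs{x}^2+2sx\cdot(u'+v')+s^2(\abs{u'}^2+\abs{v'}^2)$ together with energy conservation $\abs{u'}^2+\abs{v'}^2=\abs{u}^2+\abs{v}^2\geq\abs{v}^2$ to extract an integrable Gaussian in $(s,u')$. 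The loss term is handled more directly. For the cubic quantum corrections $\th F(u')F(v')F(u)$ and its symmetric companions the collision geometry is identical, but each extra factor of $F$ I would bound pointwise by $\nnm{F}\leq R$, which produces the $C_2R^3$ contribution without consuming any $s$-integrability.

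Granted this estimate, induction shows $\nnm{F^n}\leq R_0$ as long as $\tfrac{R_0}{2}+C_1R_0^2+C_2R_0^3\leq R_0$, which holds for $R_0$ small. Exploiting the trilinear structure of $Q$, a parallel estimate on $F^{n+1}-F^n$ gives
\begin{align*}
    \nnm{F^{n+1}-F^n}\leq\bigl(C_1R_0+C_2R_0^2\bigr)\nnm{F^n-F^{n-1}},
\end{align*}
which is a strict contraction once $R_0$ is sufficiently small; the Banach fixed point theorem then supplies a unique mild solution $F\in S_{R_0}$. Nonnegativity (and the $F\leq 1$ bound for fermions alluded to in Remark \ref{main remark 1}) is recovered by recasting the scheme in Kaniel--Shinbrot monotone form, grouping the $1+\th F$ factors into the gain/loss split so that each iterate stays within the physical range.

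The principal obstacle is the genuinely cubic term, which is absent from the classical Illner--Shinbrot analysis: one must verify that the dispersive gain along characteristics survives the insertion of a third factor of $F$. The saving point is that the extra $F$ can be bounded pointwise by $\nnm{F}$ with no further $s$-integration, so the cubic piece is a true higher-order perturbation of the quadratic estimate and is absorbed once $R_0$ is chosen small. A secondary subtlety is arranging the Kaniel--Shinbrot splitting compatibly with the $(1+\th F)$ structure so that positivity is preserved at every step of the iteration; for bosons this is automatic, while for fermions the decomposition must also respect the upper bound $F\leq 1$.
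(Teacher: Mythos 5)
Your proposal is correct and follows essentially the same route as the paper: the mild formulation along characteristics, the dispersive lemma $\int_0^\infty \ue^{-\beta|x+\tau(v-u)|^2}\ud\tau\ls |v-u|^{-1}$, the conservation-law identity to convert post-collision Gaussians in the gain term into pre-collision ones, pointwise absorption of the extra cubic factor by $\nnm{F}$, and a contraction/fixed-point argument in $S_{R_0}$ (the paper handles the gain term directly via its Lemma \ref{vacuum-lemma: prelim 2} rather than a change of variables, but this is only a cosmetic difference). The positivity discussion you append is not needed for this statement and is treated separately in the paper, where the fermion case requires a redesigned monotone scheme, as you anticipate.
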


\begin{remark}
In the near vacuum case, the global solution also satisfies the positivity bounds as in Remark \ref{main remark 1}. However, the proof is highly non-trivial. See Theorem \ref{vacuum-theorem: positivity} for bosons and Theorem \ref{vacuum-theorem: positivity.} for fermions.
\end{remark}

\begin{remark}
We will skip the periodic case near the vacuum due to the following:
\begin{itemize}
    \item 
    The framework developed here highly relies on the dispersion properties of the transport operator $\dt +v\cdot\nx$. However, such dispersive decay in time is absent in the periodic case for mild solution (smoothness may improve the decay). It is far beyond our methods discussed here.
    \item
    In the periodic case for the classical Boltzmann equation, as Mouhot \cite{Mouhot2005} and Briant \cite{Briant2015} proved, the solution will instantaneously fill the vacuum and be above a Maxwellian. This indicates that the near Maxwellian framework is more suitable for this case. We anticipate that the similar result will hold for the quantum Boltzmann equation.
    \item
    The general theory for solutions merely satisfying $\abs{F(t,x,v)}\ls \ue^{-\beta\abs{v}^2}$ is far from mature and there are very limited results on the global well-posedness.
\end{itemize}
\end{remark}

\begin{remark}
All of these results reveal that if the initial data is sufficiently close to the Maxwellian or vacuum, then the solution will remain finite and not blow up. Hence, for bosons, the Bose-Einstein condensation will not occur. In other words, the occurrence of Bose-Einstein condensation requires more delicate analysis. This is a sharp constrast with the result in Escobedo-Vel\'azquez \cite{Escobedo.Velazquez2015}.
\end{remark}

\subsubsection{Notations}

Throughout this paper, $C>0$ denotes a constant that only depends on
the domain $\Omega$, but does not depend on the data. It is
referred as universal and can change from one inequality to another.
When we write $C(z)$, it means a certain positive constant depending
on the quantity $z$. We write $a\ls b$ to denote $a\leq Cb$ for some universal constant $C>0$; we will use $\gtrsim$ and $\simeq$ in a similar standard way.


\smallskip
\subsection{Difficulties and Ideas}

The framework of studying the non-quantum Boltzmann equation near the Maxwellian is classical (see \cite{Glassey1996, Guo2002, Guo2002(=), Guo2004}). It is customary to linearize the solution $F(t,x,v)$ around the global Maxwellian $\bar\mu(v)=\ue^{-\abs{v}^2}$ as $F=\bar\mu+\bar\mu^{\frac{1}{2}}f$, where the perturbation $f(t,x,v)$ satisfies 
\begin{align}
    \dt f+v\cdot\nx f+\l[f]=\g[f,f],
\end{align}
for the linearized Boltzmann operator $\l$ and quadratic operator $\g$. 

For the quantum Boltzmann equation, the first difficulty is that the naive perturbation $F=\mu+\mu^{\frac{1}{2}}f$ around the quantum Maxwellian 
\begin{align}
\m(v)=\dfrac{1}{\rho\ue^{\abs{v}^2}-\th}
\end{align}
with $\rho>1$ will result in a linearized Boltzmann operator without coercivity, which is devastrating for the energy method. To get around this, a crucial observation is that we need to redesign the perturbation as $F=\mu+\mh f$ with 
\begin{align}
    \mathcal{M}(v)=\m(1+\th \m)=\frac{\rho\ue^{\abs{v}^2}}{(\rho\ue^{\abs{v}^2}-\th)^2}.
\end{align}
Then as Theorem \ref{prelim-theorem: L-properties} reveals, we have that $\l$ is a self-adjoint operator on $L^2_v(\r^3)$ and
\begin{align}
    \int_{\r^3}f\cdot\l[f]\,\ud v\gs \vnm{\nnpk[f]}{v},
\end{align}
where $\pk[f]$ is the orthogonal projection of $f$ onto the five-dimensional null space of $\l$, i.e.
\begin{align}
    \text{Null space of }\l=\Big\{\mh(a+b\cdot v+c\abs{v}^2): a,c\in\r,b\in\r^3\Big\}.
\end{align}
Then this recovers the basic energy structure as in the classical Boltzmann equation.

The most important distinction between the classical and quantum Boltzmann equation framework lies in the nonlinear estimates. Compared with \cite[Lemma 2.3]{Guo2002} using only $L^2_v$ norms, Lemma \ref{prelim-lemma: nonlinear-no-derivative} depends on $\sup_vf$. Thus in order to obtain $L^{\infty}_v$ estimate of $f$ in the Sobolev-space framework, we resort to the high-regularity framework with derivatives in both space and velocity. 

Intuitively, for the classical Boltzmann equation, the nonlinear operator $\g$ only involves integral for $\ud u\ud v$ of the form (we ignore the collision kernel)
\begin{align}
    \int f(u)f(v)\ \text{or}\ \int f(u')f(v').
\end{align}
These can be handled with the pre-post change-of-variable $(u,v)\leftrightarrow(u',v')$. However, for the quantum Boltzmann equation, due to the cubic nonlinearity, we must estimate the mixed-type integral
\begin{align}
    \int f(u)f(v')\ \text{or}\ \int f(u)f(u')\ \text{or}\ \int f(v)f(u')\ \text{or}\ \int f(v)f(v').
\end{align}
Then the pre-post change-of-variable cannot resolve the difficulty. This kind of integrals have long been captured as an important ingredient to tackle long-range interactions and non-cutoff Boltzmann equations. So far, there are mainly two approaches to deal with them:
\begin{itemize}
    \item 
    The proof of Alexandre-Desvillettes-Villani-Wennberg \cite[Lemma 1]{Alexandre.Desvillettes.Villani.Wennberg2000} justifies that under proper sense, $\frac{\ud u'}{\ud u}$ and $\frac{\ud v'}{\ud v}$ have positive lower bounds. With this in hand, we may bound $\int f(u)f(v')$ and $\int f(v)f(u')$. However, the difficulty lies in the remaining two integrals $\int f(u)f(u')$ and $\int f(v)f(v')$. It has been shown that $\frac{\ud u'}{\ud v}$ and $\frac{\ud v'}{\ud u}$ do not have positive lower bounds. In the literature, some authors claimed that it is possible to use the well-known cancellation lemma (see Alexandre-Desvillettes-Villani-Wennberg \cite[Lemma 1]{Alexandre.Desvillettes.Villani.Wennberg2000}) to finish the job, but we cannot see the viability and it does not look hopeful. Actually, cancellation lemma is used to bound $\int f(u)(f(v)-f(v'))$ and $\int f(v)(f(u)-f(u'))$, which can help handle $\int f(u)f(v')$ and $\int f(v)f(u')$, but not $\int f(u)f(u')$ and $\int f(v)f(v')$, so the difficulty remains.
    \item
    In the analysis of non-cutoff Boltzmann equation, Gressman-Strain \cite{Gressman.Strain2011} introduces two Carleman-type representations. Roughly speaking, \cite[Proposition A.1]{Gressman.Strain2011} makes the change-of-variables $v\rt v'$ and $u\rt u'$ with the help of $\sigma$ integral, which is controllable (see \cite[Section 3]{Gressman.Strain2011}). This results in the same good bounds as the above Alexandre-Desvillettes-Villani-Wennberg argument. On the other hand, \cite[Proposition A.2]{Gressman.Strain2011} forcefully makes the change-of-variables $u\rt v'$ and $v\rt u'$ with the help of $\sigma$ integral. However, as \cite[Section 7]{Gressman.Strain2011} reveals, this representation will generate singularity $\sim{\abs{v-v'}}^{-1}$ which cannot fit into our needs. The similar result was also proved by Silvestre \cite[Lemma A.1]{Silvestre2016}. All in all, we still cannot handle the remaining two integrals $\int f(u)f(u')$ and $\int f(v)f(v')$.
\end{itemize}
Note that \cite[Lemma 3.4, Lemma 3.7]{Bae.Jang.Yun2021} introduced another method, i.e. rewriting $\sigma\in\s^2$ integral with $\delta$ function due to the conservation of momentum and energy. While it works for the relativistic models there, we may easily check that in the non-relativistic case, the resulting formula is equivalent to the Carleman-type representation in \cite[Proposition A.2]{Gressman.Strain2011}. Hence, it does not work as expected.

At the end of the day, we arrive at the conclusion that we have to resort to $L^{\infty}_v$ estimate of $f$, which depends on high-regularity framework in the velocity variable and Sobolev embedding. This in turn creates a lot of technical difficulties. For example, we need to estimate the velocity derivatives of $\l$ and $\g$ operators. Such estimates have been done for classical hard-sphere case in \cite[Lemma 2.1, Lemma 2.2]{Guo2002}. However, the proof there highly depends on the explicit formula of $\l$ (see \cite[Section 3.2, Section 3.3]{Glassey1996}), which we do not have. Then we have to use the fact that $\m(v)\simeq\bar\mu(v)$ and $\mathcal{M}(v)\simeq\bar\mu(v)$ and the conservation laws to bound the derivatives term by term. In particular, we need to find the partially explicit formula as in \cite[Section 3.2, Section 3.3]{Glassey1996} to complete the estimates.

For the quantum Boltzmann equation near the vacuum, we utilize the robust framework introduced in Illner-Shinbrot \cite{Illner.Shinbrot1984} (we refer to Glassey \cite[Section 2]{Glassey1996} for clarity). While the global well-posedness is not too difficult to adapt, the positivity proof needs more thinking. The proof for the classical Boltzmann equation relies on a monotonicity argument to construct two approximate sequences from above and below. In particular, it highly depends on the monotonicity of the gain term $\int F(u')F(v')$. While this still holds for bosons with $\th=1$, such a naive adaptation does not work for fermions $\th=-1$ since $\int F(u')F(v')\big(1+\th F(u)+\th F(v)\big)$ does not have the desired monotonicity.
Our strategy is to redesign the artificial gain and loss terms to enforce the monotonicity. In particular, we also need to redesign the approximate sequences such that the convergence is preserved.


\smallskip
\subsection{Organization of the Paper}


Our paper is organized as follows: 
In Section \ref{Sec:Collision-Opt}, we record some preliminary results on the quantum Boltzmann collision operator $Q$, including the conservation laws, the entropy and $H$-Theorem, and the quantum Maxwellian. 
In Section \ref{Sec:Maxwellian-GlbSol}, we study the near Maxwellian case through the nonlinear energy method, which comes with a careful derivation of the perturbation form, linear and nonlinear estimates, and the proof of local and global well-posedness. 
In Section \ref{Sec:Vacuum-GlbSol}, we study the near vacuum case via the mild formulation, proving the global well-posedness and positivity of solutions.


\bigskip
\section{Properties of the Collision Operator} \label{Sec:Collision-Opt}

In this section, we present some basic results regarding the collision operator $Q$. They are mostly well-known for the classical Boltzmann equation and we derive them in the quantum context. We mainly adapt from Glassey \cite{Glassey1996}.

\subsection{Conservation Laws}

\begin{lemma}\label{basic-lemma: invariants}
For all smooth functions $F(v)$ and $\phi(v)$, small at infinity,
\begin{align}
    &\int_{\r^3}Q[F,F;F](v)\phi(v)\,\ud v\\
    =&\int_{\r^3}\int_{\r^3}\int_{\s^2}q(\o,\abs{v-u})\Big[F(u')F(v')\big(1+\th F(u)\big)\big(1+\th F(v)\big)\no\\
    &-F(u)F(v)\big(1+\th F(u')\big)\big(1+\th F(v')\big)\Big]\phi(v)\,\ud\o\ud u\ud v\no\\
    =&\int_{\r^3}\int_{\r^3}\int_{\s^2}q(\o,\abs{v-u})\Big[F(u')F(v')\big(1+\th F(u)\big)\big(1+\th F(v)\big)\no\\
    &-F(u)F(v)\big(1+\th F(u')\big)\big(1+\th F(v')\big)\Big]\phi(u)\,\ud\o\ud u\ud v\no\\
    =&-\int_{\r^3}\int_{\r^3}\int_{\s^2}q(\o,\abs{v-u})\Big[F(u')F(v')\big(1+\th F(u)\big)\big(1+\th F(v)\big)\no\\
    &-F(u)F(v)\big(1+\th F(u')\big)\big(1+\th F(v')\big)\Big]\phi(v')\,\ud\o\ud u\ud v\no\\
    =&-\int_{\r^3}\int_{\r^3}\int_{\s^2}q(\o,\abs{v-u})\Big[F(u')F(v')\big(1+\th F(u)\big)\big(1+\th F(v)\big)\no\\
    &-F(u)F(v)\big(1+\th F(u')\big)\big(1+\th F(v')\big)\Big]\phi(u')\,\ud\o\ud u\ud v.\no
\end{align}
\end{lemma}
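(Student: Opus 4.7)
The plan is to derive all four displayed equalities from two symmetries of the integrand on $\s^2\times\r^3\times\r^3$: the relabeling $u\leftrightarrow v$ of the two dummy velocity variables, and the pre--post change of variables $(u,v)\leftrightarrow(u',v')$ with $\o$ held fixed. The first equality is just the definition of $Q[F,F;F]$ integrated against $\phi(v)$, so the content lies in the remaining three.

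For the second equality, I would apply the relabeling $u\leftrightarrow v$ in the full $\ud\o\,\ud u\,\ud v$ integral. The kernel $q(\o,\abs{v-u})$ is invariant since it depends only on $\abs{v-u}$. From the formulas $u'=u+\o(\o\cdot(v-u))$ and $v'=v-\o(\o\cdot(v-u))$, swapping $u$ and $v$ sends $u'\mapsto v'$ and $v'\mapsto u'$, so the bracket
\[
F(u')F(v')(1+\th F(u))(1+\th F(v)) - F(u)F(v)(1+\th F(u'))(1+\th F(v'))
\]
is symmetric under this swap. The only factor that changes is the test function, with $\phi(v)$ becoming $\phi(u)$.

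For the third equality, I would perform the involutive change of variables $(u,v)\to(u',v')$ at fixed $\o$. Its Jacobian has absolute value one, and the kernel is invariant because conservation of momentum and energy yield $\abs{v'-u'}^2 = 2(\abs{u'}^2+\abs{v'}^2) - \abs{u'+v'}^2 = 2(\abs{u}^2+\abs{v}^2)-\abs{u+v}^2 = \abs{v-u}^2$. Under this substitution the roles of $(u,v)$ and $(u',v')$ are exchanged in the bracket, so the bracket flips sign, while $\phi(v)$ becomes $\phi(v')$; this simultaneously produces the overall minus sign and the replacement $\phi(v)\to\phi(v')$ claimed in the third line.

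Finally, the fourth equality follows by composing the two operations: starting from the third identity and then applying the $u\leftrightarrow v$ relabeling (which, as noted, also exchanges $u'\leftrightarrow v'$) replaces $\phi(v')$ by $\phi(u')$ and leaves everything else intact. The only point that requires any care is the verification that the pre--post map is an involution with unit Jacobian and that the bracket's sign flip is the sole change it induces on the integrand; both are standard consequences of the conservation of momentum and energy, so no genuine obstacle is expected.
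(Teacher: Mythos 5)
Your proposal is correct and follows essentially the same route as the paper: the first equality is the definition, the second comes from the relabeling $u\leftrightarrow v$ (which also exchanges $u'\leftrightarrow v'$), the third from the pre--post substitution $(u,v)\mapsto(u',v')$ with unit Jacobian, and the fourth from composing the two. The only cosmetic difference is that the paper verifies kernel invariance via $\o\cdot(v-u)=-\o\cdot(v'-u')$, which is the relevant check for the hard-sphere kernel $q(\o,\abs{v-u})=\abs{\o\cdot(v-u)}$, rather than only $\abs{v'-u'}=\abs{v-u}$; this extra observation is immediate and does not affect your argument.
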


\begin{proof}
The first equality is the definition of $Q$. Then switching the status of $u$ and $v$, we find that the integral does not change, so the second equality naturally follows.

The third equality comes from the pre-post collision substitution $(u,v)\mapsto (u',v')$. Based on \cite[Lemma 1.4.1, Lemma 1.6.1]{Glassey1996}, we may directly verify that $\o\cdot(v-u)=-\o\cdot(v'-u')$, and the Jacobian $J$ satisfies $\abs{J}=1$. Also, under this substitution, we know $(u',v')\mapsto (u,v)$. Hence, after renaming the variables, we get the third equality. Similarly, switching the status of $u$ and $v$, we get the fourth equality. 
\end{proof}

\begin{corollary}\label{basic-corollary: invariants 1}
For all smooth functions $F(v)$ and $\phi(v)$, small at infinity,
\begin{align}
    &\int_{\r^3}Q[F,F;F](v)\phi(v)\,\ud v\\
    =&\;\frac{1}{4}\int_{\r^3}\int_{\r^3}\int_{\s^2}q(\o,\abs{v-u})\Big[F(u')F(v')\big(1+\th F(u)\big)\big(1+\th F(v)\big)\no\\
    &\,-F(u)F(v)\big(1+\th F(u')\big)\big(1+\th F(v')\big)\Big]\Big\{\phi(v)+\phi(u)-\phi(v')-\phi(u')\Big\}\,\ud\o\ud u\ud v.\no
\end{align}

\begin{proof}
    Adding the four equalities in Lemma \ref{basic-lemma: invariants}, this result naturally follows.
\end{proof}
    
\end{corollary}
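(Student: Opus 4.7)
The statement is an immediate algebraic consequence of Lemma \ref{basic-lemma: invariants}, so the plan is essentially arithmetic: add the four identities from the lemma and divide by four.

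More precisely, Lemma \ref{basic-lemma: invariants} asserts four equalities for the single quantity $I := \int_{\r^3} Q[F,F;F](v)\phi(v)\,\ud v$. The first two express $I$ as the triple integral of the collisional difference
\[
D(u,v) := q(\o,\abs{v-u})\Big[F(u')F(v')\big(1+\th F(u)\big)\big(1+\th F(v)\big) - F(u)F(v)\big(1+\th F(u')\big)\big(1+\th F(v')\big)\Big]
\]
weighted by $\phi(v)$ and $\phi(u)$ respectively, while the third and fourth express $I$ as minus the same triple integral weighted by $\phi(v')$ and $\phi(u')$. Writing these four equalities together yields
\[
4\,I = \int_{\r^3}\!\int_{\r^3}\!\int_{\s^2} D(u,v)\,\Big\{\phi(v)+\phi(u)-\phi(v')-\phi(u')\Big\}\,\ud\o\,\ud u\,\ud v.
\]
Dividing by four gives the claimed formula.

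Since Lemma \ref{basic-lemma: invariants} already supplies all four identities, there is no obstacle at all; the only implicit requirement is that the triple integrals are absolutely convergent so that linearity of integration applies. This is guaranteed by the smoothness-plus-smallness hypothesis on $F$ and $\phi$ (decay at infinity makes the integrand integrable against the hard-sphere kernel $q(\o,|v-u|)=|\o\cdot(v-u)|$, which grows only linearly). Thus the proof is a one-line combination of the lemma's four identities, with no genuine analytic content beyond what was already established there.
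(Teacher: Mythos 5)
Your proposal is correct and follows essentially the same route as the paper: add the four identities of Lemma \ref{basic-lemma: invariants} and divide by four, with the smallness-at-infinity hypothesis justifying the use of linearity of the integrals.
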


\begin{corollary}\label{basic-corollary: invariants 2}
    For any smooth function $F(v)$, small at infinity,
    \begin{align}
        \int_{\r^3}Q[F,F;F](v)\,\ud v=0,\quad \int_{\r^3}Q[F,F;F](v)v_i\,\ud v=0,\quad \int_{\r^3}Q[F,F;F](v)\abs{v}^2\ud v=0,
    \end{align}
    for $i=1,2,3$.
\end{corollary}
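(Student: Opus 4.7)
The plan is to deduce the three conservation identities directly from the symmetrized representation provided by Corollary \ref{basic-corollary: invariants 1}. Since that corollary expresses
\[
\int_{\r^3}Q[F,F;F](v)\phi(v)\,\ud v
\]
as a triple integral whose integrand contains the factor $\{\phi(v)+\phi(u)-\phi(v')-\phi(u')\}$, it suffices to exhibit test functions $\phi$ for which this factor vanishes identically on the collision manifold, and then to check that $\phi=1$, $\phi(v)=v_i$, and $\phi(v)=\abs{v}^2$ each fall into this class.

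First I would take $\phi\equiv 1$; then $\phi(v)+\phi(u)-\phi(v')-\phi(u')=1+1-1-1=0$, so Corollary \ref{basic-corollary: invariants 1} gives $\int_{\r^3}Q[F,F;F]\,\ud v=0$, which is conservation of mass. Next I would set $\phi(v)=v_i$ for a fixed $i\in\{1,2,3\}$; the bracket becomes $v_i+u_i-v'_i-u'_i$, which vanishes by the conservation of momentum $u+v=u'+v'$ recorded in the introduction (this is built into the parametrization $u'=u+\o(\o\cdot(v-u))$, $v'=v-\o(\o\cdot(v-u))$). Hence $\int_{\r^3}Q[F,F;F]\,v_i\,\ud v=0$. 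Finally, for $\phi(v)=\abs{v}^2$, the bracket is $\abs{v}^2+\abs{u}^2-\abs{v'}^2-\abs{u'}^2$, which vanishes by the conservation of kinetic energy, also a direct algebraic consequence of the same parametrization. Applying Corollary \ref{basic-corollary: invariants 1} once more yields $\int_{\r^3}Q[F,F;F]\,\abs{v}^2\,\ud v=0$.

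There is no real obstacle here: the work has already been done in Lemma \ref{basic-lemma: invariants} (the pre-post change of variables with Jacobian $1$) and in its immediate corollary. The only thing to verify, if desired, is that the smoothness/decay hypothesis on $F$ suffices to apply Fubini and the change of variables with $\phi(v)\in\{1,v_i,\abs{v}^2\}$; since $F$ is small at infinity in the sense assumed, all the integrals in Lemma \ref{basic-lemma: invariants} are absolutely convergent for these polynomial test functions, so the symmetrization step is legitimate. This finishes the proof.
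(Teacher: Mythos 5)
Your proposal is correct and follows essentially the same route as the paper: the paper's proof also consists of taking $\phi(v)=1,\,v_i,\,\abs{v}^2$ in Corollary \ref{basic-corollary: invariants 1} and observing that each is a collisional invariant, i.e. $\phi(v)+\phi(u)-\phi(v')-\phi(u')=0$. Your additional remarks on momentum/energy conservation and integrability merely spell out details the paper leaves implicit.
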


\begin{proof}
Taking $\phi(v)=1,v_i,\abs{v}^2$ in Corollary\;\ref{basic-corollary: invariants 1}, which satisfy $\phi(v)+\phi(u)-\phi(v')-\phi(u')=0$, we immediately obtain the results.
\end{proof}

\begin{remark}
The test function $\phi(v)$ satisfying \,$\phi(v)+\phi(u)-\phi(v')-\phi(u')=0$\, is called a collisional invariant. Corollary \ref{basic-corollary: invariants 2} justifies that \,$1,v_i,\abs{v}^2$ are collisional invariants.
\end{remark}

\begin{theorem}\label{basic-theorem: conservation law}
If $F(t,x,v)$ is a solution to the equation \eqref{equation: boltzmann} which is suitable small at infinity, then we have for any $t>0$ and for $\Omega=\t^3\ \text{or}\ \r^3$,
\begin{align}
    \int_{\Omega}\int_{\r^3}F(t,x,v)\,\ud v\ud x&=\int_{\Omega}\int_{\r^3}F_0(x,v)\,\ud v\ud x,\quad (\text{Conservation of Mass})\\
    \int_{\Omega}\int_{\r^3}F(t,x,v)v_i\,\ud v\ud x&=\int_{\Omega}\int_{\r^3}F_0(x,v)v_i\,\ud v\ud x,\quad (\text{Conservation of Momentum})\\
    \int_{\Omega}\int_{\r^3}F(t,x,v)\abs{v}^2\ud v\ud x&=\int_{\Omega}\int_{\r^3}F_0(x,v)\abs{v}^2\ud v\ud x.\quad (\text{Conservation of Energy})
\end{align}
\end{theorem}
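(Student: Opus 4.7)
The plan is to show that each of the three quantities is constant in $t$ by differentiating under the integral sign, exploiting Corollary \ref{basic-corollary: invariants 2}, and then integrating the transport term away via either periodicity or decay at infinity. Concretely, fix a collisional invariant $\phi(v)\in\{1,v_i,\abs{v}^2\}$, multiply the equation \eqref{equation: boltzmann} by $\phi(v)$, and integrate over $\Omega\times\r^3$. Under the smallness/decay assumption on $F$, one may commute $\dt$ with the integral, so that
\begin{align*}
    \frac{\ud}{\ud t}\int_{\Omega}\int_{\r^3}F(t,x,v)\phi(v)\,\ud v\ud x
    =-\int_{\Omega}\int_{\r^3}\big(v\cdot\nx F\big)\phi(v)\,\ud v\ud x
    +\int_{\Omega}\int_{\r^3}Q[F,F;F](v)\phi(v)\,\ud v\ud x.
\end{align*}

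Next, I would handle the two terms on the right separately. The collision integral vanishes pointwise in $(t,x)$: since $\phi\in\{1,v_i,\abs{v}^2\}$ is a collisional invariant (as pointed out in the remark following Corollary \ref{basic-corollary: invariants 2}), Corollary \ref{basic-corollary: invariants 2} gives $\int_{\r^3}Q[F,F;F](t,x,v)\phi(v)\,\ud v=0$, and integration in $x$ preserves this. For the transport term, by Fubini one writes it as $\int_{\r^3}\phi(v)\,v\cdot\Big(\int_{\Omega}\nx F(t,x,v)\,\ud x\Big)\ud v$, and the inner $x$-integral vanishes: on $\Omega=\t^3$ this follows from periodicity (the integral of a spatial derivative over the torus is zero), while on $\Omega=\r^3$ it follows from the divergence theorem together with the decay of $F$ in $x$ that is implicit in the phrase ``suitable small at infinity''. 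Thus the entire time derivative is zero.

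Finally, I would integrate in $t$ from $0$ to $t$ and invoke the initial condition $F(0,x,v)=F_0(x,v)$ to obtain the three identities. The only point demanding care is a justification step rather than an obstacle: one must ensure that $F(t,x,v)\phi(v)$ has enough integrability (and $\phi(v)v\cdot F$ enough decay in $x$ and $v$) to legitimize differentiation under the integral sign, Fubini's theorem, and the vanishing of the boundary contribution in the $\r^3$ case. Since the hypothesis states that $F$ is smooth and ``suitable small at infinity,'' this is precisely what is being assumed, and no deeper estimate is needed at this stage. No other subtlety is expected, and the three cases $\phi=1$, $\phi=v_i$, $\phi=\abs{v}^2$ are handled uniformly by the same argument.
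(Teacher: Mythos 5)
Your proposal is correct and follows essentially the same route as the paper: multiply the equation by the collisional invariants $\phi=1,v_i,\abs{v}^2$, use Corollary \ref{basic-corollary: invariants 2} to kill the collision term, and eliminate the transport term by integration by parts (periodicity on $\t^3$, decay on $\r^3$), so that the time derivative of each moment vanishes. Your write-up simply spells out the integrability and boundary-term justifications that the paper leaves implicit.
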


\begin{proof}
We multiply $1,v_i,\abs{v}^2$ on both sides of the equation \eqref{equation: boltzmann}, and then integrate the resulting equation over $(x,v)\in\Omega\times\r^3$. 
Using Corollary \ref{basic-corollary: invariants 2} and integration by parts, 
we get 
$$\frac{\ud}{\ud t}\iint_{\Omega\times\r^3}\! F(t,x,v) \phi(v)\,\ud v\ud x \,\equiv\, 0$$
for $\phi(v)=1,v_i,\abs{v}^2$,
which implies the desired result.
\end{proof}

\begin{remark}
The above is the basic conservation laws for the quantum Boltzmann equation.
\end{remark}


\smallskip
\subsection{Entropy and the $H$-Theorem}

\begin{theorem}[Entropy: $H$-Theorem]\label{basic-theorem: h-theorem}
If $F(t,x,v)>0$ is a solution to the equation \eqref{equation: boltzmann} which is suitable small at infinity (for $\th=-1$, we further require $F(t,x,v)<1$ to ensure $1+\th F(t,x,v)>0$), then we have for any $t>0$ and for $\Omega=\t^3\ \text{or}\ \r^3$,
\begin{align}
    \frac{\ud}{\ud t}\int_{\Omega}\int_{\r^3}\bigg[F(t,x,v)\ln\left(\frac{1}{F(t,x,v)}\right)-\th\big(1+\th F(t,x,v)\big)\ln\left(\frac{1}{1+\th F(t,x,v)}\right)\bigg]\ud v\ud x\geq 0.
\end{align}
\end{theorem}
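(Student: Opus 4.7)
The plan is a direct adaptation of the classical $H$-theorem, using the symmetrized collision identity from Corollary \ref{basic-corollary: invariants 1} with a test function that depends on $F$ itself. I would introduce the entropy density
\begin{align*}
    h(F) := F\ln(1/F) - \th(1+\th F)\ln(1/(1+\th F)) = -F\ln F + \th(1+\th F)\ln(1+\th F),
\end{align*}
and compute, using the crucial fact $\th^2 = 1$,
\begin{align*}
    h'(F) = -\ln F - 1 + \th^2\ln(1+\th F) + \th^2 = \ln\left(\frac{1+\th F}{F}\right).
\end{align*}
This quantum logarithm is well-defined by the sign hypotheses on $F$ and $1+\th F$.

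Next I would multiply the equation \eqref{equation: boltzmann} by $h'(F)$ and integrate over $(x,v)\in\Omega\times\r^3$. Because $h'(F)\nx F = \nx h(F)$, the transport contribution
\begin{align*}
    \int_{\Omega}\int_{\r^3} h'(F)\,(v\cdot\nx F)\,\ud v\ud x = \int_{\Omega}\int_{\r^3} v\cdot\nx h(F)\,\ud v\ud x
\end{align*}
vanishes by periodicity on $\t^3$, or by decay at infinity on $\r^3$, so it remains only to prove
\begin{align*}
    I := \int_{\Omega}\int_{\r^3} h'(F)\,Q[F,F;F]\,\ud v\ud x \geq 0.
\end{align*}

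For the collision integral I would apply Corollary \ref{basic-corollary: invariants 1} pointwise in $x$ with the $F$-dependent test function $\phi(v) := h'(F(v)) = \ln((1+\th F(v))/F(v))$; this is legitimate because the derivation of the corollary uses only the pre/post-collisional change of variables. Denoting
\begin{align*}
    A := F(u')F(v')\big(1+\th F(u)\big)\big(1+\th F(v)\big), \qquad B := F(u)F(v)\big(1+\th F(u')\big)\big(1+\th F(v')\big),
\end{align*}
a direct computation yields the key identity $\phi(v)+\phi(u)-\phi(v')-\phi(u') = \ln(A/B)$, so that
\begin{align*}
    I = \frac{1}{4}\int_{\Omega}\int_{\r^3}\int_{\r^3}\int_{\s^2} q(\o,\abs{v-u})\,(A-B)\ln(A/B)\,\ud\o\ud u\ud v\ud x \geq 0,
\end{align*}
the sign following from the elementary inequality $(X-Y)\ln(X/Y) \geq 0$ for positive $X,Y$ (with equality iff $X = Y$).

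The main obstacle is conceptual rather than technical: one must identify $h'(F) = \ln((1+\th F)/F)$ as the correct ``quantum logarithm'' that turns the four-term integrand of $Q[F,F;F]$ into $A-B$ times the logarithm of its ratio, thereby producing an entropy-dissipation integrand of definite sign. The integrability needed to justify the integration by parts and Fubini manipulations follows from the assumed smallness of $F$ at infinity and the positivity assumptions ensuring $\ln F$ and $\ln(1+\th F)$ are finite.
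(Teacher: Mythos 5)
Your proof is correct and takes essentially the same route as the paper: multiply the equation by the quantum logarithm $h'(F)=\ln\big((1+\th F)/F\big)$, observe that the transport contribution vanishes, and apply the symmetrized collision identity of Corollary~\ref{basic-corollary: invariants 1} to arrive at a sign-definite integrand of the form $(A-B)\ln(A/B)$. The paper's displayed derivation works with $\ln\big(F/(1+\th F)\big)$ and contains two compensating sign slips in the intermediate steps, but the substance and the final conclusion are identical to yours.
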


\begin{proof}
Denote
\begin{align}
    S[F](t,x,v):=F(t,x,v)\ln\left(\frac{1}{F(t,x,v)}\right)-\th\big(1+\th F(t,x,v)\big)\ln\left(\frac{1}{1+\th F(t,x,v)}\right).
\end{align}
We may directly compute
\begin{align}
    \frac{\p S[F]}{\p t}=\dt F\ln\left(\frac{F}{1+\theta F}\right).
\end{align}
Hence, multiplying $\ln\left(\dfrac{F}{1+\theta F}\right)$ on both sides of the equation \eqref{equation: boltzmann} and integrating over $(x,v)\in\Omega\times\r^3$, we obtain
\begin{align}
    \frac{\ud}{\ud t}\int_{\Omega}\int_{\r^3}S[F]\,\ud v\ud x&=\int_{\Omega}\int_{\r^3}\dt F\ln\left(\frac{F}{1+\theta F}\right)\ud v\ud x\\
    &=-\int_{\Omega}\int_{\r^3}\Big\{v\cdot\nx F+Q[F,F;F]\Big\}\ln\left(\frac{F}{1+\theta F}\right)\ud v\ud x\no\\
    &=-\int_{\Omega}\int_{\r^3}Q[F,F;F]\ln\left(\frac{F}{1+\theta F}\right)\ud v\ud x.\no
\end{align}
Based on Corollary \ref{basic-corollary: invariants 1} with $\phi=\ln\left(\dfrac{F}{1+\theta F}\right)$, we have
\begin{align}\label{basic-estimate: h-theorem}
    &\int_{\r^3}Q[F,F;F]\ln\left(\frac{F}{1+\theta F}\right)\ud v\\
    =&\;\frac{1}{4}\int_{\r^3}\int_{\r^3}\int_{\s^2}q(\o,\abs{v-u})\Big[F(u')F(v')\big(1+\th F(u)\big)\big(1+\th F(v)\big)\no\\
    &\,-F(u)F(v)\big(1+\th F(u')\big)\big(1+\th F(v')\big)\Big]\ln\left(\frac{F(u)F(v)\big(1+\th F(u')\big)\big(1+\th F(v')}{F(u')F(v')\big(1+\th F(u)\big)\big(1+\th F(v)\big)}\right)\ud\o\ud u\ud v\no\\
    =&\;\frac{1}{4}\int_{\r^3}\int_{\r^3}\int_{\s^2}q(\o,\abs{v-u})F(u')F(v')\big(1+\th F(u)\big)\big(1+\th F(v)\big)(1-A)\ln(A)\,\ud\o\ud u\ud v,\no
\end{align}
where $A:=\dfrac{F(u)F(v)\big(1+\th F(u')\big)\big(1+\th F(v')\big)}{F(u')F(v')\big(1+\th F(u)\big)\big(1+\th F(v)\big)}$. Since the function $(1-A)\ln(A)\leq 0$ for any $A>0$, we thus have
\begin{align}
    \int_{\Omega}\int_{\r^3}Q[F,F;F]\ln\left(\frac{F}{1+\theta F}\right)\ud v\ud x\leq 0.
\end{align}
Hence, we know
\begin{align}
    \frac{\ud}{\ud t}\int_{\Omega}\int_{\r^3}S[F]\,\ud v\ud x\geq0.
\end{align}
\end{proof}

\begin{remark}
We usually call $S[F]$ the entropy density.
\end{remark}

\smallskip
\subsection{Equilibrium: the Quantum Maxwellian}

In this section, we study the equilibrium, which is independent of time and space.
\begin{theorem}[Equilibrium]\label{basic-theorem: equilibrium}
The equilibrium (a.k.a. global Maxwellian) of the equation \eqref{equation: boltzmann} is
\begin{align}
    \m(v)=\frac{1}{\ue^{-(a+b\cdot v+c\abs{v}^2)}-\th},
\end{align}
for some $a,c\in\r$, $b\in\r^3$ with $c<0$. For bosons $\th=1$, we also require $a<0$.
\end{theorem}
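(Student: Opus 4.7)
The plan is to characterize the time-space independent solutions of \eqref{equation: boltzmann} by reducing the problem to the identification of collisional invariants. Since an equilibrium $\m$ does not depend on $t$ or $x$, the transport part vanishes identically, so the equation collapses to $Q[\m,\m;\m]=0$. First I would multiply this identity by $\ln\!\left(\tfrac{\m}{1+\th\m}\right)$ and integrate in $v$; by the symmetrization computed in Corollary \ref{basic-corollary: invariants 1} and the identity \eqref{basic-estimate: h-theorem} from the $H$-theorem, the result can be rewritten as
\begin{align*}
0=\tfrac{1}{4}\!\iiint q(\o,|v-u|)\,\m(u')\m(v')\big(1+\th \m(u)\big)\big(1+\th \m(v)\big)(1-A)\ln A\,\ud\o\ud u\ud v,
\end{align*}
with $A=\tfrac{\m(u)\m(v)(1+\th\m(u'))(1+\th\m(v'))}{\m(u')\m(v')(1+\th\m(u))(1+\th\m(v))}$. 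Since $q>0$, the weights are strictly positive (using $\m>0$ and, when $\th=-1$, $1+\th\m>0$), and $(1-A)\ln A\le 0$ with equality only when $A=1$, I conclude that $A\equiv 1$ pointwise for every admissible collision quadruple $(u,v,u',v')$.

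Next I would take logarithms of $A=1$ and set $G(v):=\ln\!\left(\tfrac{\m(v)}{1+\th\m(v)}\right)$. The identity becomes
\begin{align*}
G(u)+G(v)=G(u')+G(v')\qquad\text{whenever } u+v=u'+v',\ |u|^2+|v|^2=|u'|^2+|v'|^2,
\end{align*}
i.e.\ $G$ is a collisional invariant. By the classical characterization of collisional invariants (obtained e.g.\ by differentiating in $\o$ to derive a functional equation, or via the Boltzmann--Gronwall argument in \cite{Glassey1996}), any sufficiently regular such $G$ must be an affine function of $1$, $v_i$, and $|v|^2$; hence $G(v)=a+b\cdot v+c|v|^2$ for some $a,c\in\r$ and $b\in\r^3$. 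Inverting the relation $G=\ln(\m/(1+\th\m))$ yields
\begin{align*}
\m(v)=\frac{\ue^{a+b\cdot v+c|v|^2}}{1-\th\,\ue^{a+b\cdot v+c|v|^2}}=\frac{1}{\ue^{-(a+b\cdot v+c|v|^2)}-\th},
\end{align*}
which is the stated form.

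Finally I would verify the sign constraints on the parameters. For $\m$ to be integrable at infinity (consistent with the ``small at infinity'' hypothesis used throughout Lemma \ref{basic-lemma: invariants}), the quadratic in the exponential must decay, forcing $c<0$. For fermions $\th=-1$ the denominator $\ue^{-(a+b\cdot v+c|v|^2)}+1$ is automatically positive, so no further restriction arises. For bosons $\th=1$ we additionally need the denominator $\ue^{-(a+b\cdot v+c|v|^2)}-1$ to stay positive, equivalently $a+b\cdot v+c|v|^2<0$ on all of $\r^3$; after completing the square in $v$ (possible since $c<0$) this reduces to a strict negativity condition at the maximizer, which in the normalized/centered form reads $a<0$.

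The main technical obstacle is the classification of collisional invariants: passing from the pointwise identity $G(u)+G(v)=G(u')+G(v')$ on the four-dimensional collision manifold to the affine--quadratic form $a+b\cdot v+c|v|^2$ requires either a regularity assumption on $\m$ (which we may assume, since the equilibria we seek are smooth) or a more delicate distributional argument. Once this classical fact is in hand, the remaining steps are algebraic manipulations and elementary sign analysis.
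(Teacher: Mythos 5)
Your proposal is correct and follows essentially the same route as the paper: both reduce the equilibrium condition to the vanishing of the entropy-production integral, use the sign of $(1-A)\ln A$ in \eqref{basic-estimate: h-theorem} to force $A\equiv 1$, identify $\ln\!\left(\m/(1+\th\m)\right)$ as a collisional invariant, and invoke the classical characterization (Glassey, Lemma 1.7.2) before the algebraic inversion and sign analysis. The only cosmetic difference is that you start from the pointwise identity $Q[\m,\m;\m]=0$ while the paper passes through constancy of the entropy in time; both lead to the same integral identity, and your explicit remark that the boson condition is $a<0$ only in the centered form is consistent with the paper's later specialization to $b=\mathbf{0}$, $c=-1$.
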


\begin{proof}
An equilibrium means that it is invariant when time evolves. 
Suppose that the solution $F(t,x,v)$ to the equation \eqref{equation: boltzmann} is an equilibrium, then we must have 
\begin{align}
    \frac{\ud}{\ud t}\int_{\Omega}\int_{\r^3}S[F]\,\ud v\ud x \equiv 0,
\end{align}
which, from the proof of Theorem \ref{basic-theorem: h-theorem}, yields
\begin{align}
    \int_{\Omega}\int_{\r^3}Q[F,F;F]\ln\left(\frac{F}{1+\theta F}\right)\ud v\ud x\equiv 0.
\end{align}
We use the notation from the proof of Theorem \ref{basic-theorem: h-theorem}. In the last line of \eqref{basic-estimate: h-theorem}, since the integrand is of one sign, we must have $A=1$, i.e.
\begin{align}
    \dfrac{F(u)F(v)}{\big(1+\th F(u)\big)\big(1+\th F(v)\big)}=\frac{F(u')F(v')}{\big(1+\th F(u')\big)\big(1+\th F(v')\big)}.
\end{align}
Hence, for $\phi=\ln\left(\dfrac{F}{1+\theta F}\right)$, we must have
\begin{align}
    \phi(v)+\phi(u)=\phi(v')+\phi(u').
\end{align}
Then based on \cite[Lemma 1.7.2]{Glassey1996}, for continuous $\phi$, we must have
\begin{align}
    \phi(v)=a+b\cdot v+c\abs{v}^2,
\end{align}
for some $a,c\in\r$, $b\in\r^3$ with $c<0$ (to guarantee integrability). For bosons $\th=1$, we also require $a<0$ to avoid singularity of the denominator. Therefore, we know that if $F$ is an equilibrium, then it must have the form
\begin{align}\label{basic-formula: equilibrium}
    F(v)=\frac{1}{\ue^{-(a+b\cdot v+c\abs{v}^2)}-\th}.
\end{align}
On the other hand, it is easy to verify that such $F$ indeed satisfies the equation \eqref{equation: boltzmann}.

\end{proof}

\begin{remark}
Actually, there exist solutions to the equation \eqref{equation: boltzmann} in the form of \eqref{basic-formula: equilibrium} where $a,b,c$ depend on time and space. We call such a solution the local Maxwellian.
\end{remark}

\begin{remark}
Based on Escobedo-Mischler-Valle \cite{Escobedo.Mischler.Valle2005} on miminization of the entropy functional, for bosons we actually allow the presence of $\d$-function. 
For any given mass, momentum and energy, there exists an equilibrium in the sense of distribution of the form
\begin{align}
    \frac{1}{\ue^{-(a+b\cdot v+c\abs{v}^2)}-1}+d\delta_{-\frac{b}{2c}},
\end{align}
where $a,c,d\in\r$ and $b\in\r^3$.
The presence of $\d$-function at finite time indicates the Bose-Einstein condensation.
\end{remark}

For now on, we will consider the simplest equilibrium with $b=\mathbf{0}$ and $c=-1$, which is
\begin{align}
    \m(v)=\frac{1}{\ue^{\abs{v}^2-a}-\th}.
\end{align}
When $\th=0$ and $a=0$ for the classical particles, this reduces to the standard Maxwellian $\m(v)=\ue^{-\abs{v}^2}$. Actually, we allow any $a\in\r$. However, for Fermion gas ($\th=-1$) or Boson gas ($\th=1$), this Maxwellian is highly non-trivial. 

For fermions, the Maxwellian is well-defined for any $a\in\r$. 
However, for bosons, we must require $a<0$ to guarantee the positivity of $F$. If $a=0$, the Maxwellian might contain a singularity at $v=0$. 


To handle all cases in a uniform fashion, we require $a<0$ and denote
\begin{align}
    \m(v):=\frac{1}{\vh\ue^{\abs{v}^2}-\th}
\end{align}
with $\vh=\ue^{-a} >1$.



\begin{remark}
Although it is easy to justify that for $\vh>1$,
\begin{align}
    \int_{\r^3}\frac{1}{\vh\ue^{\abs{v}^2}-\th}\,\ud v<\infty,
\end{align}
it is almost impossible to evaluate this integral explicitly. This has serious consequences. For the classical Boltzmann equation with hard-sphere collision, the analysis heavily depends on the explicit computation of such type of integrals (see the beautiful arguments in \cite[Section 3.2]{Glassey1996} for Gaussian functions). However, now we have to find other approaches to get around this difficulty.

\end{remark}


\bigskip
\section{Global Stability of the Maxwellian} \label{Sec:Maxwellian-GlbSol}

\subsection{Perturbation Formulation}

\subsubsection{The Quantum Boltzmann Operator}

Recall the quantum Boltzmann collision operator 
\begin{align}\label{temp 1}
        Q[F,F;F]:=&\int_{\r^3}\int_{\s^2}q(\o,\abs{v-u})\Big[F(u')F(v')\big(1+\th F(u)\big)\big(1+\th F(v)\big)\\
        &-F(u)F(v)\big(1+\th F(u')\big)\big(1+\th F(v')\big)\Big]\ud\o\ud u\no\\
        =&\int_{\r^3}\int_{\s^2}q(\o,\abs{v-u})\Big[F(u' )F(v')\big(1+\th F(u)+\th F(v)\big)\no\\
    &-F(u)F(v)\big(1+\th F(u')+\th F(v')\big)\Big]\ud\o\ud u.\no
\end{align}




We may further decompose
\begin{align}
    Q[F,F;F]&=Q_1[F,F]+\th Q_2[F,F;F],
\end{align}
where
\begin{align}
    Q_1[F,F]&=\int_{\r^3}\int_{\s^2}q(\o,\abs{v-u})\Big[F(u' )F(v')-F(u)F(v)\Big]\ud\o\ud u,\\
    Q_2[F,F;F]&=\int_{\r^3}\int_{\s^2}q(\o,\abs{v-u})\Big[F(u' )F(v')\big(F(u)+F(v)\big)-F(u)F(v)\big(F(u')+F(v')\big)\Big]\ud\o\ud u.
\end{align}
Here $Q_1$ is identical to the classical Boltzmann collision operator, and $Q_2$ is a trilinear form which contains the quantum effects. Note that $Q_2$ is not necessarily smaller than $Q_1$ (we should simply regard it as a correction), so the quantum effects will play a significant role.

Denote the symmetrized operators
\begin{align}
    \qq[F,G]&:=\frac{1}{2}\int_{\r^3}\int_{\s^2}q(\o,\abs{v-u})\Big[F(u' )G(v')+G(u' )F(v')-F(u)G(v)-G(u)F(v)\Big]\ud\o\ud u,
\end{align}
and
\begin{align}
    \q[F,G;H]:=\;&\frac{1}{2}\int_{\r^3}\int_{\s^2}q(\o,\abs{v-u})\Big[F(u' )G(v')\big(H(u)+H(v)\big)+G(u' )F(v')\big(H(u)+H(v)\big)\\
    &-F(u)G(v)\big(H(u')+H(v')\big)-G(u)F(v)\big(H(u')+H(v')\big)\Big]\ud\o\ud u.\no
\end{align}
Obviously, we know
\begin{align}
    \qq[F,F]=Q_1[F,F],\quad \q[F,F;F]=Q_2[F,F;F].
\end{align}



Recall that the quantum Maxwellian is
\begin{align} \label{Maxwellian}
    \m(v):=\frac{1}{\vh\ue^{\abs{v}^2}-\th}
\end{align}
with $\vh>1$.
Define also
\begin{align} \label{Maxwellian-M}
    \mm(v):=\m(1+\th\m)=\m+\th\m^2=\frac{\vh\ue^{\abs{v}^2}}{(\vh\ue^{\abs{v}^2}-\th)^2}.
\end{align}

We define the perturbation $f(t,x,v)$ via
\begin{align} \label{perturbation}
    F(t,x,v)=\m(v)+\mh(v)f(t,x,v),
\end{align}
with
\begin{align} \label{perturbation_0}
    F_0(x,v)=\m(v)+\mh(v)f_0(x,v),
\end{align}
satisfying the conservation laws
\begin{align}
    \int_{\Omega}\int_{\r^3} f(t,x,v)\mh(v)\,\ud v\ud x&=\int_{\Omega}\int_{\r^3}f_0(x,v)\mh(v)\,\ud v\ud x=0,\quad (\text{Mass}) \label{CL-Mass}\\
    \int_{\Omega}\int_{\r^3}f(t,x,v)\mh(v)v_i\,\ud v\ud x&=\int_{\Omega}\int_{\r^3}f_0(x,v)\mh(v)v_i\,\ud v\ud x=0,\quad (\text{Momentum}) \label{CL-Momentum}\\
    \int_{\Omega}\int_{\r^3}f(t,x,v)\mh(v)\abs{v}^2\ud v\ud x&=\int_{\Omega}\int_{\r^3}f_0(x,v)\mh(v)\abs{v}^2\ud v\ud x=0.\quad (\text{Energy}) \label{CL-Energy}
\end{align}
Then we may write the equation \eqref{equation: boltzmann} as
\begin{align}\label{equation: prelim-perturbation}
    \mh\dt f+\mh\Big(v\cdot\nx f\Big)=\qq\left[\m+\mh f,\m+\mh f\right]+\th\q\left[\m+\mh f,\m+\mh f;\m+\mh f\right].
\end{align}
Note that 
\begin{align}
    \qq[\m,\m]+\th\q[\m,\m;\m]=Q[\m,\m;\m]=0.
\end{align}
Hence, we have
\begin{align}
    &\qq\left[\m+\mh f,\m+\mh f\right]+\th\q\left[\m+\mh f,\m+\mh f;\m+\mh f\right]\\
    =\;&2\qq\left[\m,\mh f\right]+\qq\left[\mh f,\mh f\right]\no\\
    &+\th\bigg(2\q\left[\m,\mh f;\m\right]+\q\left[\m,\m;\mh f\right]\bigg)+\th\bigg(\q\left[\mh f,\mh f;\m\right]+2\q\left[\m,\mh f;\mh f\right]\bigg)\no\\
    &+\th\q\left[\mh f,\mh f;\mh f\right].\no
\end{align}
Hence, we may rewrite the equation \eqref{equation: prelim-perturbation} as
\begin{align} \label{equation: prelim-perturbation-2}
    \dt f+v\cdot\nx f+\l[f]=\g[f,f;f],
\end{align}
where
\begin{align}
    \l[f]:=&-2\mhh\qq\left[\m,\mh f\right]-2\th\mhh\q\left[\m,\mh f;\m\right]-\th\mhh\q\left[\m,\m;\mh f\right],
\end{align}
and
\begin{align}
    \g[f,f;f]:=\;&\mhh\qq\left[\mh f,\mh f\right]+\th\mhh\q\left[\mh f,\mh f;\m\right]+2\th\mhh\q\left[\m,\mh f;\mh f\right]\\
    &+\th\mhh\q\left[\mh f,\mh f;\mh f\right].\no
\end{align}
Here $\l[f]$ is a linear operator on $f$ and $\g[f,f;f]$ is a nonlinear operator on $f$.

\begin{remark}
$\mm(v)$ is chosen in such a way that after ``linearization'' it is convenient to express the null space of the linear operator $L$ in terms of $\mm(v)$
(see Lemma\;\ref{prelim-theorem: L-properties} and its proof).
Note that unlike the standard perturbation formulation for the classical Boltzmann and Landau equations (see Glassey \cite[Section 3]{Glassey1996}), here $\mm$ does not necessarily coincide with $\m$.
\end{remark}



\subsubsection{Linear Estimates}

Recall that
\begin{align} \label{L-operator}
    \l[f]&=-2\mhh\qq\left[\m,\mh f\right]-2\th\mhh\q\left[\m,\mh f;\m\right]-\th\mhh\q\left[\m,\m;\mh f\right].
\end{align}
We use $\langle \cdot,\cdot\rangle$ to denote the standard $L^2$ inner product on $\r_v^3$ for a pair of functions $f(v)$ and $g(v)$:
$$\langle f,g\rangle := \int_{\r^3} f(v)g(v) \,\ud v \,.$$


\begin{lemma}[Properties of $\l$]\label{prelim-theorem: L-properties}
(1) Non-negativity: 
For any $f(v)$ small at infinity, we have
$\langle \l[f],f\rangle \geq0$,
and the equality holds if and only if \,$f(v)=\mh\left(a+b\cdot v+c\abs{v}^2\right)$ with $a,c\in\r$ and $b\in\r^3$.

(2) Self-adjointness: 
$\l$ is a self-adjoint (symmetric) operator, i.e. for any $f(v),g(v)$ small at infinity, we have
$\langle \l[f],g\rangle = \langle f,\l[g]\rangle$.

(3) Null space: 
$\l[f]=0$\, if and only if \,$f(v)=\mh\Big(a+b\cdot v+c\abs{v}^2\Big)$ with $a,c\in\r$ and $b\in\r^3$.

\end{lemma}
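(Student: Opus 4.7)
The plan is to derive the symmetric quadratic representation
\begin{align*}
\langle \l[f],g\rangle \;=\; \tfrac{1}{4}\int_{\r^3}\int_{\r^3}\int_{\s^2} q(\o,\abs{v-u})\,K(u,v,u',v')\,\Lambda\!\left[\tfrac{f}{\mh}\right]\!\Lambda\!\left[\tfrac{g}{\mh}\right] \ud\o\,\ud u\,\ud v,
\end{align*}
with kernel $K(u,v,u',v') := \m(u)\m(v)(1+\th\m(u'))(1+\th\m(v'))$ and difference operator $\Lambda[\varphi] := \varphi(u')+\varphi(v')-\varphi(u)-\varphi(v)$. Once this identity is established, all three claims follow essentially at once.

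To obtain the representation I proceed in three steps. First, I verify the detailed-balance identity
\[
\m(u)\m(v)(1+\th\m(u'))(1+\th\m(v')) \;=\; \m(u')\m(v')(1+\th\m(u))(1+\th\m(v)),
\]
which is immediate from $\m/(1+\th\m) = \vh^{-1}\ue^{-\abs{v}^2}$ together with $\abs{u}^2+\abs{v}^2=\abs{u'}^2+\abs{v'}^2$. Second, I observe that by definition \eqref{L-operator}, $-\mh\,\l[f]$ is precisely the $\e$-derivative at $\e=0$ of $Q[\m+\e\mh f,\m+\e\mh f;\m+\e\mh f]$; this matches the three bilinear/trilinear terms in \eqref{L-operator} directly. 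Third, I apply the symmetrization of Corollary~\ref{basic-corollary: invariants 1} with test function $\psi := g/\mh$ and then differentiate in $\e$ at $\e=0$. The linearized bracket inside the integral produces eight linear-in-$f$ terms which, through detailed balance, can all be written as $K$ times a factor of $\mh f$. The algebraic cancellation
\begin{align*}
\frac{1}{\m(w)}-\frac{\th}{1+\th\m(w)} \;=\; \frac{1}{\m(w)(1+\th\m(w))} \;=\; \frac{1}{\mm(w)},\qquad w\in\{u,v,u',v'\},
\end{align*}
then collapses those eight terms into $K\cdot\Lambda[f/\mh]$. Combining this with $\psi(v)+\psi(u)-\psi(v')-\psi(u')=-\Lambda[\psi]$ from the outer test-function bracket and matching the linear-in-$\e$ contributions against $-\e\langle\l[f],g\rangle$ yields the claimed symmetric identity.

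With the representation in hand, the three conclusions are immediate. Self-adjointness (2) is manifest from the symmetry under $f\leftrightarrow g$. Non-negativity (1) comes from setting $g=f$, since $q\geq 0$ and $K\geq 0$; equality forces $\Lambda[f/\mh]\equiv 0$ almost everywhere, which by the standard characterization of collisional invariants (\cite[Lemma~1.7.2]{Glassey1996}) gives $f/\mh=a+b\cdot v+c\abs{v}^2$. For the null-space claim (3), the ``only if'' direction follows from $\l[f]=0 \Rightarrow \langle\l[f],f\rangle=0$ together with the equality case of (1); the ``if'' direction is verified directly by noting that for $f=\mh(a+b\cdot v+c\abs{v}^2)$ one has $\Lambda[f/\mh]\equiv 0$, hence $\langle\l[f],g\rangle=0$ for every admissible $g$, and therefore $\l[f]=0$.

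The main obstacle I expect is the bookkeeping in the second step: the derivative of $F(u')F(v')(1+\th F(u))(1+\th F(v))-F(u)F(v)(1+\th F(u'))(1+\th F(v'))$ produces eight separate terms, each of which must be rewritten through detailed balance with the common weight $K$, and the collapse of two fractions into $1/\mm$ at every collisional argument is what finally explains the choice of $\mh=\mm^{1/2}$ (rather than $\m^{1/2}$) as the linearization weight in \eqref{perturbation}, echoing the entropy-like structure of the $H$-theorem, Theorem~\ref{basic-theorem: h-theorem}.
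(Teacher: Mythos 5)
Your proposal is correct: the symmetric form you derive, $\langle \l[f],g\rangle=\tfrac14\int q\,K\,\Lambda[\mhh f]\Lambda[\mhh g]$ with $K=\m(u)\m(v)\big(1+\th\m(u')\big)\big(1+\th\m(v')\big)$, coincides with the paper's quadratic form \eqref{prelim-equality: temp 2}, since $1+\th\m=\vh\ue^{\abs{v}^2}\m$ and energy conservation turn $K$ into the weight $\m(u)\m(v)\m(u')\m(v')\vh^2\ue^{\abs{u}^2+\abs{v}^2}$, and your detailed-balance plus $\m^{-1}-\th(1+\th\m)^{-1}=\mm^{-1}$ collapse is verifiably the right algebra for all eight linearized terms. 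The route is genuinely different in organization, though: the paper never differentiates in $\e$; it expands the three pieces of \eqref{L-operator} explicitly, factors the four coefficients by direct computation, and thereby obtains first the \emph{pointwise} representation \eqref{prelim-equality: temp 1} of $\l[f]$, from which the symmetrized form, self-adjointness, and both directions of (3) follow (the ``if'' direction by substituting $f=\mh(a+b\cdot v+c\abs{v}^2)$ into \eqref{prelim-equality: temp 1}). You instead linearize the symmetrized weak formulation (Corollary \ref{basic-corollary: invariants 1}) at $F_\e=\m+\e\mh f$ and work only at the level of the bilinear form. What your version buys is cleaner bookkeeping and a transparent explanation, via detailed balance, of why $\mm^{\frac12}$ (not $\m^{\frac12}$) is the correct linearization weight; what the paper's version buys is the explicit pointwise formula for $\l[f]$, which is reused later to define $\nu$ and $K$ and to prove Lemmas \ref{prelim-lemma: nu-estimate}--\ref{prelim-lemma: K-derivative}, so your argument does not replace that computation elsewhere in the paper. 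One small point to tighten: in the ``if'' direction of (3) you conclude $\l[f]=0$ from $\langle\l[f],g\rangle=0$ for all admissible $g$; this duality step needs $\l[f]$ to lie in a space where testing against such $g$ separates points (harmless under the smallness-at-infinity hypotheses, or avoidable by checking the pointwise formula directly, as the paper does), and the equality case of (1) tacitly uses continuity of $f$ when invoking \cite[Lemma 1.7.2]{Glassey1996}, exactly as the paper does.
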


\begin{proof}
(1)
We may write each term explicitly:
\begin{align}
    &-2\mhh\qq\left[\m,\mh f\right]\\
    =&-\mhh(v)\int_{\r^3}\int_{\s^2}q(\o,\abs{v-u})\Big[\m(u' )\mh(v')f(v')+\mh(u' )\m(v')f(u')
    -\m(u)\mh(v)f(v)-\mh(u)\m(v)f(u)\Big]\ud\o\ud u,\no
\end{align}
\begin{align}
    &-2\th\mhh\q\left[\m,\mh f;\m\right]\\
    =&-\th\mhh(v)\int_{\r^3}\int_{\s^2}q(\o,\abs{v-u})\Big[\m(u' )\mh(v')f(v')\big(\m(u)+\m(v)\big)+\mh(u' )\m(v')f(u')\big(\m(u)+\m(v)\big)\no\\
    &-\m(u)\mh(v)f(v)\big(\m(u')+\m(v')\big)-\mh(u)\m(v)f(u)\big(\m(u')+\m(v')\big)\Big]\ud\o\ud u,\no
\end{align}
and
\begin{align}
    &-\th\mhh\q\left[\m,\m;\mh f\right]\\
    =&-\frac{1}{2}\th\mhh(v)\int_{\r^3}\int_{\s^2}q(\o,\abs{v-u})\Big[\m(u' )\m(v')\big(\mh(u)f(u)+\mh(v)f(v)\big)+\m(u' )\m(v')\big(\mh(u)f(u)+\mh(v)f(v)\big)\no\\
    &-\m(u)\m(v)\big(\mh(u')f(u')+\mh(v')f(v')\big)-\m(u)\m(v)\big(\mh(u')f(u')+\mh(v')f(v')\big)\Big]\ud\o\ud u\no\\
    =&-\th\mhh(v)\int_{\r^3}\int_{\s^2}q(\o,\abs{v-u})\Big[\m(u' )\m(v')\big(\mh(u)f(u)+\mh(v)f(v)\big)\no\\
    &-\m(u)\m(v)\big(\mh(u')f(u')+\mh(v')f(v')\big)\Big]\ud\o\ud u.\no
\end{align}
Hence, summarizing all above, we have
\begin{align}
    \l[f]=\;&\mhh(v)\int_{\r^3}\int_{\s^2}q(\o,\abs{v-u})\bigg[\Big(\m(v)+\th\m(v)\m(u')+\th\m(v)\m(v')-\th\m(u')\m(v')\Big)\mh(u)f(u)\\
    &+\Big(\m(u)+\th\m(u)\m(u')+\th\m(u)\m(v')-\th\m(u')\m(v')\Big)\mh(v)f(v)\no\\
    &-\Big(\m(v')+\th\m(v')\m(u)+\th\m(v')\m(v)-\th\m(u)\m(v)\Big)\mh(u')f(u')\no\\
    &-\Big(\m(u')+\th\m(u')\m(u)+\th\m(u')\m(v)-\th\m(u)\m(v)\Big)\mh(v')f(v')\bigg]\ud\o\ud u\no.
\end{align}
Direct computation reveals that
\begin{align}
    \m(v)+\th\m(v)\m(u')+\th\m(v)\m(v')-\th\m(u')\m(v')&=\m(v)\m(u')\m(v')\vh\ue^{\abs{v}^2}\Big(\vh\ue^{\abs{u}^2}-\th\Big),\\
    \m(u)+\th\m(u)\m(u')+\th\m(u)\m(v')-\th\m(u')\m(v')&=\m(u)\m(u')\m(v')\vh\ue^{\abs{u}^2}\Big(\vh\ue^{\abs{v}^2}-\th\Big),\\
    \m(v')+\th\m(v')\m(u)+\th\m(v')\m(v)-\th\m(u)\m(v)&=\m(v')\m(u)\m(v)\vh\ue^{\abs{v'}^2}\Big(\vh\ue^{\abs{u'}^2}-\th\Big),\\
    \m(u')+\th\m(u')\m(u)+\th\m(u')\m(v)-\th\m(u)\m(v)&=
    \m(u')\m(u)\m(v)\vh\ue^{\abs{u'}^2}\Big(\vh\ue^{\abs{v'}^2}-\th\Big).
\end{align}
Hence, we have
\begin{align}\label{prelim-equality: temp 1}
    \l[f]=\;&\mhh(v)\int_{\r^3}\int_{\s^2}q(\o,\abs{v-u})\m(u)\m(v)\m(u')\m(v')\vh^2\ue^{\abs{u}^2+\abs{v}^2}\\
    &\bigg[\m^{-1}(u)\vh\ue^{-\abs{u}^2}\Big(\vh\ue^{\abs{u}^2}-\th\Big)\mh(u)f(u)+\m^{-1}(v)\vh\ue^{-\abs{v}^2}\Big(\vh\ue^{\abs{v}^2}-\th\Big)\mh(v)f(v)\no\\
    &-\m^{-1}(u')\vh\ue^{-\abs{u'}^2}\Big(\vh\ue^{\abs{u'}^2}-\th\Big)\mh(u')f(u')-\m^{-1}(v')\vh\ue^{-\abs{v'}^2}\Big(\vh\ue^{\abs{v'}^2}-\th\Big)\mh(v')f(v')\bigg]\ud\o\ud u\no\\
    =\;&\mhh(v)\int_{\r^3}\int_{\s^2}q(\o,\abs{v-u})\m(u)\m(v)\m(u')\m(v')\vh^2\ue^{\abs{u}^2+\abs{v}^2}\no\\
    &\Big[\mhh(u)f(u)+\mhh(v)f(v)-\mhh(u')f(u')-\mhh(v')f(v')\Big]\ud\o\ud u,\no
\end{align}
due to $\abs{u}^2+\abs{v}^2=\abs{u'}^2+\abs{v'}^2$ and $\m^{-1}(v)\vh\ue^{-\abs{v}^2}\Big(\vh\ue^{\abs{v}^2}-\th\Big)=\mm^{-1}(v)$.

Hence, using the similar techniques as in the proof of Theorem \ref{basic-theorem: h-theorem} (by symmetry of the ``kernel'' and change of variables), we have that
\begin{align}\label{prelim-equality: temp 2}
    \br{\l[f],f}=\;&\frac{1}{4}\int_{\r^3}\int_{\r^3}\int_{\s^2}q(\o,\abs{v-u})\m(u)\m(v)\m(u')\m(v')\vh^2\ue^{\abs{u}^2+\abs{v}^2}\\
    &\Big[\mhh(u)f(u)+\mhh(v)f(v)-\mhh(u')f(u')-\mhh(v')f(v')\Big]^2\ud\o\ud u\ud v.\no
\end{align}
Since the integrand is always nonnegative, we know 
\begin{align}
    \br{\l[f],f}\geq0.
\end{align}
In particular, if the equality holds, then we have
\begin{align}
    \mhh(u)f(u)+\mhh(v)f(v)-\mhh(u')f(u')-\mhh(v')f(v')=0,
\end{align}
which implies that
\begin{align}
    f(v)=\mh\Big(a+b\cdot v+c\abs{v}^2\Big),
\end{align}
for $a,c\in\r$ and $b\in\r^3$.

(2)
Based on \eqref{prelim-equality: temp 1} and \eqref{prelim-equality: temp 2} in the proof of (1), it is clear that $\l$ is self-adjoint.

(3)
If $f(v)=\mh\Big(a+b\cdot v+c\abs{v}^2\Big)$, inserting it into \eqref{prelim-equality: temp 1}, we know $\l[f]=0$. On the other hand, if $\l[f]=0$, then using \eqref{prelim-equality: temp 2} (being zero and noting the non-negativity of the integrand), we have $f(v)=\mh\Big(a+b\cdot v+c\abs{v}^2\Big)$ for some $a,c\in\r$ and $b\in\r^3$.
\end{proof}



Now we know that the linear operator $\l$ given by \eqref{L-operator} is a self-adjoint non-negative operator on $L^2_v(\r^3)$.
Its null space (kernel) is a ﬁve-dimensional subspace of $L^2_v(\r^3)$ spanned by \,$\left\{\mh,\, v\mh,\, |v|^2\mh \right\}$\,.
We introduce the following notations:

\begin{definition}[Projection $\pk$ onto the null space of $\l$]
Let \,$\mathbf{N}(\l) := \big\{\,g\in L^2_v(\r^3) :\, \l [g]=0 \,\big\}$\, denote the null space of the linear operator $\l$\, with 
a set of basis
\,$e_0:=\mh\,,\, e_i:=v_i\mh\;\,(i=1,2,3)\,,\, e_4:=|v|^2\mh $\,,
and write 
\begin{equation} \label{N(L)-basis}
\mathbf{N}(\l) \,:=\,  {\rm span} \left\{\mh\,,\, v_i\mh\;\,(i=1,2,3)\,,\, |v|^2\mh\, \right\} \,.
\end{equation}
For the function \,$f(t,x,v)$\, with fixed \,$(t,x)$\,, we define the projection of \,$f$\, in \,$L^2_v(\r^3)$\, onto \,$\mathbf{N}(\l)$\, as
$$\pk f\,(t,x,v) \,:=\, \sum_{i=0}^4 \big\langle f(t,x,\cdot\,), e_i \big\rangle\, e_i 
\,=:\, \left\{\mathfrak{a}_f(t,x) + v\cdot \mathfrak{b}_f(t,x) + |v|^2\, \mathfrak{c}_f(t,x) \right\}\mh \,,$$
where 
\begin{align*}
\mathfrak{a}_f(t,x) &\,:=\, \big\langle f(t,x,\cdot\,), e_0 \big\rangle 
\,=\, \int_{\r^3}\! f(t,x,\cdot\,)\mh\, \ud v \,, \\
\mathfrak{b}_f^{i}(t,x) &\,:=\, \big\langle f(t,x,\cdot\,), e_i \big\rangle 
\,=\, \int_{\r^3}\! f(t,x,\cdot\,) v_i\mh\, \ud v \,, \\
\mathfrak{c}_f(t,x) &\,:=\, \big\langle f(t,x,\cdot\,), e_4 \big\rangle 
\,=\, \int_{\r^3}\! f(t,x,\cdot\,) |v|^2\mh\, \ud v \,.
\end{align*}
\end{definition}

From \eqref{prelim-equality: temp 1}, we may rewrite
\begin{align}
    \l[f]=\nu f-K[f],
\end{align}
where
\begin{align}
    \nu(v)&:=\int_{\r^3}\int_{\s^2}q(\o,\abs{v-u})\m(u)\m(v)\m(u')\m(v')\vh^2\ue^{\abs{u}^2+\abs{v}^2}\mm^{-1}(v)\,\ud\o\ud u,
\end{align}
and $K=K_2-K_1$ for
\begin{align}
    K_1[f]:=\;&\mhh(v)\int_{\r^3}\int_{\s^2}q(\o,\abs{v-u})\m(u)\m(v)\m(u')\m(v')\vh^2\ue^{\abs{u}^2+\abs{v}^2}\mhh(u)f(u)\,\ud\o\ud u,\\
    \\
    K_2[f]:=\;&\mhh(v)\int_{\r^3}\int_{\s^2}q(\o,\abs{v-u})\m(u)\m(v)\m(u')\m(v')\vh^2\ue^{\abs{u}^2+\abs{v}^2}\Big(\mhh(u')f(u')+\mhh(v')f(v')\Big)\ud\o\ud u.\no
\end{align}

\begin{lemma}[$\nu$ Estimate]\label{prelim-lemma: nu-estimate}
$\nu(v)$ satisfies that 
\begin{align}
    \nu_1\big(1+\abs{v}\big)\leq\nu(v)\leq\nu_2\big(1+\abs{v}\big),
\end{align}
for some $\nu_1,\nu_2>0$.
\end{lemma}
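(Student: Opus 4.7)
The plan is to recast the integrand algebraically so that the quantum weights collapse into bounded factors, reducing the estimate to the classical collision-frequency bound for hard spheres. The main identity to exploit is the trivial but crucial relation
\[
1+\th\m(w) \,=\, \vh\ue^{|w|^2}\m(w),
\qquad\text{so that}\qquad \m(u)\m(v)\vh^2\ue^{|u|^2+|v|^2} \,=\, \big(1+\th\m(u)\big)\big(1+\th\m(v)\big).
\]
Combined with $\mm(v) = \m(v)\big(1+\th\m(v)\big)$, which gives $\big(1+\th\m(v)\big)\mm^{-1}(v) = 1/\m(v) = \vh\ue^{|v|^2}-\th$, the integrand for $\nu(v)$ simplifies to
\[
\nu(v) \,=\, \big(\vh\ue^{|v|^2}-\th\big)\!\int_{\r^3}\!\int_{\s^2}\!\! q(\o,|v-u|)\,\big(1+\th\m(u)\big)\,\m(u')\m(v')\,\ud\o\ud u.
\]

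Next I would record the pointwise bounds on the quantum Maxwellian. Since $\vh>1$, for both $\th=\pm 1$ one has $0<\m(w)\leq C$ uniformly in $w$ and $\m(w)\geq c\ue^{-|w|^2}$; consequently $1+\th\m(u)$ is bounded above and below by positive constants, and the factor $\big(\vh\ue^{|v|^2}-\th\big)\ue^{-|v|^2}\simeq 1$. Invoking energy conservation $|u'|^2+|v'|^2=|u|^2+|v|^2$, one gets $\m(u')\m(v')\simeq\ue^{-|u'|^2-|v'|^2} = \ue^{-|u|^2-|v|^2}$ with two-sided constants.

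Plugging these into the simplified expression yields
\[
\nu(v) \,\simeq\, \ue^{-|v|^2}\big(\vh\ue^{|v|^2}-\th\big)\int_{\r^3}\!\int_{\s^2}\! q(\o,|v-u|)\,\ue^{-|u|^2}\,\ud\o\ud u \,\simeq\, \int_{\r^3}|v-u|\,\ue^{-|u|^2}\,\ud u,
\]
where $\int_{\s^2}\abs{\o\cdot(v-u)}\ud\o\simeq|v-u|$ has been used. The final Gaussian integral against $|v-u|$ is the classical computation and is comparable to $1+|v|$; this gives the two-sided bound.

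The only nontrivial point is the first algebraic simplification — once the quantum factors are all absorbed into bounded multipliers, the proof reduces essentially to the classical hard-sphere estimate (see \cite[Section 3.2]{Glassey1996}). I do not anticipate any real obstacle: the dominant concern is simply bookkeeping the constants so that they depend only on $\vh>1$ and $\th\in\{\pm 1\}$.
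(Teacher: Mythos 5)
Your proof is correct and follows essentially the same route as the paper: both reduce $\nu(v)$, via the two-sided Gaussian bounds $\m\simeq\mm\simeq\ue^{-\abs{v}^2}$ and energy conservation $\abs{u'}^2+\abs{v'}^2=\abs{u}^2+\abs{v}^2$, to the classical hard-sphere integral $\int_{\r^3}\abs{v-u}\ue^{-\abs{u}^2}\ud u\simeq 1+\abs{v}$ handled as in Glassey. Your exact algebraic rewrite using $1+\th\m(w)=\vh\ue^{\abs{w}^2}\m(w)$ is a tidy piece of bookkeeping but does not change the substance of the argument.
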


\begin{proof}
We have the naive bounds for $\m$ and $\mm$:
\begin{align}\label{temp 2}
    \ue^{-\abs{v}^2}\ls \m(v)\ls \ue^{-\abs{v}^2},\quad \ue^{-\abs{v}^2}\ls \mm(v)\ls \ue^{-\abs{v}^2}.
\end{align}
Hence, considering $\abs{u}^2+\abs{v}^2=\abs{u'}^2+\abs{v'}^2$, we may bound
\begin{align}
    \int_{\r^3}\int_{\s^2}q(\o,\abs{v-u})\ue^{-\abs{u}^2}\ud\o\ud u\ls \nu(v)\ls \int_{\r^3}\int_{\s^2}q(\o,\abs{v-u})\ue^{-\abs{u}^2}\ud\o\ud u,
\end{align}
which implies
\begin{align}
    \int_{\r^3}\abs{v-u}\ue^{-\abs{u}^2}\ud\o\ud u\ls \nu(v)\ls \int_{\r^3}\abs{v-u}\ue^{-\abs{u}^2}\ud\o\ud u.
\end{align}
Then following the same proof as \cite[Section\;3.3.1]{Glassey1996}, we obtain the desired result.
\end{proof}

\begin{lemma}\label{prelim-lemma: nu-derivative-estimate}
We have $\abs{\p_{\beta}\nu}\ls 1$ for $\abs{\beta}\geq 1$.
\end{lemma}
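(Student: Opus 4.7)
The key observation is that $\nu$ is a smooth radially symmetric function of $v$, so by the chain rule it suffices to analyze the radial profile. Since the integrand in the definition of $\nu(v)$ is invariant under simultaneous rotations of $(v, u, \o)$, we have $\nu(v) = N(|v|^2)$ for some function $N$; smoothness of $N\in C^\infty([0,\infty))$ follows from differentiating under the integral using the change of variables $w = v-u$, which isolates the non-smooth factor $|\o\cdot w|$ from the $v$-dependence (which then lies entirely in a smooth Gaussian factor of $w$ and $v$). Combined with Lemma \ref{prelim-lemma: nu-estimate}, it suffices to establish the decay $|N^{(k)}(r)|\ls (1+\sqrt{r})^{1-2k}$ for $k\ge 1$, which by Fa\`a di Bruno gives the desired bound $|\p_\beta\nu|\ls 1$ for $|\beta|\ge 1$.

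To obtain this decay of $N^{(k)}$ in the absence of a closed-form quantum Maxwellian, the plan is to adapt Glassey's explicit hard-sphere computation \cite[Section 3.3.1]{Glassey1996} by first applying the identity derived earlier,
\[\nu(v)=(\vh\ue^{|v|^2}-\th)\int_{\r^3}\!\int_{\s^2}\!q(\o,|v-u|)(1+\th\m(u))\m(u')\m(v')\,\ud\o\,\ud u,\]
and then expanding each factor $\m$ and $1+\th\m$ as a geometric series $(1-\th\vh^{-1}\ue^{-|w|^2})^{-1}=\sum_{n\ge 0}(\th\vh^{-1})^n\ue^{-n|w|^2}$, which converges absolutely since $\vh>1$. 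This expresses $\nu(v)$ as an absolutely convergent sum of classical-type pieces of the form
\[(\vh\ue^{|v|^2}-\th)\int_{\r^3}\!\int_{\s^2}\!|\o\cdot(v-u)|\ue^{-\alpha|u|^2-\beta|u'|^2-\gamma|v'|^2}\,\ud\o\,\ud u, \qquad \alpha\ge 0,\ \beta,\gamma\ge 1,\]
with coefficients decaying geometrically in the expansion indices. Using energy conservation $|u|^2+|v|^2=|u'|^2+|v'|^2$, each such piece reduces after the substitution $w=v-u$ to a convolution of $|\o\cdot w|$ with a Gaussian, which Glassey evaluates in closed form via the error function; direct differentiation then gives the required decay $|N_{\text{piece}}^{(k)}(r)|\ls (1+\sqrt{r})^{1-2k}$.

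The main obstacle is bookkeeping: the cancellation between the prefactor $(\vh\ue^{|v|^2}-\th)$ (which grows exponentially) and the Gaussian integral (which decays exponentially) must be tracked carefully so that the derivatives of their product behave as if differentiating a nearly-linear function of $|v|$. Geometric decay of the series coefficients is expected to absorb the polynomial-in-$n$ growth produced by differentiating each Gaussian exponent, yielding uniform convergence of the sum of $\p_v^\beta$-derivatives. The weak singularity of $|\o\cdot(v-u)|$ at $\o\cdot(v-u)=0$ produces only integrable distributional kernels for low orders; higher-order derivatives are handled via the identity $\p_v^k|\o\cdot(v-u)|=(-1)^k\p_u^k|\o\cdot(v-u)|$ and integration by parts in $u$, which transfers the derivatives onto the smooth Gaussian weight and replaces the singularity by Hermite-type polynomials with Gaussian decay.
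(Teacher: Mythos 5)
Your opening device for smoothness --- substituting $w=v-u$ so that the non-smooth kernel $\abs{\o\cdot w}$ carries no $v$-dependence and derivatives fall only on Gaussian-type factors --- is exactly the mechanism of the paper's proof. But the route you then take (radial reduction plus Fa\`a di Bruno, geometric-series expansion of $\m$ and $1+\th\m$, closed-form evaluation \`a la Glassey, term-by-term differentiation and resummation) leaves the two steps that carry the actual content of the lemma unsecured. First, the claim that every series piece ``reduces after $w=v-u$ to a convolution of $\abs{\o\cdot w}$ with a Gaussian, which Glassey evaluates in closed form'' fails as stated whenever the exponents on $\abs{u'}^2$ and $\abs{v'}^2$ differ: using $\abs{u}^2+\abs{v}^2=\abs{u'}^2+\abs{v'}^2$ leaves a residual factor $\ue^{-(\gamma-\beta)\abs{v'}^2}$ whose $\o$-dependence does not disappear, so the angular integral no longer collapses to a constant multiple of $\abs{v-u}$; decomposing $u$ into components parallel and perpendicular to $\o$ one is left with an anisotropic Gaussian in $u$ and a genuine $\s^2$-integral of error-function expressions in $\o\cdot v$. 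Such pieces can still be handled explicitly, but not by ``direct differentiation of Glassey's formula,'' and it is precisely for them (and uniformly in the expansion indices, after the cancellation against the prefactor $(\vh\ue^{\abs{v}^2}-\th)$) that the decay $\abs{N^{(k)}_{\mathrm{piece}}(r)}\ls(1+\sqrt r)^{1-2k}$ would have to be proved. Second, that uniform bookkeeping is exactly what you defer (``expected to absorb''), yet it is the whole lemma; nothing in the proposal establishes it. Your final remark about moving kernel derivatives onto the Gaussian by $\p_v\abs{\o\cdot(v-u)}=-\p_u\abs{\o\cdot(v-u)}$ and integrating by parts is a legitimate device, but it sits oddly with your first paragraph (after the change of variables the kernel is never differentiated) and is not wired into the main argument.

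By contrast, the paper needs neither closed forms nor decay rates: grouping the integrand as $A\,B\,C$ with $A=q(\o,\abs{v-u})\m(u)$, the only $v$-dependence of $A$ is through the kernel, whose derivative is bounded by $\m(u)$; after substituting $w=v-u$, all remaining derivatives land on $\m(v-w)$, $B(v)$, $C(w,v,\o)$, each controlled by Gaussian factors, and the resulting integrals are bounded uniformly in $v$. Note also that your Fa\`a di Bruno reduction only requires $\abs{N^{(k)}(r)}\ls(1+\sqrt r)^{-k}$, so you are aiming at a stronger estimate than needed (plausible, since it matches the classical closed form in \cite[Section 3.3.1]{Glassey1996}, but it makes the task harder). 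To salvage your plan you would either have to carry out the anisotropic angular integrals and their derivatives explicitly, with constants summable in the series indices, or drop the radial/closed-form detour and argue directly on the differentiated integrand as the paper does, keeping at most one derivative on the kernel (where it produces a bounded sign factor) and using Lemma \ref{prelim-lemma: nu-estimate}-type Gaussian bounds for the rest.
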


\begin{proof}
We first rearrange $\nu$ as
\begin{align}
    \nu(v)&=\int_{\r^3}\int_{\s^2}\Big\{q(\o,\abs{v-u})\m(u)\Big\}\Big\{\m(v)\mm^{-1}(v)\Big\}\Big\{\m(u')\m(v')\vh^2\ue^{\abs{u}^2+\abs{v}^2}\Big\}\,\ud\o\ud u\\
    &=\int_{\r^3}\int_{\s^2}A(u,v,\o)B(v)C(u,v,\o)\,\ud\o\ud u.\no
\end{align}
We directly take $v$ derivatives in $\nu$, so it might hit one of $A,B,C$. Then we have
\begin{align}
    \nv A\sim\;&\nv\big(\abs{v-u}\m(u)\big)=\frac{v-u}{\abs{v-u}}\m(u),\\
    \nv B=\;&\nv\left(1-\vh\th\ue^{-\abs{v}^2}\right)=2v\vh\th \ue^{-\abs{v}^2},\\
    \nv C=\;&\nv\left(\frac{1}{\vh^2-\vh\th(\ue^{-\abs{u'}^2}+\ue^{-\abs{v'}^2})+\th^2\ue^{-\abs{u}^2-\abs{v}^2}}\right)\\
    =\;&\frac{2\vh\th\Big(u'\cdot\nv u'\ue^{-\abs{u'}^2}+v'\cdot\nv v'\ue^{-\abs{v'}^2}\Big)-2v\th^2\ue^{-\abs{u}^2-\abs{v}^2}}{\Big(\vh^2-\vh\th(\ue^{-\abs{u'}^2}+\ue^{-\abs{v'}^2})+\th^2\ue^{-\abs{u}^2-\abs{v}^2}\Big)^2},\no
\end{align}
where $\nv u'=\o\otimes\o$ and $\nv v'=I-\o\otimes\o$. Then we have
\begin{align}
    \abs{\nv A(u,v,\o)}\ls\;& \m(u),\\
    \abs{\nv B(v)}\ls\;&\ue^{-\frac{1}{2}\abs{v}^2},\\
    \abs{\nv C(u,v,\o)}\ls\;&\ue^{-\frac{1}{2}\abs{u'}^2}+\ue^{-\frac{1}{2}\abs{v'}^2}+\ue^{-\frac{1}{2}\abs{u}^2-\frac{1}{2}\abs{v}^2}.
\end{align}
Then using the substitution $v-u\rt w$ (the integral is transformed to be with respect to $\ud w$), we know
\begin{align}
    A(w,v,\o)&=\frac{w}{\abs{w}}\m(v-w).
\end{align}
Obviously, we know $\abs{\p_{\beta}\m(v-w)}\ls \ue^{-\frac{1}{2}\abs{v-w}^2}$. $B(v)$ does not change and any $v$ derivative will be controlled by $\ue^{-\abs{v}^2}$. The structure of $C(w,v,\o)$ will be preserved. In particular,
\begin{align}
    u'=v-w+\o(\o\cdot w),\quad v'=v-\o(\o\cdot w).
\end{align}
Hence, any $v$ derivative will be controlled by $\ue^{-\abs{u'}^2}$, $\ue^{-\abs{v'}^2}$ and $\ue^{-\abs{u}^2-\abs{v}^2}$, 
and thus our result follows.
\end{proof}

\begin{definition}[$\nu$ Norms]\label{nu-norm}
We define 
\begin{align}
    \vnm{f}{v}:=\bigg(\int_{\r^3}\nu(v) \abs{f(v)}^2\ud v\bigg)^{\frac{1}{2}}
    \sim |f|_{2,\frac{1}{2}},
\end{align}
and
\begin{align}
    \vnm{f}{x,v}:=\bigg(\int_{\Omega}\int_{\r^3}\nu(v) \abs{f(v)}^2\ud v\ud x\bigg)^{\frac{1}{2}}
    \sim \|f\|_{2,\frac{1}{2}},
\end{align}
\end{definition}

\begin{lemma}[Compactness of $K$]\label{prelim-lemma: K-compactness}
$K$ is a compact operator on $L^{\nu}_v(\r^3)$ and on $L^{2}_v(\r^3)$. 
\end{lemma}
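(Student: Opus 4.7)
The plan is to adapt Grad's classical compactness argument (see Glassey, Section~3.5) to the quantum setting: express $K$ as an integral operator with a Grad-type kernel and then approximate by Hilbert-Schmidt operators. The enabling observation is \eqref{temp 2}, which says that $\m$ and $\mm$ are both pinched between the universal Gaussian $\ue^{-\abs{v}^2}$ from above and below. Combined with the energy identity $\abs{u}^2+\abs{v}^2=\abs{u'}^2+\abs{v'}^2$, this lets the factor $\m(u)\m(v)\m(u')\m(v')\vh^2\ue^{\abs{u}^2+\abs{v}^2}$ appearing in both $K_1$ and $K_2$ be bounded by $\ue^{-c(\abs{u}^2+\abs{v}^2)}$. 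Consequently the quantum weights behave exactly like their classical counterparts up to harmless constants, and the classical kernel analysis should transfer.

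Concretely, for $K_1$ I would pull $f(u)$ outside the $\ud\o$ integral and read off
\begin{align*}
k_1(v,u) \,=\, \mhh(v)\mhh(u)\int_{\s^2} q(\o,\abs{v-u})\m(u)\m(v)\m(u')\m(v')\vh^2\ue^{\abs{u}^2+\abs{v}^2}\,\ud\o \,\lesssim\, \abs{v-u}\,\ue^{-c(\abs{v}^2+\abs{u}^2)},
\end{align*}
which is already Hilbert-Schmidt. For $K_2$, whose integrand carries $f(u')$ and $f(v')$, I would run the Carleman-type change of variables classical to Grad: parametrize the collision either by fixing $u'$ on the appropriate hyperplane or by polar coordinates in $u-v$, so as to absorb the $\ud\o$ integration into $\ud u$. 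The computation yields the familiar Grad bound
\begin{align*}
\abs{k_2(v, u)} \,\lesssim\, \frac{1}{\abs{v-u}}\,\exp\!\left(-c\abs{v-u}^2 - c\,\frac{\bigl(\abs{v}^2-\abs{u}^2\bigr)^2}{\abs{v-u}^2}\right),
\end{align*}
in which the quantum weights are again swallowed by the Gaussian pinching. The resulting $k := k_2 - k_1$ is locally square-integrable (the $1/\abs{v-u}$ singularity is square-integrable in $\r^3$) with Gaussian decay at infinity.

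With this kernel bound in hand, the endgame is standard: approximate $k$ by truncations $k_n(v,u):=k(v,u)\,\chi_{\{\abs{v}+\abs{u}\leq n,\,\abs{v-u}\geq 1/n\}}$, each defining a Hilbert-Schmidt (hence compact) operator $K_n$ on $L^2_v(\r^3)$; a Schur-type estimate on $\abs{k-k_n}$ gives $\nm{K-K_n}_{L^2\to L^2}\to 0$, so $K$ is compact on $L^2_v$. For the $L^\nu_v$ statement, note that multiplication by $\nu^{1/2}$ is an isometry $L^\nu_v\to L^2_v$ by Lemma \ref{prelim-lemma: nu-estimate}, and the conjugated kernel $\nu^{1/2}(v)\nu^{-1/2}(u)k(v,u)$ still satisfies a Grad-type bound since the prefactor $\nu^{1/2}(v)\nu^{-1/2}(u)\lesssim \br{v-u}^{1/2}$ is easily absorbed into the Gaussian decay. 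The main obstacle I anticipate is the Carleman computation for $K_2$ itself: although conceptually the same as in the classical case, the extra smooth quantum weights $\m(u'),\m(v')$ must be carefully tracked through the change of variables to certify that the $1/\abs{v-u}$ singularity (and nothing worse) is what appears and that enough Gaussian decay survives. This should reduce to careful bookkeeping rather than new analysis, but it is the technical core of the proof.
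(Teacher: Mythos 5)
Your proposal is correct and follows essentially the same route as the paper: both bound the quantum kernels by the classical Grad kernels $\tilde k_1,\tilde k_2$ using the Gaussian pinching $\m,\mm\simeq \ue^{-\abs{v}^2}$ together with $\abs{u}^2+\abs{v}^2=\abs{u'}^2+\abs{v'}^2$, perform the Grad/Carleman change of variables for $K_2$, and then invoke the standard truncation/Hilbert--Schmidt approximation argument of \cite[Section 3.5]{Glassey1996}. The only cosmetic difference is that you spell out the $L^\nu_v$ case via conjugation by $\nu^{1/2}$, which the paper leaves implicit.
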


\begin{proof}
We may denote
\begin{align}
    K_i[f]=\int_{\r^3}k_i(u,v)f(u)\,\ud u
\end{align}
for some kernel functions $k_i$, $i=1,2$.

We first consider $K_1$. Obviously,
\begin{align}
    k_1(u,v)=\vh^2\ue^{\abs{u}^2+\abs{v}^2}\mhh(u)\mhh(v)\m(u)\m(v)\int_{\s^2}q(\o,\abs{v-u})\m(u')\m(v')\,\ud\o.
\end{align}
Based on a \eqref{temp 2}, we know 
\begin{align}
    \ue^{-\frac{\abs{u}^2}{2}-\frac{\abs{v}^2}{2}}\ls\mhh(u)\mhh(v)\m(u)\m(v)\ls \ue^{-\frac{\abs{u}^2}{2}-\frac{\abs{v}^2}{2}},
\end{align}
and
\begin{align}
    1\ls\ue^{\abs{u}^2+\abs{v}^2}\m(u')\m(v')\ls 1.
\end{align}
Thus, though we cannot derive an explicit formula for $k_1$, we know it can be well-controlled by the corresponding $\tilde k_1$ for the classical Boltzmann equation, where
\begin{align}
    \tilde k_1(u,v)= \pi\abs{v-u}\exp\left(-\frac{\abs{u}^2}{2}-\frac{\abs{v}^2}{2}\right).
\end{align}
For $K_2$, $k_2$ is not easy to obtain. We first split
\begin{align}
    K_2[f]=&\int_{\r^3}\int_{\s^2}q(\o,\abs{v-u})\m(u)\m(v)\m(u')\m(v')\vh^2\ue^{\abs{u}^2+\abs{v}^2}\mhh(v)\mhh(u')f(u')\,\ud\o\ud u\\
    &+\int_{\r^3}\int_{\s^2}q(\o,\abs{v-u})\m(u)\m(v)\m(u')\m(v')\vh^2\ue^{\abs{u}^2+\abs{v}^2}\mhh(v)\mhh(v')f(v')\,\ud\o\ud u.\no
\end{align}
Following the argument as in \cite[(3.34),(3.51)]{Glassey1996}, we obtain that
\begin{align}
    K_2[f]=&\int_{\r}\bigg\{\int_{\r^2}\frac{2}{\abs{\eta-v}}\Big[\m(\eta+V_{\perp})\m(v)\m(v+V_{\perp})\m(\eta)\vh^2\ue^{\abs{\eta+V_{\perp}}^2+\abs{v}^2}\mhh(v)\mhh(\eta)\Big]\ud{V_{\perp}}\bigg\}f(\eta)\,\ud\eta
\end{align}
where
\begin{align}
    &V=u-v,\quad V_{\parallel}=(V\cdot\o)\o,\quad V_{\perp}=V-(V\cdot\o)\o,\quad\eta=v+V_{\parallel},
\end{align}
or equivalently
\begin{align}
    u=\eta+V_{\perp},\quad u'=v+V_{\perp},\quad v'=\eta.
\end{align}
Therefore, we know
\begin{align}
    k_2(v,\eta)=\int_{\r^2}\frac{2}{\abs{\eta-v}}\Big[\m(\eta+V_{\perp})\m(v)\m(v+V_{\perp})\m(\eta)\vh^2\ue^{\abs{\eta+V_{\perp}}^2+\abs{v}^2}\mhh(v)\mhh(\eta)\Big]\ud{V_{\perp}}.
\end{align}
Unfortunately, due to the complexity of $\m$ and $\mm$ in the quantum Boltzmann equation, we can hardly further simplify $k_2$ as in \cite[(3.45),(3.52)]{Glassey1996} to get an explicit formula. Hence, we turn to direct bounds. 

Note that in the original variables
\begin{align}
    \ue^{-\frac{1}{2}\abs{u}^2-\frac{1}{2}\abs{u'}^2}\ls\m(u)\m(v)\m(u')\m(v')\vh^2\ue^{\abs{u}^2+\abs{v}^2}\mhh(v)\mhh(v')\ls \ue^{-\frac{1}{2}\abs{u}^2-\frac{1}{2}\abs{u'}^2}.
\end{align}
Thus in the new variables, we know
\begin{align}
\\
    \ue^{-\frac{1}{2}\abs{\eta+V_{\perp}}^2-\frac{1}{2}\abs{v+V_{\perp}}^2}\ls\m(\eta+V_{\perp})\m(v)\m(v+V_{\perp})\m(\eta)\vh^2\ue^{\abs{\eta+V_{\perp}}^2+\abs{v}^2}\mhh(v)\mhh(\eta)\ls \ue^{-\frac{1}{2}\abs{\eta+V_{\perp}}^2-\frac{1}{2}\abs{v+V_{\perp}}^2}.\no
\end{align}
Compared with \cite[(3.35)]{Glassey1996}, we know the upper bound and lower bound can be computed explicitly. In other words, though we cannot obtain explicitly formula of $k_2$, we know that it can be well-controlled by the corresponding $\tilde k_2$ for the classical Boltzmann equation, where based on \cite[(3.52)]{Glassey1996}
\begin{align}
    \tilde k_2(u,v)=\frac{2\pi}{\abs{u-v}}\exp\left(-\frac{1}{4}\abs{u-v}^2-\frac{1}{4}\frac{(\abs{u}^2-\abs{v}^2)^2}{\abs{u-v}^2}\right).
\end{align}

Then similarly to the argument in \cite[Section 3.5]{Glassey1996}, we know $K$ is compact.
\end{proof}

\begin{corollary}
We have 
\begin{align}
    \sup_{v\in\r^3}\int_{\r^3}k_1(u,v)\,\ud u&<\infty,\quad \sup_{v\in\r^3}\int_{\r^3}\big|k_1(u,v)\big|^2\ud u<\infty,\\
    \sup_{v\in\r^3}\int_{\r^3}k_2(u,v)\,\ud u&<\infty,\quad \sup_{v\in\r^3}\int_{\r^3}\big|k_2(u,v)\big|^2\ud u<\infty.
\end{align}
\end{corollary}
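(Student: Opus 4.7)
The plan is to reduce both assertions to the classical Boltzmann analogs, which are standard in Glassey \cite{Glassey1996}. The proof of Lemma \ref{prelim-lemma: K-compactness} already established the pointwise comparisons
\begin{align*}
k_1(u,v) &\ls \tilde k_1(u,v) := \pi\,\abs{v-u}\,\ue^{-\frac{1}{2}\abs{u}^2-\frac{1}{2}\abs{v}^2}, \\
k_2(u,v) &\ls \tilde k_2(u,v) := \frac{2\pi}{\abs{u-v}}\,\ue^{-\frac{1}{4}\abs{u-v}^2 - \frac{1}{4}\frac{(\abs{u}^2-\abs{v}^2)^2}{\abs{u-v}^2}},
\end{align*}
so it suffices to prove the four integral bounds with $k_i$ replaced by $\tilde k_i$. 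Once this reduction is made, the corollary follows from purely classical Gaussian computations.

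For $\tilde k_1$, the estimates are essentially one-line Gaussian computations. Using $\abs{v-u}\leq \abs{v}+\abs{u}$ and integrating against $\ue^{-\abs{u}^2/2}$ produces at most $C(1+\abs{v})$, which when multiplied by the factor $\ue^{-\abs{v}^2/2}$ in $v$ is uniformly bounded in $v\in\r^3$. Squaring $\tilde k_1$ doubles every Gaussian weight, and $\abs{v-u}^2\leq 2\abs{v}^2+2\abs{u}^2$ together with the same reasoning yields the $L^2$ bound.

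For $\tilde k_2$, I will follow the classical change-of-variables from Glassey \cite[Section 3.5]{Glassey1996}: set $w = u-v$, use the identity $\abs{u}^2-\abs{v}^2 = \abs{w}^2 + 2v\cdot w$ to rewrite the exponent, and decompose $w = w_{\parallel}+w_{\perp}$ into components parallel and perpendicular to $v$. The integration over $w_{\parallel}$ becomes a one-dimensional Gaussian whose dependence on $\abs{v}$ vanishes after completing the square, leaving a rapidly decaying function of $w_{\perp}$. The singularities $\abs{w}^{-1}$ and $\abs{w}^{-2}$ at $w=0$ are locally integrable in $\r^3$, so they pose no problem.

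The main (mild) obstacle is the $k_2$ estimate: the singular factor $\abs{u-v}^{-1}$ looks dangerous at first glance, but it is compensated by the Gaussian weight $\ue^{-(\abs{u}^2-\abs{v}^2)^2/(4\abs{u-v}^2)}$ once the change of variables is performed, and the remaining $\abs{w}^{-1}$ (resp.\ $\abs{w}^{-2}$) is harmless in three dimensions. Thanks to the pointwise comparison with $\tilde k_i$, the classical calculation applies verbatim, and no new technical difficulty arises beyond what was already handled in the compactness argument of Lemma \ref{prelim-lemma: K-compactness}.
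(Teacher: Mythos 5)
Your proposal is correct and takes essentially the same route as the paper: the paper's proof of this corollary is simply the citation to Glassey \cite[Section 3.3.2]{Glassey1996}, implicitly resting on the pointwise comparisons $k_i\ls \tilde k_i$ already established in the proof of Lemma \ref{prelim-lemma: K-compactness}, which is exactly your reduction. Your additional sketch of the Gaussian computations for $\tilde k_1$ and $\tilde k_2$ (Grad's classical estimates) is consistent with that reference and introduces no gap.
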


\begin{proof}
See \cite[Section 3.3.2]{Glassey1996}.
\end{proof}

\begin{corollary}
$K$ is bounded on $L^2_v(\r^3)$ and on $L^{\nu}_v(\r^3)$. 
\end{corollary}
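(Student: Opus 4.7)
The statement is an immediate consequence of the compactness already established in Lemma \ref{prelim-lemma: K-compactness}, since every compact operator on a normed space is bounded. I would begin the proof by simply citing that lemma for both $L^2_v(\r^3)$ and $L^{\nu}_v(\r^3)$.

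If a direct quantitative proof is preferred, my plan is to use Schur's test. Writing $K=K_2-K_1$ with integral kernels $k_i$, the preceding corollary records $\sup_v\int|k_i(u,v)|\,\ud u<\infty$. Self-adjointness of $\l=\nu I-K$ in Lemma \ref{prelim-theorem: L-properties}(2) forces $k_i(u,v)=k_i(v,u)$, so the symmetric Schur bound $\sup_u\int|k_i(u,v)|\,\ud v<\infty$ comes for free. Schur's test then yields $\nm{K_if}_{L^2_v}\ls\nm{f}_{L^2_v}$ and hence the $L^2_v$ claim on $K$.

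For the weighted space I would run the same Schur argument after the substitution $g=\sqrt{\nu}\,f$, which reduces $L^{\nu}_v$-boundedness of $K$ to $L^2_v$-boundedness of the operator with kernel $\tilde k(u,v)=\sqrt{\nu(v)/\nu(u)}\,k(u,v)$. Schur's test then asks for
$$\sup_v\int|\tilde k(u,v)|\,\ud u<\infty,\qquad \sup_u\int|\tilde k(u,v)|\,\ud v<\infty,$$
both of which follow from the Gaussian-type pointwise upper bounds $|k_i|\ls|\tilde k_i|$ used in the compactness proof, combined with $\nu(v)\ls 1+|v|$ and $\nu\gs 1$ from Lemma \ref{prelim-lemma: nu-estimate}.

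The only real obstacle is the absence of an explicit formula for $k_2$ in the quantum regime, but this was already overcome in the proof of Lemma \ref{prelim-lemma: K-compactness} via pointwise comparison with the classical kernel $\tilde k_2$. Once that comparison is invoked, the required Schur integrals reduce to the classical hard-sphere estimates of \cite[Section 3.3.2]{Glassey1996}, which are well-known to be finite.
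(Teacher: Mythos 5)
Your proposal is correct and in substance the same as the paper's, which simply invokes the classical argument of \cite[Section 3.5]{Glassey1996} after the pointwise comparison of $k_1,k_2$ with the classical kernels $\tilde k_1,\tilde k_2$; in fact, since Lemma \ref{prelim-lemma: K-compactness} already asserts compactness of $K$ on both $L^2_v$ and $L^{\nu}_v$, your first observation (compact $\Rightarrow$ bounded) settles the corollary immediately within the paper's ordering. One minor imprecision: self-adjointness of $\l$ only yields symmetry of the combined kernel $k=k_2-k_1$, not of each $k_i$ separately, but this costs nothing since the Schur column bounds follow directly from the manifestly symmetric Gaussian dominating kernels $\tilde k_1,\tilde k_2$ together with Lemma \ref{prelim-lemma: nu-estimate}.
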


\begin{proof}
See \cite[Section 3.5]{Glassey1996}.
\end{proof}

\begin{corollary}
Let $\alpha>0$ and $k=k_2-k_1$. Then we have 
\begin{align}
\int_{\r^3}\abs{k(u,v)}(1+\abs{u}^2)^{-\frac{\alpha}{2}}\ud u\ls (1+\abs{u}^2)^{-\frac{\alpha+1}{2}}
\end{align}
\end{corollary}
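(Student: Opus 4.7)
The plan is to reduce the claim to the analogous estimate for the classical Boltzmann kernel, which is well-established in the literature. From the proof of Lemma \ref{prelim-lemma: K-compactness}, we already have the pointwise bounds $\abs{k_i(u,v)} \ls \tilde{k}_i(u,v)$ for $i=1,2$, where
\begin{align*}
\tilde{k}_1(u,v) &= \pi\abs{u-v}\exp\left(-\tfrac{\abs{u}^2}{2}-\tfrac{\abs{v}^2}{2}\right),\\
\tilde{k}_2(u,v) &= \frac{2\pi}{\abs{u-v}}\exp\left(-\tfrac{1}{4}\abs{u-v}^2 - \tfrac{1}{4}\tfrac{(\abs{u}^2-\abs{v}^2)^2}{\abs{u-v}^2}\right)
\end{align*}
are the classical Boltzmann kernels. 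Hence $\abs{k(u,v)} \ls \tilde{k}_1(u,v)+\tilde{k}_2(u,v)$, and (interpreting the right-hand side of the statement as $(1+\abs{v}^2)^{-(\alpha+1)/2}$, which appears to be the intended bound) it suffices to verify the estimate for each $\tilde{k}_i$ separately.

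The bound for $\tilde{k}_1$ is immediate: the Gaussian factor $\ue^{-\abs{v}^2/2}$ already decays faster than any polynomial in $\abs{v}$, so a straightforward computation yields $\int_{\r^3} \tilde{k}_1(u,v)(1+\abs{u}^2)^{-\alpha/2}\,\ud u \ls (1+\abs{v}^2)^{-(\alpha+1)/2}$ with room to spare.

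The bound for $\tilde{k}_2$ is the heart of the argument. The standard approach, going back to Grad and streamlined in Glassey \cite[Section 3.5]{Glassey1996}, is to change variables $u = v+z$ and decompose $z = z_\parallel \hat{v} + z_\perp$ with $\hat{v} = v/\abs{v}$, where $z_\parallel\in\r$ and $z_\perp\in\r^2$ is perpendicular to $v$. Using $\abs{u}^2 - \abs{v}^2 = 2\abs{v}z_\parallel + \abs{z}^2$ together with the elementary inequality
\[ \tfrac{1}{4}\abs{z}^2 + \tfrac{(2\abs{v}z_\parallel + \abs{z}^2)^2}{4\abs{z}^2} \,\geq\, \tfrac{1}{8}\abs{z}^2 + \tfrac{1}{2}\abs{v}^2\tfrac{z_\parallel^2}{\abs{z}^2}, \]
one separates the exponential into Gaussian decay in $z_\perp$ on $\r^2$ (which cancels the singular factor $\abs{z}^{-1}$) and Gaussian decay in $z_\parallel$ with width $\sim \abs{z}/\abs{v}$. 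Rescaling $z_\parallel \mapsto \abs{z}s/\abs{v}$ then extracts a factor $\abs{v}^{-1}$, which combines with the polynomial weight to yield the claimed decay $(1+\abs{v}^2)^{-(\alpha+1)/2}$.

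The main obstacle I anticipate is tracking the polynomial factor $(1+\abs{u}^2)^{-\alpha/2}$ through this change of variables: when $\abs{z}$ is comparable to or larger than $\abs{v}$, one cannot simply replace $\abs{u}$ by $\abs{v}$ without loss. The resolution is to split the $z$-integral into the regions $\abs{z}\leq\tfrac{1}{2}\abs{v}$ and $\abs{z}>\tfrac{1}{2}\abs{v}$. In the inner region the weight is equivalent to $(1+\abs{v}^2)^{-\alpha/2}$ up to constants, while in the outer region the Gaussian factor $\ue^{-\abs{z}^2/8}$ produces exponential decay in $\abs{v}$, more than enough to absorb any loss coming from the polynomial weight. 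Putting the two regions together delivers the required bound.
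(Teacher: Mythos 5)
Your overall route is the same as the paper's: the paper establishes in the proof of Lemma \ref{prelim-lemma: K-compactness} that the quantum kernels are pointwise dominated by the classical Grad kernels $\tilde k_1,\tilde k_2$, and then disposes of this corollary simply by citing \cite[Lemma 3.3.1]{Glassey1996}; you carry out the same reduction and then sketch the classical computation yourself instead of citing it. Your reading of the statement (the right-hand side should be $(1+\abs{v}^2)^{-\frac{\alpha+1}{2}}$) is the correct interpretation of the paper's typo, and your splitting $\abs{z}\leq\frac12\abs{v}$ versus $\abs{z}>\frac12\abs{v}$ to track the polynomial weight is exactly how the classical proof handles it.

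One step of your sketch, however, is false as written. With $u=v+z$ the exponent of $\tilde k_2$ equals
\begin{align}
\frac14\abs{z}^2+\frac{\big(2\abs{v}z_\parallel+\abs{z}^2\big)^2}{4\abs{z}^2}
=\frac12\abs{z}^2+\frac{\abs{v}^2z_\parallel^2}{\abs{z}^2}+\abs{v}z_\parallel ,
\end{align}
and the cross term $\abs{v}z_\parallel$ can be negative; your claimed lower bound $\frac18\abs{z}^2+\frac12\abs{v}^2z_\parallel^2/\abs{z}^2$ fails, e.g., when $\abs{v}z_\parallel=-\abs{z}^2$ the exponent is $\frac12\abs{z}^2$ while your right-hand side is $\frac58\abs{z}^2$. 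The repair is standard and restores your conclusion: the exponent is \emph{exactly} $\frac14\abs{z}^2+\left(\frac{\abs{v}z_\parallel}{\abs{z}}+\frac{\abs{z}}{2}\right)^2$, so the factor $\ue^{-\frac14\abs{z_\perp}^2}$ cancels the singularity $\abs{z}^{-1}$, and the substitution $s=\frac{\abs{v}z_\parallel}{\abs{z}}+\frac{\abs{z}}{2}$ in the $z_\parallel$-integral produces the factor $\abs{z}/\abs{v}$ (rather than your separated Gaussian of width $\abs{z}/\abs{v}$), which yields the extra decay $(1+\abs{v})^{-1}$. With that correction your argument is a complete proof of the cited classical estimate, i.e., slightly more self-contained than the paper, which leaves the computation to \cite{Glassey1996}.
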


\begin{proof}
See \cite[Lemma 3.3.1]{Glassey1996}.
\end{proof}

\begin{lemma}\label{prelim-lemma: K-derivative}
Let $\abs{\beta}=k$. Then we have
\begin{align}
    \tnm{\p_{\beta}K[g]}{v}^2\ls \tnm{g}{v}^2+\sum_{\abs{\alpha}=k}\tnm{\p_{\alpha}g}{v}^2.
\end{align}
Also, for any small $\eta>0$, there exists $C_{k,\eta}>0$ such that for any $g(v)\in H^k_v(\r^3)$ and $\beta'\leq \beta$, we have
\begin{align}
    \tnm{\p_{\beta'}K[g]}{v}^2\leq C_{k,\eta}\tnm{g}{v}^2+\eta\sum_{\abs{\alpha}=k}\tnm{\p_{\alpha}g}{v}^2.
\end{align}
\end{lemma}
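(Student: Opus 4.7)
The plan is to prove both bounds via a change-of-variables (``moving-frame'') computation, supplemented for the second bound by a Grad-type cutoff splitting of the kernel into a smooth truncated piece and a Gaussian-tail piece.

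\textbf{First bound.} Write $K = K_2 - K_1$ in kernel form, $K[g](v) = \int_{\r^3} k(u,v)\, g(u)\,\ud u$ with $k = k_2 - k_1$. I would change variables $u = v + w$ so that
\begin{align*}
K[g](v) = \int_{\r^3} k(v+w, v)\, g(v+w)\,\ud w,
\end{align*}
and then apply $\p^\beta_v$. By Leibniz and the chain rule $\p_v[g(v+w)] = (\p g)(v+w)$,
\begin{align*}
\p^\beta K[g](v) = \sum_{\beta_1 + \beta_2 = \beta} \binom{\beta}{\beta_1} \int_{\r^3} \big[\p^{\beta_1}_v k(v+w, v)\big] (\p^{\beta_2} g)(v+w)\,\ud w.
\end{align*}
Changing back to $u = v+w$, this reads $\sum_{\beta_1+\beta_2=\beta} c_{\beta_1}\,\tilde K_{\beta_1}[\p^{\beta_2} g](v)$ with the kernel $\tilde k_{\beta_1}(u,v) := [\p^{\beta_1}_v k(v+w, v)]_{w = u-v}$. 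The crucial point is that this moving-frame differentiation never hits the $|u-v| = |w|$ factor; all derivatives are absorbed by the Maxwellian factors $\m,\mm,\mhh$ and the $\ue^{|u|^2+|v|^2}$ prefactor in $k_1, k_2$, each producing only polynomial weights controlled by the Gaussian decay (cf.\ the computation in Lemma \ref{prelim-lemma: nu-derivative-estimate}). Hence, the same Schur-type reasoning as in Lemma \ref{prelim-lemma: K-compactness} yields
\begin{align*}
\sup_v \int_{\r^3} |\tilde k_{\beta_1}(u,v)|\,\ud u + \sup_u \int_{\r^3} |\tilde k_{\beta_1}(u,v)|\,\ud v \ls 1,
\end{align*}
so $\|\tilde K_{\beta_1}[h]\|_{L^2_v} \ls \|h\|_{L^2_v}$, and
\begin{align*}
\tnm{\p^\beta K[g]}{v}^2 \ls \sum_{|\beta_2| \leq k} \tnm{\p^{\beta_2} g}{v}^2.
\end{align*}
Intermediate-order contributions $|\beta_2| < k$ are absorbed via Gagliardo--Nirenberg interpolation $\|\p^{j} g\|_{L^2}^2 \ls \|g\|_{L^2}^2 + \|\p^k g\|_{L^2}^2$, giving the first estimate.

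\textbf{Second bound.} The first bound alone yields an $O(1)$ coefficient in front of the derivative norm. To replace it by a small $\eta$, I would introduce a smooth cutoff $\chi_N(u,v)$ equal to $1$ on $\{|u|, |v| \leq N\} \cap \{|u-v| \geq N^{-1}\}$ and vanishing outside a slight enlargement, and split $k = k_N^{\rm sm} + k_N^{\rm tl}$ with $k_N^{\rm sm} := \chi_N k$. For the tail piece, the pointwise upper bounds on $k_1, k_2$ established in Lemma \ref{prelim-lemma: K-compactness} (via the classical comparison kernels $\tilde k_1, \tilde k_2$) force
\begin{align*}
\sup_v \int_{\r^3} |k_N^{\rm tl}(u,v)|\,\ud u + \sup_u \int_{\r^3} |k_N^{\rm tl}(u,v)|\,\ud v \;\longrightarrow\; 0 \quad \text{as } N \to \infty,
\end{align*}
and the same is true for the moving-frame derivatives $\tilde k_{\beta_1, N}^{\rm tl}$. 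Applying the first-bound argument to this piece produces, for $N = N(\eta)$ chosen sufficiently large,
\begin{align*}
\tnm{\p^{\beta'} K_N^{\rm tl}[g]}{v}^2 \leq \eta \Big(\tnm{g}{v}^2 + \sum_{|\alpha| = k} \tnm{\p^\alpha g}{v}^2\Big).
\end{align*}
For the smooth piece $k_N^{\rm sm}$, the support is compact and bounded away from $u = v$, so every mixed derivative $\p^{\alpha}_v \p^{\alpha'}_u k_N^{\rm sm}$ is pointwise bounded by some $C_{N,k}$. Any derivative landing on $g$ after the change of variables can be shifted onto the kernel via integration by parts in $u$, yielding
\begin{align*}
\tnm{\p^{\beta'} K_N^{\rm sm}[g]}{v}^2 \leq C_{N, k}\, \tnm{g}{v}^2.
\end{align*}
Setting $C_{k, \eta} := C_{N(\eta), k}$ and summing the two pieces gives the second bound.

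\textbf{Main obstacle.} The technical heart is establishing the Schur-type bounds for $\tilde k_{\beta_1}$, particularly for $k_2$, whose $|u-v|^{-1}$ singularity would, under naive $\p_v$-differentiation, become $|u-v|^{-1-|\beta_1|}$ and cease to be locally integrable. The moving-frame change of variables is exactly what rescues the estimate: holding $w = u-v$ fixed, only the smooth Maxwellian and exponential factors get differentiated, while the singular $|w|^{-1}$ factor stays inert. Because explicit formulas of the Glassey type \cite[Section 3.3]{Glassey1996} are unavailable in the quantum setting, this verification ultimately reduces to a term-by-term pointwise control of derivatives of $\m,\mm,\mhh$ and $\ue^{|v|^2}$, checking that each polynomial prefactor produced is dominated by the surviving exponential decay, in the same spirit as Lemma \ref{prelim-lemma: nu-derivative-estimate}.
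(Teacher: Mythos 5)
Your proof of the first inequality is essentially the paper's own argument: the paper also substitutes $w=v-u$, observes that the $v$-derivatives (at fixed $w$) never touch the singular $\abs{w}$-factor but only the Maxwellian-type factors $\m,\mm^{\pm\frac12}$ and the exponential prefactor (whose derivatives are dominated by the surviving Gaussian decay, exactly as in Lemma \ref{prelim-lemma: nu-derivative-estimate}), lets the remaining derivatives fall on $g$, and then reuses the Schur/compactness bounds of Lemma \ref{prelim-lemma: K-compactness} together with the interpolation inequality to absorb intermediate orders. The only cosmetic difference is that for $K_2$ you differentiate the Carleman-represented kernel $k_2(v,\eta)$ directly, while the paper differentiates in the original $(u,\o)$-representation (so the derivatives land on $g(u')$, $g(v')$ with Jacobians $\nabla_v u'=\nabla_v v'=I$ in the moving frame) and only afterwards invokes the Carleman change of variables; these are equivalent in substance. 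For the second inequality your route genuinely differs: the paper simply reduces it to compactness of $\p_{\beta}K$ as a map $H^k_v\rt L^2_v$ and gets the $\eta$-smallness from the standard compactness (Ehrling-type) argument, whereas you reprove it constructively via a Grad-type truncation, smallness of the tail in operator norm, and integration by parts in $u$ on the truncated piece. Your version is more explicit (it is the classical Guo-style proof and yields a quantitative $C_{k,\eta}$), at the price of extra verifications; the paper's version is softer and shorter but non-quantitative.

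Two points in your second step need care. First, if the near-diagonal cutoff is a genuine function of $(u,v)$ transitioning on the scale $\abs{u-v}\sim N^{-1}$, then moving-frame derivatives hitting it produce factors $N^{\abs{\beta_1}}$ on a shell of volume $\sim N^{-3}$, and against the $\abs{u-v}^{-1}$ singularity of $k_2$ the tail Schur integral is only $O(N^{\abs{\beta_1}-2})$, which is not small for $\abs{\beta_1}\geq 2$; you should take the near-diagonal cutoff as a function of $u-v=w$ alone (and the large-velocity cutoffs with $O(1)$ transition scales), so that it is inert (or produces $O(N^{-1})$ factors) under differentiation at fixed $w$, and then your claimed vanishing of the tail's Schur bounds is correct. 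Second, the integration by parts in $u$ on the truncated piece requires pointwise bounds on $u$-derivatives of $k_2(u,v)$, which is only implicitly defined: the plane of the $V_{\perp}$-integration rotates with the direction of $u-v$, so smoothness away from the diagonal on the compact truncated region must be argued (e.g.\ via a locally smooth orthonormal frame or a rotation of coordinates) rather than asserted; alternatively, one can perform the splitting after the moving-frame differentiation, so that all derivatives on $g$ are shifted onto the already-differentiated, truncated kernel and every $N$-dependent constant is harmlessly absorbed into $C_{k,\eta}$.
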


\begin{proof}
Due to the standard interpolation estimate 
\begin{align}
    \tnm{\nv^jg}{v}\ls \tnm{g}{v}\tnm{\nv^kg}{v}
\end{align}
for $0\leq j\leq k$, it suffices to justify the $L^2$ boundedness of $\p_{\beta}K$. Furthermore, it suffices to show that $\p_{\beta}K$ is compact.

We introduce substitution $w=v-u$. Then we regroup $K_1$ to obtain
\begin{align}
    K_1[g]=\;&\mhh(v)\int_{\r^3}\int_{\s^2}\abs{\o\cdot w}\m(v-w)\m(v)\m(u')\m(v')\vh^2\ue^{\abs{v-w}^2+\abs{v}^2}\mhh(u)g(u)\,\ud\o\ud w\\
    =&\int_{\r^3}\int_{\s^2}\Big\{\abs{\o\cdot w}\m(v-w)\mhh(v-w)\Big\}\vh^2\Big\{\mhh(v)\m(v)\Big\}\Big\{\m(u')\m(v')\ue^{\abs{v-w}^2+\abs{v}^2}\Big\}g(u)\,\ud\o\ud w\no\\
    =&\int_{\r^3}\int_{\s^2}A(w,v,\o)B(v)C(w,v,\o)g(v-w)\,\ud\o\ud w.\no
\end{align}
Then similar to the proof of Lemma \ref{prelim-lemma: nu-derivative-estimate}, after taking $v$ derivatives, we know 
\begin{align}
    \abs{\p_{\beta_1}A}\ls\;& \ue^{-\frac{1}{2}\abs{v-w}^2},\\
    \abs{\p_{\beta_2}B}\ls\;&\ue^{-\frac{1}{2}\abs{v}^2},\\
    \abs{\p_{\beta_3}C}\ls\;&\ue^{-\frac{1}{2}\abs{u'}^2-\frac{1}{2}\abs{v'}^2}.
\end{align}
Hence, following the proof of Lemma \ref{prelim-lemma: K-compactness}, we know $\p_{\beta}K_1$ is compact.

Similarly, for $K_2$, using the substitution $w=v-u$ and regrouping, we obtain
\begin{align}
\\
    K_2[g]=\;&\mhh(v)\int_{\r^3}\int_{\s^2}\abs{\o\cdot w}\m(v-w)\m(v)\m(u')\m(v')\vh^2\ue^{\abs{v-w}^2+\abs{v}^2}\Big(\mhh(u')g(u')+\mhh(v')g(v')\Big)\ud\o\ud w\no\\
    =&\int_{\r^3}\int_{\s^2}\Big\{\abs{\o\cdot w}\m^{\frac{1}{2}}(v-w)\Big\}\vh^2\Big\{\mhh(v)\m^{\frac{1}{2}}(v)\Big\}\Big\{\m(u')\m(v')\ue^{\abs{v-w}^2+\abs{v}^2}\Big\}\no\\
    &\Big\{\m^{\frac{1}{2}}(v-w)\m^{\frac{1}{2}}(v)\Big(\mhh(u')g(u')+\mhh(v')g(v')\Big)\Big\}\,\ud\o\ud w\no\\
    =&\int_{\r^3}\int_{\s^2}A(w,v,\o)B(v)C(w,v,\o)D(w,v,\o)\,\ud\o\ud w.\no
\end{align}
Here, $A,B,C$ can be handled as in $K_1$ case, so we focus on $D$. In particular,
\begin{align}
    &\nv\left(\m^{\frac{1}{2}}(v-w)\m^{\frac{1}{2}}(v)\mhh(u')\right)\\
    =&\left(\m^{-\frac{1}{2}}(v-w)\m^{-\frac{1}{2}}(v)\mh(u')\right)\Big(\nv\big(\mm^{-1}(u')\big)\big(\m(v-w)\m(v)\big)+\nv\big(\m(v-w)\m(v)\big)\mm^{-1}(u')\Big).\no
\end{align}
Then direct computation reveals that
\begin{align}
    \abs{\nv\big(\mm^{-1}(u')\big)}\ls\;& \abs{u'}\ue^{|u'|^2},\\
    \big|\nv\big(\m(v-w)\m(v)\big)\big|\ls\;&\big(\abs{v}+\abs{v-w}\big)\ue^{-\abs{v}^2-\abs{v-w}^2}.
\end{align}
Hence, we know
\begin{align}
    \abs{\nv\left(\m^{\frac{1}{2}}(v-w)\m^{\frac{1}{2}}(v)\mhh(u')\right)}\ls\;&\big(\abs{u'}+\abs{v}+\abs{v-w}\big) \ue^{-\frac{1}{2}|v'|^2}\\
    \ls\;&\big(\abs{v'}+\abs{v}+\abs{v-w}\big) \ue^{-\frac{1}{2}|v'|^2}\no\\
    \ls\;&\big(1+\abs{v}+\abs{v-w}\big) \ue^{-\frac{1}{4}|v'|^2}.\no
\end{align}
Similar technique justifies that
\begin{align}
    \abs{\nv\left(\m^{\frac{1}{2}}(v-w)\m^{\frac{1}{2}}(v)\mhh(v')\right)}\ls\;&\big(1+\abs{v}+\abs{v-w}\big) \ue^{-\frac{1}{4}|u'|^2}.
\end{align}
The similar structure will be preserved when taking higher-order $v$ derivatives. Therefore, we know
\begin{align}
    \abs{\p_{\beta_4}D}\ls \Big(1+\abs{v}^{\abs{\beta_4}}+\abs{v-w}^{\abs{\beta_4}}\Big) \Big(\ue^{-\frac{1}{4}\abs{u'}^2}+\ue^{-\frac{1}{4}\abs{v'}^2}\Big)\Big(\abs{\p_{\beta_4}g(u')}+\abs{\p_{\beta_4}g(v')}\Big).
\end{align}
Here, $\Big(1+\abs{v}^{\abs{\beta_4}}+\abs{v-w}^{\abs{\beta_4}}\Big)$ can be handled by $A$ and $B$. Summarizing all above, we know that $v$ derivatives of $K_2$ will not change its fundamental structure, so we may follow the proof of Lemma \ref{prelim-lemma: K-compactness} to show that $\p_{\beta}K_2$ is compact.
\end{proof}




\begin{theorem}[Semi-Positivity of $\l$]\label{prelim-theorem: semi-positivity}
There exists a $\d>0$ such that
\begin{align}
    \br{\l[g],g}\geq \d\vnm{\nnpk[g]}{v}^2
\end{align}
for a general function $g(v)$.
\end{theorem}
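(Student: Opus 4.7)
The plan is to first reduce to vectors orthogonal to the null space, and then run a standard Weyl-type compactness argument exploiting the compactness of $K$ established in Lemma \ref{prelim-lemma: K-compactness}.

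First I would decompose $g=\pk g+\nnpk g$. Since $\pk g\in\mathbf{N}(\l)$ (so $\l[\pk g]=0$) and $\l$ is self-adjoint by Lemma \ref{prelim-theorem: L-properties}(2), the cross term $\br{\l[\pk g],\nnpk g}=\br{\pk g,\l[\nnpk g]}$; but this also equals $\br{\l[\pk g],\nnpk g}=0$ by the same self-adjointness chained with $\l[\pk g]=0$. Hence
\begin{align*}
\br{\l[g],g}=\br{\l[\nnpk g],\nnpk g}.
\end{align*}
So it suffices to prove that there exists $\d>0$ such that $\br{\l[h],h}\geq \d\vnm{h}{v}^2$ for every $h$ in the orthogonal complement of $\mathbf{N}(\l)$ in $L^2_v(\r^3)$.

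Next I would write $\l=\nu I-K$ using the splitting already recorded in the paper, so that
\begin{align*}
\br{\l[h],h}=\vnm{h}{v}^2-\br{K[h],h}.
\end{align*}
Suppose, for contradiction, that no such $\d>0$ exists. Then one may pick a sequence $\{h_n\}\subset \mathbf{N}(\l)^{\perp}$ with $\vnm{h_n}{v}=1$ and $\br{\l[h_n],h_n}\to 0$. By Lemma \ref{prelim-lemma: nu-estimate}, $\vnm{\cdot}{v}$ controls $L^2_v$, so $\{h_n\}$ is bounded in $L^2_v(\r^3)$ and we may extract a weak limit $h_n\rightharpoonup h$ in $L^2_v$. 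By the compactness of $K$ on $L^2_v$ (Lemma \ref{prelim-lemma: K-compactness}), $K[h_n]\to K[h]$ strongly in $L^2_v$, so $\br{K[h_n],h_n}\to\br{K[h],h}$.

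Combined with $\br{\l[h_n],h_n}\to 0$ this forces $\vnm{h_n}{v}^2\to\br{K[h],h}$, hence $\vnm{h_n}{v}\to\vnm{h}{v}$ in the weighted norm and in particular $h_n\to h$ strongly in $L^{\nu}_v$ and in $L^2_v$. Passing to the limit yields $\vnm{h}{v}\gtrsim\|h\|_{L^2_v}>0$ together with $\br{\l[h],h}=0$. By Lemma \ref{prelim-theorem: L-properties}(1), the equality case forces $h\in\mathbf{N}(\l)$, which contradicts $h\in\mathbf{N}(\l)^{\perp}$ since $h\neq 0$. This contradiction proves the claim. The main obstacle I anticipate is ensuring that the contradictory limit $h$ is actually nonzero (ruling out that all the mass escapes to infinity in velocity); this is handled precisely by the compactness of $K$ and the lower bound $\nu(v)\gtrsim 1+\abs{v}$ from Lemma \ref{prelim-lemma: nu-estimate}, which together prevent loss of mass in the limit.
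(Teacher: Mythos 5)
Your proof is correct and follows essentially the same route as the paper: a contradiction/normalization argument based on $\l=\nu I-K$, weak compactness, the compactness of $K$ from Lemma \ref{prelim-lemma: K-compactness}, weak lower semicontinuity of the $\nu$-norm together with $\br{\l[h],h]}\geq0$ to force $\vnm{h}{v}=1$ and $\br{\l[h],h}=0$, and then the equality case of Lemma \ref{prelim-theorem: L-properties} plus preservation of orthogonality to $\mathbf{N}(\l)$ under weak limits to conclude $h=0$, a contradiction. The only differences are cosmetic: you spell out the reduction $\br{\l[g],g}=\br{\l[\nnpk g],\nnpk g}$ (which the paper uses implicitly by starting from $\pk[g_n]=0$), and you run the compactness of $K$ and the weak convergence in $L^2_v$ rather than in $L^{2}_{\nu}$; both settings are covered by Lemma \ref{prelim-lemma: K-compactness}.
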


\begin{proof}
We prove by contradiction. Assume that there exists a sequence of functions $\{g_n\}_{n=1}^{\infty}$ satisfying $\pk[g_n]=0$, $\vnm{g_n}{v}=1$ and 
\begin{align}\label{prelim-equality: temp 4}
    \br{\l[g_n],g_n}\leq \frac{1}{n}.
\end{align}
Since $L^2_{\nu}$ is a Hilbert space, based on the Eberlain-Shmulyan theorem, we have the weakly convergent sequence (up to extracting a subsequence with an abuse of notation) $g_n\rightharpoonup g$ in $L^2_{\nu}$. Therefore, by the weak semi-continuity, we have
\begin{align}
    \vnm{g}{v}\leq 1.
\end{align}
Notice that
\begin{align}\label{prelim-equality: temp 3}
    \br{\l[g_n],g_n}=\vnm{g_n}{v}^2-\br{K[g_n],g_n}=1-\br{K[g_n],g_n}.
\end{align}
Since $K$ is a compact operator on $L^{\nu}_v$, we know it maps weakly convergent sequence into strongly convergent sequence, i.e.
\begin{align}
    \lim_{n\rt\infty}\vnm{K[g_n]-K[g]}{v}=0.
\end{align}
Hence, we naturally have
\begin{align}
    \lim_{n\rt\infty}\big(\br{K[g_n],g_n}-\br{K[g],g}\big)=0.
\end{align}
Therefore, using \eqref{prelim-equality: temp 4}, we may direct take limit $n\rt\infty$ in \eqref{prelim-equality: temp 3} to get 
\begin{align}
    \br{\l[g],g}=1-\br{K[g],g}=0.
\end{align}
On the other hand, the above equality may be written as
\begin{align}
    \br{\l[g],g}=\big(1-\vnm{g}{v}^2\big)+\big(\vnm{g}{v}^2-\br{K[g],g}\big)=0.
\end{align}
Based on the weak semi-continuity, we just proved that the first term is non-negative. Also, the second term is actually $\br{\l[g],g}$ which is also non-negative due to Lemma \ref{prelim-theorem: L-properties} (1). Hence, both of them must be zero, i.e. $\l[g]=0$ and $\vnm{g}{v}^2=1$. Then based on Lemma \ref{prelim-theorem: L-properties} (3), we know 
\begin{align}
    g=\mh\Big(a+b\cdot v+c\abs{v}^2\Big).
\end{align}
Then our assumption $\pk[g_n]=0$ implies that the limit $\pk[g]=0$, which means $a=c=0$ and $b=0$. Therefore, we must have $g=0$, which contradicts with $\vnm{g}{v}=1$.
\end{proof}


\subsubsection{Nonlinear Estimates}

Recall that
\begin{align} \label{Gamma-opt}
    \g[f_1,f_2;f_3]=\;&\mhh\qq\left[\mh f_1,\mh f_2\right]+\th\mhh\q\left[\mh f_1,\mh f_2;\m\right]+\th\mhh\q\left[\mh f_1,\m;\mh f_3\right]\\
    &+\th\mhh\q\left[\m,\mh f_2;\mh f_3\right]+\th\mhh\q\left[\mh f_1,\mh f_2;\mh f_3\right].\no
\end{align}

\begin{lemma}
For any $f(v)$, we have
\begin{align}
    \pk\big[\g[f,f;f]\big]=0.
\end{align}
\end{lemma}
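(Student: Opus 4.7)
The plan is to exploit the fact that $\g$ is, by design, the nonlinear residual of the full operator $Q$ after subtracting off the linearized part $\l$. Substituting $F=\m+\mh f$ into $Q[F,F;F]$ and using $Q[\m,\m;\m]=0$, the expansion already carried out in the derivation of \eqref{equation: prelim-perturbation-2} yields the identity
\begin{equation*}
\mhh\, Q[\m+\mh f,\,\m+\mh f;\,\m+\mh f] \;=\; -\,\l[f] \,+\, \g[f,f;f].
\end{equation*}
This is the one nontrivial thing to verify, and it is a direct (and purely algebraic) rearrangement: the constant term in $f$ vanishes, the linear-in-$f$ terms assemble into $-\mh\l[f]$ by the very definition of $\l$, and the quadratic and cubic terms in $f$ assemble into $\mh\g[f,f;f]$ by \eqref{Gamma-opt}.

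Given this identity, I would then test $\g[f,f;f]$ against each basis element of $\mathbf{N}(\l)$. Writing $e_i=\phi_i\mh$ with $\phi_i\in\{1,\,v_1,\,v_2,\,v_3,\,|v|^2\}$ (see \eqref{N(L)-basis}), I compute
\begin{equation*}
\langle\g[f,f;f],\phi_i\mh\rangle \;=\; \int_{\r^3}\! Q[F,F;F](v)\,\phi_i(v)\,\ud v \;+\; \langle\l[f],\phi_i\mh\rangle.
\end{equation*}
The second term vanishes by self-adjointness of $\l$ (Lemma \ref{prelim-theorem: L-properties}(2)), since $\langle\l[f],\phi_i\mh\rangle=\langle f,\l[\phi_i\mh]\rangle$ and $\phi_i\mh\in\mathbf{N}(\l)$ by Lemma \ref{prelim-theorem: L-properties}(3). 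The first term vanishes by Corollary \ref{basic-corollary: invariants 2}, because each $\phi_i$ is a collisional invariant. Hence every coefficient of $\pk[\g[f,f;f]]$ in the basis $\{e_i\}$ is zero, and $\pk[\g[f,f;f]]=0$.

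There is essentially no obstacle here once the decomposition identity is in hand; the structural content of the lemma is really ``the conservation laws for $Q$ survive the perturbative splitting.'' The only mild technical point is to ensure enough regularity and decay on $f$ so that $F=\m+\mh f$ satisfies the smoothness/smallness-at-infinity hypothesis of Corollary \ref{basic-corollary: invariants 2}; this will be unproblematic in all the function classes used later in the paper.
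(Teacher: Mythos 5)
Your argument is correct and is essentially the paper's own proof: both rest on expanding $Q[\m+\mh f,\m+\mh f;\m+\mh f]$ into its linear and nonlinear parts and then killing the projection of the $Q$-term by the collisional invariants (Corollary \ref{basic-corollary: invariants 2}) and the projection of $\l[f]$ via the null space / self-adjointness of $\l$ (Lemma \ref{prelim-theorem: L-properties}). As a minor aside, your identity $\mhh Q[\m+\mh f,\m+\mh f;\m+\mh f]=-\l[f]+\g[f,f;f]$ carries the correct sign (the paper's displayed version has a harmless sign slip on the $\l[f]$ term), and the sign plays no role in the conclusion.
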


\begin{proof}
Since 
\begin{align}
    Q\left[\m+\mh f,\m+\mh f;\m+\mh f\right]=\;&Q[\m,\m;\m]+\mh\l[f]+\mh\g[f,f;f]\\
    =\;&\mh\l[f]+\mh\g[f,f;f],\no
\end{align}
and by direct computation in the same spirit as the proof of Corollary\;\ref{basic-corollary: invariants 2},
\begin{align}
    \pk\Big[\mhh Q[\m+\mh f,\m+\mh f;\m+\mh f]\Big]=\pk\big[\l[f]\big]=0,
\end{align}
we must have
\begin{align}
    \pk\big[\g[f,f;f]\big]=0.
\end{align}
\end{proof}

\begin{lemma}\label{prelim-lemma:derivative}
For fixed $\o$ and $v$, the Jacobian of the transformation $u\rightarrow u'$ satisfies 
\begin{align}
    \abs{\frac{\ud u'}{\ud u}}\geq \frac{1}{8},
\end{align}
and for fixed $\o$ and $u$, the Jacobian of the transformation $v\rightarrow v'$ satisfies
\begin{align}
    \abs{\frac{\ud v'}{\ud v}}\geq \frac{1}{8}.
\end{align}
\end{lemma}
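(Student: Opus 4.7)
The plan is to compute the Jacobian matrices of the two maps directly by differentiating the explicit collision formulas. Fix $v\in\r^3$ and $\o\in\s^2$, and write $u'=u+\big(\o\cdot(v-u)\big)\o$. Differentiating componentwise yields
\begin{align*}
\frac{\p u'_i}{\p u_j}=\delta_{ij}-\o_i\o_j,
\end{align*}
so that the Jacobian matrix is $\dfrac{\p u'}{\p u}=I-\o\otimes\o$, whose eigenvalues in the orthonormal basis $\{\o,e_1,e_2\}$ of $\r^3$ adapted to $\o$ are $0,1,1$. Since this matrix is a rank-two orthogonal projection onto $\o^\perp$, its classical three-dimensional determinant vanishes, so the positive lower bound must be interpreted via an appropriate reparametrization of the collision.

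To this end, I would pass to the center-of-mass representation parametrized by $\sigma\in\s^2$, related to $\o$ through $\sigma=\hat n-2(\o\cdot\hat n)\o$ with $\hat n=(v-u)/\abs{v-u}$. Under this change of variable one has $u'=\tfrac{u+v}{2}-\tfrac{\abs{v-u}}{2}\sigma$, and differentiating at fixed $v$ and $\sigma$, the product rule on $\abs{v-u}\sigma$ gives
\begin{align*}
\frac{\p u'}{\p u}=\tfrac{1}{2}I-\tfrac{1}{2}\sigma\otimes\hat n.
\end{align*}
Since $\sigma\otimes\hat n$ has rank one with unique nonzero eigenvalue $\sigma\cdot\hat n$, the eigenvalues of this Jacobian are $\tfrac{1}{2}(1-\sigma\cdot\hat n)$, $\tfrac{1}{2}$, $\tfrac{1}{2}$, whose product is $(1-\sigma\cdot\hat n)/8$. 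On the portion of $\s^2$ used in the standard $\o$-integration convention of the collision operator, one verifies that $1-\sigma\cdot\hat n\geq 1$, so the resulting quantity is at least $1/8$. The transformation $v\to v'$ is handled symmetrically using $v'=\tfrac{u+v}{2}+\tfrac{\abs{v-u}}{2}\sigma$ and an identical eigenvalue computation.

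The main obstacle is precisely the rank degeneracy of the naive $\o$-representation, where $I-\o\otimes\o$ annihilates the $\o$-direction and produces a vanishing 3D determinant. The resolution is the passage to center-of-mass coordinates, after which the Jacobian becomes a genuinely non-degenerate three-dimensional matrix and the explicit constant $1/8=\big(\tfrac{1}{2}\big)^3$ emerges cleanly from the product of its eigenvalues in the relevant regime of integration.
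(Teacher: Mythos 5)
Your instinct to supply the actual computation is reasonable (the paper's own proof is a one-line citation to Alexandre--Desvillettes--Villani--Wennberg), and your opening observation is correct and worth making: at fixed $\o$ and $v$ the map $u\mapsto u'$ has differential $I-\o\otimes\o$, a rank-two projection, so the literal three-dimensional Jacobian vanishes and the lemma must be read through the $\sigma$-parametrization. However, your resolution contains two genuine errors. First, a sign: with $u'=\tfrac{u+v}{2}-\tfrac{\abs{v-u}}{2}\sigma$ and $\hat n=(v-u)/\abs{v-u}$ one has $\nabla_u\abs{v-u}=-\hat n$, so the two minus signs cancel and
\begin{align*}
\frac{\p u'}{\p u}=\tfrac12 I+\tfrac12\,\sigma\otimes\hat n,
\qquad
\abs{\det}=\tfrac18\big(1+\sigma\cdot\hat n\big),
\end{align*}
not $\tfrac18(1-\sigma\cdot\hat n)$. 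Second, and more seriously, the assertion that ``on the portion of $\s^2$ used in the standard $\o$-integration one verifies $1-\sigma\cdot\hat n\ge 1$'' is unsubstantiated and false: in the $\o$-representation of the collision operator \eqref{collision operator}, $\o$ runs over all of $\s^2$, and since $\sigma\cdot\hat n=1-2(\o\cdot\hat n)^2$ sweeps the whole interval $[-1,1]$, the correct determinant $\tfrac18(1+\sigma\cdot\hat n)$ is not uniformly bounded below --- it tends to $0$ as $\o$ becomes parallel to $v-u$ (where $u'\to v$), and with your sign the bound would instead fail near grazing collisions. Either way, no uniform $1/8$ holds over the full angular range.

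The missing idea is precisely the step that produces the $1/8$ in the cited proof: one restricts to deviation angles with $\sigma\cdot\hat n\ge 0$, which may be assumed without loss of generality by symmetrizing the integrand under $\sigma\to-\sigma$ (equivalently, exchanging the roles of $u'$ and $v'$); on that half of the sphere $\tfrac18(1+\sigma\cdot\hat n)\ge\tfrac18$. Your proposal replaces this symmetrization by an unverified (and, after correcting the sign, backwards) inequality, so as written it does not establish the stated lower bound. To repair it, keep your center-of-mass computation but with the corrected Jacobian, state the angular restriction $\sigma\cdot\hat n\ge0$ explicitly, and justify it by the $u'\leftrightarrow v'$ symmetry of the integrals in which the lemma is later applied (e.g.\ in Lemma \ref{prelim-lemma: nonlinear-no-derivative}).
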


\begin{proof}
This is based on the proof of Alexandre-Desvillettes-Villani-Wennberg \cite[Lemma 1]{Alexandre.Desvillettes.Villani.Wennberg2000}. 
\end{proof}

\begin{lemma}\label{prelim-lemma: nonlinear-no-derivative}
Let $f_i$ for $i=1,2,3$, and $g$ be smooth functions. Then we have
\begin{align}\label{temp 3}
    &\abs{\int_{\r^3}\g[f_1,f_2;f_3]g\,\ud v}\\
    \ls&\tnm{g}{v}\tnm{f_1}{v}\tnm{f_2}{v}\tnm{f_3}{v}+\vnm{g}{v}\tnm{f_a}{v}\vnm{f_b}{v}\no\\ &+\vnm{g}{v}\min\bigg\{\Big(\vnm{f_1}{v}\tnm{f_2}{v}+\tnm{f_1}{v}\vnm{f_2}{v}\Big)\sup_v\abs{f_3},\,\sup_v\abs{f_1}\sup_v\abs{f_2}\vnm{f_3}{v}\bigg\}\no\\
    &+\vnm{g}{v}\min\bigg\{\Big(\vnm{f_1}{v}\tnm{f_2}{v}+\tnm{f_1}{v}\vnm{f_2}{v}\Big)\sup_v\abs{f_3},\Big(\sup_v\abs{\nu^{\frac{1}{2}}f_1}\tnm{f_2}{v}+\tnm{f_1}{v}\sup_v\abs{\nu^{\frac{1}{2}}f_2}\Big)\tnm{f_3}{v}\bigg\}\no\\
    &+\vnm{g}{v}\min\bigg\{\vnm{f_1}{v}\sup_v\abs{f_3},\,\sup_v\abs{\nu^{\frac{1}{2}}f_1}\tnm{f_3}{v}\bigg\}+\vnm{g}{v}\min\bigg\{\vnm{f_2}{v}\sup_v\abs{f_3},\,\sup_v\abs{\nu^{\frac{1}{2}}f_2}\tnm{f_3}{v}\bigg\}.\no
\end{align}
Also, we have
\begin{align}\label{temp 4}
    \abs{\int_{\r^3}\g[f_1,f_2;f_3]g\,\ud v}\ls&\;\sup_{v}\abs{\nu^3g}\tnm{f_a}{v}\tnm{f_b}{v}+\Big(\vnm{g}{v}+\sup_{v}\abs{\nu g}\Big)\tnm{f_1}{v}\tnm{f_2}{v}\tnm{f_3}{v},
\end{align}
and
\begin{align}\label{temp 5}
    \big\|\g[f_1,f_2;f_3]g\big\|_{L^2_v}\ls&\,\sup_{v}\abs{\nu g}\bigg(\tnm{f_1}{v}\tnm{f_2}{v}\tnm{f_3}{v}+\tnm{f_a}{v}\tnm{f_b}{v}\\
    &+\min\Big\{\sup_{v}\abs{f_3}\tnm{f_1}{v}\tnm{f_2}{v}, \sup_{v}\abs{f_1}\sup_{v}\abs{f_2}\tnm{f_3}{v}\Big\}\no\\
    &+\min\Big\{ \sup_{v}\abs{f_3}\tnm{f_1}{v}\tnm{f_2}{v},\sup_{v}\abs{f_2}\tnm{f_1}{v}\tnm{f_3}{v} \Big\}\no\\
    &+\min\Big\{\sup_{v}\abs{f_3}\tnm{f_1}{v}\tnm{f_2}{v},\sup_{v}\abs{f_1}\tnm{f_2}{v}\tnm{f_3}{v}\Big\}\bigg)\no.
\end{align}
Here $(a,b)$ runs all combinations of $\{1,2,3\}$.
\end{lemma}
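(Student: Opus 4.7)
The plan is to prove all three bounds \eqref{temp 3}, \eqref{temp 4}, \eqref{temp 5} by first decomposing $\g[f_1,f_2;f_3]$ into its five constituents from \eqref{Gamma-opt}, namely the bilinear piece $\mhh\qq[\mh f_1,\mh f_2]$, the three semi-trilinear pieces $\mhh\q[\mh f_1,\mh f_2;\m]$, $\mhh\q[\mh f_1,\m;\mh f_3]$, $\mhh\q[\m,\mh f_2;\mh f_3]$ (each carrying one Maxwellian factor $\m$ or $\mh$), and the genuinely cubic remainder $\mhh\q[\mh f_1,\mh f_2;\mh f_3]$. After testing against $g$, each piece produces an eightfold integral in $(u,v,\o)$; the Gaussian bound $\m,\mm\simeq\ue^{-\abs{v}^2}$ from \eqref{temp 2} allows us to absorb the $\mhh$ weight into one of the $\mh$ factors so that every term enjoys a Gaussian $\ue^{-c\abs{v}^2-c\abs{u}^2}$ on top of the collision kernel $q(\o,\abs{v-u})$.

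For the bilinear piece, I would follow the classical Guo-type argument \cite[Lemma~2.3]{Guo2002}: symmetrize using the first two equalities of Lemma \ref{basic-lemma: invariants}, apply the pre-post substitution $(u,v)\leftrightarrow(u',v')$ (whose Jacobian is $1$) to match pairs, and close via Cauchy–Schwarz with the Gaussian weight, producing the term $\vnm{g}{v}\tnm{f_a}{v}\vnm{f_b}{v}$ in \eqref{temp 3}. For the three semi-trilinear pieces containing a pure Maxwellian, two of the four integrands still carry matched $(u,v)$ or $(u',v')$ arguments and can be estimated similarly; the remaining mixed pairs such as $\m(u)f(v')$ are controlled using the boundedness of $\m$ and either a Jacobian change of variables (Lemma \ref{prelim-lemma:derivative}) or a direct $\sup_v\abs{\cdot}$ extraction of the Maxwellian factor, yielding the Gaussian analogues of $K_1$, $K_2$ kernels already bounded in Lemma \ref{prelim-lemma: K-compactness}.

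The main obstacle is the pure cubic piece $\mhh\q[\mh f_1,\mh f_2;\mh f_3]$, which gives rise precisely to the mixed-type integrals $\int f_i(u)f_j(u')$, $\int f_i(v)f_j(v')$, $\int f_i(u)f_j(v')$, $\int f_i(v)f_j(u')$ discussed in the introduction, for which the pre-post change of variables fails. The strategy is to extract one factor in $L^{\infty}_v$, thereby reducing to a bilinear expression which can then be handled either by Cauchy–Schwarz after invoking Lemma \ref{prelim-lemma:derivative} (the lower Jacobian bound $\abs{\ud u'/\ud u}\geq 1/8$ and $\abs{\ud v'/\ud v}\geq 1/8$) to change variables in the "good" direction $u\to u'$ or $v\to v'$, or by directly distributing $\nu^{1/2}$ weights on the $L^{\infty}$ factor. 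The freedom to choose which of $f_1,f_2,f_3$ is placed in $L^{\infty}$ is exactly what produces the $\min\{\cdot,\cdot\}$ structure of \eqref{temp 3}: the first option in each minimum corresponds to placing $f_3$ in $\sup_v$, while the second corresponds to placing $f_1$ or $f_2$ in $\sup_v$ (with or without the $\nu^{1/2}$ weight absorbed).

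The alternative bound \eqref{temp 4} is obtained by moving the $\nu$ weight onto $g$ (the factor $\sup_v\abs{\nu^3 g}$ absorbs the worst large-velocity growth from the cubic triple product), so that the $f_i$'s can be estimated in plain $L^2_v$. Finally, \eqref{temp 5} follows from \eqref{temp 3} by duality: for every $h\in L^2_v$ we write $\br{\g[f_1,f_2;f_3],h}$, choose $g=\nu^{-1}h$ so that $\sup_v\abs{\nu g}=\sup_v\abs{h}$ is controlled when we weight-shift, and then use Cauchy–Schwarz to move from the $\vnm{\cdot}{v}$ bounds on $g$ to $L^2_v$ bounds on $h$; taking supremum over $\|h\|_{L^2_v}\leq 1$ yields \eqref{temp 5}. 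The bookkeeping of which weight ($\nu$, $\nu^{1/2}$, pure $L^2$, or $L^{\infty}$) is placed on which of the four functions $g,f_1,f_2,f_3$ is the most delicate step, and must be carried out separately for each of the five constituents of $\g$ before summing.
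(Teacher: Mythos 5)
Your proposal follows essentially the same route as the paper's proof: the same five-term decomposition of $\g$ from \eqref{Gamma-opt}, the quadratic pieces handled via the classical estimate of \cite[Lemma 2.3]{Guo2002}, the genuinely cubic piece treated by Gaussian domination of the Maxwellian factors plus extraction of one factor in $L^{\infty}_v$ combined with the Jacobian bounds of Lemma \ref{prelim-lemma:derivative} in the good directions $u\rt u'$, $v\rt v'$ (which is exactly what produces the $\min$ structure), \eqref{temp 4} by shifting the $\nu$-weights onto $g$, and \eqref{temp 5} by $L^2_v$ duality with the weight kept on $g$ as $\sup_v\abs{\nu g}$. One small caution: \eqref{temp 5} does not literally follow from \eqref{temp 3} by duality (that would leave $\nu$-weighted norms on the $f_i$), and the phrase ``choose $g=\nu^{-1}h$'' should be dropped since $g$ is a given function; instead, as the paper does and as your closing bookkeeping remark implicitly requires, the trilinear integrals must be re-estimated against the test function $h$ with all velocity weights placed on $g$.
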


\begin{proof}
We look at formula \eqref{Gamma-opt} for the $\Gamma$ operator and estimate term by term.

{\it \underline{Step\;1}. Estimate of quadratic terms}.
For $\mhh\qq\left[\mh f_1,\mh f_2\right]$ and $\th\mhh\q\left[\mh f_1,\mh f_2;\m\right]$, based on \cite[Lemma 2.3]{Guo2002}, we have
\begin{align}
    \abs{\int_{\r^3}\mhh\qq\left[\mh f_1,\mh f_2\right]g\,\ud v}&\ls\vnm{f_1}{v}\tnm{f_2}{v}\vnm{g}{v},\\
    \abs{\int_{\r^3}\mhh\qq\left[\mh f_1,\mh f_2\right]g\,\ud v}&\ls \sup_{v}\abs{\nu^3g}\tnm{f_1}{v}\tnm{f_2}{v},\\
    \tnm{\mhh\qq\left[\mh f_1,\mh f_2\right]g}{v}&\ls\sup_{v}\abs{\nu g}\tnm{f_1}{v}\tnm{f_2}{v},
\end{align}
and
\begin{align}
    \abs{\int_{\r^3}\th\mhh\q\left[\mh f_1,\mh f_2;\m\right]g\,\ud v}&\ls\vnm{f_1}{v}\tnm{f_2}{v}\vnm{g}{v},\\
    \abs{\int_{\r^3}\th\mhh\q\left[\mh f_1,\mh f_2;\m\right]g\,\ud v}&\ls \sup_{v}\abs{\nu^3g}\tnm{f_1}{v}\tnm{f_2}{v},\\
    \tnm{\th\mhh\q\left[\mh f_1,\mh f_2;\m\right]g}{v}&\ls\sup_{v}\abs{\nu g}\tnm{f_1}{v}\tnm{f_2}{v}.
\end{align}

{\it \underline{Step\;2-1}. Estimate of cubic terms for \eqref{temp 3}}.
We then focus on $\th\mhh\q\left[\mh f_1,\mh f_2;\mh f_3\right]$. Recalling \eqref{temp 2}, we have
\begin{align}
    &\abs{\th\mhh\q\left[\mh f_1,\mh f_2;\mh f_3\right]}\\
    \ls\,&\,\ue^{\frac{1}{2}\abs{v}^2}\int_{\r^3}\int_{\s^2}q(\o,\abs{v-u})\ue^{-\frac{1}{2}\abs{u'}^2-\frac{1}{2}\abs{v'}^2}\Big(\abs{f_1(u')f_2(v')}+\abs{f_1(v' )f_2(u')}\Big)\Big(\ue^{-\frac{1}{2}\abs{u}^2}\abs{f_3(u )}+\ue^{-\frac{1}{2}\abs{v}^2}\abs{f_3(v)}\Big)\ud\o\ud u\no\\
    &+\ue^{\frac{1}{2}\abs{v}^2}\int_{\r^3}\int_{\s^2}q(\o,\abs{v-u})\ue^{-\frac{1}{2}\abs{u}^2-\frac{1}{2}\abs{v}^2}\Big(\abs{f_1(u)f_2(v)}+\abs{f_1(v )f_2(u)}\Big)\Big(\ue^{-\frac{1}{2}\abs{u'}^2}\abs{f_3(u' )}+\ue^{-\frac{1}{2}\abs{v'}^2}\abs{f_3(v')}\Big)\ud\o\ud u\no\\
    \ls\,&\int_{\r^3}\int_{\s^2}q(\o,\abs{v-u})\ue^{-\frac{1}{2}\abs{u}^2}\Big(\abs{f_1(u')f_2(v')}+\abs{f_1(v' )f_2(u')}\Big)\Big(\ue^{-\frac{1}{2}\abs{u}^2}\abs{f_3(u )}+\ue^{-\frac{1}{2}\abs{v}^2}\abs{f_3(v)}\Big)\ud\o\ud u\no\\
    &+\int_{\r^3}\int_{\s^2}q(\o,\abs{v-u})\ue^{-\frac{1}{2}\abs{u}^2}\Big(\abs{f_1(u)f_2(v)}+\abs{f_1(v )f_2(u)}\Big)\Big(\ue^{-\frac{1}{2}\abs{u'}^2}\abs{f_3(u' )}+\ue^{-\frac{1}{2}\abs{v'}^2}\abs{f_3(v')}\Big)\ud\o\ud u\no\\
    =\,&\, J_1+J_2.\no
\end{align}
For both parts, we have the naive bound
\begin{align}
    q(\o,\abs{v-u})\ue^{-\frac{1}{2}\abs{u}^2}\ls \nu(v).
\end{align}
Also, noticing that $q(\o,\abs{v-u})=\abs{\o\big(\o\cdot(v-u)\big)}=\abs{u-u'}$, we have
\begin{align}
    q(\o,\abs{v-u})\ue^{-\frac{1}{2}\abs{u}^2}\ls \nu(u').
\end{align}
For $J_1$, we split
\begin{align}
    &\abs{\int_{\r^3}J_1(v)g(v)\,\ud v}\\
    \ls\,&\int_{\r^3}\int_{\r^3}\int_{\s^2} q(\o,\abs{v-u})\ue^{-\frac{1}{2}\abs{u}^2}\Big(\abs{f_1(u')f_2(v')}+\abs{f_1(v' )f_2(u')}\Big)\Big(\ue^{-\frac{1}{2}\abs{u}^2}\abs{f_3(u )}\Big)|g(v)|\,\ud\o\ud u\ud v\no\\
    &+\int_{\r^3}\int_{\r^3}\int_{\s^2}
    q(\o,\abs{v-u})\ue^{-\frac{1}{2}\abs{u}^2}\Big(\abs{f_1(u')f_2(v')}+\abs{f_1(v' )f_2(u')}\Big)\Big(\ue^{-\frac{1}{2}\abs{v}^2}\abs{f_3(v)}\Big)|g(v)|\,\ud\o\ud u\ud v\no\\
    =\,&\, I_{11}+I_{12}.\no
\end{align}
We may directly use Cauchy's inequality to bound $I_{11}$, 
\begin{align}
    I_{11}\ls\,&\bigg(\int_{\r^3}\int_{\r^3}\int_{\s^2}\Big(f_1^2(u')f_2^2(v')+f_1^2(v')f_2^2(u')\Big)\ud\o\ud u\ud v\bigg)^{\frac{1}{2}}\\
    &\times\bigg(\int_{\r^3}\int_{\r^3}\int_{\s^2}\nu^2(v)\ue^{-\abs{u}^2}f_3^2(u)g^2(v)\,\ud\o\ud u\ud v\bigg)^{\frac{1}{2}}\no\\
    \ls\,&\tnm{g}{v}\tnm{f_1}{v}\tnm{f_2}{v}\tnm{f_3}{v}.\no
\end{align}
The estimate of $I_{12}$ is a bit complicated due to $g(v)f_3(v)$ term. 
We may bound it in two different ways
\begin{align}
    I_{12}\ls\,&\bigg(\int_{\r^3}\int_{\r^3}\int_{\s^2}\nu(u')\Big(f_1^2(u')f_2^2(v')+f_1^2(v')f_2^2(u')\Big)\ud\o\ud u\ud v\bigg)^{\frac{1}{2}}\\
    &\times\bigg(\int_{\r^3}\int_{\r^3}\int_{\s^2}\abs{v-u}\ue^{-\frac{\abs{u}^2}{2}}\ue^{-|v|^2}f_3^2(v) g^2(v)\,\ud\o\ud u\ud v\bigg)^{\frac{1}{2}}\no\\
    \ls\,&\vnm{g}{v}\Big(\vnm{f_1}{v}\tnm{f_2}{v}+\tnm{f_1}{v}\vnm{f_2}{v}\Big)\sup_v\abs{f_3}.\no
\end{align}
and
\begin{align}
    I_{12}\ls\,&\sup_v\abs{f_1}\sup_v\abs{f_2}\bigg(\int_{\r^3}\int_{\r^3}\int_{\s^2}\abs{v-u}g^2(v)\ue^{-\frac{1}{2}\abs{u}^2}\ue^{-\frac{1}{2}\abs{v}^2}\ud\o\ud u\ud v\bigg)^{\frac{1}{2}}\\
    &\times\bigg(\int_{\r^3}\int_{\r^3}\int_{\s^2}\abs{v-u}f_3^2(v)\ue^{-\frac{1}{2}\abs{u}^2}\ue^{-\frac{1}{2}\abs{v}^2}\ud\o\ud u\ud v\bigg)^{\frac{1}{2}}\no\\
    \ls\,&\tnm{g}{v}\sup_v\abs{f_1}\sup_v\abs{f_2}\tnm{f_3}{v}.\no
\end{align}
Hence, we know
\begin{align}
    I_{12}\ls \vnm{g}{v}\min\bigg\{\Big(\vnm{f_1}{v}\tnm{f_2}{v}+\tnm{f_1}{v}\vnm{f_2}{v}\Big)\sup_v\abs{f_3},\sup_v\abs{f_1}\sup_v\abs{f_2}\tnm{f_3}{v}\bigg\}.
\end{align}
In total, we have
\begin{align}
    \abs{\int_{\r^3}J_1(v)g(v)\,\ud v}\ls\,&\tnm{g}{v}\tnm{f_1}{v}\tnm{f_2}{v}\tnm{f_3}{v}\\ &+\vnm{g}{v}\min\bigg\{\Big(\vnm{f_1}{v}\tnm{f_2}{v}+\tnm{f_1}{v}\vnm{f_2}{v}\Big)\sup_v\abs{f_3},\sup_v\abs{f_1}\sup_v\abs{f_2}\tnm{f_3}{v}\bigg\}.\no
\end{align}
For $J_2$, 
we may use two different ways to bound it:
\begin{align}
    \abs{\int_{\r^3}J_2(v)g(v)\,\ud v}\ls\,&\sup_v\abs{ f_3}\bigg(\int_{\r^3}\int_{\r^3}\int_{\s^2}\abs{v-u}g^2(v)\ue^{-\frac{1}{2}\abs{u}^2}\ud\o\ud u\ud v\bigg)^{\frac{1}{2}}\\
    &\times\bigg(\int_{\r^3}\int_{\r^3}\int_{\s^2}\nu(v)f_i^2(v)f_j^2(u)\,\ud\o\ud u\ud v\bigg)^{\frac{1}{2}}\no\\
    \ls\,&\vnm{g}{v}\Big(\vnm{f_1}{v}\tnm{f_2}{v}+\tnm{f_1}{v}\vnm{f_2}{v}\Big)\sup_v\abs{f_3},\no
\end{align}
and using the fact that $\abs{q(\o,\abs{v-u})}=\abs{v-v'}$,
\begin{align}
    \abs{\int_{\r^3}J_2(v)g(v)\,\ud v}\ls\,&\bigg(\int_{\r^3}\int_{\r^3}\int_{\s^2}\nu(v)g^2(v)f_i^2(u)\,\ud\o\ud u\ud v\bigg)^{\frac{1}{2}}\\
    &\times\bigg(\int_{\r^3}\int_{\r^3}\int_{\s^2}\nu(u')f_j^2(v)\ue^{-\abs{u'}^2}f_3^2(u')\,\ud\o\ud u\ud v\bigg)^{\frac{1}{2}}\no\\
    &+\bigg(\int_{\r^3}\int_{\r^3}\int_{\s^2}\nu(v)g^2(v)f_i^2(u)\,\ud\o\ud u\ud v\bigg)^{\frac{1}{2}}\no\\
    &\times\bigg(\int_{\r^3}\int_{\r^3}\int_{\s^2}\abs{v-v'}\ue^{-\frac{1}{2}\abs{u}^2}f_j^2(v)\ue^{-\abs{v'}^2}f^2_3(v')\,\ud\o\ud u\ud v\bigg)^{\frac{1}{2}}\no\\
    =&\;I_{21}+I_{22}.\no
\end{align}
Using Lemma \ref{prelim-lemma:derivative} with substitution $u\rt u'$, we may obtain
\begin{align}
    I_{21}\ls \vnm{g}{v}\tnm{f_i}{v}\tnm{f_j}{v}\tnm{f_3}{v}.
\end{align}
Using Lemma \ref{prelim-lemma:derivative} with substitution $v\rt v'$, we may obtain
\begin{align}
    I_{22}
    \ls \sup_{v}\abs{\nu^{\frac{1}{2}}f_j}\vnm{g}{v}\tnm{f_i}{v}\tnm{f_3}{v}.
\end{align}
In total, we know
\begin{align}
    \abs{\int_{\r^3}J_2(v)g(v)\,\ud v}
    \ls\,&\vnm{g}{v}\Big(\tnm{f_1}{v}\tnm{f_2}{v}+\sup_v\abs{\nu^{\frac{1}{2}}f_1}\tnm{f_2}{v}+\tnm{f_1}{v}\sup_v\abs{\nu^{\frac{1}{2}}f_2}\Big)\tnm{f_3}{v}.\no
\end{align}
Hence, we know
\begin{align}
    \abs{\int_{\r^3}J_2(v)g(v)\,\ud v}\ls&\vnm{g}{v}\min\bigg\{\Big(\vnm{f_1}{v}\tnm{f_2}{v}+\tnm{f_1}{v}\vnm{f_2}{v}\Big)\sup_v\abs{f_3},\\
    &\Big(\tnm{f_1}{v}\tnm{f_2}{v}+\sup_v\abs{\nu^{\frac{1}{2}}f_1}\tnm{f_2}{v}+\tnm{f_1}{v}\sup_v\abs{\nu^{\frac{1}{2}}f_2}\Big)\tnm{f_3}{v}\bigg\}.\no
\end{align}
Summarizing all above, we know
\begin{align}
    &\abs{\int_{\r^3}\th\mhh\q\left[\mh f_1,\mh f_2;\mh f_3\right]g\,\ud v}\\
    \ls\,&\tnm{g}{v}\tnm{f_1}{v}\tnm{f_2}{v}\tnm{f_3}{v}\no\\ \,&+\vnm{g}{v}\min\bigg\{\Big(\vnm{f_1}{v}\tnm{f_2}{v}+\tnm{f_1}{v}\vnm{f_2}{v}\Big)\sup_v\abs{f_3},\sup_v\abs{f_1}\sup_v\abs{f_2}\vnm{f_3}{v}\bigg\}\no\\
    \,&+\vnm{g}{v}\min\bigg\{\Big(\vnm{f_1}{v}\tnm{f_2}{v}+\tnm{f_1}{v}\vnm{f_2}{v}\Big)\sup_v\abs{f_3},\Big(\sup_v\abs{\nu^{\frac{1}{2}}f_1}\tnm{f_2}{v}+\tnm{f_1}{v}\sup_v\abs{\nu^{\frac{1}{2}}f_2}\Big)\tnm{f_3}{v}\bigg\}.\no
\end{align}

{\it \underline{Step\;2-2}. Estimate of cubic terms for \eqref{temp 4}}.
On the other hand, similar to the estimates in Step 2-1, if we take supremum over $v$ on $g$, we have
\begin{align}
    \abs{\int_{\r^3}J_1(v)g(v)\,\ud v}\ls\,&\,\Big(\vnm{g}{v}+\sup_{v}\abs{\nu g}\Big)\tnm{f_1}{v}\tnm{f_2}{v}\tnm{f_3}{v},
\end{align}
and
\begin{align}
    \abs{\int_{\r^3}J_2(v)g(v)\,\ud v}\ls\,\Big(\vnm{g}{v}+\sup_{v}\abs{\nu g}\Big)\tnm{f_1}{v}\tnm{f_2}{v}\tnm{f_3}{v},\no
\end{align}
Hence, we have
\begin{align}
    \int_{\r^3}\th\mhh\q\left[\mh f_1,\mh f_2;\mh g\right]\ud v\ls\, \Big(\vnm{g}{v}+\sup_{v}\abs{\nu g}\Big)\tnm{f_1}{v}\tnm{f_2}{v}\tnm{f_3}{v}.
\end{align}

{\it \underline{Step\;2-3}. Estimate of cubic terms for \eqref{temp 5}}.
Also, in a similar fashion, for any $h\in L^2_{v}$, we have
\begin{align}
    &\abs{\int_{\r^3}J_1(v)g(v)h(v)\,\ud v}\\
    \ls&\,\sup_{v}\abs{\nu g}\tnm{h}{v}\bigg(\tnm{f_1}{v}\tnm{f_2}{v}\tnm{f_3}{v}+\min\Big\{\sup_{v}\abs{f_3}\tnm{f_1}{v}\tnm{f_2}{v}, \sup_{v}\abs{f_1}\sup_{v}\abs{f_2}\tnm{f_3}{v}\Big\}\bigg),\no
\end{align}
and
\begin{align}
    &\abs{\int_{\r^3}J_2(v)g(v)h(v)\,\ud v}\\
    \ls\,&\sup_{v}\abs{\nu g}\tnm{h}{v}\bigg(\min\Big\{\sup_{v}\abs{f_2}\tnm{f_1}{v}\tnm{f_3}{v}, \sup_{v}\abs{f_3}\tnm{f_1}{v}\tnm{f_2}{v} \Big\}\no\\
    \,&+\min\Big\{\sup_{v}\abs{f_1}\tnm{f_2}{v}\tnm{f_3}{v}, \sup_{v}\abs{f_3}\tnm{f_1}{v}\tnm{f_2}{v} \Big\}\bigg)\no.
\end{align}
Therefore, due to duality of $L^2$, we have
\begin{align}
    &\tnm{\th\mhh\q\left[\mh f_1,\mh f_2;\mh g\right]}{v}\\
    \ls\,& \sup_{v}\abs{\nu g}\bigg(\tnm{f_1}{v}\tnm{f_2}{v}\tnm{f_3}{x,v}+\min\Big\{\sup_{v}\abs{f_3}\tnm{f_1}{v}\tnm{f_2}{v}, \sup_{v}\abs{f_1}\sup_{v}\abs{f_2}\tnm{f_3}{v}\Big\}\no\\
    &\,+\min\Big\{ \sup_{v}\abs{f_3}\tnm{f_1}{v}\tnm{f_2}{v},\sup_{v}\abs{f_2}\tnm{f_1}{v}\tnm{f_3}{v} \Big\}\no\\
    &\,+\min\Big\{\sup_{v}\abs{f_3}\tnm{f_1}{v}\tnm{f_2}{v},\sup_{v}\abs{f_1}\tnm{f_2}{v}\tnm{f_3}{v}\Big\}\bigg)\no.
\end{align}

{\it \underline{Step\;3}. More estimate of quadratic terms}.
By a similar argument, we can handle \,$\th\mhh\q\left[\mh f_1,\m;\mh f_3\right]$ and \,$\th\mhh\q\left[\m,\mh f_2;\mh f_3\right]$. For \eqref{temp 3}, we obtain
\begin{align}
    \,&\abs{\int_{\r^3}\th\mhh\q\left[\mh f_1,\m;\mh f_3\right]g\,\ud v}\\
    \ls\,&\vnm{g}{v}\tnm{f_1}{v}\vnm{f_3}{v}+\vnm{g}{v}\min\Big\{\vnm{f_1}{v}\sup_v\abs{f_3},\sup_v\abs{\nu^{\frac{1}{2}}f_1}\tnm{f_3}{v}\Big\},\no
\end{align}
and
\begin{align}
    \,&\abs{\int_{\r^3}\th\mhh\q\left[\m,\mh f_2;\mh f_3\right]g\,\ud v}\\
    \ls\,&\vnm{g}{v}\tnm{f_2}{v}\vnm{f_3}{v}+\vnm{g}{v}\min\Big\{\vnm{f_2}{v}\sup_v\abs{f_3},\sup_v\abs{\nu^{\frac{1}{2}}f_2}\tnm{f_3}{v}\Big\}.\no
\end{align}
In addition, for \eqref{temp 4},
\begin{align}
    \abs{\int_{\r^3}\th\mhh\q\left[\mh f_1,\m;\mh f_3\right]g\,\ud v}\ls\, \Big(\vnm{g}{v}+\sup_{v}\abs{\nu g}\Big)\tnm{f_1}{v}\tnm{f_3}{v},
\end{align}
and
\begin{align}
    \abs{\int_{\r^3}\th\mhh\q\left[\m,\mh f_2;\mh f_3\right]g\,\ud v}\ls\, \Big(\vnm{g}{v}+\sup_{v}\abs{\nu g}\Big)\tnm{f_2}{v}\tnm{f_3}{v}.
\end{align}
Also, for \eqref{temp 5},
\begin{align}
    &\tnm{\th\mhh\q\left[\mh f_1,\m;\mh f_3\right]g}{v}\\
    \ls&\, \sup_{v}\abs{\nu g}\tnm{f_1}{v}\tnm{f_3}{v}+\sup_{v}\abs{\nu g}\min\Big\{\tnm{f_1}{v}\sup_v\abs{f_3},\sup_v\abs{f_1}\tnm{f_3}{v}\Big\},\no
\end{align}
and
\begin{align}
    &\tnm{\th\mhh\q\left[\m,\mh f_2;\mh f_3\right]g}{v}\\
    \ls&\, \sup_{v}\abs{\nu g}\tnm{f_2}{v}\tnm{f_3}{v}+\sup_{v}\abs{\nu g}\min\Big\{\tnm{f_2}{v}\sup_v\abs{f_3},\sup_v\abs{f_2}\tnm{f_3}{v}\Big\}.
\end{align}
\end{proof}

\begin{remark}
The nonlinear estimate for the quantum case is much more complicated than the classical Boltzmann version. In particular, unlike the classical Boltzmann equation, here we need \,$\sup_v$ estimate via Sobolev embedding, which implies that we have to consider velocity derivatives.
\end{remark}

\begin{lemma}\label{prelim-lemma: nonlinear-derivative}
Let $f_i$ for $i=1,2,3$, and $g$ be smooth functions. Then we have
\begin{align}
    &\abs{\int_{\r^3}\p_{\beta}^{\gamma}\g[f_1,f_2;f_3]g\,\ud v}\\
    \ls\,&\tnm{g}{v}\tnm{f_1'}{v}\tnm{f_2'}{v}\tnm{f_3'}{v}+\vnm{g}{v}\tnm{f_a'}{v}\vnm{f_b'}{v}\no\\ \,&+\vnm{g}{v}\min\bigg\{\Big(\vnm{f_1'}{v}\tnm{f_2'}{v}+\tnm{f_1'}{v}\vnm{f_2'}{v}\Big)\sup_v\abs{f_3'},\sup_v\abs{f_1'}\sup_v\abs{f_2'}\vnm{f_3'}{v}\bigg\}\no\\
    \,&+\vnm{g}{v}\min\bigg\{\Big(\vnm{f_1'}{v}\tnm{f_2'}{v}+\tnm{f_1'}{v}\vnm{f_2'}{v}\Big)\sup_v\abs{f_3'},\Big(\sup_v\abs{\nu^{\frac{1}{2}}f_1'}\tnm{f_2'}{v}+\tnm{f_1'}{v}\sup_v\abs{\nu^{\frac{1}{2}}f_2'}\Big)\tnm{f_3'}{v}\bigg\}\no\\
    \,&+\vnm{g}{v}\min\Big\{\vnm{f_1'}{v}\sup_v\abs{f_3'},\sup_v\abs{\nu^{\frac{1}{2}}f_1'}\tnm{f_3'}{v}\Big\}+\vnm{g}{v}\min\Big\{\vnm{f_2'}{v}\sup_v\abs{f_3'},\sup_v\abs{\nu^{\frac{1}{2}}f_2'}\tnm{f_3'}{v}\Big\}.\no
\end{align}
Thus, we have
\begin{align}
    \abs{\int_{\r^3}\p_{\beta}^{\gamma}\g[f_1,f_2;f_3]g\,\ud v}\ls\sup_{v}\abs{\nu^3g}\tnm{f_a'}{v}\tnm{f_b'}{v}+\Big(\vnm{g}{v}+\sup_{v}\abs{\nu g}\Big)\tnm{f_i'}{v}\tnm{f_j'}{v}\tnm{f_k'}{v},\no
\end{align}
and
\begin{align}
    \tnm{\p_{\beta}^{\gamma}\g[f_1,f_2;f_3]g}{v}\ls&\,\sup_{v}\abs{\nu g}\bigg(\tnm{f_1'}{v}\tnm{f_2'}{v}\tnm{f_3'}{v}+\tnm{f_a'}{v}\tnm{f_b'}{v}\\
    &+\min\Big\{\sup_{v}\abs{f_3'}\tnm{f_1'}{v}\tnm{f_2'}{v}, \sup_{v}\abs{f_1'}\sup_{v}\abs{f_2'}\tnm{f_3'}{v}\Big\}\no\\
    &+\min\Big\{ \sup_{v}\abs{f_3'}\tnm{f_1'}{v}\tnm{f_2'}{v},\sup_{v}\abs{f_2'}\tnm{f_1'}{v}\tnm{f_3'}{v} \Big\}\no\\
    &+\min\Big\{\sup_{v}\abs{f_3'}\tnm{f_1'}{v}\tnm{f_2'}{v},\sup_{v}\abs{f_1'}\tnm{f_2'}{v}\tnm{f_3'}{v}\Big\}\bigg)\no.
\end{align}
Here $(a,b)$ runs all combinations of $\{1,2,3\}$. For $s=1,2,3$, $f_s':=\p_{\beta_s}^{\gamma_s}f_s$ where $\abs{\beta_1}+\abs{\beta_2}+\abs{\beta_3}=\abs{\beta}$ and $\abs{\gamma_1}+\abs{\gamma_2}+\abs{\gamma_3}=\abs{\gamma}$.
\end{lemma}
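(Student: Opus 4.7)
The plan is to reduce the derivative estimate to the no-derivative estimate in Lemma \ref{prelim-lemma: nonlinear-no-derivative} by using the Leibniz rule and the Gaussian-type control of derivatives of $\m$ and $\mm$ already established in Lemma \ref{prelim-lemma: nu-derivative-estimate} and the proof of Lemma \ref{prelim-lemma: K-derivative}.

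First I would deal with the spatial/time derivatives $\p^{\gamma}$. These act only on the $f_i$ factors in \eqref{Gamma-opt} (since $\m$ and $\mm$ are functions of $v$ only), so Leibniz's rule distributes them as $\p^{\gamma_1}f_1$, $\p^{\gamma_2}f_2$, $\p^{\gamma_3}f_3$ with $\gamma_1+\gamma_2+\gamma_3=\gamma$. Because these operators commute with the $\ud u\,\ud\o$ integration and with multiplication by the Maxwellian weights, after this step the integrand has exactly the structure of $\g[\p^{\gamma_1}f_1,\p^{\gamma_2}f_2;\p^{\gamma_3}f_3]$, i.e. the no-derivative lemma applies term by term.

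Next I would handle the velocity derivatives $\p_{\beta}$. Here Leibniz's rule is more delicate since $\p_{\beta}$ hits $\mhh(v)$ and the factors $\mh(u)$, $\mh(v)$, $\mh(u')$, $\mh(v')$, $\m(\cdot)$ appearing inside the $\qq$ and $\q$ operators. The key observation, exactly as in the proofs of Lemma \ref{prelim-lemma: nu-derivative-estimate} and Lemma \ref{prelim-lemma: K-derivative}, is that under the substitution $w=v-u$ the variables $u'=v-w+\o(\o\cdot w)$ and $v'=v-\o(\o\cdot w)$ depend smoothly on $v$ with $\nv u'=\o\otimes\o$ and $\nv v'=I-\o\otimes\o$ bounded. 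Consequently any $v$-derivative of $\mhh(v),\mh(v),\mh(u'),\mh(v')$ or $\m(\cdot)$ is controlled by polynomial-in-$v$ factors times the same Gaussian decay, and the polynomial factors are absorbed by the remaining Gaussians. When $\p_{\beta}$ hits $q(\o,|v-u|)=|\o\cdot(v-u)|$ one picks up a bounded factor. Thus after expanding $\p_{\beta}$ via Leibniz, each resulting summand is pointwise dominated by an expression of the same schematic form as in the proof of Lemma \ref{prelim-lemma: nonlinear-no-derivative}, but with $f_s$ replaced by $\p_{\beta_s}^{\gamma_s}f_s$, where $\beta_1+\beta_2+\beta_3\leq\beta$ and the missing derivatives have been absorbed harmlessly into the Maxwellian weights.

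At this point the three estimates in the lemma follow by repeating verbatim the computations in Step 1, Step 2-1, Step 2-2, Step 2-3 and Step 3 of the proof of Lemma \ref{prelim-lemma: nonlinear-no-derivative} (Cauchy–Schwarz, the bound $q(\o,|v-u|)\ue^{-|u|^2/2}\ls\min\{\nu(v),\nu(u')\}$, Lemma \ref{prelim-lemma:derivative} for the change of variables $u\mapsto u'$ and $v\mapsto v'$, and the identity $|q|=|v-v'|=|u-u'|$), reading off the final bounds with $f_s$ replaced by $f_s'=\p_{\beta_s}^{\gamma_s}f_s$. The main obstacle, as in the linear case, is simply bookkeeping: keeping track that every $v$-derivative landing on a Maxwellian can be dominated by a Gaussian with a slightly smaller exponent, so that the key pointwise bound $\ue^{|v|^2/2}\mh(u')\mh(v')\ls\ue^{-|u|^2/2}$ (and its analogues with primes swapped) survives intact. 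Once this is checked the rest is mechanical replacement of $f_s$ by $f_s'$ in the previous argument.
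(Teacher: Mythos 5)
Your proposal is correct and follows essentially the same route as the paper: the paper's own (very brief) proof likewise distributes the $\p^{\gamma}_{\beta}$ derivatives among the three arguments of $\g$ via Leibniz and invokes the Gaussian control of derivatives of the Maxwellian weights from Lemma \ref{prelim-lemma: nu-derivative-estimate} (and the $w=v-u$ substitution as in Lemma \ref{prelim-lemma: K-derivative}) before repeating the estimates of Lemma \ref{prelim-lemma: nonlinear-no-derivative} with $f_s$ replaced by $f_s'$. Your write-up is in fact more explicit than the paper's two-sentence argument, and the bookkeeping points you flag (derivatives hitting $q$ or the weights) are handled exactly as you describe.
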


\begin{proof}
The proof is similar to that of Lemma \ref{prelim-lemma: nonlinear-no-derivative} and Lemma \ref{prelim-lemma: nu-derivative-estimate}. Note that the spatial and velocity derivatives will be distributed among the three arguments in $\Gamma$, but it will not change the fundamental structure here.
\end{proof}


\smallskip
\subsection{Local Solutions}




\begin{theorem}[Local Well-posedness] \label{local-theorem}
There exists $M>0$ and $T(M)>0$ such that if 
\begin{align}
    \ee[f_0]\leq \frac{M}{2},
\end{align}
then there exists a unique solution $f(t,x,v)$ to the quantum Boltzmann equation \eqref{equation: prelim-perturbation-2} such that
\begin{align}
    \ee[f(t)]\leq M,
\end{align}
for any $t\in[0,T]$. Moreover, the energy $\ee[f(t)]$ is continuous over $t\in[0,T]$. Furthermore, 
\begin{itemize}
    \item 
    for fermions $\th=-1$, if $0\leq F_0(x,v)\leq 1$, then $0\leq F(t,x,v)\leq 1$;
    \item
    for bosons $\th=1$, if $F_0(x,v)\geq 0$, then $F(t,x,v)\geq 0$.
\end{itemize}
\end{theorem}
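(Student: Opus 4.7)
The plan is to solve the perturbation equation \eqref{equation: prelim-perturbation-2} by a Picard-type iteration, closing a uniform high-order energy bound on a short time interval via the semi-positivity of $\l$ (Theorem \ref{prelim-theorem: semi-positivity}) and the trilinear bounds of Lemma \ref{prelim-lemma: nonlinear-derivative}. Starting from $f^{0}\equiv 0$, let $f^{n+1}$ solve the linear Cauchy problem
\begin{align*}
\dt f^{n+1}+v\cdot\nx f^{n+1}+\l[f^{n+1}]=\g[f^{n},f^{n};f^{n}],\qquad f^{n+1}(0)=f_{0}.
\end{align*}
Each linear step is solvable in the high-regularity framework via the splitting $\l=\nu\ik-K$ supplied by Lemma \ref{prelim-lemma: nu-estimate} and Lemma \ref{prelim-lemma: K-derivative}, using characteristics for the transport-plus-$\nu$ part and a Duhamel treatment of the compact perturbation $K$.

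For the uniform-in-$n$ energy bound I apply $\p_{\beta}^{\gamma}$ for each $\abs{\gamma}+\abs{\beta}\leq N$, pair with $\p_{\beta}^{\gamma}f^{n+1}$ in $L^{2}_{x,v}$, and integrate over $[0,t]$. The transport term integrates out, Theorem \ref{prelim-theorem: semi-positivity} supplies the dissipative contribution $\vnm{\p_{\beta}^{\gamma}\nnpk[f^{n+1}]}{x,v}^{2}$, and the commutators of $\p_{\beta}$ with $\nu$ and $K$ are absorbed through Lemma \ref{prelim-lemma: nu-derivative-estimate} and the small-$\eta$ form of Lemma \ref{prelim-lemma: K-derivative}. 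The trilinear source is estimated by Lemma \ref{prelim-lemma: nonlinear-derivative}, where the $\sup_{v}$ factors are first upgraded to $L^{\infty}_{x,v}$ and then controlled by the energy norm through the Sobolev embedding $H^{s}_{x,v}\hookrightarrow L^{\infty}_{x,v}$ for $s>3$; this is precisely why the hypothesis $N\geq 8$ is natural. The resulting inequality
\begin{align*}
\ee[f^{n+1}(t)]\,\leq\,\ee[f_{0}]+C\int_{0}^{t}P\bigl(\ee[f^{n}(s)]\bigr)\,\ud s,
\end{align*}
with $P$ a polynomial of degree at most six, propagates $\ee[f^{n+1}(t)]\leq M$ on $[0,T(M)]$ once $T(M)$ is chosen small depending only on $M$.

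Contraction is obtained by the same energy estimate applied to $g^{n+1}:=f^{n+1}-f^{n}$, which satisfies the linear equation with right-hand side $\g[f^{n},f^{n};f^{n}]-\g[f^{n-1},f^{n-1};f^{n-1}]$. Decomposing this difference into trilinear terms each containing one factor of $g^{n}=f^{n}-f^{n-1}$ and invoking the uniform bound together with Lemma \ref{prelim-lemma: nonlinear-derivative} yields $\ee[g^{n+1}(t)]\leq C(M)\,T\,\ee[g^{n}(t)]$, a strict contraction for $T$ small. The limit $f$ is the desired local solution; uniqueness follows by the same estimate on the difference of two solutions, and continuity of $\ee[f(t)]$ in $t$ is the standard consequence of strong convergence in the energy norm together with the equation itself.

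The most delicate point is the positivity of $F=\m+\mh f$, which I would obtain as a byproduct by running a parallel iteration directly at the $F$-level of the form $\dt F^{n+1}+v\cdot\nx F^{n+1}+L_{F^{n}}F^{n+1}=G_{F^{n}}$, with a monotone splitting of $Q$ into loss and gain parts. For bosons ($\th=1$) the loss coefficient $L_{F^{n}}$ and the source $G_{F^{n}}$ are manifestly non-negative whenever $F^{n}\geq 0$, and Duhamel forces $F^{n+1}\geq 0$. For fermions ($\th=-1$) the trilinear factor $1-F(u)-F(v)$ has no definite sign, so an artificial monotone decomposition of $Q$ is required in order to propagate both $F^{n+1}\geq 0$ and $F^{n+1}\leq 1$ simultaneously --- exactly the kind of decomposition that is developed for the vacuum case in Theorem \ref{vacuum-theorem: positivity.}, and which is the principal technical obstacle in this argument. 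Once the two iterations are shown to share the same limit by uniqueness in the energy class, the positivity bounds transfer automatically to $F$.
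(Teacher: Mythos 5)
Your energy/contraction scheme for the perturbation equation is essentially workable, but it is not the paper's scheme, and the difference matters for the part of the theorem you have not actually proved: the positivity bounds. Your Picard iteration is fully explicit, $\dt f^{n+1}+v\cdot\nx f^{n+1}+\l[f^{n+1}]=\g[f^n,f^n;f^n]$, which at the $F$-level has no sign structure whatsoever, so positivity cannot come out of it; you acknowledge this and propose a second, parallel iteration at the $F$-level with a ``monotone splitting,'' deferring the fermion case to ``exactly the kind of decomposition developed for the vacuum case.'' That is a genuine gap, not a detail: (i) the fermion statement requires propagating both $F\geq 0$ and $F\leq 1$, and you explicitly leave the needed decomposition unconstructed, calling it the principal obstacle; (ii) the vacuum-case monotone scheme you point to lives in a different framework (mild solutions with Gaussian space--time weights, smallness of $F$ itself), and its upper/lower barrier construction does not transfer to the near-Maxwellian setting where $F\simeq\m$ is not small; (iii) even granting such a scheme, you would still have to show its limit lies in the energy class before invoking uniqueness to identify it with $f$. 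The paper closes this gap with one choice that makes the second iteration unnecessary: it isolates in $Q$ all terms in which the unknown is evaluated at $v$ and treats exactly those implicitly, i.e. it iterates $\dt F^{n+1}+v\cdot\nx F^{n+1}+\tilde Q_p[F^n,F^n]\,F^{n+1}=Q_q[F^n,F^n;F^n]$, whose source is non-negative (given $0\leq F^n\leq 1$ for $\th=-1$, $F^n\geq 0$ for $\th=1$) and whose damping coefficient is non-negative for fermions, so solving the resulting ODE along characteristics propagates the pointwise bounds by induction; in perturbation form this same iteration reads $\dt f^{n+1}+v\cdot\nx f^{n+1}+\nu f^{n+1}=K[f^n]+\g_p[f^n,f^n;f^n]-\g_q[f^n,f^n;f^{n+1}]$, and it is this one equation on which the boundedness and contraction estimates are run. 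If you want your write-up to prove the theorem as stated, you should either adopt this semi-implicit splitting or actually construct the monotone decomposition you allude to.

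Two smaller imprecisions in your energy argument. First, Theorem \ref{prelim-theorem: semi-positivity} only yields dissipation on $\nnpk[\p_\beta^\gamma f^{n+1}]$, while $\ee$ contains the full $\nu$-norm in its time integral; this is repairable on a short interval via $\vnm{\pk g}{x,v}\ls\tnm{g}{x,v}$ and Gronwall, but you should say so (the paper sidesteps it by keeping the full $\nu f^{n+1}$ on the left and putting $K[f^n]$ on the right, so no coercivity of $\l$ is needed at this stage). Second, the contraction constant cannot be made small by $T$ alone: the terms in which the $\nu$-weight must fall on $f^{n+1}-f^n$ or $f^n-f^{n-1}$ are controlled by the dissipation time-integrals and produce a factor of order $M$, not $T$; as in the paper, contraction requires taking $M$ small as well.
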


\begin{proof}

In the following, we mainly study $Q[F,F;F]$, so we rearrange the terms as
\begin{align}\label{prelim-decompostion: Q}
    Q[F,F;F]=Q_p[F,F;F]-Q_q[F,F;F],
\end{align}
where
\begin{align}
    Q_p[F,G;H]:=&\int_{\r^3}\int_{\s^2}q(\o,\abs{v-u})F(u' )G(v')\big(1+\th H(u)\big)\ud\o\ud u,\\
    Q_q[F,G;H]:=&\int_{\r^3}\int_{\s^2}q(\o,\abs{v-u})\Big[-\th F(u' )G(v') H(v)+F(u)H(v)\big(1+\th G(u')+\th G(v')\big)\Big]\ud\o\ud u.
\end{align}
$Q_q[F,F;F]$ contains all terms that depends on $F(v)$, and $Q_p[F,F;F]$ contains all the other terms. 
(Here the $F,G,H$ is the not the same as \eqref{temp 1}).
We may further rewrite
\begin{align}
    Q_q[F,G;H]=H\tilde Q_p[F,G],
\end{align}
with
\begin{align}
    \tilde Q_p[F,G]=\int_{\r^3}\int_{\s^2}q(\o,\abs{v-u})\Big[-\th F(u' )G(v') +F(u)\big(1+\th G(u')+\th G(v')\big)\Big]\ud\o\ud u.
\end{align}
Similar to the decomposition in \eqref{prelim-decompostion: Q}, we may also decompose $\g$ as
\begin{align}
    \g[f,g;h]=\g_p[f,g;h]-\g_q[f,g;h],
\end{align}
where $\g_p$ comes from the linearization of $Q_p$, and $\g_q$ from $Q_p$.

\smallskip
{\it \underline{Step\;1}. Boundedness}.
Define the iteration sequence via
\begin{align}\label{local-equation: iteration.}
    \dt F^{n+1}+v\cdot\nx F^{n+1}+Q_q[F^n,F^n;F^{n+1}]=Q_p[F^n,F^n;F^n],
\end{align}
with
\begin{align}
    F^{n+1}(0,x,v)=F_0(x,v).
\end{align}
This is equivalent to the perturbation form
\begin{align}\label{local-equation: iteration}
    \dt f^{n+1}+v\cdot\nx f^{n+1}+\nu f^{n+1}=K[f^n]+\g_p[f^n,f^n;f^{n}]-\g_q[f^n,f^n;f^{n+1}],
\end{align}
with
\begin{align}
    f^{n+1}(0,x,v)=f_0(x,v).
\end{align}
The iteration starts with $f^0(t,x,v)=f_0(x,v)$. Taking $\p_{\beta}^{\gamma}$ on both sides of \eqref{local-equation: iteration}, we obtain
\begin{align}\label{local-equation: iteration-derivative}
    \dt \Big(\p_{\beta}^{\gamma}f^{n+1}\Big)+v\cdot\nx \Big(\p_{\beta}^{\gamma}f^{n+1}\Big)+\p_{\beta}^{\gamma}\Big(\nu f^{n+1}\Big)=\p_{\beta}^{\gamma}K[f^n]+\p_{\beta}^{\gamma}\g_p[f^n,f^n;f^{n}]-\p_{\beta}^{\gamma}\g_q[f^n,f^n;f^{n+1}].
\end{align}
Multiplying $\p_{\beta}^{\gamma}f^{n+1}$ on both sides of \eqref{local-equation: iteration-derivative} and integrating over $\Omega\times\r^3$, we obtain
\begin{align}\label{local-estimate:temp}
    &\frac{1}{2}\frac{\ud}{\ud t}\tnm{\p_{\beta}^{\gamma}f^{n+1}}{x,v}^2+\iint_{\Omega\times\r^3}\Big(\p_{\beta}^{\gamma}f^{n+1}\Big)\cdot\p_{\beta}^{\gamma}\Big(\nu f^{n+1}\Big)\\
    =&\iint_{\Omega\times\r^3}\Big(\p_{\beta}^{\gamma}f^{n+1}\Big)\cdot\p_{\beta}^{\gamma}K[f^n]+\iint_{\Omega\times\r^3}\Big(\p_{\beta}^{\gamma}f^{n+1}\Big)\cdot\p_{\beta}^{\gamma}\g_p[f^n,f^n;f^{n}]-\iint_{\Omega\times\r^3}\Big(\p_{\beta}^{\gamma}f^{n+1}\Big)\cdot\p_{\beta}^{\gamma}\g_q[f^n,f^n;f^{n+1}].\no
\end{align}
Let $\eta>0$ be sufficiently small and $C_{\eta}>0$. For the second term on the LHS of \eqref{local-estimate:temp}, using Lemma \ref{prelim-lemma: nu-estimate} and Lemma \ref{prelim-lemma: nu-derivative-estimate}, we have
\begin{align}
    &\iint_{\Omega\times\r^3}\Big(\p_{\beta}^{\gamma}f^{n+1}\Big)\cdot\p_{\beta}^{\gamma}\Big(\nu f^{n+1}\Big)\\
    \gs\,&\iint_{\Omega\times\r^3}\Big(\p_{\beta}^{\gamma}f^{n+1}\Big)\cdot\Big(\nu \p_{\beta}^{\gamma}f^{n+1}\Big)-\abs{\sum_{\beta'<\beta}\iint_{\Omega\times\r^3}\Big(\p_{\beta}^{\gamma}f^{n+1}\Big)\cdot\p_{\beta'}^{\gamma}\Big(\nu f^{n+1}\Big)}\no\\
    \gs\,&\vnm{\p_{\beta}^{\gamma}f^{n+1}}{x,v}^2-\eta\tnm{\p_{\beta}^{\gamma}f^{n+1}}{x,v}^2-C_{\eta}\sum_{\beta'<\beta}\tnm{\p_{\beta'}^{\gamma}f^{n+1}}{x,v}^2,\no
\end{align}
For the first term on the RHS of \eqref{local-estimate:temp}, using Lemma \ref{prelim-lemma: K-derivative}, we have
\begin{align}
    \iint_{\Omega\times\r^3}\Big(\p_{\beta}^{\gamma}f^{n+1}\Big)\cdot\p_{\beta}^{\gamma}K[f^n]\,\ls\, \eta\tnm{\p_{\beta}^{\gamma}f^{n+1}}{x,v}^2+C_{\eta}\sum_{\beta'\leq\beta}\tnm{\p_{\beta'}^{\gamma}f^{n}}{x,v}^2.
\end{align}
For the second term on the RHS of \eqref{local-estimate:temp}, using the first inequality in Lemma \ref{prelim-lemma: nonlinear-derivative} with $\sup_v$ falling on the term with the lowest $v$ derivative, combining with Sobolev embedding theorem $H^2_v\hookrightarrow L^{\infty}_v$, we obtain
\begin{align}
    &\iint_{\Omega\times\r^3}\Big(\p_{\beta}^{\gamma}f^{n+1}\Big)\cdot\p_{\beta}^{\gamma}\g_p[f^n,f^n;f^{n}]
    \,\ls\, \vnm{\p_{\beta}^{\gamma}f^{n+1}}{x,v} \sum_{\beta'\leq\beta}\vnm{\p_{\beta'}^{\gamma}f^{n}}{x,v}\cdot\sum_{\beta'\leq\beta}\tnm{\p_{\beta'}^{\gamma}f^{n}}{x,v}^2,
\end{align}
and
\begin{align}
    &\iint_{\Omega\times\r^3}\Big(\p_{\beta}^{\gamma}f^{n+1}\Big)\cdot\p_{\beta}^{\gamma}\g_q[f^n,f^n;f^{n+1}]\\
    \ls&\, \vnm{\p_{\beta}^{\gamma}f^{n+1}}{x,v}\left(\sum_{\beta'\leq\beta}\vnm{\p_{\beta'}^{\gamma}f^{n+1}}{x,v}\cdot\sum_{\beta'\leq\beta}\tnm{\p_{\beta'}^{\gamma}f^{n}}{x,v}^2+\sum_{\beta'\leq\beta}\tnm{\p_{\beta'}^{\gamma}f^{n+1}}{x,v}\cdot\sum_{\beta'\leq\beta}\tnm{\p_{\beta'}^{\gamma}f^{n}}{x,v}\cdot\sum_{\beta'\leq\beta}\vnm{\p_{\beta'}^{\gamma}f^{n}}{x,v}\right).\no
\end{align}
Summarizing all above and running over $\abs{\gamma}+\abs{\beta}\leq N$, we have
\begin{align}\label{temp 6}
    \frac{\ud}{\ud t}\nnm{f^{n+1}}^2+\vnnm{f^{n+1}}^2\,\ls&\, \nnm{f^{n+1}}^2+\vnnm{f^{n}}\nnm{f^{n}}^2\vnnm{f^{n+1}}\\
    &+\nnm{f^{n}}^2+\nnm{f^{n}}^2\vnnm{f^{n+1}}^2+\nnm{f^{n}}^2+\vnnm{f^{n}}\nnm{f^{n}}\vnnm{f^{n+1}}\nnm{f^{n+1}}.\no
\end{align}
Then integrating over $[0,t]$ and using the definition of $\ee$, with the help of Cauchy's inequality, we have
\begin{align}
    \ee[f^{n+1}(t)]\,\ls\,\ee[f_0]+t\sup_{s\in[0,t]}\ee[f^{n+1}(s)]+t\sup_{s\in[0,t]}\ee[f^{n}(s)]+\sup_{s\in[0,t]}\ee[f^{n}(s)]\sup_{s\in[0,t]}\ee[f^{n+1}(s)].
\end{align}
Assume that $\ds\sup_{s\in[0,T]}\ee[f^{n}(s)]\leq M$ and $\ee[f_0]\leq \dfrac{M}{2}$. Then taking supremum over $s\in[0,T]$, we have
\begin{align}
    \sup_{s\in[0,T]}\ee[f^{n+1}(s)]\,\ls\,\dfrac{M}{2}+T\sup_{s\in[0,T]}\ee[f^{n+1}(s)]+TM+M\sup_{s\in[0,T]}\ee[f^{n+1}(s)].
\end{align}
Hence, for $M$ sufficiently small and $T(M)$ sufficiently small, we know 
\begin{align}
    \sup_{s\in[0,T]}\ee[f^{n+1}(s)]\leq M.
\end{align}
Therefore, we know this iteration is uniformly bounded.

\smallskip
{\it \underline{Step\;2}. Contraction}.
On the other hand, taking the difference of the equations for $f^{n+1}$ and $f^n$, we obtain
\begin{align}\label{local-equation: difference}
    &\dt \big(f^{n+1}-f^n\big)+v\cdot\nx \big(f^{n+1}-f^n\big)+\nu \big(f^{n+1}-f^n\big)\\
    =\,&K[f^n-f^{n-1}]+\g_p[f^n-f^{n-1},f^{n};f^{n}]+\g_p[f^{n-1},f^n-f^{n-1};f^{n-1}]+\g_p[f^{n-1},f^{n-1};f^{n}-f^{n-1}]\no\\
    \,&-\g_q[f^n-f^{n-1},f^n;f^{n+1}]-\g_q[f^{n-1},f^n-f^{n-1};f^{n+1}]-\g_q[f^{n-1},f^{n-1};f^{n+1}-f^n],\no
\end{align}
with
\begin{align}
    \big(f^{n+1}-f^n\big)(0,x,v)=0.
\end{align}
Similarly to the above argument, we first take $\p_{\beta}^{\gamma}$, multiply $\p_{\beta}^{\gamma}\big(f^{n+1}-f^n\big)$ on both sides of \eqref{local-equation: difference}, and integrate over $\Omega\times\r^3$. 
Using similar techniques as in proving boundedness, we obtain
\begin{align}
    \sup_{s\in[0,T]}\ee\big[\big(f^{n+1}-f^n\big)\big](s)\,\ls\sup_{s\in[0,T]}\ee\big[\big(f^{n}-f^{n-1}\big)\big](s),
\end{align}
where we use the nonlinear estimates
\begin{align}
    &\iint_{\Omega\times\r^3}\p_{\beta}^{\gamma}\big(f^{n+1}-f^n\big)\cdot\p_{\beta}^{\gamma}\g_p[f^n-f^{n-1},f^n;f^{n}]\\
    \ls\,& \Big(\sup_{s\in[0,T]}\ee\big[\big(f^{n+1}-f^n\big)\big](s)\Big)^{\frac{1}{2}}\Big(\sup_{s\in[0,T]}\ee\big[\big(f^{n}-f^{n-1}\big)\big](s)\Big)^{\frac{1}{2}}\Big(\sup_{s\in[0,T]}\ee[f^n](s)\Big)^{\frac{1}{2}}\Big(\sup_{s\in[0,T]}\ee[f^{n}](s)\Big)^{\frac{1}{2}}\no\\
    \ls\,& M\Big(\sup_{s\in[0,T]}\ee\big[\big(f^{n+1}-f^n\big)\big](s)\Big)^{\frac{1}{2}}\Big(\sup_{s\in[0,T]}\ee\big[\big(f^{n}-f^{n-1}\big)\big](s)\Big)^{\frac{1}{2}},\no
\end{align}
\begin{align}
    &\iint_{\Omega\times\r^3}\p_{\beta}^{\gamma}\big(f^{n+1}-f^n\big)\cdot\p_{\beta}^{\gamma}\g_p[f^{n-1},f^n-f^{n-1};f^{n}]\\
    \ls\,& \Big(\sup_{s\in[0,T]}\ee\big[\big(f^{n+1}-f^n\big)\big](s)\Big)^{\frac{1}{2}}\Big(\sup_{s\in[0,T]}\ee\big[\big(f^{n}-f^{n-1}\big)\big](s)\Big)^{\frac{1}{2}}\Big(\sup_{s\in[0,T]}\ee[f^{n-1}](s)\Big)^{\frac{1}{2}}\Big(\sup_{s\in[0,T]}\ee[f^{n}](s)\Big)^{\frac{1}{2}}\no\\
    \ls\,& M\Big(\sup_{s\in[0,T]}\ee\big[\big(f^{n+1}-f^n\big)\big](s)\Big)^{\frac{1}{2}}\Big(\sup_{s\in[0,T]}\ee\big[\big(f^{n}-f^{n-1}\big)\big](s)\Big)^{\frac{1}{2}},\no
\end{align}
\begin{align}
    &\iint_{\Omega\times\r^3}\p_{\beta}^{\gamma}\big(f^{n+1}-f^n\big)\cdot\p_{\beta}^{\gamma}\g_p[f^{n-1},f^{n-1};f^{n}-f^{n-1}]\\
    \ls\,& \Big(\sup_{s\in[0,T]}\ee\big[\big(f^{n+1}-f^n\big)\big](s)\Big)^{\frac{1}{2}}\Big(\sup_{s\in[0,T]}\ee\big[\big(f^{n}-f^{n-1}\big)\big](s)\Big)^{\frac{1}{2}}\Big(\sup_{s\in[0,T]}\ee[f^{n-1}](s)\Big)\no\\
    \ls\,& M\Big(\sup_{s\in[0,T]}\ee\big[\big(f^{n+1}-f^n\big)\big](s)\Big)^{\frac{1}{2}}\Big(\sup_{s\in[0,T]}\ee\big[\big(f^{n}-f^{n-1}\big)\big](s)\Big)^{\frac{1}{2}},\no
\end{align}
and
\begin{align}
    &\iint_{\Omega\times\r^3}\p_{\beta}^{\gamma}\big(f^{n+1}-f^n\big)\cdot\p_{\beta}^{\gamma}\g_q[f^n-f^{n-1},f^n;f^{n+1}]\\
    \ls\,& \Big(\sup_{s\in[0,T]}\ee\big[\big(f^{n}-f^{n-1}\big)\big](s)\Big)^{\frac{1}{2}}\Big(\sup_{s\in[0,T]}\ee\big[\big(f^{n+1}-f^{n}\big)\big](s)\Big)^{\frac{1}{2}}\Big(\sup_{s\in[0,T]}\ee[f^n](s)\Big)^{\frac{1}{2}}\Big(\sup_{s\in[0,T]}\ee[f^{n+1}](s)\Big)^{\frac{1}{2}}\no\\
    \ls\,& M\Big(\sup_{s\in[0,T]}\ee\big[\big(f^{n}-f^{n-1}\big)\big](s)\Big)^{\frac{1}{2}}\Big(\sup_{s\in[0,T]}\ee\big[\big(f^{n+1}-f^{n}\big)\big](s)\Big)^{\frac{1}{2}},\no
\end{align}
\begin{align}
    &\iint_{\Omega\times\r^3}\p_{\beta}^{\gamma}\big(f^{n+1}-f^n\big)\cdot\p_{\beta}^{\gamma}\g_q[f^{n-1},f^n-f^{n-1};f^{n+1}]\\
    \ls\,& \Big(\sup_{s\in[0,T]}\ee\big[\big(f^{n}-f^{n-1}\big)\big](s)\Big)^{\frac{1}{2}}\Big(\sup_{s\in[0,T]}\ee\big[\big(f^{n+1}-f^{n}\big)\big](s)\Big)^{\frac{1}{2}}\Big(\sup_{s\in[0,T]}\ee[f^{n-1}](s)\Big)^{\frac{1}{2}}\Big(\sup_{s\in[0,T]}\ee[f^{n+1}](s)\Big)^{\frac{1}{2}}\no\\
    \ls\,& M\Big(\sup_{s\in[0,T]}\ee\big[\big(f^{n}-f^{n-1}\big)\big](s)\Big)^{\frac{1}{2}}\Big(\sup_{s\in[0,T]}\ee\big[\big(f^{n+1}-f^{n}\big)\big](s)\Big)^{\frac{1}{2}},\no
\end{align}
\begin{align}
    &\iint_{\Omega\times\r^3}\p_{\beta}^{\gamma}\big(f^{n+1}-f^n\big)\cdot\p_{\beta}^{\gamma}\g_q[f^{n-1},f^{n-1};f^{n+1}-f^n]\\
    \ls\,& \sup_{s\in[0,T]}\ee\big[\big(f^{n+1}-f^{n}\big)\big](s)\cdot\sup_{s\in[0,T]}\ee[f^{n-1}](s)\no\\
    \ls\,& M\sup_{s\in[0,T]}\ee\big[\big(f^{n}-f^{n-1}\big)\big](s).\no
\end{align}
Hence, we know the iteration is a contraction, and thus
defines a (uniform) Cauchy sequence. 
Therefore, we know that there exists a classical solution $f$. The uniqueness follows naturally from the contraction proof and the Gronwall's inequality. The inequality \eqref{temp 6} also justifies the continuity of $\ee[f(t)]$ with respect to $t$. 

The positivity of $F$ follows from a standard induction. Our iteration is actually 
\begin{align}
    \dt F^{n+1}+v\cdot\nx F^{n+1}+\tilde Q_p[F^n,F^n]F^{n+1}=Q_q[F^n,F^n;F^n].
\end{align}
We may verify that for $\th=\pm 1$, $F^n$ satisfies the positivity estimate
\begin{align}
    Q_q[F^n,F^n;F^n]\geq0.
\end{align}
In addition, for $\th=-1$, $F^n$ satisfies the positivity estimate
\begin{align}
\\
    \tilde Q_p[F^n,F^n]=\int_{\r^3}\int_{\s^2}q(\o,\abs{v-u})\Big[-\th F^n(u' )F^n(v')\big(1+\th F^n(u)\big) +F^n(u)\big(1+\th F^n(u')\big)\big(1+\th F^n(v')\big)\Big]\ud\o\ud u\geq0.\no
\end{align}
Hence, by solving the ODE for $F^{n+1}$, we may derive the positivity naturally.
\end{proof}


\smallskip
\subsection{Global Solutions for $\Omega=\t^3$}

In this section, we will justify the global well-posedness and decay for $\Omega=\t^3$ case. 

\subsubsection{Positivity Estimate for $\l$}

\begin{lemma}\label{global-lemma:spatial-derivative}
Assume $f(t,x,v)$ satisfies \eqref{equation: prelim-perturbation} for $t\in[0,T]$ with $T\geq 1$. Also, $f(t,x,v)$ satisfies $\ds\sup_{t\in[0,T]}\ee[f(t)]\leq M$. Then for $0\leq s<t\leq T$, we have
\begin{align}
    \sum_{\abs{\gamma}\leq N}\bigg(\tnm{\p^{\gamma}f(t)}{x,v}^2+\int_s^t\vnm{\p^{\gamma}f(\tau)}{x,v}^2\ud\tau\bigg)\leq \ue^{C(t-s)} \sum_{\abs{\gamma}\leq N}\tnm{\p^{\gamma}f(s)}{x,v}^2,
\end{align}
and
\begin{align}
    \sum_{\abs{\gamma}\leq N}\int_s^t\vnm{\p^{\gamma}f(\tau)}{x,v}^2\ud\tau\geq \Big(1-\ue^{C(t-s)}\Big) \sum_{\abs{\gamma}\leq N}\tnm{\p^{\gamma}f(s)}{x,v}^2.
\end{align}
\end{lemma}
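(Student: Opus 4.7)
The plan is to derive a Gronwall-type differential inequality for the spatial-derivative energy and integrate in time. Apply $\p^\gamma$ with $\abs{\gamma}\leq N$ (purely spatial in the periodic setting) to the perturbation equation \eqref{equation: prelim-perturbation-2}, pair with $\p^\gamma f$ in $L^2_{x,v}$, and sum over $\abs{\gamma}\leq N$. The transport contribution $\iint (\p^\gamma f)(v\cdot\nx \p^\gamma f)\,\ud v\ud x$ vanishes by integration by parts in $x$ on the torus. For the linear piece, Theorem \ref{prelim-theorem: semi-positivity} yields $\int \l[\p^\gamma f]\cdot \p^\gamma f\,\ud v \geq \delta\, \vnm{\nnpk[\p^\gamma f]}{v}^2$; the macroscopic part is cheap, since the basis of $\mathbf{N}(\l)$ is Schwartz-class, giving $\vnm{\pk[\p^\gamma f]}{x,v}\leq C\tnm{\p^\gamma f}{x,v}$ and hence $\vnm{\p^\gamma f}{x,v}^2 \leq 2\,\vnm{\nnpk[\p^\gamma f]}{x,v}^2 + C\tnm{\p^\gamma f}{x,v}^2$. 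Thus the linear side dominates $c\sum_{\abs{\gamma}\leq N}\vnm{\p^\gamma f}{x,v}^2 - C\sum_{\abs{\gamma}\leq N}\tnm{\p^\gamma f}{x,v}^2$.

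For the nonlinear pairing $\sum_{\abs{\gamma}\leq N}\langle \p^\gamma \g[f,f;f],\p^\gamma f\rangle$, apply Lemma \ref{prelim-lemma: nonlinear-derivative} with $g=\p^\gamma f$ and distribute $\p^\gamma$ among the three arguments of $\g$ by Leibniz. For the two factors carrying the fewer derivatives, bound their $L^\infty_{x,v}$ norms by Sobolev embedding in both $x$ and $v$; the cost is at most four derivatives, amply covered by $N\geq 8$, and each such norm is controlled by $\vertiii{f}\leq \sqrt{M}$ uniformly in $t$. The remaining factor carries most derivatives and is measured in $\vnm{\cdot}{x,v}$. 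A Cauchy--Schwarz step yields
$$\sum_{\abs{\gamma}\leq N}\bigl|\langle \p^\gamma \g[f,f;f],\p^\gamma f\rangle\bigr| \,\leq\, C(M)\sum_{\abs{\gamma}\leq N}\vnm{\p^\gamma f}{x,v}^2 + C(M)\sum_{\abs{\gamma}\leq N}\tnm{\p^\gamma f}{x,v}^2,$$
with a $\sqrt{M}$-type prefactor on the first term, which is absorbed into the dissipation (freely for small $M$, otherwise at the cost of slightly shrinking $c$).

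Collecting everything produces
$$\frac{\ud}{\ud t}\sum_{\abs{\gamma}\leq N}\tnm{\p^\gamma f}{x,v}^2 + c\sum_{\abs{\gamma}\leq N}\vnm{\p^\gamma f}{x,v}^2 \,\leq\, C\sum_{\abs{\gamma}\leq N}\tnm{\p^\gamma f}{x,v}^2,$$
with $c,C>0$ depending on $M$. Gronwall's inequality gives $\sum\tnm{\p^\gamma f(t)}{x,v}^2 \leq \ue^{C(t-s)}\sum\tnm{\p^\gamma f(s)}{x,v}^2$, and substituting this back into the time-integrated form yields the first claim. The second claim is an immediate rearrangement: its left-hand side is non-negative whereas $(1-\ue^{C(t-s)})\sum\tnm{\p^\gamma f(s)}{x,v}^2$ is non-positive for $t>s$, so the inequality holds trivially and is simply recorded in this packaged form for downstream use. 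The main technical obstacle is that even to control purely spatial-derivative quantities, the cubic structure of $\g$ forces $L^\infty_v$ bounds via Sobolev embedding in $v$, which pulls in velocity derivatives of $f$; the uniform hypothesis $\sup_t \ee[f(t)]\leq M$ is precisely what supplies the required mixed $x,v$ Sobolev control and allows the estimate to close.
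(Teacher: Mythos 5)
Your treatment of the first inequality is essentially the paper's argument: apply $\p^{\gamma}$, pair with $\p^{\gamma}f$, get coercivity from the linear term, control the nonlinearity through Lemma \ref{prelim-lemma: nonlinear-derivative} plus Sobolev embedding and the hypothesis $\sup_t\ee[f(t)]\leq M$, then Gronwall. The only structural difference is that you invoke Theorem \ref{prelim-theorem: semi-positivity} and bound $\vnm{\pk[\p^{\gamma}f]}{x,v}\ls\tnm{\p^{\gamma}f}{x,v}$, whereas the paper splits $\l=\nu-K$ and uses boundedness of $K$ (Lemmas \ref{prelim-lemma: nu-estimate}, \ref{prelim-lemma: K-compactness}); both give the same differential inequality. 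One caveat: your parenthetical that for non-small $M$ the $\nu$-weighted nonlinear contribution can be absorbed ``at the cost of slightly shrinking $c$'' is not correct, since $\vnm{\cdot}{x,v}$ cannot be dominated by $\tnm{\cdot}{x,v}$; like the paper (which states ``when $M$ is small, we may absorb''), the absorption genuinely requires the prefactor to be below the coercivity constant, i.e.\ $M$ small.

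The genuine gap is your disposal of the second inequality as vacuously true. As printed it is indeed trivial (nonnegative left side versus nonpositive right side), but that reading is a sign typo: the content the lemma is meant to deliver, and the form in which it is used in the proof of Lemma \ref{global-lemma:positivity} (namely $\sum_{\abs{\gamma}\leq N}\int_0^1\vnm{\p^{\gamma}f_n(t)}{x,v}^2\,\ud t\gs\sum_{\abs{\gamma}\leq N}\tnm{\p^{\gamma}f_n(0)}{x,v}^2$, which normalizes $Z_n$ and drives the compactness/contradiction argument), is a strictly positive lower bound of the form $\big(1-\ue^{-C(t-s)}\big)\sum_{\abs{\gamma}\leq N}\tnm{\p^{\gamma}f(s)}{x,v}^2$. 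The paper proves this by rearranging the energy identity to obtain the reverse inequality $\frac{\ud}{\ud t}\sum_{\abs{\gamma}\leq N}\tnm{\p^{\gamma}f}{x,v}^2\gs-\sum_{\abs{\gamma}\leq N}\vnm{\p^{\gamma}f}{x,v}^2$, integrating over $[s,t]$, and combining with $\tnm{\p^{\gamma}f}{x,v}\ls\vnm{\p^{\gamma}f}{x,v}$ in a Gronwall-type argument for the dissipation integral. Your proposal contains no step producing any lower bound on $\int_s^t\sum_{\abs{\gamma}\leq N}\vnm{\p^{\gamma}f(\tau)}{x,v}^2\,\ud\tau$ in terms of $\sum_{\abs{\gamma}\leq N}\tnm{\p^{\gamma}f(s)}{x,v}^2$, so it would not support the lemma's downstream use; you should supply the reverse differential inequality and the ensuing Gronwall estimate rather than record the statement as trivial.
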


\begin{proof}
Similar to the proof of Theorem \ref{local-theorem}, applying $\p^{\gamma}$ on both sides of the equation \eqref{equation: prelim-perturbation}, multiplying $\p^{\gamma}f$, and integrating over $\Omega\times\r^3$, we have
\begin{align}\label{local-estimate: derivative}
    &\frac{1}{2}\frac{\ud}{\ud t}\tnm{\p^{\gamma}f}{x,v}^2+\iint_{\Omega\times\r^3}\p^{\gamma}f\cdot \big(\nu\p^{\gamma}f\big)
    =\iint_{\Omega\times\r^3}\p^{\gamma}f\cdot K\big[\p^{\gamma}f\big]+\iint_{\Omega\times\r^3}\p^{\gamma}f\cdot\p^{\gamma}\g[f,f;f].
\end{align}
Then using Lemma \ref{prelim-lemma: nu-estimate}, Lemma \ref{prelim-lemma: K-compactness} and Lemma \ref{prelim-lemma: nonlinear-no-derivative}, and summing over $\abs{\gamma}\leq N$, we have
\begin{align}
    \frac{\ud}{\ud t}\bigg(\sum_{\abs{\gamma}\leq N}\tnm{\p^{\gamma}f}{x,v}^2\bigg)+\bigg(\sum_{\abs{\gamma}\leq N}\vnm{\p^{\gamma}f}{x,v}^2\bigg)\ls \bigg(\sum_{\abs{\gamma}\leq N}\tnm{\p^{\gamma}f}{x,v}^2\bigg)+M\bigg(\sum_{\abs{\gamma}\leq N}\tnm{\p^{\gamma}f}{x,v}^2\bigg).
\end{align}
When $M$ is small, we may absorb the last term into the LHS to obtain
\begin{align}
    \frac{\ud}{\ud t}\bigg(\sum_{\abs{\gamma}\leq N}\tnm{\p^{\gamma}f}{x,v}^2\bigg)+\bigg(\sum_{\abs{\gamma}\leq N}\vnm{\p^{\gamma}f}{x,v}^2\bigg)\ls \bigg(\sum_{\abs{\gamma}\leq N}\tnm{\p^{\gamma}f}{x,v}^2\bigg).
\end{align}
Then by Gronwall's inequality, we obtain
\begin{align}
    \sum_{\abs{\gamma}\leq N}\tnm{\p^{\gamma}f(t)}{x,v}^2\leq \ue^{C(t-s)}\sum_{\abs{\gamma}\leq N}\tnm{\p^{\gamma}f(s)}{x,v}^2.
\end{align}
Then we integrate over $\tau\in[s,t]$ to obtain
\begin{align}
    \sum_{\abs{\gamma}\leq N}\int_s^t\vnm{\p^{\gamma}f(\tau)}{x,v}^2\ud\tau\leq \ue^{C(t-s)}\sum_{\abs{\gamma}\leq N}\tnm{\p^{\gamma}f(s)}{x,v}^2.
\end{align}
This justifies the first inequality in the lemma. 

On the other hand, we may rearrange the terms in \eqref{local-estimate: derivative}
\begin{align}
    &\frac{1}{2}\frac{\ud}{\ud t}\tnm{\p^{\gamma}f}{x,v}^2
    =-\iint_{\Omega\times\r^3}\p^{\gamma}f\cdot \big(\nu\p^{\gamma}f\big)+\iint_{\Omega\times\r^3}\p^{\gamma}f\cdot K\big[\p^{\gamma}f\big]+\iint_{\Omega\times\r^3}\p^{\gamma}f\cdot\p^{\gamma}\g[f,f;f].
\end{align}
Similar to the above argument, we have
\begin{align}
    \frac{\ud}{\ud t}\bigg(\sum_{\abs{\gamma}\leq N}\tnm{\p^{\gamma}f}{x,v}^2\bigg)\gs -\bigg(\sum_{\abs{\gamma}\leq N}\vnm{\p^{\gamma}f}{x,v}^2\bigg)-\bigg(\sum_{\abs{\gamma}\leq N}\tnm{\p^{\gamma}f}{x,v}^2\bigg)-M\bigg(\sum_{\abs{\gamma}\leq N}\tnm{\p^{\gamma}f}{x,v}^2\bigg),
\end{align}
which yields
\begin{align}
    \frac{\ud}{\ud t}\bigg(\sum_{\abs{\gamma}\leq N}\tnm{\p^{\gamma}f}{x,v}^2\bigg)\gs -\bigg(\sum_{\abs{\gamma}\leq N}\vnm{\p^{\gamma}f}{x,v}^2\bigg),
\end{align}
Integrating over $\tau\in[s,t]$, we have
\begin{align}
    \sum_{\abs{\gamma}\leq N}\vnm{\p^{\gamma}f(t)}{x,v}^2\gs \sum_{\abs{\gamma}\leq N}\tnm{\p^{\gamma}f(s)}{x,v}^2-\sum_{\abs{\gamma}\leq N}\int_s^t\vnm{\p^{\gamma}f(\tau)}{x,v}^2\ud\tau.
\end{align}
Then by Gronwall's inequality, we obtain
\begin{align}
    \sum_{\abs{\gamma}\leq N}\int_s^t\vnm{\p^{\gamma}f(\tau)}{x,v}^2\ud\tau\geq \Big(1-\ue^{C(t-s)}\Big) \sum_{\abs{\gamma}\leq N}\tnm{\p^{\gamma}f(s)}{x,v}^2.
\end{align}
This justifies the second inequality in the lemma.
\end{proof}

\begin{lemma}\label{global-lemma:positivity}
Assume $f(t,x,v)$ satisfies \eqref{equation: prelim-perturbation} for $t\in[0,T]$ with $T\geq 1$. Assume the initial data $f_0$ satisfies the conservation laws. Also, $f(t,x,v)$ satisfies $\sup_{t\in[0,T]}\ee[f(t)]\leq M$. Then there exists a constant $\d_M\in(0,1)$ such that
\begin{align}
    \sum_{\abs{\gamma}\leq N}\int_0^1\int_{\t^3}\int_{\r^3}\l\big[\p^{\gamma}f(t)\big]\cdot\p^{\gamma}f(t)\,\ud v\ud x\ud t\geq \d_M\sum_{\abs{\gamma}\leq N}\int_0^1\vnm{\p^{\gamma}f(t)}{x,v}^2\ud t.
\end{align}
\end{lemma}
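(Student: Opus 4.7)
My plan is to combine the microscopic coercivity from Theorem \ref{prelim-theorem: semi-positivity} with a compactness-contradiction argument on the fixed window $[0,1]$. Since $\pk$ commutes with $\dt$ and $\nx$, applying Theorem \ref{prelim-theorem: semi-positivity} pointwise in $(t,x)$ to $\p^\gamma f(t,x,\cdot)$ and integrating in $(t,x)$ gives
\begin{align*}
\sum_{\abs{\gamma}\leq N}\int_0^1\!\iint_{\t^3\times\r^3}\!\l\big[\p^\gamma f\big]\cdot\p^\gamma f \,\ud v\,\ud x\,\ud t \;\geq\; \d\sum_{\abs{\gamma}\leq N}\int_0^1\vnm{\nnpk\p^\gamma f}{x,v}^2\,\ud t.
\end{align*}
Hence it suffices to produce the complementary \emph{macroscopic} estimate
\begin{align*}
\sum_{\abs{\gamma}\leq N}\int_0^1\vnm{\pk\p^\gamma f}{x,v}^2\,\ud t \;\leq\; C_M\sum_{\abs{\gamma}\leq N}\int_0^1\!\iint_{\t^3\times\r^3}\!\l\big[\p^\gamma f\big]\cdot\p^\gamma f\,\ud v\,\ud x\,\ud t,
\end{align*}
from which the claim follows with $\d_M = \d/(2+2C_M)$.

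Assume, for contradiction, that the macroscopic estimate fails. Then for each $n$ there is a solution $f_n$ of \eqref{equation: prelim-perturbation-2} on $[0,1]$ satisfying $\sup_t\ee[f_n(t)]\leq M$ and the conservation laws \eqref{CL-Mass}--\eqref{CL-Energy}, with
\begin{align*}
\sum_{\abs{\gamma}\leq N}\int_0^1\!\iint\l[\p^\gamma f_n]\cdot\p^\gamma f_n \;<\; \frac{1}{n}\sum_{\abs{\gamma}\leq N}\int_0^1\vnm{\p^\gamma f_n}{x,v}^2\,\ud t.
\end{align*}
Set $Z_n^2 := \sum_\gamma\int_0^1\vnm{\p^\gamma f_n}{x,v}^2\,\ud t$ (bounded by a multiple of $M$) and normalize $g_n := f_n/Z_n$, so that $\sum_\gamma\int_0^1\vnm{\p^\gamma g_n}{x,v}^2\,\ud t = 1$ and, by the coercivity above, $\sum_\gamma\int_0^1\vnm{\nnpk\p^\gamma g_n}{x,v}^2\,\ud t \leq 1/(\d n)\to 0$. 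The uniform $H^N_{x,v}$ bounds on the compact domain $\t^3_x$ together with Rellich's theorem yield (after extracting a subsequence) strong $L^2_{t,x}$ convergence of the five macroscopic coefficients of $\pk g_n$, while the vanishing of the microscopic part forces $g_n\to g$ in $L^2_t L^\nu_{x,v}$ with $\nnpk\p^\gamma g = 0$ for every $\abs{\gamma}\leq N$; in particular $g(t,x,v) = \{a(t,x)+v\cdot b(t,x)+\abs{v}^2c(t,x)\}\mh(v)$ and $\sum_\gamma\int_0^1\vnm{\p^\gamma g}{x,v}^2\,\ud t = 1$. The linear conservation laws persist for $g$; inverting the non-degenerate Gaussian moment system then gives $\int_{\t^3}a(t,\cdot)\,\ud x = \int_{\t^3}b^i(t,\cdot)\,\ud x = \int_{\t^3}c(t,\cdot)\,\ud x = 0$ for a.e.\ $t\in[0,1]$.

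Substituting $g_n$ into \eqref{equation: prelim-perturbation-2} gives $\dt g_n + v\cdot\nx g_n + \l[g_n] = Z_n^{-1}\g[f_n,f_n;f_n]$. Testing against the five collisional invariants $\mh,\,v_i\mh,\,\abs{v}^2\mh$ and using $\pk\l\equiv 0$ \emph{together with} $\pk\g\equiv 0$ annihilates the nonlinear source entirely, yielding the standard Boltzmann-type macroscopic conservation system for $(a_n,b_n,c_n)$ whose remainders are explicit velocity moments of $\nnpk g_n$ and tend to $0$ in $L^2_{t,x}$. In the limit $(a,b,c)$ solves a linear symmetric hyperbolic system on $\t^3$ with zero spatial mean; a standard energy or Fourier-on-the-torus argument forces $a\equiv b\equiv c\equiv 0$, contradicting $\sum_\gamma\int_0^1\vnm{\p^\gamma g}{x,v}^2\,\ud t = 1$. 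The \textbf{main obstacle} is controlling the nonlinear source $Z_n^{-1}\g[f_n,f_n;f_n]$, which need not vanish when $Z_n\to 0$; what rescues the argument is the identity $\pk\g = 0$ proved in the nonlinear subsection, making the five macroscopic components of the equation for $g_n$ \emph{exactly} linear regardless of the scale of $Z_n$. All moment integrals rely only on $\m,\mm\simeq\bar\mu$ from \eqref{temp 2}, so the quantum modifications contribute no extra difficulty at this step.
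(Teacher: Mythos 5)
Your overall strategy (reduce to a macroscopic estimate, normalize $g_n=f_n/Z_n$, argue by compactness and contradiction on $[0,1]$) is the same as the paper's, but two steps do not hold as written. First, the compactness step is unjustified. The normalization only gives $\sum_{\abs{\gamma}\le N}\int_0^1\vnm{\p^{\gamma}g_n}{x,v}^2\,\ud t=1$, i.e. control of \emph{spatial} derivatives in an $L^2_t$ sense; it gives no bound on velocity derivatives of $g_n$ (dividing $\ee[f_n]\le M$ by $Z_n$, which may tend to zero, destroys that bound), so there is no ``uniform $H^N_{x,v}$ bound.'' More seriously, Rellich in $x$ alone cannot produce strong $L^2_{t,x}$ convergence of the macroscopic coefficients: compactness in time is needed as well, and at top order $\abs{\gamma}=N$ there is no spare spatial derivative, so even spatial compactness of those moments fails. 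Without strong convergence at \emph{every} order $\abs{\gamma}\le N$ you cannot conclude that the limit retains norm one (the normalization could be carried by oscillations in $(t,x)$ or by the top-order macroscopic components), and the contradiction collapses. The paper gets precisely this compactness by a different route: Lemma \ref{global-lemma:spatial-derivative} gives $\sup_t\sum_{\gamma}\tnm{\p^{\gamma}Z_n}{x,v}\ls 1$, the kernel of $K$ is split as in Lemma \ref{prelim-lemma: K-compactness}, and the velocity averaging lemma applied to the transport equation for $\p^{\gamma}Z_n$ (with $L^2$ right-hand side, valid at every order including $\abs{\gamma}=N$) yields $H^{1/4}_{t,x}$ regularity of the velocity averages, hence strong convergence of $K[\p^{\gamma}Z_n]$ and, together with weak lower semicontinuity, norm preservation in the limit.

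Second, and decisively, the final step fails. Testing against the five collisional invariants does annihilate the nonlinear source (since $\pk\big[\g[f,f;f]\big]=0$ and the range of $\l$ is orthogonal to its kernel), but the resulting system for $(a,b,c)$ is exactly the linearized Euler (acoustic) system, which on $\t^3$ admits nontrivial standing-wave solutions with zero spatial mean; a symmetric-hyperbolic energy estimate conserves the energy rather than killing it, and Fourier analysis on the torus only exhibits oscillatory modes. So $a\equiv b\equiv c\equiv 0$ does not follow. The constraints that actually control the macroscopic part come from matching the coefficients of $v_i\abs{v}^2\mh$ and $v_iv_j\mh$ as well, i.e. the full macroscopic system \eqref{global-equation:macro1}\,--\,\eqref{global-equation:macro5}; in those equations the moments of the normalized nonlinear term do \emph{not} vanish, they are only $O(M)$ by Lemma \ref{prelim-lemma: nonlinear-derivative}. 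Accordingly, the paper does not prove the limit is zero: it proves, via these extra equations, elliptic-type estimates, Poincar\'e's inequality and the conservation laws, that $\sum_{\gamma}\int_0^1\vnm{\p^{\gamma}Z}{x,v}^2\ls M$, which contradicts the normalization $=1$ precisely because $M$ is small. Your identification of $\pk\g=0$ as the rescue is therefore a misdiagnosis: it removes the source only from the five conservation laws, which are insufficient on the torus to force the limit to vanish.
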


\begin{proof}
We prove by contradiction. If the result is not true, then there exists a sequence of solutions $\big\{f_n(t,x,v)\big\}_{n=1}^{\infty}$ to \eqref{equation: prelim-perturbation} such that
\begin{align}
    \sup_{t\in[0,1]}\sum_{\abs{\gamma}+\abs{\beta}\leq N}\tnm{\p_{\beta}^{\gamma}f_n(t)}{x,v}^2\leq M,
\end{align}
and
\begin{align}
    0\leq \sum_{\abs{\gamma}\leq N}\int_0^1\int_{\t^3}\int_{\r^3}\l\big[\p^{\gamma}f_n(t)\big]\cdot\p^{\gamma}f_n(t)\leq \frac{1}{n}\sum_{\abs{\gamma}\leq N}\int_0^1\vnm{\p^{\gamma}f_n(t)}{x,v}^2.
\end{align}
We normalize
\begin{align}
    Z_n(t,x,v)=\frac{f_n(t,x,v)}{\sqrt{\sum_{\abs{\gamma}\leq N}\int_0^1\vnm{\p^{\gamma}f_n(t)}{x,v}^2\ud t}}.
\end{align}
Then noticing that $\l=\nu I-K$, we know
\begin{align}\label{global-estimate: K-estimate}
    0\leq 1-\sum_{\abs{\gamma}\leq N}\int_0^1\int_{\t^3}\int_{\r^3}K\big[\p^{\gamma}Z_n(t)\big]\cdot\p^{\gamma}Z_n(t)\leq \frac{1}{n}.
\end{align}
On the other hand, by Lemma \ref{global-lemma:spatial-derivative}, we know
\begin{align}
    \sum_{\abs{\gamma}\leq N}\tnm{\p^{\gamma}f_n(t)}{x,v}^2\ls \sum_{\abs{\gamma}\leq N}\tnm{\p^{\gamma}f_n(0)}{x,v}^2,
\end{align}
and
\begin{align}
    \sum_{\abs{\gamma}\leq N}\int_0^1\vnm{\p^{\gamma}f_n(t)}{x,v}^2\gs \sum_{\abs{\gamma}\leq N}\tnm{\p^{\gamma}f_n(0)}{x,v}^2.
\end{align}
Then based on the definition of $Z_n$, we know that
\begin{align}
    \sup_{t\in[0,1]}\sum_{\abs{\gamma}\leq N}\tnm{\p^{\gamma}Z_n(t)}{x,v}^2\ls1.
\end{align}
Based on the equation \eqref{equation: prelim-perturbation}, we know $Z_n$ satisfies
\begin{align}
    \dt Z_n+v\cdot\nx Z_n+\l[Z_n]=\g[f_n,f_n;Z_n].
\end{align}
After applying $\p^{\gamma}$ on both sides, we obtain
\begin{align}\label{global-equation: Zn}
    \dt \big(\p^{\gamma}Z_n\big)+v\cdot\nx \big(\p^{\gamma}Z_n\big)+\l\big[\p^{\gamma}Z_n\big]=\p^{\gamma}\g[f_n,f_n;Z_n].
\end{align}
Also, we have the conservation laws
\begin{align}
    \int_{\Omega}\int_{\r^3} Z_n(t,x,v)\mh(v)\,\ud v\ud x&=0,\quad (\text{Mass})\\
    \int_{\Omega}\int_{\r^3}Z_n(t,x,v)\mh(v)v_i\,\ud v\ud x&=0,\quad (\text{Momentum})\\
    \int_{\Omega}\int_{\r^3}Z_n(t,x,v)\mh(v)\abs{v}^2\ud v\ud x&=0.\quad (\text{Energy})
\end{align}
Due to the boundedness, we can extract weakly convergent subsequence in $L^2([0,1]\times\t^3\times\r^3)$ to get
\begin{align}
    \p^{\gamma}Z_n\rightharpoonup \p^{\gamma}Z,\quad \p^{\gamma}f_n\rightharpoonup \p^{\gamma}f,
\end{align}
where $Z$ and $f$ are the limit functions, respectively.\\

\smallskip
{\it \underline{Step\;1}. $K[\p^{\gamma}Z_n]\rt K[\p^{\gamma}Z]$ in $L^2([0,1]\times\t^3\times\r^3)$ for $\abs{\gamma}\leq N$}.

Based on Lemma \ref{prelim-lemma: K-compactness}, we know $K$ is a bounded operator in $L^2(\t^3\times\r^3)$. Then for any $\e>0$, we have
\begin{align}
    \int_0^{\e}\tnm{K[\p^{\gamma}Z_n]}{x,v}^2+\int_{1-\e}^{1}\tnm{K[\p^{\gamma}Z_n]}{x,v}^2\ls\e.
\end{align}
Hence, there is no time concentration in a neighborhood of $t=0$ or $t=1$. Then it suffices to consider $K[\p^{\gamma}Z_n]\rt K[\p^{\gamma}Z]$ in $L^2([\e,1-\e]\times\t^3\times\r^3)$.

Notice that
\begin{align}
    K[\p^{\gamma}Z_n]=\int_{\r^3}k(u,v)\p^{\gamma}Z_n(u)\ud u.
\end{align}
Due to the proof of Lemma \ref{prelim-lemma: K-compactness}, we know that though we cannot write $k(u,v)$ explicitly, it actually can be pointwise bounded by the corresponding operator $\tilde k(u,v)$ for the classical Boltzmann equation as in \cite[Section 3.2\&3.3]{Glassey1996}. Hence, \cite[Lemma 3.5.1]{Glassey1996} justifies that for any $\e>0$, there exists $m>0$ such that we may split
\begin{align}
    k(u,v)=k_m(u,v)+\big(k(u,v)-k_m(u,v)\big),
\end{align}
where
\begin{align}
    k_m(u,v)=k(u,v)\id_{\{(u,v):\abs{u-v}\geq\frac{1}{m}\ \text{and}\ \abs{v}\leq m\}},
\end{align}
satisfying
\begin{align}
    \int_0^1\tnm{\int_{\r^3}(k-k_m)\p^{\gamma}Z_n(t)\ud u}{x,v}\leq \e\int_0^1\tnm{\p^{\gamma}Z_n(t)}{x,v}\ls\e.
\end{align}
Then naturally $k_m\in L^2(\r^3\times\r^3)$. Based on the density lemma, we may find a smooth function $\k_{\e}(u,v)=\k_1(u)\k_2(v)$ with compact support satisfying
\begin{align}
    \tnm{k_m-\k_{\e}}{u,v}\ls\e.
\end{align}
Hence, we know
\begin{align}
    \int_0^1\tnm{\int_{\r^3}(k_m-\k_{\e})\p^{\gamma}Z_n(t)\ud u}{x,v}\leq \tnm{k_m-\k_{\e}}{u,v}\int_0^1\tnm{\p^{\gamma}Z_n(t)}{x,v}\ls\e.
\end{align}
Then it suffices to justify
\begin{align}
    \int_{\r^3}\k_1(u)\p^{\gamma}Z_n(t)\ud u \rt \int_{\r^3}\k_1(u)\p^{\gamma}Z(t)\ud u
\end{align}
in $L^2([\e,1-\e]\times\t^3)$ since we can later multiply $\k_2(v)$ and integrating over $v\in\r^3$ to complete the proof.

Let $\chi(t,x)$ be a smooth cutoff function in $(0,1)\times\r^3$ such that $\chi=1$ in $[\e,1-\e]\times\t^3$. Multiplying $\k_1(v)\chi(t,x)$ on both sides of \eqref{global-equation: Zn} to obtain
\begin{align}
\\
    \dt \big(\k_1\chi\p^{\gamma}Z_n\big)+v\cdot\nx \big(\k_1\chi\p^{\gamma}Z_n\big)=-\k_1\chi\l\big[\p^{\gamma}Z_n\big]+\k_1\chi\p^{\gamma}\g[f_n,f_n;Z_n]+\p^{\gamma}Z_n\big(\dt+v\cdot\nx\big)(\k_1\chi).\no
\end{align}
Due to compact support in $(t,x,v)$ variables of $\k_1\chi$, we know
\begin{align}
    \int_0^1\tnm{\k_1\chi\l\big[\p^{\gamma}Z_n\big]}{x,v}^2\ls \int_0^1\tnm{\p^{\gamma}Z_n}{x,v}^2\ls 1,
\end{align}
and
\begin{align}
    \int_0^1\tnm{\p^{\gamma}Z_n\big(\dt+v\cdot\nx\big)(\k_1\chi)}{x,v}^2\ls \int_0^1\tnm{\p^{\gamma}Z_n}{x,v}^2\ls 1.
\end{align}
Then we are left with the nonlinear term. Based on the third inequality in Lemma \ref{prelim-lemma: nonlinear-derivative} and our assumption of the lemma, we always put $L^2_{v,x}$ on the term with highest-order derivative and use Sobolev embedding to handle the supremum in $(x,v)$, to obtain
\begin{align}
    \int_0^1\tnm{\k_1\chi\p^{\gamma}\g[f_n,f_n;Z_n]}{x,v}^2\ls M\int_0^1\Big(\tnm{\p^{\gamma}Z_n}{x,v}^2+\tnm{\p^{\gamma}f_n}{x,v}^2\Big)\ls 1.
\end{align}
In total, we know that
\begin{align}
    \dt \big(\k_1\chi\p^{\gamma}Z_n\big)+v\cdot\nx \big(\k_1\chi\p^{\gamma}Z_n\big)\in L^2([0,1]\times\r^3\times\r^3).
\end{align}
Then from the averaging lemma, we obtain
\begin{align}
    \int_{\r^3}\k_1\chi\p^{\gamma}Z_n(t,x,u)\ud u\in H^{\frac{1}{4}}([0,1]\times\r^3).
\end{align}
Then by the compact embedding, we may extract a weakly convergent subsequence in $H^{\frac{1}{4}}$, which is a strongly convergent subsequence in $L^2$ such that
\begin{align}
    \int_{\r^3}\k_1\chi\p^{\gamma}Z_n(t,x,u)\ud u\rt \int_{\r^3}\k_1\chi\p^{\gamma}Z(t,x,u)\ud u.
\end{align}
Hence, our result naturally follows.\\

\smallskip
{\it \underline{Step\;2}. $Z(t,x,v)=a(t,x)\mh+b(t,x)\cdot v\mh+c(t,x)\abs{v}^2\mh$}.

From Step 1, we know that 
\begin{align}
    \int_0^1\int_{\t^3}\int_{\r^3}K\big[\p^{\gamma}Z_n\big]\cdot\p^{\gamma}Z_n \rt \int_0^1\int_{\t^3}\int_{\r^3}K\big[\p^{\gamma}Z\big]\cdot\p^{\gamma}Z.
\end{align}
Hence, taking limit $n\rt\infty$ in \eqref{global-estimate: K-estimate}, we obtain
\begin{align}
0\leq 1-\sum_{\abs{\gamma}\leq N}\int_0^1\int_{\t^3}\int_{\r^3}K\big[\p^{\gamma}Z\big]\cdot\p^{\gamma}Z\leq0.
\end{align}
Hence, we must have
\begin{align}
    \sum_{\abs{\gamma}\leq N}\int_0^1\int_{\t^3}\int_{\r^3}K\big[\p^{\gamma}Z\big]\cdot\p^{\gamma}Z=1.
\end{align}
On the other hand, the lower semi-continuity of $\nu$-norm implies
\begin{align}
    \sum_{\abs{\gamma}\leq N}\int_0^1\vnm{\p^{\gamma}Z(t)}{x,v}^2\leq 1.
\end{align}
Therefore, we know
\begin{align}
    0\leq& \sum_{\abs{\gamma}\leq N}\int_0^1\int_{\t^3}\int_{\r^3}\l\big[\p^{\gamma}Z\big]\cdot\p^{\gamma}Z\\
    =&\sum_{\abs{\gamma}\leq N}\int_0^1\vnm{\p^{\gamma}Z(t)}{x,v}^2-\sum_{\abs{\gamma}\leq N}\int_0^1\int_{\t^3}\int_{\r^3}K\big[\p^{\gamma}Z\big]\cdot\p^{\gamma}Z\no\\
    \leq&\, 1-1=0.\no
\end{align}
Therefore, we have
\begin{align}\label{global-estimate:temp 1}
    \sum_{\abs{\gamma}\leq N}\int_0^1\vnm{\p^{\gamma}Z(t)}{x,v}^2=1,
\end{align}
and
\begin{align}
    \sum_{\abs{\gamma}\leq N}\int_0^1\int_{\t^3}\int_{\r^3}\l\big[\p^{\gamma}Z\big]\cdot\p^{\gamma}Z=0.
\end{align}
In particular, the weak convergence and norm convergence imply strong convergence, i.e. $\p^{\gamma}Z_n\rt \p^{\gamma}Z$ in $L^2([0,1]\times\r^3\times\r^3)$.

Hence, we know $Z$ belongs to the null space of $\l$, i.e. 
\begin{align}\label{global-estimate:temp 2}
    Z(t,x,v)=a(t,x)\mh+b(t,x)\cdot v\mh+c(t,x)\abs{v}^2\mh,
\end{align}
where $a,b,c$ are given by $Z$. In particular, the boundedness of $Z$ implies
\begin{align}
    \sup_{t\in[0,1]}\Big(\tnm{\p^{\gamma}a(t)}{x}^2+\tnm{\p^{\gamma}b(t)}{x}^2+\tnm{\p^{\gamma}c(t)}{x}^2\Big)\ls 1.
\end{align}
Then taking limit $n\rt\infty$ in \eqref{global-equation: Zn}, we know that in the sense of distribution
\begin{align}\label{global-estimate:temp 3}
    \dt\big(\p^{\gamma}Z\big)+v\cdot\nx \big(\p^{\gamma}Z\big)=\p^{\gamma}\g[f,f;Z].
\end{align}
Also, we have the conservation laws
\begin{align}
    \int_{\Omega}\int_{\r^3} Z(t,x,v)\mh(v)\,\ud v\ud x&=0,\quad (\text{Mass})\\
    \int_{\Omega}\int_{\r^3}Z(t,x,v)\mh(v)v_i\,\ud v\ud x&=0,\quad (\text{Momentum})\\
    \int_{\Omega}\int_{\r^3}Z(t,x,v)\mh(v)\abs{v}^2\ud v\ud x&=0.\quad (\text{Energy})
\end{align}

\smallskip
{\it \underline{Step\;3}. $\sum_{\abs{\gamma}\leq N}\int_0^1\vnm{\p^{\gamma}Z(t)}{x,v}^2\ls M$}.

If this is justified, then it contradicts \eqref{global-estimate:temp 1} and we conclude our proof.
Plugging \eqref{global-estimate:temp 2} into \eqref{global-estimate:temp 3}, we obtain that
\begin{align}
    \big(\nx\p^{\gamma}c\big)\cdot v\abs{v}^2\mh+\big(\dt \p^{\gamma}c\abs{v}^2+v\cdot\nx(v\cdot\p^{\gamma})\big)\mh\\
    +\big(\dt\p^{\gamma}b+\nx\p^{\gamma}a\big)\cdot v\mh+\big(\dt\p^{\gamma}a\big)\mh&=\p^{\gamma}\g[f,f;Z].\no
\end{align}
Since 
\begin{align}
    v_i\abs{v}^2\mh,\quad v_iv_j\mh,\quad v_i\mh,\quad \mh
\end{align}
are linearly independent, in the sense of distribution, their coefficients on both sides of the equation should be equal, i.e. the so-called macroscopic equations
\begin{align}
    \p_{x_i}\p^{\gamma}c&=h_{ci}^{\gamma},\label{global-equation:macro1}\\
    \dt \p^{\gamma}c+\p_{x_i}\p^{\gamma}b_i&=h^{\gamma}_i,\label{global-equation:macro2}\\
    \p_{x_i}\p^{\gamma}b_j+\p_{x_j}\p^{\gamma}b_i&=h^{\gamma}_{ij}\ \text{for}\ i\neq j,\label{global-equation:macro3}\\
    \dt\p^{\gamma}b_i+\p_{x_i}\p^{\gamma}a&=h^{\gamma}_{bi},\label{global-equation:macro4}\\
    \dt\p^{\gamma}a&=h^{\gamma}_a,\label{global-equation:macro5}
\end{align}
where $h_{ci}^{\gamma}$, $h^{\gamma}_i$, $h^{\gamma}_{ij}$, $h^{\gamma}_{bi}$ and $h^{\gamma}_a$ are linear combinations of
\begin{align}
\\
    \int_{\r^3}\p^{\gamma}\g[f,f;Z]v_i\abs{v}^2\mh,\quad \int_{\r^3}\p^{\gamma}\g[f,f;Z]v_iv_j\mh,\quad \int_{\r^3}\p^{\gamma}\g[f,f;Z]v_i\mh,\quad \int_{\r^3}\p^{\gamma}\g[f,f;Z]\mh.\no
\end{align}
In particular, based on the second inequality of Lemma \ref{prelim-lemma: nonlinear-derivative}, we know
\begin{align}
    \sup_{t\in[0,1]}\Big(\tnm{h_{ci}^{\gamma}}{x}+\tnm{h^{\gamma}_i}{x}+\tnm{h^{\gamma}_{ij}}{x}+\tnm{h^{\gamma}_{bi}}{x}+\tnm{h^{\gamma}_a}{x}\Big)\ls M.
\end{align}
From \eqref{global-equation:macro1}, we know
\begin{align}
    \tnm{\nx\p^{\gamma}c}{x}\ls \tnm{h_{ci}^{\gamma}}{x}\ls M.
\end{align}
For $b$, we directly compute from \eqref{global-equation:macro2} and \eqref{global-equation:macro3}
\begin{align}
    \Delta_x\p^{\gamma}b_i&=\sum_{i\neq j}\p_{x_jx_j}\p^{\gamma}b_i+\p_{x_ix_i}\p^{\gamma}b_i\\
    &=\sum_{i\neq j}\Big(-\p_{x_ix_j}\p^{\gamma}b_j+\p_{x_j}h^{\gamma}_{ij}\Big)+\Big(-\dt \p_{x_i}\p^{\gamma}c+\p_{x_i}h^{\gamma}_i\Big)\no\\
    &=\sum_{i\neq j}\Big(\dt\p_{x_i}\p^{\gamma}c-\p_{x_i}h^{\gamma}_{j}\Big)+\Big(\sum_{i\neq j}\p_{x_j}h^{\gamma}_{ij}\Big)+\Big(-\dt \p_{x_i}\p^{\gamma}c+\p_{x_i}h^{\gamma}_i\Big)\no\\
    &=\dt\p_{x_i}\p^{\gamma}c+\sum_{i\neq j}\Big(\p_{x_j}h^{\gamma}_{ij}-\p_{x_i}h^{\gamma}_{j}\Big)+\p_{x_i}h^{\gamma}_i\no\\
    &=\Big(-\p_{x_ix_i}\p^{\gamma}b_i+\p_{x_i}h^{\gamma}_i\Big)+\sum_{i\neq j}\Big(\p_{x_j}h^{\gamma}_{ij}-\p_{x_i}h^{\gamma}_{j}\Big)+\p_{x_i}h^{\gamma}_i\no\\
    &=-\p_{x_ix_i}\p^{\gamma}b_i+\sum_{i\neq j}\Big(\p_{x_j}h^{\gamma}_{ij}-\p_{x_i}h^{\gamma}_{j}\Big)+2\p_{x_i}h^{\gamma}_i.\no
\end{align}
Then multiplying $\p^{\gamma}b_i$ in the above equation and integrating by parts, we obtain
\begin{align}
    \tnm{\nx\p^{\gamma}b_i}{x}\ls \tnm{h^{\gamma}_{ij}}{x}+\tnm{h^{\gamma}_{i}}{x}\ls M.
\end{align}
We assume $t>\dfrac{1}{2}$ and focus on $[0,t]$ (otherwise, we can focus on $[t,1]$). We integrate \eqref{global-equation:macro4} over $[0,t]$ to obtain that for $0\leq\abs{\gamma}\leq N-1$
\begin{align}
    \p^{\gamma}b_i(t)-\p^{\gamma}b_i(0)+\int_0^t\p_{x_i}\p^{\gamma}a(s)\ud s=\int_0^t h_{bi}^{\gamma}(s)\ud s.
\end{align}
Then since $\dt\p_{x_i}\p^{\gamma}a=\p_{x_i}h_a^{\gamma}$, we have
\begin{align}
    \p_{x_i}\p^{\gamma}a(s)=\p_{x_i}\p^{\gamma}a(t)+\int_t^s\p_{x_i}h_a^{\gamma}(\tau)\ud\tau.
\end{align}
Then plug this into the above equation, we have
\begin{align}
    \p_{x_i}\p^{\gamma}a(t)&=-\frac{1}{t}\Big(\p^{\gamma}b_i(t)-\p^{\gamma}b_i(0)\Big)-\frac{1}{t}\int_0^t\int_t^s\p_{x_i}h_a^{\gamma}(\tau)\ud\tau\ud s+\frac{1}{t}\int_0^t h_{bi}^{\gamma}(s)\ud s.
\end{align}
Then taking $x_i$ derivative, we obtain
\begin{align}
    \p_{x_ix_i}\p^{\gamma}a(t)&=-\frac{1}{t}\Big(\p_{x_i}\p^{\gamma}b_i(t)-\p_{x_i}\p^{\gamma}b_i(0)\Big)-\frac{1}{t}\int_0^t\int_t^s\p_{x_ix_i}h_a^{\gamma}(\tau)\ud\tau\ud s+\frac{1}{t}\int_0^t \p_{x_i}h_{bi}^{\gamma}(s)\ud s.
\end{align}
Then multiplying $\p^{\gamma}a$ in the above equation and integrating by parts, we obtain
\begin{align}
    \tnm{\nx\p^{\gamma}a}{x}\ls \sup_{t\in[0,1]}\tnm{b}{x}+\tnm{\nx h^{\gamma}_{a}}{x}+\tnm{h_{bi}^{\gamma}}{x}\ls M.
\end{align}
In total, we have proved 
\begin{align}
    \sum_{0<\abs{\alpha}\leq N}\int_0^1\Big(\tnm{\p^{\gamma}a}{x}^2+\tnm{\p^{\gamma}b}{x}^2+\tnm{\p^{\gamma}c}{x}^2\Big)\ls M.
\end{align}
Let $\abs{\gamma}=0$ in \eqref{global-equation:macro1}-\eqref{global-equation:macro5}, we know 
\begin{align}
    \int_0^1\Big(\tnm{\dt a}{x}^2+\tnm{\dt b}{x}^2+\tnm{\dt c}{x}^2\Big)\ls M.
\end{align}
Therefore, applying Poincar\'e's inequality in $[0,t]\times\t^3$, we have
\begin{align}
    &\int_0^1\Big(\tnm{a}{x}^2+\tnm{ b}{x}^2+\tnm{c}{x}^2\Big)\\
    \ls&\int_0^1\Big(\tnm{\nabla_{t,x}a}{x}^2+\tnm{\nabla_{t,x}b}{x}^2+\tnm{\nabla_{t,x}c}{x}^2\Big)+\bigg(\abs{\int_0^t\int_{\r^3}a}+\abs{\int_0^t\int_{\r^3}b}+\abs{\int_0^t\int_{\r^3}c}\bigg)\no\\
    \ls&\, M+\bigg(\abs{\int_0^t\int_{\r^3}a}+\abs{\int_0^t\int_{\r^3}b}+\abs{\int_0^t\int_{\r^3}c}\bigg).\no
\end{align}
The conservation law for $Z$ implies 
\begin{align}
    \int_0^t\int_{\r^3}a=0,\quad \int_0^t\int_{\r^3}b=0,\quad \int_0^t\int_{\r^3}c=0.
\end{align}
Hence, we have
\begin{align}
    \int_0^1\Big(\tnm{a}{x}^2+\tnm{ b}{x}^2+\tnm{c}{x}^2\Big)\ls M.
\end{align}
This concludes our proof.
\end{proof}

\begin{remark}
This proof highly relies on Poincar\'e's inequality, so it cannot be naturally extended to $\Omega=\r^3$ case.
\end{remark}

\begin{lemma}\label{global-lemma:multi-positivity}
Assume $f(t,x,v)$ satisfies \eqref{equation: prelim-perturbation} for $t\in[0,T]$ with $T\geq 1$. Assume the initial data $f_0$ satisfies the conservation laws. Also, $f(t,x,v)$ satisfies $\sup_{t\in[0,T]}\ee[f(t)]\leq M$. Then there exists a constant $\d_M\in(0,1)$ such that for any $t'\geq0$ and a positive integer $n$ with $t'+n\in[0,T]$, 
\begin{align}
    \sum_{\abs{\gamma}\leq N}\int_{t'}^{t'+n}\int_{\t^3}\int_{\r^3}\l\big[\p^{\gamma}f(t)\big]\cdot\p^{\gamma}f(t)\ud v\ud x\ud t\geq \d_M\sum_{\abs{\gamma}\leq N}\int_{t'}^{t'+n}\vnm{\p^{\gamma}f(t)}{x,v}^2\ud t.
\end{align}
\end{lemma}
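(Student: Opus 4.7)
The plan is to reduce the multi-interval estimate to the unit-interval estimate of Lemma \ref{global-lemma:positivity} by partitioning and time-translating. Specifically, decompose
\begin{align*}
[t',t'+n] \;=\; \bigcup_{k=0}^{n-1}[t'+k,\,t'+k+1],
\end{align*}
and for each $k\in\{0,1,\dots,n-1\}$ introduce the shifted perturbation
\begin{align*}
\tilde f_k(s,x,v) \;:=\; f(s+t'+k,\,x,\,v), \qquad s\in[0,1].
\end{align*}
Since the equation \eqref{equation: prelim-perturbation-2} is autonomous in $t$, the function $\tilde f_k$ solves the same perturbation equation on $[0,1]$, with initial data $\tilde f_k(0,\cdot,\cdot)=f(t'+k,\cdot,\cdot)$.

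Next I would verify that $\tilde f_k$ satisfies the hypotheses of Lemma \ref{global-lemma:positivity}. The energy bound is immediate from the uniform assumption on $f$, namely $\sup_{s\in[0,1]}\ee[\tilde f_k(s)]\le \sup_{t\in[0,T]}\ee[f(t)]\le M$. For the conservation laws, observe that the conservation laws \eqref{CL-Mass}\,--\,\eqref{CL-Energy} propagate in time: if they hold for $f_0$, they hold for $f(t,\cdot,\cdot)$ for every $t\in[0,T]$ (this is just Theorem \ref{basic-theorem: conservation law} rewritten in terms of the perturbation). In particular, $\tilde f_k(0,x,v)=f(t'+k,x,v)$ satisfies the required conservation identities. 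Hence Lemma \ref{global-lemma:positivity} applies to $\tilde f_k$ and yields
\begin{align*}
\sum_{|\gamma|\le N}\int_0^1\!\!\int_{\t^3}\!\!\int_{\r^3} \l\big[\p^\gamma \tilde f_k(s)\big]\cdot \p^\gamma \tilde f_k(s)\,\ud v\,\ud x\,\ud s
\;\ge\; \d_M \sum_{|\gamma|\le N}\int_0^1 \vnm{\p^\gamma \tilde f_k(s)}{x,v}^2\ud s,
\end{align*}
with the \emph{same} constant $\d_M$, since the proof of Lemma \ref{global-lemma:positivity} only uses the size bound $M$ and the conservation laws.

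Finally, undoing the change of variables $t=s+t'+k$ in each term and summing over $k=0,1,\ldots,n-1$ produces
\begin{align*}
\sum_{|\gamma|\le N}\int_{t'}^{t'+n}\!\!\int_{\t^3}\!\!\int_{\r^3} \l\big[\p^\gamma f(t)\big]\cdot \p^\gamma f(t)\,\ud v\,\ud x\,\ud t
\;\ge\; \d_M \sum_{|\gamma|\le N}\int_{t'}^{t'+n}\vnm{\p^\gamma f(t)}{x,v}^2\ud t,
\end{align*}
which is the desired inequality. There is no real technical obstacle here: the only thing to be careful about is the propagation of the conservation laws, since Lemma \ref{global-lemma:positivity} used them in the final Poincar\'e step (to kill the averages of $a,b,c$ in $x$ and $t$) — this is exactly why the argument works with the same $\d_M$ on every unit subinterval, rather than producing a $k$-dependent constant that could deteriorate with $n$.
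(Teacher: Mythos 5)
Your proposal is correct and is essentially the paper's own argument: the paper likewise applies Lemma \ref{global-lemma:positivity} on each unit subinterval $[t'+k,t'+k+1]$ and sums, and your additional remarks (time-translation invariance of the equation, propagation of the conservation laws, uniformity of $\d_M$) are just the details the paper leaves implicit.
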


\begin{proof}
We apply Lemma \ref{global-lemma:positivity} to each of the intervals $[t',t'+1]$, $[t'+1,t'+2]$, $\cdots$, $[t'+(n-1),t'+n]$ and then sum them up.
\end{proof}

\begin{remark}
It is not very easy to further extend the result to intervals with arbitrary length. In particular, if we take $f_0$ satisfying $\nnpk[f_0]=0$, then for a short period of time, we know the results in Lemma \ref{global-lemma:positivity} cannot be true. Hence, the lower bound of interval length is very important.
\end{remark}


\subsubsection{Global Well-Posedness and Time Decay}

\begin{theorem}\label{global-theorem: well-posedness}
There exists $M_0>0$ such that if 
\begin{align}
    \ee[f_0]\leq \frac{M_0}{2},
\end{align}
then there exists a unique solution $f(t,x,v)$ to the quantum Boltzmann equation \eqref{equation: prelim-perturbation} such that
\begin{align}
    \ee[f(t)]\leq M_0,
\end{align}
for any $t\in[0,\infty)$. 
\end{theorem}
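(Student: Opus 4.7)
The plan is a standard bootstrap/continuation argument: combine the local well-posedness of Theorem \ref{local-theorem} with a uniform-in-time a priori estimate derived from the coercivity of $\l$ established in Lemma \ref{global-lemma:multi-positivity}. Specifically, I would show that whenever a solution satisfies $\ee[f(t)] \leq M_0$ on some interval $[0,T]$ and $M_0$ is chosen small enough, it in fact satisfies $\ee[f(t)] \leq \ee[f_0] \leq M_0/2$ strictly. Then setting $T^{\ast} := \sup\{T\geq 0 : \ee[f(t)]\leq M_0 \text{ on } [0,T]\}$, the continuity of $\ee$ together with the local theorem applied at $T^{\ast}$ would extend the solution past $T^{\ast}$, forcing $T^{\ast}=\infty$. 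Uniqueness on $[0,\infty)$ follows by covering with the local uniqueness intervals of Theorem \ref{local-theorem}.

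For the a priori estimate, I would apply $\p^{\gamma}_{\beta}$ to the perturbation equation \eqref{equation: prelim-perturbation-2}, test against $\p^{\gamma}_{\beta}f$ in $L^2_{x,v}(\t^3\times\r^3)$, and sum over $|\gamma|+|\beta|\leq N$. The transport term $v\cdot\nx$ integrates to zero on the torus. The linear contribution $\sum\br{\p^{\gamma}_{\beta}\l[f],\p^{\gamma}_{\beta}f}$ is bounded below in two stages. For pure spatial derivatives ($|\beta|=0$), $\l$ commutes with $\p^{\gamma}$ and, after integrating in time over a window of length at least one, Lemma \ref{global-lemma:multi-positivity} yields
\[
\sum_{|\gamma|\leq N}\int_{t'}^{t'+n}\!\br{\l[\p^{\gamma} f],\p^{\gamma} f}\, dt \,\geq\, \d_M \sum_{|\gamma|\leq N}\int_{t'}^{t'+n}\!\vnm{\p^{\gamma} f}{x,v}^2\, dt.
\]
For $|\beta|\geq 1$, I would decompose $\p_{\beta}\l[f] = \nu\,\p_{\beta}f - \p_{\beta}K[f] + (\text{lower order in }\beta)$ using $\l = \nu I - K$, and invoke Lemmas \ref{prelim-lemma: nu-derivative-estimate} and \ref{prelim-lemma: K-derivative} with small parameter $\eta$, together with an induction on $|\beta|$, to conclude that the $\nu$-dissipation from the leading term dominates once a small multiple of the $|\beta'|<|\beta|$ terms (controlled by the previous induction step, with the base case being $|\beta|=0$) has been absorbed.

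The nonlinear contribution is bounded via the first estimate of Lemma \ref{prelim-lemma: nonlinear-derivative}, always placing $\sup_v$ on the factor with the fewest derivatives and controlling it by Sobolev embedding $H^2_{x,v}\hookrightarrow L^{\infty}_{x,v}$; the requirement $N\geq 8$ leaves ample room for this distribution of derivatives among the three arguments of $\g$. The upshot is a bound of the form
\[
\Big|\sum_{|\gamma|+|\beta|\leq N}\iint (\p^{\gamma}_{\beta}f)\,\p^{\gamma}_{\beta}\g[f,f;f]\,\ud v\,\ud x\Big| \,\ls\, \sqrt{\ee[f(t)]}\;\vertiii{f(t)}_{\nu}^{2},
\]
which for $M_0$ sufficiently small can be absorbed into half of the linear dissipation. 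Integrating the resulting inequality $\tfrac{d}{dt}\vertiii{f(t)}^{2} + \tfrac{\d_M}{2}\vertiii{f(t)}_{\nu}^{2} \leq 0$ (taken in sliding-window form on unit intervals, since Lemma \ref{global-lemma:multi-positivity} only provides coercivity after time averaging) yields $\ee[f(t)]\leq \ee[f_0]$ and closes the bootstrap.

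The principal obstacle is the second step of the a priori estimate: converting the pure-spatial coercivity of Lemma \ref{global-lemma:multi-positivity} into dissipation for the full mixed-derivative energy $\vertiii{f}_{\nu}^{2}$. Because $\l$ does not commute with $\p_{\beta}$ for $|\beta|\geq 1$, one cannot invoke Lemma \ref{global-lemma:multi-positivity} directly on $\p^{\gamma}_{\beta}f$; the standard commutator bookkeeping from Guo's method must instead be rebuilt using the quantum-adapted kernel estimates of Lemmas \ref{prelim-lemma: nu-derivative-estimate} and \ref{prelim-lemma: K-derivative}, which replace the unavailable explicit Gaussian formulas of \cite[Section 3.2]{Glassey1996}. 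A secondary wrinkle is that the coercivity in Lemma \ref{global-lemma:multi-positivity} holds only after integrating over intervals of length at least one, so the energy inequality must be applied in integrated form on unit windows rather than pointwise in $t$ — a bookkeeping-heavy but essentially harmless departure from the classical Guo scheme, which still closes the bootstrap on any finite $[0,T]$ and thus on $[0,\infty)$.
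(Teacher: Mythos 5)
Your proposal follows essentially the same route as the paper: $\p^{\gamma}_{\beta}$ energy estimates, the time-averaged coercivity of Lemma \ref{global-lemma:multi-positivity} for the purely spatial derivatives, an induction on $\abs{\beta}$ based on $\l=\nu I-K$ together with Lemmas \ref{prelim-lemma: nu-derivative-estimate} and \ref{prelim-lemma: K-derivative}, the nonlinear bounds of Lemma \ref{prelim-lemma: nonlinear-derivative} with Sobolev embedding, and a continuation argument on top of Theorem \ref{local-theorem}.

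One step, however, would fail as literally written: the pointwise-in-time inequality $\frac{\ud}{\ud t}\vertiii{f}^2+\frac{\d_M}{2}\vertiii{f}_{\nu}^2\leq 0$ and the resulting monotonicity $\ee[f(t)]\leq\ee[f_0]$. Pointwise in $t$, Theorem \ref{prelim-theorem: semi-positivity} only dissipates the microscopic part $\vnm{\nnpk[\p^{\gamma}f]}{x,v}^2$; the full dissipation $\vnm{\p^{\gamma}f}{x,v}^2$ is recovered only after integrating over time windows of length at least one (Lemma \ref{global-lemma:multi-positivity}), and the leftover initial window $[0,t')$ with $t'\in[0,1)$ must be filled by the Gronwall-type bound of Lemma \ref{global-lemma:spatial-derivative}, which costs a multiplicative constant $\ue^{C}$. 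Consequently the closing estimate takes the form $\ee[f(t)]\leq C_M\ee[f_0]+C_M\sup_{s\in[0,t]}\ee[f(s)]^2$ with a constant $C_M\geq 1$ that may be large, not $\ee[f(t)]\leq\ee[f_0]$, so your bootstrap step ``if $\ee\leq M_0$ on $[0,T]$ then in fact $\ee\leq\ee[f_0]\leq M_0/2$'' does not close unless the initial data is additionally shrunk relative to $1/C_M$; the paper does exactly this by choosing $C_MM_0\leq\frac{1}{2}$ and $\ee[f_0]\leq M_0/(4C_M)$ before running the continuity argument. With that constant bookkeeping inserted, the rest of your argument (commutator induction in $\beta$, absorption of the nonlinearity for small $M_0$, uniqueness from the local theory) coincides with the paper's proof.
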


\begin{proof}
We first choose the initial data $\ee[f_0]\leq \dfrac{M}{2}$.
Denote
\begin{align}
    T=\sup\Big\{t\geq0: \sup_{s\in[0,t]} \ee[f(s)]\leq M\Big\}.
\end{align}
Based on Theorem \ref{local-theorem} for the local well-posedness, we know $T>0$. For any $t\in[0,T]$, applying $\p^{\gamma}$ on both sides of the equation \eqref{equation: prelim-perturbation}, multiplying $\p^{\gamma}f$, and integrating over $\t^3\times\r^3$, we have
\begin{align}
    &\frac{1}{2}\frac{\ud}{\ud t}\tnm{\p^{\gamma}f}{x,v}^2+\int_{\t^3\times\r^3}\p^{\gamma}f\cdot \l\big[\p^{\gamma}f\big]
    =\int_{\t^3\times\r^3}\p^{\gamma}f\cdot\p^{\gamma}\g[f,f;f].
\end{align}
We further integrate over time to obtain
\begin{align}
    &\tnm{\p^{\gamma}f(t)}{x,v}^2+\int_0^t\int_{\t^3\times\r^3}\p^{\gamma}f\cdot \l\big[\p^{\gamma}f\big]
    =\tnm{\p^{\gamma}f_0}{x,v}^2+\int_0^t\int_{\t^3\times\r^3}\p^{\gamma}f\cdot\p^{\gamma}\g[f,f;f].
\end{align}
For each $t$, we split $t=t'+n$, where $t'\in[0,1)$ and $n$ is a positive integer. Then using Lemma \ref{prelim-theorem: semi-positivity}, Lemma \ref{global-lemma:multi-positivity} and Lemma \ref{prelim-lemma: nonlinear-derivative}, and summing over $\abs{\gamma}\leq N$, we have
\begin{align}
    \bigg(\sum_{\abs{\gamma}\leq N}\tnm{\p^{\gamma}f(t)}{x,v}^2\bigg)+\int_{t'}^t\bigg(\sum_{\abs{\gamma}\leq N}\vnm{\p^{\gamma}f}{x,v}^2\bigg)\ls C_M\ee[f_0]+ C_M\sup_{s\in[0,t]}\ee[f(s)]^2,\no
\end{align}
for some constant $C_M\geq 1$ depending on $M$. However, the second term in LHS still lacks the information on $[0,t']$. We fill this gap by adding the missing piece (the integral over $[0,t']$) on both sides
\begin{align}
\\
    \bigg(\sum_{\abs{\gamma}\leq N}\tnm{\p^{\gamma}f(t)}{x,v}^2\bigg)+\int_{0}^t\bigg(\sum_{\abs{\gamma}\leq N}\vnm{\p^{\gamma}f}{x,v}^2\bigg)
    \ls C_M\ee[f_0]+ C_M\sup_{s\in[0,t]}\ee[f(s)]^2+\int_{0}^{t'}\bigg(\sum_{\abs{\gamma}\leq N}\vnm{\p^{\gamma}f}{x,v}^2\bigg).\no
\end{align}
Then based on Lemma \ref{global-lemma:spatial-derivative}, we know for $t'\in[0,1)$
\begin{align}
    \int_{0}^{t'}\bigg(\sum_{\abs{\gamma}\leq N}\vnm{\p^{\gamma}f}{x,v}^2\bigg)\ls \sum_{\abs{\gamma}\leq N}\tnm{\p^{\gamma}f_0}{x,v}^2.
\end{align}
Hence, in total, we obtain
\begin{align}
    &\sum_{\abs{\gamma}\leq N}\tnm{\p^{\gamma}f(t)}{x,v}^2+\int_{0}^t\bigg(\sum_{\abs{\gamma}\leq N}\vnm{\p^{\gamma}f}{x,v}^2\bigg)
    \leq C_M\ee[f_0]+ C_M\sup_{s\in[0,t]}\ee[f(s)]^2.
\end{align}
Next, we consider the mixed derivative case.
For any $t\in[0,T]$, applying $\p^{\gamma}_{\beta}$ on both sides of the equation \eqref{equation: prelim-perturbation}, multiplying $\p^{\gamma}_{\beta}f$, and integrating over $\t^3\times\r^3$, we have
\begin{align}
\\
    &\frac{1}{2}\frac{\ud}{\ud t}\tnm{\p_{\beta}^{\gamma}f}{x,v}^2+\int_{\t^3\times\r^3}\p_{\beta}^{\gamma}f\cdot \l\big[\p_{\beta}^{\gamma}f\big]
    =-\int_{\t^3\times\r^3}\p_{\beta}^{\gamma}f\cdot \Big(\p_{\beta}\l[\p^{\gamma}f]-\l\big[\p_{\beta}^{\gamma}f\big]\Big)+\int_{\t^3\times\r^3}\p_{\beta}^{\gamma}f\cdot\p_{\beta}^{\gamma}\g[f,f;f].\no
\end{align}
Note that the each term in $\p_{\beta}\l\big[\p^{\gamma}f\big]-\l\big[\p_{\beta}^{\gamma}f\big]$ has $\p^{\gamma}_{\beta'}f$ with $0\leq\abs{\beta'}<\abs{\beta}$. Hence, we may use a simple induction over $\abs{\beta}=0,1,2,\cdots,N$ to obtain
\begin{align}\label{global-estimate:well-posedness}
    &\ee[f(t)]
    \leq C_M\ee[f_0]+ C_M\sup_{s\in[0,t]}\ee[f(s)]^2.
\end{align}
Note that we cannot directly absorb the last term into the LHS since we are not clear whether $C_MM<1$. We further choose $M_0$ such that
\begin{align}
    C_MM_0\leq \frac{1}{2}.
\end{align}
Also, we choose the initial data
\begin{align}
    \ee[f_0]\leq \e_M=\frac{M_0}{4C_M}<\frac{M_0}{2}<\frac{M}{2}.
\end{align}
Denote
\begin{align}
    T_0=\sup\Big\{t\geq0: \sup_{s\in[0,t]} \ee[f(s)]\leq M_0\Big\}.
\end{align}
For $0<t<T_0\leq T$, the bound \eqref{global-estimate:well-posedness} still holds. Hence, we have
\begin{align}
    \ee[f(t)]
    \leq C_M\ee[f_0]+ C_M\sup_{s\in[0,t]}\ee[f(s)]^2\leq \frac{M_0}{4}+\frac{1}{2}\sup_{s\in[0,t]}\ee[f(s)].
\end{align}
Taking supremum over $t\in[0,T_0]$, we have
\begin{align}
    \sup_{t\in[0,T_0]}\ee[f(s)]\leq \frac{M_0}{2}<M_0.
\end{align}
Then by standard continuity argument, we know $T_0=\infty$.
\end{proof}

\begin{theorem}\label{global-theorem: decay}
Under the same assumption as in Theorem \ref{global-theorem: well-posedness}, the global solution $f(t,x,v)$ satisfies
\begin{align}
    \nnm{f(t)}\leq C\ue^{-kt}\nnm{f_0},
\end{align}
for some constant $C,K>0$. 
\end{theorem}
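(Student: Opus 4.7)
The plan is to derive a uniform one-step contraction $\nnm{f(s+1)}^2\leq\lambda\nnm{f(s)}^2$ with $\lambda\in(0,1)$, which will iterate to give the claimed exponential decay. First I would revisit the energy identity from the proof of Theorem \ref{global-theorem: well-posedness}, but this time keeping the full linear dissipation on the left-hand side. Applying $\p_\beta^\gamma$ to the perturbation equation \eqref{equation: prelim-perturbation-2}, pairing with $\p_\beta^\gamma f$, summing over $\abs{\gamma}+\abs{\beta}\leq N$ and integrating on $[s,t]$, with the nonlinear contribution controlled by Lemma \ref{prelim-lemma: nonlinear-derivative} and Sobolev embedding $H^2_v\hookrightarrow L^\infty_v$ (yielding a factor $C\sqrt{M_0}$ absorbed into the dissipation for $M_0$ small), I obtain
\begin{align*}
\nnm{f(t)}^2 \,+\, c\int_s^t\sum_{\abs{\gamma}+\abs{\beta}\leq N}\big\langle\l[\p_\beta^\gamma f],\p_\beta^\gamma f\big\rangle(\tau)\,\ud\tau \,\leq\, \nnm{f(s)}^2.
\end{align*}
In particular $t\mapsto\nnm{f(t)}^2$ is monotone non-increasing.

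Next I would upgrade the integrated linear dissipation to the full $\nu$-dissipation. For pure spatial derivatives, Lemma \ref{global-lemma:multi-positivity} with $n=1$ gives directly $\int_s^{s+1}\sum_{\abs{\gamma}\leq N}\langle\l[\p^\gamma f],\p^\gamma f\rangle\,\ud\tau\geq\d_M\int_s^{s+1}\sum_{\abs{\gamma}\leq N}\vnm{\p^\gamma f}{x,v}^2\,\ud\tau$. For mixed derivatives with $\abs{\beta}\geq 1$, I would use the splitting $\l=\nu I-K$ together with Lemma \ref{prelim-lemma: nu-estimate}, Lemma \ref{prelim-lemma: nu-derivative-estimate}, and Lemma \ref{prelim-lemma: K-derivative}, and induct on $\abs{\beta}$: the commutator $[\p_\beta,\l]$ produces lower-$\abs{\beta}$ terms that are absorbed by the previous inductive step. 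Chaining these estimates yields
\begin{align*}
\int_s^{s+1}\sum_{\abs{\gamma}+\abs{\beta}\leq N}\big\langle\l[\p_\beta^\gamma f],\p_\beta^\gamma f\big\rangle\,\ud\tau \,\geq\,\d_M\int_s^{s+1}\vnnm{f(\tau)}^2\,\ud\tau.
\end{align*}

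Third, since $\nu(v)\geq\nu_1>0$ by Lemma \ref{prelim-lemma: nu-estimate}, we have $\vnnm{g}^2\geq\nu_1\nnm{g}^2$. Combining the energy inequality on $[s,s+1]$ with this coercivity and the monotonicity bound $\int_s^{s+1}\nnm{f(\tau)}^2\,\ud\tau\geq\nnm{f(s+1)}^2$, we get $(1+c_M)\nnm{f(s+1)}^2\leq\nnm{f(s)}^2$. Iterating at integer times yields $\nnm{f(n)}^2\leq\lambda^n\nnm{f_0}^2$ with $\lambda=1/(1+c_M)\in(0,1)$, and interpolating by monotonicity on each $[n,n+1]$ produces $\nnm{f(t)}\leq C\ue^{-kt}\nnm{f_0}$ with $k=-\tfrac{1}{2}\log\lambda>0$.

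The hard part will be Step 2, namely extending the macroscopic coercivity from pure spatial derivatives---where Lemma \ref{global-lemma:multi-positivity} was proved via Poincar\'e's inequality on $\t^3$---to mixed derivatives. Because explicit formulas for $\nu$ and $K$ are unavailable in the quantum context, the commutator analysis must rely entirely on the derivative bounds in Lemma \ref{prelim-lemma: nu-derivative-estimate} and Lemma \ref{prelim-lemma: K-derivative}, with the induction ordered so that the lower-order error at stage $\abs{\beta}=k$ has already been controlled at stage $\abs{\beta}=k-1$. An auxiliary concern is keeping track of the conservation laws \eqref{CL-Mass}--\eqref{CL-Energy}, which are crucial for invoking Lemma \ref{global-lemma:multi-positivity} and are preserved by the dynamics.
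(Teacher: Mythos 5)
Your overall setup (energy identity with the dissipation kept on the left, Lemma \ref{global-lemma:multi-positivity} for the spatial derivatives, an induction on $\abs{\beta}$ via $\l=\nu I-K$ for the mixed ones, and the lower bound $\nu\geq\nu_1$) is the same as the paper's, but the step your contraction actually rests on is not justified: in Step 1 you assert the integrated inequality with right-hand constant exactly $1$ and conclude that $t\mapsto\nnm{f(t)}^2$ is monotone non-increasing, and Step 3 uses this twice (for $\int_s^{s+1}\nnm{f(\tau)}^2\,\ud\tau\geq\nnm{f(s+1)}^2$ and for the interpolation between integer times). Monotonicity is not available here. Instantaneously, $\br{\l[g],g}$ only controls the microscopic part $\vnm{\nnpk[g]}{v}^2$ (Theorem \ref{prelim-theorem: semi-positivity}); the macroscopic part is recovered only after integrating over time intervals of length at least $1$ (Lemma \ref{global-lemma:positivity}; see the remark after Lemma \ref{global-lemma:multi-positivity}, which points out the estimate is false on short intervals, e.g.\ for data with $\nnpk[f_0]=0$), so the nonlinear right-hand side cannot be absorbed pointwise in time. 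Moreover, for $\abs{\beta}\geq1$ the commutators $\p_{\beta}\l[\p^{\gamma}f]-\l[\p_{\beta}^{\gamma}f]$ and $[\p_{\beta},v\cdot\nx]$ are not sign-definite and are absorbed only up to constants (Lemmas \ref{prelim-lemma: nu-derivative-estimate}, \ref{prelim-lemma: K-derivative}). What these estimates actually yield is the paper's inequality $\nnm{f(t)}^2+\int_s^t\vnnm{f(\tau)}^2\,\ud\tau\leq C_0\,\nnm{f(s)}^2$ for $t-s\geq1$ with some $C_0\geq1$; with $C_0>1$ your one-step bound degenerates to $\nnm{f(s+1)}^2\leq C_0\nnm{f(s)}^2$, which gives no decay, so the iteration collapses.

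The paper closes this without any monotonicity by the Had\v{z}i\'c--Guo/Maslova device: since $\nnm{\cdot}\ls\vnnm{\cdot}$, set $V(s)=\int_s^{\infty}\nnm{f(\tau)}^2\,\ud\tau$; the integrated inequality gives $V(s)\leq C_0\nnm{f(s)}^2$, hence $V'(s)=-\nnm{f(s)}^2\leq-\tfrac{1}{C_0}V(s)$, so $V$ decays exponentially, and pointwise decay is then recovered from the integrated bound. If you prefer to keep your contraction scheme, it can be repaired by enlarging the window instead of assuming monotonicity: for $\tau\in[s,s+T-1]$ the same inequality started at $\tau$ gives $\nnm{f(s+T)}^2\leq C_0\nnm{f(\tau)}^2$, hence $\int_s^{s+T-1}\nnm{f(\tau)}^2\,\ud\tau\geq\tfrac{T-1}{C_0}\nnm{f(s+T)}^2$, and combining with the inequality on $[s,s+T]$ yields $\bigl(1+\tfrac{c(T-1)}{C_0}\bigr)\nnm{f(s+T)}^2\leq C_0\nnm{f(s)}^2$, a genuine contraction once $T$ is chosen large relative to $C_0^2/c$. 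As written, however, with $T=1$ and monotonicity assumed, the argument has a real gap.
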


\begin{proof}
We mainly use an argument similar to Had\v{z}i\'c-Guo \cite{Hadzic.Guo2010} and Maslova \cite{Maslova1993}. Based on the proof of Theorem \ref{global-theorem: well-posedness}, we know for any $s<t$ with $\abs{t-s}\geq1$,
\begin{align}
    \nnm{f(t)}^2+\int_s^t\vnnm{f(\tau)}^2\ud\tau\leq  C_0\nnm{f(s)}^2.
\end{align}
Since $\nnm{f(t)}\ls\vnnm{f(t)}$, we know
\begin{align}\label{global-estimate:decay}
    \nnm{f(t)}^2+\int_s^t\nnm{f(\tau)}^2\ud\tau\leq C_0\nnm{f(s)}^2.
\end{align}
Denote
\begin{align}
    V(s)=\int_s^{\infty}\nnm{f(\tau)}^2\ud\tau.
\end{align}
Then naturally 
\begin{align}
    V(s)\leq C_0\nnm{f(s)}^2,
\end{align}
and thus
\begin{align}
    V'(s)=-\nnm{f(s)}^2\leq -\frac{1}{C_0}V(s).
\end{align}
Therefore, we know 
\begin{align}
    V(s)\leq V(0)\ue^{-\frac{1}{C_0}s}.
\end{align}
Then we integrate over $s\in\left[t,2t\right]$ for $t\geq1$ in \eqref{global-estimate:decay}, we have
\begin{align}
    t\nnm{f(t)}^2\leq\int_t^{2t}\nnm{f(\tau)}^2\ud\tau\leq\int_t^{\infty}\nnm{f(\tau)}^2\ud\tau=V(t)
\end{align}
Hence, we have
\begin{align}
    \nnm{f(t)}^2\leq V(0)\ue^{-\frac{1}{C_0}t}.
\end{align}
Since $V(0)\ls M_0$, our result naturally follows.
\end{proof}


\smallskip
\subsection{Global Solutions for $\Omega=\r^3$}

In this section, we will prove the global well-posedness when $\Omega=\r^3$. 

Denote a special dissipation rate
\begin{align}
    \vnnmz{f}=\vnm{\nnpk f}{x,v}+\sum_{0<\abs{\gamma}\leq N}\vnm{\p^{\gamma}_{\beta}f}{x,v}.
\end{align}
Note that this does not include $\vnm{\pk f}{x,v}$, which has not time or spatial derivatives.

\subsubsection{Positivity Estimate for $\l$}

Similar to $\Omega=\t^3$ case, we denote
\begin{align}\label{whole-equation: decomposition}
    f(t,x,v)=&\;\pk[f](t,x,v)+\nnpk[f](t,x,v)\\
    =&\;a(t,x)\mh+b(t,x)\cdot v\mh+c(t,x)\abs{v}^2\mh+\nnpk[f](t,x,v),\no
\end{align}
where $a,b,c$ are given by $f$. 

Plugging \eqref{whole-equation: decomposition} into \eqref{equation: prelim-perturbation} and compare the two sides with the basis
\begin{align}\label{whole-estimate:temp1}
    v_i\abs{v}^2\mh,\quad v_iv_j\mh,\quad v_i\mh,\quad \mh
\end{align}
we obtain the so-called macroscopic equations
\begin{align}
    \p_{x_i}\p^{\gamma}c&=\ell^{\gamma} _{ci}+h_{ci}^{\gamma},\label{whole-equation:macro1}\\
    \dt \p^{\gamma}c+\p_{x_i}\p^{\gamma}b_i&=\ell_i^{\gamma}+h_i^{\gamma},\label{whole-equation:macro2}\\
    \p_{x_i}\p^{\gamma}b_j+\p_{x_j}\p^{\gamma}b_i&=\ell_{ij}^{\gamma}+h_{ij}^{\gamma}\ \text{for}\ i\neq j,\label{whole-equation:macro3}\\
    \dt \p^{\gamma}b_i+\p_{x_i}\p^{\gamma}a&=\ell_{bi}^{\gamma}+h_{bi}^{\gamma},\label{whole-equation:macro4}\\
    \dt \p^{\gamma}a&=\ell_a^{\gamma}+h_a^{\gamma},\label{whole-equation:macro5}
\end{align}
where $\ell_{ci}^{\gamma}$, $\ell_i^{\gamma}$, $\ell_{ij}^{\gamma}$, $\ell_{bi}^{\gamma}$ and $\ell_a^{\gamma}$ are the coefficients corresponding to the above basis for the linear term $-\big(\dt+v\cdot\nx+\big)\nnpk[\p^{\gamma}f]$, and $h_{ci}^{\gamma}$, $h_i^{\gamma}$, $h_{ij}^{\gamma}$, $h_{bi}^{\gamma}$ and $h_a^{\gamma}$ are corresponding to $\p^{\gamma}\g[f,f;f]$.

\begin{lemma}\label{whole-lemma:linear}
We have
\begin{align}
    \sum_{\abs{\gamma}\leq N-1}\Big(\tnm{\ell_{ci}^{\gamma}}{x}+\tnm{\ell_i^{\gamma}}{x}+\tnm{\ell^{\gamma}_{ij}}{x}+\tnm{\ell^{\gamma}_{bi}}{x}+\tnm{\ell^{\gamma}_a}{x}\Big)\ls\sum_{\abs{\gamma}\leq N}\tnm{\nnpk[\p^{\gamma}f]}{x}.
\end{align}
\end{lemma}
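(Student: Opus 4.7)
The underlying observation is that each $\ell_\bullet^\gamma(t,x)$ is a finite linear combination of velocity inner products of the form
\begin{equation*}
\big\langle \phi(v),\, -(\dt+v\cdot\nx)\nnpk[\p^\gamma f](t,x,\cdot)\big\rangle_v,
\end{equation*}
where $\phi(v)=p(v)\mh(v)$ for some fixed polynomial $p$. Because $\pk$ is defined purely through $v$-integration, it commutes with $\dt$ and $\p_{x_k}$, so the derivatives on $\nnpk[\p^\gamma f]$ can be absorbed into the multi-index: $\dt\nnpk[\p^\gamma f]=\nnpk[\p^{\gamma'}f]$ and $\p_{x_k}\nnpk[\p^\gamma f]=\nnpk[\p^{\gamma''}f]$ with $|\gamma'|=|\gamma''|=|\gamma|+1$. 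Cauchy--Schwarz in $v$, together with the Gaussian decay of $\phi$ and $v_k\phi$, then yields $|\ell_\bullet^\gamma(t,x)|\ls\sum_{|\gamma'|=|\gamma|+1}\tnm{\nnpk[\p^{\gamma'}f](t,x,\cdot)}{v}$, and squaring, integrating in $x$, and summing over $|\gamma|\le N-1$ produces the asserted bound.

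To establish this representation, I would apply $\p^\gamma$ to \eqref{equation: prelim-perturbation-2}, decompose $\p^\gamma f=\pk[\p^\gamma f]+\nnpk[\p^\gamma f]$ via \eqref{whole-equation: decomposition}, and rearrange to
\begin{equation*}
(\dt+v\cdot\nx)\pk[\p^\gamma f] \,=\, -(\dt+v\cdot\nx)\nnpk[\p^\gamma f]-\l[\p^\gamma f]+\p^\gamma\g[f,f;f].
\end{equation*}
Next, I would pair both sides in $L^2_v$ against each test function in the standard 13-moment family $\{\mh,\, v_i\mh,\, v_iv_j\mh,\, v_i|v|^2\mh\}$ (shifted by constants where necessary so that each test function is orthogonal to $\mathbf{N}(\l)$). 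Self-adjointness of $\l$ together with $\l\,\big|_{\mathbf{N}(\l)}=0$ kills every $\l[\p^\gamma f]$ pairing. The LHS then becomes a fixed linear system in the derivatives $\dt\p^\gamma a$, $\dt\p^\gamma b_i$, $\dt\p^\gamma c$, $\p_{x_i}\p^\gamma a$, $\p_{x_i}\p^\gamma b_j$, $\p_{x_i}\p^\gamma c$. Inverting the resulting constant Gram-type matrix isolates \eqref{whole-equation:macro1}--\eqref{whole-equation:macro5} and identifies each $\ell_\bullet^\gamma$ as the required linear combination of inner products against polynomial-weighted Gaussians, while each $h_\bullet^\gamma$ absorbs the corresponding moment of $\p^\gamma\g[f,f;f]$.

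The estimate itself is then a one-line Cauchy--Schwarz applied to the representation above. The only genuine work lies in the extraction step of the second paragraph: the moment basis \eqref{whole-estimate:temp1} is not orthogonal to $\mathbf{N}(\l)$, so one must perform a small amount of Gram--Schmidt orthogonalization or, equivalently, invert a finite-dimensional constant matrix in order to separate the individual coefficients. This is the main bookkeeping obstacle, but it is purely algebraic, constant in $t,x$, and contributes only universal factors that are absorbed into $\ls$.
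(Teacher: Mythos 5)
Your overall skeleton—express each $\ell_\bullet^\gamma(t,x)$ as a fixed linear combination of $L^2_v$-moments of the linear part of the transported microscopic equation, commute $\dt$ and $\p_{x_k}$ past $\npk$, and finish with Cauchy--Schwarz against Gaussian-decaying weights—matches the paper. But there is a genuine gap in your second paragraph, where you drop the $\l$ contribution to the $\ell$'s.

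You assert that self-adjointness of $\l$ together with $\l\big|_{\mathbf{N}(\l)}=0$ kills every $\l[\p^\gamma f]$ pairing. This is false for the test functions you actually need. Self-adjointness gives $\langle\l[g],\phi\rangle=\langle g,\l[\phi]\rangle$, which vanishes when $\phi\in\mathbf{N}(\l)$, not when $\phi\perp\mathbf{N}(\l)$. Your proposed shift ``so that each test function is orthogonal to $\mathbf{N}(\l)$'' therefore points in exactly the wrong direction; and even without the shift, the moment family contains $v_iv_j\mh$ for $i\ne j$ and $v_i|v|^2\mh$, which do not lie in $\mathbf{N}(\l)={\rm span}\{\mh,v_i\mh,|v|^2\mh\}$. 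For those, $\langle\l[\nnpk[\p^\gamma f]],\phi\rangle$ does not vanish, so your representation of $\ell_\bullet^\gamma$ as a moment of $-(\dt+v\cdot\nx)\nnpk[\p^\gamma f]$ alone omits a genuine contribution, and the ``one-line Cauchy--Schwarz'' does not recover $\ell_\bullet^\gamma$.

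The fix is what the paper actually does (the stray ``$+$'' before the closing parenthesis in the text defining the $\ell$'s is a typo for ``$+\l$''): keep the $\l[\nnpk[\p^{\gamma}f]]$ term. Its moment against any $\e_n$ in the basis is controlled by $\tnm{\nnpk[\p^{\gamma}f]}{v}$ via Cauchy--Schwarz, because $\e_n$ is a Gaussian-weighted polynomial so $\nu\e_n\in L^2_v$ by Lemma~\ref{prelim-lemma: nu-estimate}, and $K$ is bounded on $L^2_v$ by Lemma~\ref{prelim-lemma: K-compactness}. This furnishes the order-$|\gamma|$ term on the right-hand side; your commutation-and-Cauchy--Schwarz bound supplies the order-$(|\gamma|+1)$ terms from the transport part; summing $|\gamma|\le N-1$ against $|\gamma|\le N$ then closes the lemma.
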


\begin{proof}
Assume the basis in \eqref{whole-estimate:temp1} is $\{\e_n(v)\}$. Then the coefficients $\ell_{ci}^{\gamma}$, $\ell_i^{\gamma}$, $\ell_{ij}^{\gamma}$, $\ell_{bi}^{\gamma}$ and $\ell_a^{\gamma}$ are just linear combinations of 
\begin{align}
    \int_{\r^3}\big(\dt+v\cdot\nx+\l\big)\nnpk[\p^{\gamma}f]\cdot\e_n.
\end{align}
Note that with Lemma \ref{prelim-lemma: K-compactness} and Lemma \ref{prelim-theorem: semi-positivity},
\begin{align}
    &\tnm{\int_{\r^3}\big(\dt+v\cdot\nx+\l\big)\nnpk[\p^{\gamma}f]\cdot\e_n}{x}^2\\
    \ls&\int_{\r^3}\abs{\e_n}\cdot\int_{\r^3\times\r^3}\abs{\e_n}\Big(\abs{\nnpk[\dt\p^{\gamma}f]}^2+\abs{v}^2\abs{\nnpk[\nx\p^{\gamma}f]}^2+\abs{\nnpk[\l[\p^{\gamma}f]]}^2\Big)\no\\
    \ls&\tnm{\nnpk[\dt\p^{\gamma}f]}{x}^2+\tnm{\nnpk[\nx\p^{\gamma}f]}{x}^2+\tnm{\nnpk[\p^{\gamma}f]}{x}^2.\no
\end{align}
Hence, our result is obvious.
\end{proof}

\begin{remark}
This lemma indicates that we must include $\dt$ in the definition of $\gamma$.
\end{remark}

\begin{lemma}\label{whole-lemma:nonlinear}
\begin{align}
    \sum_{\abs{\gamma}\leq N}\Big(\tnm{h_{ci}^{\gamma}}{x}+\tnm{h^{\gamma}_i}{x}+\tnm{h^{\gamma}_{ij}}{x}+\tnm{h^{\gamma}_{bi}}{x}+\tnm{h^{\gamma}_a}{x}\Big)\ls \nnm{f}^2\vnnmz{f}.
\end{align}
\end{lemma}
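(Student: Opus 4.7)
The strategy is to rewrite each of the macroscopic coefficients $h^\gamma$ as a linear combination of moment integrals $\int_{\r^3}\p^\gamma\g[f,f;f]\cdot p(v)\mh(v)\,\ud v$ for appropriate polynomials $p(v)$ (dictated by the dual basis, with respect to the $L^2_v$ inner product, of the finite-dimensional span of $\{\mh,v_i\mh,v_iv_j\mh,v_i|v|^2\mh\}$), and then to estimate each such integral via Lemma \ref{prelim-lemma: nonlinear-derivative}. The test function $g(v)=p(v)\mh(v)$ decays like a Gaussian, so $\sup_v|\nu^3 g|$, $\sup_v|\nu g|$, and $\vnm{g}{v}$ are all uniformly bounded; applying the second inequality of Lemma \ref{prelim-lemma: nonlinear-derivative} yields, for each Leibniz-type splitting $\gamma=\gamma_a+\gamma_b$ and $\gamma=\gamma_1+\gamma_2+\gamma_3$,
\begin{equation*}
\Big|\int_{\r^3}\p^\gamma\g[f,f;f]\cdot g\,\ud v\Big|
\ls \tnm{\p^{\gamma_a}f}{v}\tnm{\p^{\gamma_b}f}{v}
+ \tnm{\p^{\gamma_1}f}{v}\tnm{\p^{\gamma_2}f}{v}\tnm{\p^{\gamma_3}f}{v}.
\end{equation*}

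I then take the $L^2_x$ norm and invoke the Sobolev embedding $H^2_x\hookrightarrow L^\infty_x$, which is comfortably available since $N\geq 8$: the factors carrying the smaller derivative orders are placed in $L^\infty_x$ and each controlled by $\nnm{f}$, while the factor of maximal order $\gamma_*$ is placed in $L^2_{x,v}$ and further bounded (using $\nu\geq\nu_1>0$ from Lemma \ref{prelim-lemma: nu-estimate}) by $\vnm{\p^{\gamma_*}f}{x,v}$. For $|\gamma|\geq 1$, we automatically have $|\gamma_*|\geq 1$, so $\vnm{\p^{\gamma_*}f}{x,v}$ is one of the summands of $\vnnmz{f}$; the cubic contribution is then bounded by $\nnm{f}^2\vnnmz{f}$, and the quadratic contribution $\nnm{f}\vnnmz{f}$ is absorbed under the standing bootstrap smallness of $\nnm{f}$.

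The main obstacle is the $|\gamma|=0$ case, where $|\gamma_*|=0$ and $\vnm{f}{x,v}$ is not directly dominated by $\vnnmz{f}$ (since $\vnnmz$ contains $\vnm{\nnpk f}{x,v}$ but not $\vnm{\pk f}{x,v}$). To close this case I exploit the identity $\pk[\g[f,f;f]]=0$ established earlier. First, the coefficients $h_a^0$ and $h_{bi}^0$ (associated to the basis elements $\mh$ and $v_i\mh$, which lie in $\mathbf{N}(\l)$) vanish identically. For the surviving coefficients $h_{ci}^0,\,h_i^0,\,h_{ij}^0$, the test function may be replaced by its projection onto $\mathbf{N}(\l)^\perp$; splitting $f=\pk f+\nnpk f$ inside $\g[f,f;f]$ and using $\pk[\g]=0$ at each order (treating $\pk f$ and $\nnpk f$ as independent variables, so that in particular $\g[\pk f,\pk f;\pk f]\perp \mathbf{N}(\l)$) arranges that every surviving contribution against the $\mathbf{N}(\l)^\perp$-test function carries at least one factor of $\nnpk f$, which is controlled by $\vnm{\nnpk f}{x,v}\le \vnnmz{f}$. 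Summing over $|\gamma|\leq N$ then delivers the stated bound. The principal technical difficulty I anticipate is rigorously verifying the $\pk f$-cancellation for the quantum operator $\g$, since unlike the classical setting the perturbed state $\m+\mh\pk f$ is not an exact local Maxwellian, so the vanishing must be extracted from the identity $\pk[\g]=0$ at the level of the projection rather than from collision invariance.
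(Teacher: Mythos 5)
Your treatment of the $\abs{\gamma}\geq 1$ case matches the paper's: distribute the derivatives, put $L^2_x$ on the highest-order factor (which has at least one derivative, hence falls under $\vnnmz{f}$), and Sobolev-embed the lower-order factors into $L^\infty_x$ to pick up $\nnm{f}^2$. The trouble is your $\abs{\gamma}=0$ argument. You correctly note that $\g[\pk f,\pk f;\pk f]\perp\mathbf{N}(\l)$ follows from $\pk\big[\g[g,g;g]\big]=0$ with $g=\pk f$, and you correctly note this kills the moments against $\mh$, $v_i\mh$, $\abs{v}^2\mh$. But you then claim that ``every surviving contribution against the $\mathbf{N}(\l)^\perp$-test function carries at least one factor of $\nnpk f$,'' and this is a non-sequitur: $\g[\pk f,\pk f;\pk f]$ carries \emph{no} factor of $\nnpk f$, and the fact that it lies in $\mathbf{N}(\l)^\perp$ means precisely that it pairs nontrivially with the $\mathbf{N}(\l)^\perp$-part of the test functions $v_i\abs{v}^2\mh$ and $v_iv_j\mh$ ($i\neq j$). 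There is no higher collision invariant forcing $\int\g[\pk f,\pk f;\pk f]\,v_i\abs{v}^2\mh\,\ud v$ to vanish, so the term $\tnm{\int_{\r^3}\g\big[\pk[f],\pk[f];\pk[f]\big]\cdot\e_n}{x}$ is genuinely present and uncontrolled by your argument.

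The paper closes this gap differently and in a way you cannot reproduce from $\pk$-orthogonality alone: having isolated $\g\big[\pk[f],\pk[f];\pk[f]\big]$, it bounds the moment pointwise in $x$ by a polynomial of the macroscopic fields, $\abs{a}^3+\abs{b}^3+\abs{c}^3$ (plus lower degrees), takes $L^2_x$ to get $\nm{a}_{L^6_x}^3+\nm{b}_{L^6_x}^3+\nm{c}_{L^6_x}^3$, and then uses the \emph{Sobolev inequality on $\r^3$}, $\nm{g}_{L^6_x}\ls\tnm{\nx g}{x}$, to convert these into $\tnm{\nx a}{x}^3+\cdots$, which are controlled by derivative norms and hence by $\nnm{f}^2\vnnmz{f}$. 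This step is exactly where the whole-space geometry enters (the paper remarks that it fails on $\t^3$), and it is the mechanism that rescues the otherwise uncontrollable zero-derivative, pure-$\pk f$ cubic term. You should replace your projection argument for this term with the $L^6$--Sobolev route.
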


\begin{proof}
Similar to the above lemma, it suffices to bound
\begin{align}
    \tnm{\int_{\r^3}\p^{\gamma}\g[f,f;f]\cdot\e_n}{x}.
\end{align}
This is a bit delicate since $\vnnmz{f}$ does not include the lowest order terms.

For $\abs{\gamma}>0$, the derivative is distributed among the three arguments in $\g$. Based on the second inequality in Lemma \ref{prelim-lemma: nonlinear-no-derivative}, we may assign $L^2_x$ to the term with highest-order derivative to bound it by $\vnnmz{f}$. Then we assign $L^{\infty}_x$ for the other two terms, and the Sobolev embedding helps bound them by $\nnm{f}^2$.

The more delicate case is $\abs{\gamma}=0$. We split $f=\pk[f]+\nnpk[f]$ and get
\begin{align}
    \g[f,f;f]=&\g\Big[f,f;\nnpk[f]\Big]+\g\Big[f,\nnpk[f];\pk[f]\Big]+\g\Big[\nnpk[f],\pk[f];\pk[f]\Big]\\
    &+\g\Big[\pk[f],\pk[f];\pk[f]\Big].\no
\end{align}
Since $\vnnmz{f}$ includes $\tnm{\nnpk[f]}{x}$, so the first three terms are good to go. We just need the estimates as in $\abs{\gamma}>0$ case. The difficult part is the last term
\begin{align}
    \tnm{\int_{\r^3}\g\Big[\pk[f],\pk[f];\pk[f]\Big]\cdot\e_n}{x}.
\end{align}
Since $\pk[f]=a(t,x)\mh+b(t,x)\cdot v\mh+c(t,x)\abs{v}^2\mh$, we have
\begin{align}
    \tnm{\int_{\r^3}\g\Big[\pk[f],\pk[f];\pk[f]\Big]\cdot\e_n}{x}&\ls \tnm{\abs{a}^3+\abs{b}^3+\abs{c}^3}{x}\ls\nm{a}_{L^6_x}^3+\nm{b}_{L^6_x}^3+\nm{c}_{L^6_x}^3.
\end{align}
Due to Sobolev inequality in $\r^3$, we have
\begin{align}
    \nm{a}_{L^6_x}^3+\nm{b}_{L^6_x}^3+\nm{c}_{L^6_x}^3\ls \tnm{\nx a}{x}^3+\tnm{\nx b}{x}^3+\tnm{\nx c}{x}^3\ls \nnm{f}^2\vnnmz{f}.
\end{align}
Hence, our result naturally follows.
\end{proof}

\begin{remark}
The Sobolev inequality in $\r^3$ plays a key role in the proof of this lemma. It does not hold in $\Omega=\t^3$ case.
\end{remark}

\begin{lemma}\label{whole-lemma:positivity}
Assume $f(t,x,v)$ satisfies \eqref{equation: prelim-perturbation} for $t\in[0,T]$ with $T\geq 1$. Assume the initial data $f_0$ satisfies the conservation laws. Also, $f(t,x,v)$ satisfies $\ds\sup_{t\in[0,T]}\nnm{f(t)}\leq M$. Then there exists a constant $\d_M\in(0,1)$ such that
\begin{align}
\\
    \sum_{0<\abs{\gamma}\leq N}\int_{\t^3}\int_{\r^3}\l\big[\p^{\gamma}f(t)\big]\cdot\p^{\gamma}f(t)\ud v\ud x\geq \d_M\sum_{0<\abs{\gamma}\leq N}\vnm{\p^{\gamma}f(t)}{x,v}^2-\frac{\ud}{\ud t}\int_{\r^3}a(\nx\cdot b)-\nnm{f}^4\vnnmz{f}^2.\no
\end{align}
\end{lemma}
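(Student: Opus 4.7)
The plan is to combine the uniform microscopic coercivity of $\l$ from Theorem \ref{prelim-theorem: semi-positivity} with a macroscopic estimate extracted from the system \eqref{whole-equation:macro1}--\eqref{whole-equation:macro5}, and to absorb a single low-order obstruction via a distinguished time-derivative correction.

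First I would apply Theorem \ref{prelim-theorem: semi-positivity} pointwise in $(t,x)$ to each $\p^{\gamma}f$ and integrate in $x$ to obtain the microscopic piece of the dissipation:
\begin{align*}
\sum_{0<|\gamma|\leq N} \int_{\r^3}\!\!\int_{\r^3}\! \l[\p^{\gamma}f]\cdot \p^{\gamma}f\,\ud v\ud x \,\geq\, \delta \sum_{0<|\gamma|\leq N} \vnm{\nnpk \p^{\gamma}f}{x,v}^2.
\end{align*}
Since the five-dimensional null space structure gives $\vnm{\pk \p^{\gamma}f}{x,v}^2 \simeq \|\p^{\gamma}\mathfrak{a}_f\|_{L^2_x}^2 + \|\p^{\gamma}\mathfrak{b}_f\|_{L^2_x}^2 + \|\p^{\gamma}\mathfrak{c}_f\|_{L^2_x}^2$ by direct Gaussian $v$-integrals, the remaining task is to control these hydrodynamic derivatives for all $0<|\gamma|\leq N$ up to the allowed corrections.

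Next I would close the macroscopic part. For any $\p^{\gamma}$ with $|\gamma|\geq 1$, write $\p^{\gamma}=\p\, \p^{\gamma'}$ and substitute: \eqref{whole-equation:macro1} controls $\|\p^{\gamma}c\|_{L^2_x}$ directly; a Hodge-type combination of \eqref{whole-equation:macro2}--\eqref{whole-equation:macro3} of the form $\Delta_x \p^{\gamma'}b_i = \sum_{j\neq i}\p_{x_j}(\ell_{ij}^{\gamma'}+h_{ij}^{\gamma'}) - \p_{x_i}(\ell_i^{\gamma'}+h_i^{\gamma'}) + \p_{x_i}\dt \p^{\gamma'}c$ controls $\|\nx \p^{\gamma'}b\|_{L^2_x}$; purely temporal $\p^{\gamma}=\dt^{\gamma_0}$ reduces to the spatial case by first using \eqref{whole-equation:macro2} or \eqref{whole-equation:macro5}; and for $a$ with $\gamma$ containing a spatial index, \eqref{whole-equation:macro4} yields $\|\p^{\gamma}a\|_{L^2_x}\lesssim \|\dt \p^{\gamma'}b\|_{L^2_x}+\|\ell_b^{\gamma'}\|_{L^2_x}+\|h_b^{\gamma'}\|_{L^2_x}$, while $\dt \p^{\gamma'}b$ with $|\gamma'|\geq 1$ is itself a legitimate member of the hydrodynamic dissipation and can be absorbed. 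In each case Lemma \ref{whole-lemma:linear} converts the accumulated $\ell$-terms into $\sum_{|\gamma''|\leq N}\vnm{\nnpk \p^{\gamma''}f}{x,v}^2$, and Lemma \ref{whole-lemma:nonlinear} converts the $h$-terms into $\nnm{f}^4\vnnmz{f}^2$.

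The only configuration that does not close purely from this scheme is the lowest-order $\|\nx \mathfrak{a}_f\|_{L^2_x}$, i.e.\ $|\gamma|=1$ spatial applied to $a$, because \eqref{whole-equation:macro4} then trades $\nx a$ for $\dt b$ at the bottom level, which is \emph{not} in the dissipation $\vnnmz{f}$. This is where the distinguished correction enters: multiplying \eqref{whole-equation:macro4} at $\gamma'=0$ by $\p_{x_i}a$, integrating in $x$, integrating by parts in $t$, and using \eqref{whole-equation:macro5} to rewrite the resulting $\dt a=\ell_a^0+h_a^0$, one obtains after Cauchy--Schwarz with a small parameter
\begin{align*}
\|\nx a\|_{L^2_x}^2 \,\lesssim\, -\,\frac{\ud}{\ud t}\int_{\r^3}\! a\,(\nx\cdot b)\,\ud x \,+\, \|\nx\cdot b\|_{L^2_x}^2 \,+\, \sum_{|\gamma'|\leq 1}\big(\|\ell^{\gamma'}\|_{L^2_x}^2 + \|h^{\gamma'}\|_{L^2_x}^2\big).
\end{align*}
Summing all the macroscopic bounds over $0<|\gamma|\leq N$, choosing $\delta_M$ small enough that the microscopic coercivity from the first step dominates the universal constant produced by these hydrodynamic substitutions, and invoking Lemma \ref{whole-lemma:linear}--\ref{whole-lemma:nonlinear} at the end completes the proof. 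The main obstacle is the consistent bookkeeping across all mixed space-time multi-indices (especially the purely temporal $\dt^k$), together with verifying that the Hodge inversion for $b$ never generates a source at the same order as the quantity being bounded, so that only the single correction $-\frac{\ud}{\ud t}\int a(\nx\cdot b)$ survives at the bottom level.
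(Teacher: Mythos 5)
Your proposal is correct and follows essentially the same route as the paper's proof: microscopic coercivity from Theorem \ref{prelim-theorem: semi-positivity}, control of the hydrodynamic coefficients $a,b,c$ through the macroscopic system \eqref{whole-equation:macro1}--\eqref{whole-equation:macro5} (elliptic estimate for $b$, \eqref{whole-equation:macro1} for $c$, \eqref{whole-equation:macro5}/\eqref{whole-equation:macro4} for $a$), with Lemma \ref{whole-lemma:linear} and Lemma \ref{whole-lemma:nonlinear} absorbing the $\ell$- and $h$-terms, and the same distinguished correction $-\frac{\ud}{\ud t}\int_{\r^3} a(\nx\cdot b)$ generated by integrating by parts in time to handle the bottom-level $\nx a \leftrightarrow \dt b$ obstruction. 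The only caveat is bookkeeping you already flag yourself (e.g.\ purely temporal derivatives of $b$ go through \eqref{whole-equation:macro4} rather than \eqref{whole-equation:macro2} or \eqref{whole-equation:macro5}), which is handled at the same level of detail as in the paper.
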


\begin{proof}
Due to Lemma \ref{prelim-theorem: semi-positivity}, we know
\begin{align}
    \sum_{0<\abs{\gamma}\leq N}\int_{\t^3}\int_{\r^3}\l\big[\p^{\gamma}f(t)\big]\cdot\p^{\gamma}f(t)\ud v\ud x\geq \d\sum_{0<\abs{\gamma}\leq N}\vnm{\p^{\gamma}\nnpk[f](t)}{x,v}^2.
\end{align}
Hence, it suffices to bound $\vnm{\p^{\gamma}\pk[f](t)}{x,v}$.
Similar the bound of $Z$ in the proof of Lemma \ref{global-lemma:positivity}, and using the proof of Lemma \ref{whole-lemma:linear} and Lemma \ref{whole-lemma:nonlinear}, we know
\begin{align}
    \tnm{\nx\p^{\gamma}c}{x}\ls \tnm{\ell_{ci}^{\gamma}}{x}+\tnm{h_{ci}^{\gamma}}{x}\ls\sum_{0<\abs{\gamma}\leq N}\tnm{\nnpk[\p^{\gamma}f]}{x}+\nnm{f}^2\vnnmz{f},
\end{align}
and
\begin{align}
    \tnm{\nx\p^{\gamma}b_i}{x}&\ls \tnm{\ell^{\gamma}_{ij}}{x}+\tnm{\ell^{\gamma}_{i}}{x}+\tnm{h^{\gamma}_{ij}}{x}+\tnm{h^{\gamma}_{i}}{x}\\
    &\ls\sum_{0<\abs{\gamma}\leq N}\tnm{\nnpk[\p^{\gamma}f]}{x}+\nnm{f}^2\vnnmz{f}.\no
\end{align}
Also, from \eqref{global-equation:macro2}, we have
\begin{align}
    \tnm{\dt\p^{\gamma}c}{x}\ls\tnm{\p_{x_i}\p^{\gamma}b_i}{x}+\tnm{\ell_i^{\gamma}}{x}+\tnm{h_i^{\gamma}}{x}\ls \sum_{0<\abs{\gamma}\leq N}\tnm{\nnpk[\p^{\gamma}f]}{x}+\nnm{f}^2\vnnmz{f}.
\end{align}
The remaining term is for temporal derivative of $b$, which will be discussed later.

For $a$, \eqref{global-equation:macro5} implies
\begin{align}
    \tnm{\dt\p^{\gamma}a}{x}\ls\tnm{\ell_a^{\gamma}}{x}+\tnm{h_a^{\gamma}}{x}\ls \sum_{0<\abs{\gamma}\leq N}\tnm{\nnpk[\p^{\gamma}f]}{x}+\nnm{f}^2\vnnmz{f}.
\end{align}
The remaining term is for purely spatial derivative of $a$. Let $\gamma=[0,\gamma_1,\gamma_2,\gamma_3]$. For $\abs{\gamma}>0$, taking $\p_{x_i}$ in \eqref{global-equation:macro4} yields
\begin{align}
    \p_{x_ix_i}\p^{\gamma}a=-\dt\p_{x_i}\p^{\gamma}b_i+\p_{x_i}(\ell_{bi}^{\gamma}+h_{bi}^{\gamma}).
\end{align}
Multiplying $\p^{\gamma}a$ on both sides, integrating over $\r^3$, and integrating by parts, we have
\begin{align}
    \tnm{\nx\p^{\gamma}a}{x}\ls \tnm{\dt\p^{\gamma}b_i}{x}+\tnm{\ell_{bi}^{\gamma}}{x}+\tnm{h_{bi}^{\gamma}}{x}\ls \sum_{0<\abs{\gamma}\leq N}\tnm{\nnpk[\p^{\gamma}f]}{x}+\nnm{f}^2\vnnmz{f}.
\end{align}
For $\abs{\gamma}=0$, the same procedure implies
\begin{align}
    \tnm{\nx a}{x}^2\ls\int_{\r^3}a(\nx\cdot\dt b)+\tnm{\ell_{bi}}{x}+\tnm{h_{bi}}{x}.
\end{align}
In particular, we know
\begin{align}
    \int_{\r^3}a(\nx\cdot\dt b)=\frac{\ud}{\ud t}\int_{\r^3}a(\nx\cdot b)-\int_{\r^3}\dt a(\nx\cdot b)\leq \frac{\ud}{\ud t}\int_{\r^3}a(\nx\cdot b)+\tnm{\dt a}{x}^2+\tnm{\nx b}{x}^2.
\end{align}
In summary, we have
\begin{align}
    \tnm{\nx a}{x}^2\ls\frac{\ud}{\ud t}\int_{\r^3}a(\nx\cdot b)+\sum_{0<\abs{\gamma}\leq N}\tnm{\nnpk[\p^{\gamma}f]}{x}^2+\nnm{f}^4\vnnmz{f}^2.
\end{align}
Finally, we come to the purely temporal derivative of $b$. For $\gamma=[\gamma_0,0,0,0]$ with $\abs{\gamma}\geq0$ in \eqref{whole-equation:macro4}, we have
\begin{align}
    \tnm{\dt\p^{\gamma}b_i}{x}\ls\tnm{\p_{x_i}\p^{\gamma}a}{x}+\tnm{\ell_{bi}}{x}+\tnm{h_{bi}}{x}.
\end{align}
The RHS has been estimated as above. In particular, for $\abs{\gamma}=0$, we need to introduce $\frac{\ud}{\ud t}\int_{\r^3}a(\nx\cdot b)$.
\end{proof}


\subsubsection{Global Well-Posedness}

Denote 
\begin{align}
    \ee[f(t)]=\nnm{f(t)}^2+\int_0^t\vnnmz{f(s)}^2\ud s,
\end{align}
and
\begin{align}
    \ee[f_0]=\nnm{f_0}^2.
\end{align}
\begin{theorem}\label{whole-theorem: well-posedness}
There exists $M_0>0$ such that if 
\begin{align}
    \ee[f_0]\leq \frac{M_0}{2},
\end{align}
then there exists a unique solution $f(t,x,v)$ to the quantum Boltzmann equation \eqref{equation: prelim-perturbation} such that
\begin{align}
    \ee[f(t)]\leq M_0,
\end{align}
for any $t\in[0,\infty)$. 
\end{theorem}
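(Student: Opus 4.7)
The plan is to reprise the bootstrap and continuity scheme of Theorem \ref{global-theorem: well-posedness}, feeding the local-in-time existence from Theorem \ref{local-theorem} into a global energy estimate. Let $M_0>0$ be a small parameter to be fixed, and set
\begin{equation*}
T_0 \,:=\, \sup\Big\{t\geq0:\, \sup_{s\in[0,t]}\ee[f(s)]\leq M_0\Big\}.
\end{equation*}
The target is to derive, on $[0,T_0]$, a closed estimate of the form $\ee[f(t)] \,\leq\, C_M\,\ee[f_0] + C_M \sup_{s\in[0,t]}\ee[f(s)]^2$, and then to select $M_0$ so small that $C_M M_0\leq\tfrac{1}{2}$; a standard continuity argument will then force $T_0=\infty$. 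Note that since $\gamma$ now carries a temporal index $\gamma_0$, the quantity $\ee[f_0]$ implicitly involves $\dt^{\gamma_0}f|_{t=0}$, which I define recursively from \eqref{equation: prelim-perturbation-2} and bound by the spatial and velocity data.

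The energy inequality will be built in two layers. In the first layer, for $\abs{\gamma}\leq N$ with $\abs{\beta}=0$, I apply $\p^{\gamma}$ to \eqref{equation: prelim-perturbation-2}, pair with $\p^{\gamma}f$ in $L^2_{x,v}$, and sum. The coercivity splits by $\abs{\gamma}$: Theorem \ref{prelim-theorem: semi-positivity} handles $\abs{\gamma}=0$ by producing $\d\,\vnm{\nnpk f}{x,v}^2$, while Lemma \ref{whole-lemma:positivity} handles $0<\abs{\gamma}\leq N$, giving $\d_M\sum_{0<\abs{\gamma}\leq N}\vnm{\p^{\gamma}f}{x,v}^2$ modulo $-\tfrac{\ud}{\ud t}I(t)$ and the cubic error $\nnm{f}^4\vnnmz{f}^2$, with the macroscopic interaction functional $I(t):=\int_{\r^3}a(\nx\cdot b)\,\ud x$. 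Adding a small multiple of $I(t)$ to the top-level energy functional produces a norm equivalent to $\nnm{f}^2$, because $|I(t)|\ls\nnm{f}^2$ by Cauchy--Schwarz, and reconstitutes the full $\vnnmz{f}^2$ on the dissipation side. The nonlinear pairing is handled via Lemma \ref{prelim-lemma: nonlinear-derivative}: place $L^2$ on the highest-derivative factor and $L^{\infty}$ on the remaining ones through the Sobolev embedding $H^2\hookrightarrow L^{\infty}$, while the worst fully macroscopic piece $\g[\pk f,\pk f;\pk f]$ is controlled by the $H^1\hookrightarrow L^6$ Sobolev inequality on $\r^3$, exactly as in the proof of Lemma \ref{whole-lemma:nonlinear}. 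Under the bootstrap hypothesis $\nnm{f}\leq M_0^{1/2}$, all cubic contributions are $\ls M_0^{1/2}\vnnmz{f}^2$ and absorb into the dissipation.

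The second layer extends to mixed derivatives $\p_{\beta}^{\gamma}$ by finite induction on $\abs{\beta}$, following verbatim the torus argument. The new contribution is the commutator $\p_{\beta}\l[\p^{\gamma}f]-\l[\p_{\beta}^{\gamma}f]$, which by Lemma \ref{prelim-lemma: nu-derivative-estimate} and Lemma \ref{prelim-lemma: K-derivative} absorbs into $\eta\,\vnm{\p_{\beta}^{\gamma}f}{x,v}^2+C_{\eta}\sum_{\abs{\beta'}<\abs{\beta}}\vnm{\p_{\beta'}^{\gamma}f}{x,v}^2$ for arbitrarily small $\eta$, closing the induction. The principal obstacle is precisely the absence of a Poincar\'e-type inequality on $\r^3$: unlike the torus setting, where Lemma \ref{global-lemma:positivity} recovered $\vnm{\pk f}{x,v}$ using the zero-mean conservation laws, the undifferentiated macroscopic piece $\{a+b\cdot v+c\abs{v}^2\}\mh$ cannot be dominated by its own spatial gradient in $\r^3$. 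This forces the dissipation norm to drop from $\vnnm{\cdot}$ to the strictly weaker $\vnnmz{\cdot}$, which in turn compels every nonlinear estimate to close in this weaker norm; the compensation is supplied precisely by the macroscopic interaction functional $I(t)$, whose time derivative recovers the missing $\tnm{\nx a}{x}^2$ control through the macroscopic system \eqref{whole-equation:macro1}--\eqref{whole-equation:macro5}. Once this is in place, integrating in time and iterating through the $|\beta|$-induction delivers the closed estimate, and the choice $C_M M_0\leq 1/2$ with $\ee[f_0]\leq M_0/(4C_M)$ bootstraps to $T_0=\infty$ as in Theorem \ref{global-theorem: well-posedness}.
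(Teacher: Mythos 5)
Your overall strategy coincides with the paper's: energy estimates with $\p^{\gamma}$ for $\abs{\gamma}>0$ closed by Lemma \ref{whole-lemma:positivity}, the $\abs{\gamma}=0$ step closed through the microscopic projection, the interaction functional $\int_{\r^3}a(\nx\cdot b)$ absorbed by an equivalent modification of the energy (the paper scales up the zeroth-order energy by a large constant, you add a small multiple of $I(t)$ — the same device), the macroscopic piece $\g[\pk f,\pk f;\pk f]$ handled by $H^1\hookrightarrow L^6$, and a continuity/bootstrap argument with $C_MM_0\le\frac{1}{2}$. Your remark that $\ee[f_0]$ involves time derivatives at $t=0$ defined recursively from the equation is a point the paper leaves implicit, and it is handled correctly.

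The one place where your plan, as stated, would not go through is the $\abs{\beta}$-induction done ``verbatim the torus argument.'' You claim the commutator $\p_{\beta}\l[\p^{\gamma}f]-\l[\p_{\beta}^{\gamma}f]$ absorbs into $\eta\vnm{\p_{\beta}^{\gamma}f}{x,v}^2+C_{\eta}\sum_{\abs{\beta'}<\abs{\beta}}\vnm{\p_{\beta'}^{\gamma}f}{x,v}^2$; but in the whole-space case, for $\abs{\gamma}=0$ and $\beta'=0$ this produces $C_{\eta}\vnm{f}{x,v}^2$, whose macroscopic part $\vnm{\pk f}{x,v}^2$ is exactly what the weaker dissipation $\vnnmz{f}^2$ does \emph{not} contain, so after time integration this term cannot be closed inside the bootstrap (on $\t^3$ it is recovered by Poincar\'e, which is the whole point of the obstruction you yourself identify for the nonlinear terms). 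The paper's fix, which your proposal omits, is the pair of projection identities
\begin{align*}
\int_{\r^3}\l[f]\cdot g=\int_{\r^3}\l\big[\nnpk f\big]\cdot\nnpk[g],\qquad
\int_{\r^3}\g[f_1,f_2;f_3]\cdot g=\int_{\r^3}\g[f_1,f_2;f_3]\cdot \nnpk[g],
\end{align*}
which guarantee that whenever a velocity derivative hits $\nu$, $K$, or $\g$, only $(\ik-\pk)$-components — which \emph{are} in $\vnnmz{\cdot}$ — appear in the resulting lower-order terms. With that observation inserted into your induction step, the argument closes and matches the paper's proof.
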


\begin{proof}
Applying $\p^{\gamma}$ with $\abs{\gamma}>0$ to \eqref{equation: prelim-perturbation}, multiplying $\p^{\gamma}f$ on both sides and integrating over $\r^3\times\r^3$, we get
\begin{align}
    \frac{1}{2}\sum_{0<\abs{\gamma}\leq N}\tnm{\p^{\gamma}f}{x,v}^2+\sum_{0<\abs{\gamma}\leq N}\int_{\r^3\times\r^3}\l\big[\p^{\gamma}f\big]\cdot\p^{\gamma}f=\sum_{0<\abs{\gamma}\leq N}\int_{\r^3\times\r^3}\p^{\gamma}\g[f,f;f]\cdot \p^{\gamma}f.
\end{align}
Using Lemma \ref{whole-lemma:positivity} and similar techniques as in the proof of Lemma \ref{whole-lemma:nonlinear}, we have
\begin{align}
    \frac{\ud}{\ud t}\bigg(\sum_{0<\abs{\gamma}\leq N}\tnm{\p^{\gamma}f}{x,v}^2-\int_{\r^3}a(\nx\cdot b)\bigg)+\sum_{0<\abs{\gamma}\leq N}\vnm{\p^{\gamma}f}{x,v}^2\ls \nnm{f}^4\vnnmz{f}^2.
\end{align}
For $\abs{\gamma}=0$, we have
\begin{align}
    \frac{1}{2}\tnm{f}{x}^2+\int_{\r^3\times\r^3}\l[f]\cdot f=\int_{\r^3\times\r^3}\g[f,f;f]\cdot f.
\end{align}
Note that 
\begin{align}
    \int_{\r^3\times\r^3}\g[f,f;f]\cdot f=\int_{\r^3\times\r^3}\g[f,f;f]\cdot \nnpk[f].
\end{align}
Using similar techniques as in the proof of Lemma \ref{whole-lemma:nonlinear}, we have
\begin{align}
    \tnm{f}{x,v}^2+\tnm{\nnpk[f]}{x,v}^2\ls \nnm{f}^2\vnnmz{f}^2.
\end{align}
In total, we have
\begin{align}
    \frac{\ud}{\ud t}\bigg(C\tnm{f}{x,v}+\sum_{0<\abs{\gamma}\leq N}\tnm{\p^{\gamma}f}{x,v}^2-\int_{\r^3}a(\nx\cdot b)\bigg)\\
    +\bigg(\sum_{0<\abs{\gamma}\leq N}\vnm{\p^{\gamma}f}{x,v}^2+\tnm{\nnpk[f]}{x,v}^2\bigg)&\ls \nnm{f}^2\vnnmz{f}^2.\no
\end{align}
In particular, we may choose $C$ sufficiently large to kill $\int_{\r^3}a(\nx\cdot b)$. Hence, we have
\begin{align}
    \frac{\ud}{\ud t}\bigg(\tnm{f}{x,v}+\sum_{0<\abs{\gamma}\leq N}\tnm{\p^{\gamma}f}{x,v}^2\bigg)
    +\bigg(\sum_{0<\abs{\gamma}\leq N}\vnm{\p^{\gamma}f}{x,v}^2+\tnm{\nnpk[f]}{x,v}^2\bigg)&\ls \nnm{f}^2\vnnmz{f}^2.
\end{align}
This is for $\abs{\beta}=0$ case. When $\abs{\beta}>0$, we use similar induction as in the proof of Theorem \ref{global-theorem: well-posedness} to obtain
\begin{align}
    \frac{\ud}{\ud t}\nnm{f}^2+\vnnmz{f}\ls \nnm{f}^2\vnnmz{f}^2.
\end{align}
Here note the key fact that
\begin{align}
    \int_{\r^3}\l[f]\cdot g=\int_{\r^3}\l\big[\nnpk[f]\big]\cdot\nnpk[g],
\end{align}
and
\begin{align}
    \int_{\r^3}\g[f_1,f_2;f_3]\cdot g=\int_{\r^3}\g[f_1,f_2;f_3]\cdot \nnpk[g].
\end{align}
This helps handle the case when the velocity derivative hits $\nu$, $K$ or $\g$. Since $\npk$ part is included in the dissipation, we are good to go. Then by a similar argument as in the proof of Theorem \ref{global-theorem: well-posedness}, we obtain the global well-posedness.
\end{proof}

\begin{remark}
This provide a different framework to justify global well-posedness. It also works for $\Omega=\t^3$ case. However, note that Theorem \ref{global-theorem: well-posedness} is slightly better since there we do not need to take temporal derivatives.
\end{remark}




\bigskip
\section{Global Stability of the Vacuum} \label{Sec:Vacuum-GlbSol}

In this section, we focus on the global well-posedness and positivity of the mild solution near the vacuum. 


\smallskip
\subsection{Mild Formulation}

As in the classical Boltzmann equation, we decompose the collision term
\begin{align}
    Q[F,F;F]=&Q_{\text{gain}}[F,F;F]-Q_{\text{loss}}[F,F;F],
\end{align}
where
\begin{align}
    Q_{\text{gain}}[F,F;F]:=&\int_{\r^3}\int_{\s^2}q(\omega,\abs{v-u})F(u' )F(v')\big(1+\th F(u)+\th F(v)\big)\ud\o\ud u,\\
    Q_{\text{loss}}[F,F;F]:=&\int_{\r^3}\int_{\s^2}q(\omega,\abs{v-u})F(u)F(v)\big(1+\th F(u')+\th F(v')\big)\ud\o\ud u.
\end{align}
In particular, we might write
\begin{align}
    Q_{\text{loss}}[F,F;F](v)=F(v)\cdot R[F,F](v),
\end{align}
where
\begin{align}
    R[F,F]:=\int_{\r^3}\int_{\s^2}q(\o,\abs{v-u})F(u)\big(1+\th F(u')+\th F(v')\big)\ud\o\ud u.
\end{align}

Given $\beta>0$, define 
\begin{align}
    S=\Big\{F\in C^0(\rp\times\r^3\times\r^3): \text{there exists}\ c>0\ \text{such that}\ \abs{F(t,x,v)}\leq c\ue^{-\beta(\abs{x}^2+\abs{v}^2)}\Big\},
\end{align}
equipped with norm
\begin{align}
    \nnm{F}:=\sup_{t,x,v}\Big(\ue^{\beta(\abs{x}^2+\abs{v}^2)}\abs{F(t,x,v)}\Big).
\end{align}
We name the weight function
\begin{align}
    w(x,v):=\ue^{\beta(\abs{x}^2+\abs{v}^2)}.
\end{align}
We introduce the transported solution
\begin{align}
    \fs(t,x,v)=F(t,x+tv,v).
\end{align}
Then the quantum Boltzmann equation can be rewritten as
\begin{align}\label{equation: vacuum}
    \dt\fs=\qs[F,F;F]=\qs_{\text{gain}}[F,F;F]-\qs_{\text{loss}}[F,F;F],
\end{align}
where 
\begin{align}
    &\qs_{\text{gain}}[F,F;F](t,x,v)=\qs_{\text{gain}}[F,F;F](t,x+tv,v)\\
    =&\int_{\r^3}\int_{\s^2}q(\omega,\abs{v-u})F(t,x+tv,u')F(t,x+tv,v')\big(1+\th F(t,x+tv,u)+\th F(t,x+tv,v)\big)\ud\o\ud u,\no\\
    =&\int_{\r^3}\int_{\s^2}q(\omega,\abs{v-u})\fs\big(t,x+t(v-u'),u'\big)\fs\big(t,x+t(v-v'),v'\big)\Big(1+\th \fs\big(t,x+t(v-u),u\big)+\th \fs\big(t,x,v\big)\Big)\ud\o\ud u,\no\\
    &\qs_{\text{loss}}[F,F;F](t,x,v)=\qs_{\text{loss}}[F,F;F](t,x+tv,v)\\
    =&\int_{\r^3}\int_{\s^2}q(\omega,\abs{v-u})F(t,x+tv,u)F(t,x+tv,v)\big(1+\th F(t,x+tv,u')+\th F(t,x+tv,v')\big)\ud\o\ud u,\no\\
    =&\int_{\r^3}\int_{\s^2}q(\omega,\abs{v-u})\fs\big(t,x+t(v-u),u\big)\fs\big(t,x,v\big)\Big(1+\th \fs\big(t,x+t(v-u'),u'\big)+\th \fs\big(t,x+t(v-v'),v'\big)\Big)\ud\o\ud u.\no
\end{align}


\smallskip
\subsection{Global Well-Posedness}

The equation \eqref{equation: vacuum} can be written in the mild formulation
\begin{align}
    \fs(t,x,v)=F_0(x,v)+\int_0^t\qs[F,F;F](\tau,x,v)\ud\tau.
\end{align}
We call the function $F\in S$ satisfying the above a mild solution to \eqref{equation: vacuum}. Hence, the key is to bound $\ds\int_0^t\qs_{\text{gain}}[F,F;F](\tau,x,v)\ud\tau$ and $\ds\int_0^t\qs_{\text{loss}}[F,F;F](\tau,x,v)\ud\tau$.

\begin{lemma}\label{vacuum-lemma: prelim 1}
We have
\begin{align}
    \int_0^{\infty}\ue^{-\beta\abs{x+\tau(v-u)}^2}\ud\tau\leq \sqrt{\frac{\beta}{\pi}}\frac{1}{\abs{v-u}}.
\end{align}
\end{lemma}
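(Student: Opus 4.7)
The plan is to reduce the integral to a standard one-dimensional Gaussian by completing the square in the variable $\tau$. Writing $w:=v-u$, expand
\[
\abs{x+\tau w}^2 = \abs{w}^2\tau^2 + 2\tau\,(x\cdot w) + \abs{x}^2
= \abs{w}^2\bigl(\tau-\tau_0\bigr)^2 + d^2,
\]
where $\tau_0 := -(x\cdot w)/\abs{w}^2$ and $d^2 := \abs{x}^2 - (x\cdot w)^2/\abs{w}^2 \geq 0$ (this is just the squared distance from the origin to the straight line $\{x+\tau w:\tau\in\r\}$, and is non-negative by Cauchy--Schwarz).

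Using $d^2\geq 0$ to drop the non-negative summand in the exponent, and then enlarging the domain of integration from $[0,\infty)$ to the whole line (which only increases the integral since the integrand is non-negative), I would bound
\[
\int_0^{\infty}\ue^{-\beta\abs{x+\tau w}^2}\ud\tau
\;\leq\; \ue^{-\beta d^2}\int_0^{\infty}\ue^{-\beta\abs{w}^2(\tau-\tau_0)^2}\ud\tau
\;\leq\; \int_{-\infty}^{\infty}\ue^{-\beta\abs{w}^2 s^2}\ud s.
\]
A change of variables $y=\sqrt{\beta}\,\abs{w}\,s$ reduces the right-hand side to $\frac{1}{\sqrt{\beta}\,\abs{w}}\int_{-\infty}^{\infty}\ue^{-y^{2}}\ud y = \sqrt{\pi/\beta}\,/\,\abs{w}$, yielding the stated estimate (modulo what appears to be a typo in the stated constant: the natural bound coming out of this calculation is $\sqrt{\pi/\beta}\,\abs{v-u}^{-1}$, not $\sqrt{\beta/\pi}\,\abs{v-u}^{-1}$; the dependence $\abs{v-u}^{-1}$, which is the only thing used in subsequent collision-integral estimates, is correct either way).

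There is no real obstacle here: the proof is a two-line calculation, and the only conceptual content is the geometric observation that the line $\tau\mapsto x+\tau w$ achieves its minimum distance $d$ from the origin at $\tau=\tau_0$, so the worst case for the integral is when the line passes through the origin ($d=0$) and we may center the Gaussian, then extend to all of $\r$.
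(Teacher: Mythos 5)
Your proof is correct and is essentially the argument behind the result the paper invokes: the paper does not prove this lemma directly but cites \cite[Lemma 2.1.1]{Glassey1996}, whose proof is exactly the completion-of-the-square and Gaussian computation you give (drop $\ue^{-\beta d^2}\leq 1$, extend to the whole line, integrate). Your remark about the constant is also right: the stated $\sqrt{\beta/\pi}$ should be $\sqrt{\pi/\beta}$, which is the value the paper in fact uses later (see the proof of Lemma \ref{vacuum-lemma: positivity 1}), and only the $\abs{v-u}^{-1}$ dependence matters for the subsequent estimates.
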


\begin{proof}
This is \cite[Lemma 2.1.1]{Glassey1996}.
\end{proof}

\begin{lemma}\label{vacuum-lemma: prelim 2}
We have
\begin{align}
    \abs{x+\tau(u-v')}^2+\abs{x+\tau(v-v')^2}=\abs{x}^2+\abs{x+\tau(v-u)}^2.
\end{align}
\end{lemma}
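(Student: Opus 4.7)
The plan is to reduce the identity to the Pythagorean relation built into the elastic collision parametrization: the two shift vectors $v-u'$ and $v-v'$ are the tangential and normal components of $v-u$ relative to $\omega$, and hence are orthogonal and sum to $v-u$. Concretely, introduce the shorthand $w:=v-u$ and $p:=\omega(\omega\cdot w)$, so that the collision formulas $u' = u+\omega(\omega\cdot(v-u))$ and $v' = v-\omega(\omega\cdot(v-u))$ read $u'=u+p$ and $v'=v-p$. Then
\begin{align*}
v-u' \,=\, w-p \,, \qquad v-v' \,=\, p \,, \qquad v-u \,=\, w \,=\, (w-p)+p \,.
\end{align*}
The decisive algebraic fact is $(w-p)\cdot p = w\cdot p - \abs{p}^2 = (\omega\cdot w)^2-(\omega\cdot w)^2 = 0$, from which $\abs{w-p}^2+\abs{p}^2=\abs{w}^2$ by Pythagoras.

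Given this orthogonal decomposition, the identity is a one-line expansion. We compute
\begin{align*}
\abs{x+\tau(v-u')}^2 + \abs{x+\tau(v-v')}^2
\,=\, 2\abs{x}^2 + 2\tau\, x\cdot\bigl((w-p)+p\bigr) + \tau^2\bigl(\abs{w-p}^2+\abs{p}^2\bigr),
\end{align*}
and substituting $(w-p)+p=w$ together with $\abs{w-p}^2+\abs{p}^2=\abs{w}^2$ collapses this to $2\abs{x}^2 + 2\tau\, x\cdot w + \tau^2\abs{w}^2$, which is exactly $\abs{x}^2 + \abs{x+\tau w}^2 = \abs{x}^2 + \abs{x+\tau(v-u)}^2$.

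There is no real obstacle: the identity is purely kinematic and amounts to the Pythagorean decomposition of $v-u$ along $\omega$. Its role in the subsequent mild-formulation estimates is to provide the factorization $\ue^{-\beta\abs{x+\tau(v-u')}^2}\ue^{-\beta\abs{x+\tau(v-v')}^2} = \ue^{-\beta\abs{x}^2}\ue^{-\beta\abs{x+\tau(v-u)}^2}$ inside the Gaussian weights of the gain term, which can then be coupled with Lemma \ref{vacuum-lemma: prelim 1} to produce an $\abs{v-u}^{-1}$-type bound on the $\tau$-integral. I note in passing that, as printed, the first shift reads $u-v' = -(v-u')$; since $\abs{x+\tau a}^2 \neq \abs{x-\tau a}^2$ in general, I read this (along with the misplaced exponent in the second summand) as typographical and work with $v-u'$, which is the quantity that actually appears in $\qs_{\text{gain}}$.
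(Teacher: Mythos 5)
Your proof is correct, and it is more self-contained than the paper's, which simply cites Glassey \cite[(2.19)]{Glassey1996} and notes that the classical conservation laws carry over verbatim. Your route goes through the explicit parametrization $p=\omega(\omega\cdot w)$ and the orthogonality $(w-p)\perp p$; the citation route (and the underlying Glassey computation) instead uses $u'+v'=u+v$ and $|u'|^2+|v'|^2=|u|^2+|v|^2$ directly to get $2v-u'-v'=v-u$ and $|v-u'|^2+|v-v'|^2=|v-u|^2$. The two are essentially the same algebra packaged differently: your version makes the Pythagorean structure visible and does not require writing down the conservation laws as a separate step, while the conservation-law version would work for any collision parametrization, not just the $\omega$-representation. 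You are also right that the printed statement has two typographical errors — the first shift should read $v-u'$ rather than $u-v'$ (the signs are not interchangeable here, since $\abs{x+\tau a}^2\neq\abs{x-\tau a}^2$ in general), and the exponent $2$ in the second summand belongs outside the norm — and your corrected reading agrees with how the lemma is actually applied in the proof of Lemma \ref{vacuum-lemma: gain-estimate}.
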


\begin{proof}
This is \cite[(2.19)]{Glassey1996}. Note that the conservation laws of the classical and quantum Boltzmann equations are the same, i.e. $u+v=u'+v'$ and $\abs{u}^2+\abs{v}^2=\abs{u'}^2+\abs{v'}^2$.
\end{proof}

\begin{lemma}\label{vacuum-lemma: loss-estimate}
For $\fs\in S$ and any $t\geq0$, we have
\begin{align}
    \abs{\int_0^t\qs_{\text{loss}}[F,F;F](\tau,x,v)\ud\tau}\ls \beta^{-2}w^{-1}(x,v)\Big(\nnm{\fs}^2+\nnm{\fs}^3\Big).
\end{align}
\end{lemma}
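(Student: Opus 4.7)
The plan is to exploit two structural features: the pointwise bound $|\fs(\tau,y,z)|\leq \nnm{\fs}\,w^{-1}(y,z)$, and the fact that the loss term, unlike the gain term, contains an unshifted factor $\fs(\tau,x,v)$ which immediately supplies the weight $w^{-1}(x,v)$ that appears on the right-hand side of the claim. So only one layer of dispersive integration will be needed, and Lemma \ref{vacuum-lemma: prelim 2} will not be required.

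First I would split the integrand into the quadratic piece
\[
\int_{\r^3}\!\!\int_{\s^2} q(\omega,|v-u|)\,\fs\big(\tau,x+\tau(v-u),u\big)\,\fs(\tau,x,v)\,\ud\omega\,\ud u
\]
and the two cubic correction pieces coming from $\th\fs(\tau,x+\tau(v-u'),u')$ and $\th\fs(\tau,x+\tau(v-v'),v')$. For the quadratic piece, substituting the pointwise bound yields
\[
\nnm{\fs}^2\,w^{-1}(x,v)\int_0^t\!\!\int_{\r^3}\!\!\int_{\s^2}|\omega\cdot(v-u)|\,\ue^{-\beta|x+\tau(v-u)|^2}\,\ue^{-\beta|u|^2}\,\ud\omega\,\ud u\,\ud\tau,
\]
where the factor $w^{-1}(x,v)$ is produced entirely by the unshifted $\fs(\tau,x,v)$. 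Then I would extend $\tau$ to $[0,\infty)$, use $\int_{\s^2}|\omega\cdot(v-u)|\,\ud\omega\ls|v-u|$, swap the $\tau$ and $u$ integrals by Fubini, apply Lemma \ref{vacuum-lemma: prelim 1} to control $\int_0^\infty \ue^{-\beta|x+\tau(v-u)|^2}\ud\tau$ by a constant times $\beta^{-1/2}/|v-u|$ (the $|v-u|$ factor cancels the one from the collision kernel), and finish with the Gaussian identity $\int_{\r^3}\ue^{-\beta|u|^2}\ud u \sim \beta^{-3/2}$. The combined factor is $\beta^{-2}$, so this contribution is bounded by $\beta^{-2}\nnm{\fs}^2 w^{-1}(x,v)$.

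For each of the two cubic corrections, the extra factor is of the form $|\fs(\tau, x+\tau(v-u'), u')|$ or $|\fs(\tau, x+\tau(v-v'), v')|$; since the exponential weight is always $\leq 1$, I simply bound it crudely by $\nnm{\fs}$. This produces one extra factor of $\nnm{\fs}$ without introducing any primed variable into the remaining integrals, and the estimate then proceeds exactly as in the quadratic case, yielding a $\beta^{-2}\nnm{\fs}^3 w^{-1}(x,v)$ bound. Adding the three contributions gives the claimed inequality.

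The hard part is, frankly, not here: for the loss estimate the $(x,v)$-weight comes out for free from the unshifted factor, and the post-collision variables $u',v'$ only appear inside quantities bounded trivially by $\nnm{\fs}$. The genuine difficulty will arise in the companion gain-term estimate, where \emph{both} $\fs(\tau,x+\tau(v-u'),u')$ and $\fs(\tau,x+\tau(v-v'),v')$ enter in an essential way and must be handled together via Lemma \ref{vacuum-lemma: prelim 2}; in the present loss lemma this subtlety is avoided entirely.
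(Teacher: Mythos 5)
Your proposal is correct and follows essentially the same route as the paper: the same three-way splitting of the loss term, the weight $w^{-1}(x,v)$ extracted from the unshifted factor $\fs(\tau,x,v)$, the time integral handled via Lemma \ref{vacuum-lemma: prelim 1} (cancelling the $\abs{v-u}$ from the kernel), the Gaussian $u$-integral supplying $\beta^{-3/2}$, and the cubic corrections reduced to the quadratic case by bounding the primed factors trivially. Your closing observation that Lemma \ref{vacuum-lemma: prelim 2} is only needed for the gain term also matches the paper's treatment.
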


\begin{proof}
We may further decompose
\begin{align}
    &\qs_{\text{loss}}[F,F;F](t,x,v)=\qs_{\text{loss},1}[F,F](t,x,v)+\th\qs_{\text{loss},2}[F,F;F](t,x,v)+\th\qs_{\text{loss,3}}[F,F;F](t,x,v),
\end{align}
where
\begin{align}
    \qs_{\text{loss},1}[F,F](t,x,v)=&\int_{\r^3}\int_{\s^2}q(\omega,\abs{v-u})\fs\big(t,x+t(v-u),u\big)\fs\big(t,x,v\big)\ud\o\ud u,\no\\
    \qs_{\text{loss},2}[F,F;F](t,x,v)=&\int_{\r^3}\int_{\s^2}q(\omega,\abs{v-u})\fs\big(t,x+t(v-u),u\big)\fs\big(t,x,v\big) \fs\big(t,x+t(v-u'),u'\big)\ud\o\ud u,\no\\
    \qs_{\text{loss,3}}[F,F;F](t,x,v)=&\int_{\r^3}\int_{\s^2}q(\omega,\abs{v-u})\fs\big(t,x+t(v-u),u\big)\fs\big(t,x,v\big) \fs\big(t,x+t(v-v'),v'\big)\ud\o\ud u.\no
\end{align}
We first consider $\qs_{\text{loss},1}$. Direct computation reveals
\begin{align}
    \abs{\int_0^t\qs_{\text{loss},1}[F,F](\tau,x,v)\ud\tau}=&\abs{\int_0^t\fs\big(\tau,x,v\big)\ud\tau\int_{\r^3}\abs{v-u}\fs\big(\tau,x+\tau(v-u),u\big)\ud u}\\
    \ls&\;w^{-1}(x,v)\nnm{\fs}\abs{\int_0^t\int_{\r^3}\abs{v-u}\fs\big(\tau,x+\tau(v-u),u\big)\ud u\ud\tau}\no\\
    \ls&\;w^{-1}(x,v)\nnm{\fs}^2\abs{\int_0^t\int_{\r^3}\abs{v-u}\ue^{-\beta\abs{u}^2}\ue^{-\beta\abs{x+\tau(v-u)}^2}\ud u\ud\tau}\no\\
    =&\;w^{-1}(x,v)\nnm{\fs}^2\abs{\int_{\r^3}\abs{v-u}\ue^{-\beta\abs{u}^2}\ud u\int_0^t\ue^{-\beta\abs{x+\tau(v-u)}^2}\ud\tau}.\no
\end{align}
Based on Lemma \ref{vacuum-lemma: prelim 1}, we know
\begin{align}
    \int_0^t\ue^{-\abs{x+\tau(v-u)}^2}\ud\tau\leq \int_0^{\infty}\ue^{-\abs{x+\tau(v-u)}^2}\ud\tau\leq \sqrt{\frac{\beta}{\pi}}\frac{1}{\abs{v-u}}.
\end{align}
Hence, we have
\begin{align}
    \abs{\int_0^t\qs_{\text{loss},1}[F,F](\tau,x,v)\ud\tau}\ls&\;\beta^{-\frac{1}{2}}w^{-1}(x,v)\nnm{\fs}^2\int_{\r^3}\ue^{-\beta\abs{u}^2}\ud u\ls \beta^{-2}w^{-1}(x,v)\nnm{\fs}^2.
\end{align}
Next, we turn to $\qs_{\text{loss},2}$. We have
\begin{align}
    &\abs{\int_0^t\qs_{\text{loss},2}[F,F;F](\tau,x,v)\ud\tau}\\
    =&\,\abs{\int_0^t\fs\big(\tau,x,v\big)\ud\tau\int_{\r^3}\int_{\s^2}\abs{\o\cdot(v-u)}\fs\big(\tau,x+\tau(v-u),u\big)\fs\big(\tau,x+\tau(v-u'),u'\big)\ud\o\ud u}\no\\
    \ls&\;w^{-1}(x,v)\nnm{\fs}^3\abs{\int_0^t\int_{\r^3}\int_{\s^2}\abs{\o\cdot(v-u)}\ue^{-\beta\abs{u}^2}\ue^{-\beta\abs{u'}^2}\ue^{-\beta\abs{x+\tau(v-u)}^2}\ue^{-\beta\abs{x+\tau(v-u')}^2}\ud\o\ud u\ud\tau}.\no
\end{align}
Since $\ue^{-\beta\abs{u'}^2}\leq 1$ and $\ue^{-\beta\abs{x+\tau(v-u')}^2}\leq1$, it reduces to $\qs_{\text{loss},1}$ case. Hence, we have
\begin{align}
    \abs{\int_0^t\qs_{\text{loss},2}[F,F;F](\tau,x,v)\ud\tau}\ls \beta^{-2}w^{-1}(x,v)\nnm{\fs}^3.
\end{align}
Similarly, we know
\begin{align}
    \abs{\int_0^t\qs_{\text{loss},3}[F,F;F](\tau,x,v)\ud\tau}\ls \beta^{-2}w^{-1}(x,v)\nnm{\fs}^3.
\end{align}
\end{proof}

\begin{lemma}\label{vacuum-lemma: gain-estimate}
For $\fs\in S$ and any $t\geq0$, we have
\begin{align}
    \abs{\int_0^t\qs_{\text{gain}}[F,F;F](\tau,x,v)\ud\tau}\ls \beta^{-2}w^{-1}(x,v)\Big(\nnm{\fs}^2+\nnm{\fs}^3\Big).
\end{align}
\end{lemma}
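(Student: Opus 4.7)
The plan is to mirror the loss estimate in Lemma \ref{vacuum-lemma: loss-estimate}, but at the key step exploit the \emph{two} conservation laws simultaneously: energy conservation $\abs{u'}^2+\abs{v'}^2=\abs{u}^2+\abs{v}^2$ to handle the velocity exponentials from $\fs\in S$, and the positional identity of Lemma \ref{vacuum-lemma: prelim 2} (applied with the shifts $v-u'$ and $v-v'$ that actually appear in $\qs_{\text{gain}}^{\#}$) to handle the $\ue^{-\beta|x+\tau(\cdot)|^2}$ factors.

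First I would decompose
\begin{align*}
\qs_{\text{gain}}[F,F;F]=\qs_{\text{gain},1}[F,F]+\th\qs_{\text{gain},2}[F,F;F]+\th\qs_{\text{gain},3}[F,F;F],
\end{align*}
with $\qs_{\text{gain},1}$ being the quadratic part (coefficient $1$), $\qs_{\text{gain},2}$ the cubic part carrying the extra factor $\fs(\tau,x+\tau(v-u),u)$, and $\qs_{\text{gain},3}$ the cubic part carrying the extra factor $\fs(\tau,x,v)$.

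For the quadratic part, bound pointwise
\begin{align*}
\abs{\fs(\tau,x+\tau(v-u'),u')\fs(\tau,x+\tau(v-v'),v')}\leq\nnm{\fs}^2\ue^{-\beta(|u'|^2+|v'|^2)}\ue^{-\beta(|x+\tau(v-u')|^2+|x+\tau(v-v')|^2)}.
\end{align*}
Energy conservation collapses $\ue^{-\beta(|u'|^2+|v'|^2)}=\ue^{-\beta|u|^2}\ue^{-\beta|v|^2}$, and the identity
\begin{align*}
\abs{x+\tau(v-u')}^2+\abs{x+\tau(v-v')}^2=\abs{x}^2+\abs{x+\tau(v-u)}^2
\end{align*}
(verified by decomposing $v-u$ into its $\omega$-parallel and $\omega$-perpendicular parts, which are exactly $v-v'$ and $v-u'$) collapses the position exponentials to $\ue^{-\beta|x|^2}\ue^{-\beta|x+\tau(v-u)|^2}$. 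Pulling out the factor $\ue^{-\beta(|x|^2+|v|^2)}=w^{-1}(x,v)$, what remains is identical to the integral bounded in Lemma \ref{vacuum-lemma: loss-estimate}: using Fubini, applying Lemma \ref{vacuum-lemma: prelim 1} to do the $\tau$-integral against $\abs{v-u}$, and then integrating a Gaussian in $u$ yields the $\beta^{-2}w^{-1}(x,v)\nnm{\fs}^2$ bound.

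For the cubic parts $\qs_{\text{gain},2}$ and $\qs_{\text{gain},3}$, bound the extra factor of $\fs$ by $\nnm{\fs}\cdot w^{-1}(\,\cdot\,)\leq \nnm{\fs}$ (using $w^{-1}\leq 1$) and reduce to the quadratic case, producing the $\beta^{-2}w^{-1}(x,v)\nnm{\fs}^3$ term. The main obstacle is really just the bookkeeping at the position-identity step: one must make sure the conservation-of-position formula is applied with the shifts $v-u'$, $v-v'$ that the post-collision test functions carry (not the $u-v'$, $v-v'$ appearing in the statement of Lemma \ref{vacuum-lemma: prelim 2}); once the correct $\omega_\parallel/\omega_\perp$ split of $v-u$ is recognized, the two exponentials combine into the same dispersive structure as in the loss term and the rest is routine.
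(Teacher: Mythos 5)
Your proposal is correct and follows essentially the same route as the paper's proof: the same three-term decomposition of $\qs_{\text{gain}}$, energy conservation to collapse the velocity Gaussians, the positional identity of Lemma \ref{vacuum-lemma: prelim 2} (indeed with the shifts $v-u'$, $v-v'$, which your orthogonal-split verification handles correctly) to collapse the spatial Gaussians, and then reduction to the $\qs_{\text{loss},1}$ estimate via Lemma \ref{vacuum-lemma: prelim 1}. The cubic terms are handled just as in the paper, by discarding the extra bounded factors to reduce to the quadratic case.
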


\begin{proof}
We may further decompose
\begin{align}
    &\qs_{\text{gain}}[F,F;F](t,x,v)=\qs_{\text{gain},1}[F,F](t,x,v)+\th\qs_{\text{gain},2}[F,F;F](t,x,v)+\th\qs_{\text{gain,3}}[F,F;F](t,x,v),
\end{align}
where
\begin{align}
    \qs_{\text{gain},1}[F,F](t,x,v)=&\int_{\r^3}\int_{\s^2}q(\omega,\abs{v-u})\fs\big(t,x+t(v-u'),u'\big)\fs\big(t,x+t(v-v'),v'\big)\ud\o\ud u,\no\\
    \qs_{\text{gain},2}[F,F;F](t,x,v)=&\int_{\r^3}\int_{\s^2}q(\omega,\abs{v-u})\fs\big(t,x+t(v-u'),u'\big)\fs\big(t,x+t(v-v'),v'\big)\fs\big(t,x+t(v-u),u\big)\ud\o\ud u,\no\\
    \qs_{\text{gain,3}}[F,F;F](t,x,v)=&\int_{\r^3}\int_{\s^2}q(\omega,\abs{v-u})\fs\big(t,x+t(v-u'),u'\big)\fs\big(t,x+t(v-v'),v'\big) \fs\big(t,x,v\big)\ud\o\ud u.\no
\end{align}
We first consider $\qs_{\text{gain},1}$. Direct computation reveals
\begin{align}
    &\abs{\int_0^t\qs_{\text{gain},1}[F,F](\tau,x,v)\ud\tau}\\
    =&\,\abs{\int_0^t\int_{\r^3}\int_{\s^2}\abs{v-u}\fs\big(\tau,x+\tau(v-u'),u'\big)\fs\big(\tau,x+\tau(v-v'),v'\big)\ud\o\ud u\ud\tau}\no\\
    \ls&\;\nnm{\fs}^2\abs{\int_0^t\int_{\r^3}\int_{\s^2}\abs{v-u}\ue^{-\beta\abs{u'}^2}\ue^{-\beta\abs{v'}^2}\ue^{-\beta\abs{x+\tau(v-u')}^2}\ue^{-\beta\abs{x+\tau(v-v')}^2}\ud\o\ud u\ud\tau}.\no
\end{align}
Using Lemma \ref{vacuum-lemma: prelim 2} and the conservation laws for $(u,v)$ and $(u',v')$, we have
\begin{align}
    \abs{\int_0^t\qs_{\text{gain},1}[F,F](\tau,x,v)\ud\tau}
    \ls&\;\nnm{\fs}^2\abs{\int_0^t\int_{\r^3}\abs{v-u}\ue^{-\beta\abs{u}^2}\ue^{-\beta\abs{v}^2}\ue^{-\beta\abs{x}^2}\ue^{-\beta\abs{x+\tau(v-u)}^2}\ud u\ud\tau}\\
    \leq&\;w^{-1}(x,v)\nnm{\fs}^2\abs{\int_0^t\int_{\r^3}\abs{v-u}\ue^{-\beta\abs{u}^2}\ue^{-\beta\abs{x+\tau(v-u)}^2}\ud u\ud\tau}.\no
\end{align}
Then it reduces to the $\qs_{\text{loss},1}$ case in Lemma \ref{vacuum-lemma: loss-estimate}, so we know
\begin{align}
    \abs{\int_0^t\qs_{\text{gain},1}[F,F](\tau,x,v)\ud\tau}
    \ls\beta^{-2}w^{-1}(x,v)\nnm{\fs}^2.
\end{align}
Next, we turn to $\qs_{\text{gain},2}$. We have
\begin{align}
    &\abs{\int_0^t\qs_{\text{gain},2}[F,F;F](\tau,x,v)\ud\tau}\\
    \ls\;&\nnm{\fs}^3\abs{\int_0^t\int_{\r^3}\int_{\s^2}\abs{\o\cdot(v-u)}\ue^{-\beta\abs{u}^2}\ue^{-\beta\abs{u'}^2}\ue^{-\beta\abs{v'}^2}\ue^{-\beta\abs{x+\tau(v-u)}^2}\ue^{-\beta\abs{x+\tau(v-u')}^2}\ue^{-\beta\abs{x+\tau(v-v')}^2}\ud\o\ud u\ud\tau}.\no
\end{align}
Using the same technique as in $\qs_{\text{gain},1}$ case, we have
\begin{align}
    \abs{\int_0^t\qs_{\text{gain},2}[F,F;F](\tau,x,v)\ud\tau}
    \ls&\;\nnm{\fs}^3\abs{\int_0^t\int_{\r^3}\abs{v-u}\ue^{-2\beta\abs{u}^2}\ue^{-\beta\abs{v}^2}\ue^{-\beta\abs{x}^2}\ue^{-2\beta\abs{x+\tau(v-u)}^2}\ud u\ud\tau}\\
    \ls&\; \beta^{-2}w^{-1}(x,v)\nnm{\fs}^3.\no
\end{align}
For $\qs_{\text{gain},3}$, we directly get
\begin{align}
    \abs{\int_0^t\qs_{\text{gain},3}[F,F;F](\tau,x,v)\ud\tau}\ls&\;w^{-1}(x,v)\nnm{\fs}\abs{\int_0^t\qs_{\text{gain},1}[F,F](\tau,x,v)\ud\tau}\\
    \ls&\;\beta^{-2}w^{-2}(x,v)\nnm{\fs}^3.\no
\end{align}
\end{proof}

Define the operator
\begin{align}
    \mathcal{F}[\fs](x,v)=F_0(x,v)+\int_0^t\qs[F,F;F](\tau,x,v)\ud\tau.
\end{align}
Define the solution set
\begin{align}
    S_R=\{F\in S: \nnm{F}\leq R\}.
\end{align}
\begin{theorem}\label{vacuum-theorem: well-posedness}
There exists a constant $R_0$ such that if $\nnm{F_0}<\dfrac{R_0}{2}$, then the equation \eqref{equation: vacuum} has a unique mild solution $F\in S_{R_0}$.
\end{theorem}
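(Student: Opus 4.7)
The plan is to apply Banach's fixed-point theorem to the operator $\mathcal{F}$ on the closed ball $S_{R_0}$ of the Banach space $(S,\nnm{\cdot})$. Recalling that $F\in S$ is a mild solution to \eqref{equation: vacuum} if and only if $\fs=\mathcal{F}[\fs]$, it suffices to show that, for $R_0$ small, $\mathcal{F}$ sends $S_{R_0}$ to itself and is a strict contraction there.

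First I would prove the self-mapping property. Starting from the pointwise identity $\mathcal{F}[\fs](t,x,v)=F_0(x,v)+\int_0^t\qs_{\text{gain}}[F,F;F]\,\ud\tau-\int_0^t\qs_{\text{loss}}[F,F;F]\,\ud\tau$, I would multiply by $w(x,v)$ and take $\sup_{t,x,v}$. Lemma \ref{vacuum-lemma: gain-estimate} and Lemma \ref{vacuum-lemma: loss-estimate} immediately give
\begin{equation}
\nnm{\mathcal{F}[\fs]}\,\leq\,\nnm{F_0}+C\beta^{-2}\bigl(\nnm{\fs}^2+\nnm{\fs}^3\bigr).
\end{equation}
If $\nnm{F_0}\leq R_0/2$ and $\nnm{\fs}\leq R_0$, then a choice of $R_0$ with $C\beta^{-2}(R_0+R_0^2)\leq 1/2$ forces $\nnm{\mathcal{F}[\fs]}\leq R_0$, so $\mathcal{F}:S_{R_0}\to S_{R_0}$.

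Next I would prove contraction. Given $\fs_1,\fs_2\in S_{R_0}$, the difference $\mathcal{F}[\fs_1]-\mathcal{F}[\fs_2]$ equals $\int_0^t\bigl(\qs[F_1,F_1;F_1]-\qs[F_2,F_2;F_2]\bigr)\ud\tau$. I would exploit the multilinear structure of $\qs_{\text{gain}}$ and $\qs_{\text{loss}}$ via the telescoping identities
\begin{align}
F_1F_1-F_2F_2 &= (F_1-F_2)F_1 + F_2(F_1-F_2), \\
F_1F_1F_1-F_2F_2F_2 &= (F_1-F_2)F_1F_1 + F_2(F_1-F_2)F_1 + F_2F_2(F_1-F_2),
\end{align}
so that the difference becomes a sum of trilinear and bilinear expressions each containing exactly one factor of $\fs_1-\fs_2$ and the other factors from $\{\fs_1,\fs_2\}\subset S_{R_0}$. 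Each such term can be estimated by repeating verbatim the arguments in the proofs of Lemmas \ref{vacuum-lemma: loss-estimate} and \ref{vacuum-lemma: gain-estimate}: pull out $\nnm{\fs_1-\fs_2}$ and $\nnm{\fs_i}$ factors via the weight $w^{-1}$, then apply Lemma \ref{vacuum-lemma: prelim 1} (after using Lemma \ref{vacuum-lemma: prelim 2} for gain-type terms to realign Gaussians along $v-u$). This produces
\begin{equation}
\nnm{\mathcal{F}[\fs_1]-\mathcal{F}[\fs_2]}\,\leq\, C\beta^{-2}\bigl(R_0+R_0^2\bigr)\nnm{\fs_1-\fs_2},
\end{equation}
and shrinking $R_0$ further makes the prefactor strictly less than $1$.

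The only genuinely delicate step is handling the cubic $\qs$-terms where one of the three factors carries $\fs_1-\fs_2$ while the other two live at post-collisional arguments such as $u'$ or $v'$; here the argument of Lemma \ref{vacuum-lemma: gain-estimate} applies because Lemma \ref{vacuum-lemma: prelim 2} distributes the Gaussian weights symmetrically regardless of which factor is singled out, and the remaining $\ue^{-\beta|u'|^2}\leq 1$ or $\ue^{-\beta|v'|^2}\leq 1$ bounds reduce everything to the model integral $\int\!\!\int |v-u|\,\ue^{-\beta|u|^2}\ue^{-\beta|x+\tau(v-u)|^2}\,\ud u\,\ud\tau$ controlled as in Lemma \ref{vacuum-lemma: loss-estimate}. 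The contraction mapping theorem then yields a unique fixed point $\fs\in S_{R_0}$, which corresponds to the desired unique mild solution $F\in S_{R_0}$ of \eqref{equation: vacuum}.
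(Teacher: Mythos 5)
Your proposal is correct and follows essentially the same route as the paper: the paper also applies the contraction mapping principle on $S_{R_0}$, establishing the self-mapping property from Lemma \ref{vacuum-lemma: loss-estimate} and Lemma \ref{vacuum-lemma: gain-estimate} exactly as you do, and then asserts the contraction "by a similar argument." Your telescoping multilinear expansion simply fills in the detail the paper leaves implicit, so there is no substantive difference.
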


\begin{proof}
Using Lemma \ref{vacuum-lemma: gain-estimate} and Lemma \ref{vacuum-lemma: loss-estimate}, we know for $\nnm{F_0}\leq \dfrac{R_0}{2}$, we have
\begin{align}
    \abs{\mathcal{F}[\fs]}\leq&\, \abs{F_0}+\abs{\int_0^t\qs_{\text{gain}}[F,F;F](\tau,x,v)\ud\tau}+\abs{\int_0^t\qs_{\text{loss}}[F,F;F](\tau,x,v)\ud\tau}\\
    \leq&\;w^{-1}(x,v)\nnm{F_0}+\beta^{-2}w^{-1}(x,v)\Big(\nnm{\fs}^2+\nnm{\fs}^3\Big).\no
\end{align}
Therefore, we know
\begin{align}
    \nnm{\mathcal{F}[\fs]}\ls\;& \nnm{F_0}+\beta^{-2}\Big(\nnm{\fs}^2+\nnm{\fs}^3\Big)\\
    \ls\;&\frac{R_0}{2}+\beta^{-2}(R_0^2+R_0^3)\leq R_0.\no
\end{align}
Hence, $\mathcal{F}$ is a mapping from $S_{R_0}$ to $S_{R_0}$. A similar argument justifies that this is a contraction. Hence, the solution exists uniquely.
\end{proof}


\begin{remark}
This theorem justifies that for both fermions and bosons, in the space $S$, the solution remains small and smooth. Hence, if the initial data is in $S$ and is sufficiently small, there is no possibility of Bose-Einstein condensation. This is significantly different from the result of homogeneous equation.

In the homogeneous equation, the lack of transport operator means that we lose the dispersion and cannot handle the time integral. This is exactly the key in non-homogeneous case.
\end{remark}


\smallskip
\subsection{Positivity of $F$ for Bosons $\th=1$}

Recall \begin{align}
    Q_{\text{gain}}[F,G;H]=&\int_{\r^3}\int_{\s^2}q(\omega,\abs{v-u})F(u' )G(v')\big(1+\th H(u)+\th H(v)\big)\ud\o\ud u,
\end{align}
and
\begin{align}
    Q_{\text{loss}}[F,G;H](v)=F(v)\cdot R[G,H](v),
\end{align}
where
\begin{align}
    R[G,H]=\int_{\r^3}\int_{\s^2}q(\o,\abs{v-u})G(u)\big(1+\th H(u')+\th H(v')\big)\ud\o\ud u.
\end{align}

Suppose $S_T$ is the restriction of element $F\in S$ to $[0,T]\times\r^3\times\r^3$. Assume $\ell_0(t,x,v)\leq u_0(t,x,v)$ with $\ell_0,u_0\in S$. Define a sequence
\begin{align}
    \dt\lls_{k+1}+\lls_{k+1}\rs[u_k,u_k]&=\qs_{\text{gain}}[\ell_k,\ell_k;\ell_k],\\
    \dt\uus_{k+1}+\uus_{k+1}\rs[\ell_k,\ell_k]&=\qs_{\text{gain}}[u_k,u_k;u_k],
\end{align}
with initial data $\ell_{k+1}=F_0$ and $u_{k+1}=F_0$.

We would like to select some special starting point $(\ell_0,u_0)$ and study the convergence property of $(\lls_k,\uus_k)$.

\begin{lemma}\label{vacuum-lemma: positivity 1}
If $\nnm{F_0}$ and $\beta^{-2}(R_0+R_0^2)$ are sufficiently small, then there exists $\ell_0,u_0\in S_T$ such that the Beginning Condition (BC)
\begin{align}
    0\leq \ell_0\leq \ell_1\leq u_1\leq u_0
\end{align}
holds for $t\in[0,T]$.
\end{lemma}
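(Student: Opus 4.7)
The plan is to build $(\ell_0,u_0)$ explicitly by setting $\ell_0 \equiv 0$ and $u_0(x,v):=R_0\,w^{-1}(x,v)=R_0\,\ue^{-\beta(\abs{x}^2+\abs{v}^2)}$, both independent of $t$, and then verify the four inequalities of (BC) one at a time using the bosonic positivity $\th=+1$ and the gain-term estimate of Lemma \ref{vacuum-lemma: gain-estimate}. Clearly $\ell_0,u_0\in S_T$ with $\nnm{\ell_0}=0$ and $\nnm{u_0}=R_0$, and $\ell_0\leq u_0$ holds trivially.

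To check $\ell_0\leq\lls_1$: with $\ell_0\equiv 0$ we have $\qs_{\text{gain}}[\ell_0,\ell_0;\ell_0]=0$, so the equation for $\lls_1$ is the linear homogeneous ODE (in $t$, with $x,v$ frozen)
\begin{equation*}
\dt \lls_1 = -\lls_1\, \rs[u_0,u_0],\qquad \lls_1(0)=F_0\geq 0.
\end{equation*}
Since $u_0\geq 0$ and $\th=+1$, the integrand defining $\rs[u_0,u_0]$ is nonnegative, so $\rs[u_0,u_0]\geq 0$, and integrating gives $\lls_1(t,x,v)=F_0(x,v)\exp\!\big(-\!\int_0^t\rs[u_0,u_0]\,\ud\tau\big)\geq 0=\ell_0$. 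In particular $\lls_1\geq 0$.

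To check $\uus_1\leq u_0$: since $\rs[\ell_0,\ell_0]=0$, $\uus_1(t,x,v)=F_0(x,v)+\int_0^t \qs_{\text{gain}}[u_0,u_0;u_0](\tau,x,v)\,\ud\tau$, which is nonnegative since both summands are. Lemma \ref{vacuum-lemma: gain-estimate} with $\nnm{u_0}=R_0$ and the hypothesis $\nnm{F_0}\leq R_0/2$ yield
\begin{equation*}
0\leq \uus_1(t,x,v)\leq \nnm{F_0}\,w^{-1}(x,v)+C\beta^{-2}(R_0^2+R_0^3)\,w^{-1}(x,v)\leq R_0\,w^{-1}(x,v)=u_0(x,v),
\end{equation*}
provided the smallness bound $C\beta^{-2}(R_0+R_0^2)\leq 1/2$ holds, which is exactly the hypothesis that $\beta^{-2}(R_0+R_0^2)$ is sufficiently small. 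Finally, $\lls_1\leq\uus_1$ follows from subtracting the two defining ODEs: $\lls_1(0)=\uus_1(0)=F_0$ and
\begin{equation*}
\dt(\uus_1-\lls_1)=\qs_{\text{gain}}[u_0,u_0;u_0]+\lls_1\,\rs[u_0,u_0]\geq 0,
\end{equation*}
where both summands are nonnegative by the just-established facts that $\lls_1\geq 0$, $u_0\geq 0$, and $\th=+1$.

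The only real friction point is the second step, where one must match the constant $C$ coming out of Lemma \ref{vacuum-lemma: gain-estimate} against the thresholds on $\nnm{F_0}$ and on $\beta^{-2}(R_0+R_0^2)$; this is a purely quantitative adjustment and causes no conceptual difficulty. All positivity statements exploit the fact that for bosons the factor $1+\th F(u)+\th F(v)$ is automatically nonnegative whenever the relevant $F$'s are, so no additional truncation or redesign is needed here — that difficulty is postponed to the fermionic analogue addressed in Theorem \ref{vacuum-theorem: positivity.}.
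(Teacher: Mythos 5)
Your overall strategy---take $\ell_0\equiv0$ and an explicit Gaussian barrier of size $R_0$, then verify $u_1\le u_0$ directly from Lemma \ref{vacuum-lemma: gain-estimate} rather than constructing a profile by a fixed point---is viable, but as written there is a genuine gap at precisely the point the paper calls the ``delicate construction'': the frame in which $u_0$ is time-independent. You define $u_0(t,x,v):=R_0\ue^{-\beta(|x|^2+|v|^2)}$ ``independent of $t$'' in the original variables, and then conclude $\uus_1(t,x,v)\le R_0\,w^{-1}(x,v)=u_0(x,v)$. But the Beginning Condition requires $u_1\le u_0$ pointwise, i.e. $\uus_1(t,x,v)\le \uus_0(t,x,v)$, and for your choice $\uus_0(t,x,v)=u_0(t,x+tv,v)=R_0\ue^{-\beta(|x+tv|^2+|v|^2)}$, which tends to $0$ as $t\to\infty$ along every characteristic with $v\neq0$, while $\uus_1(t,x,v)\ge F_0(x,v)$ for all $t$ (your own mild formula, the gain term being nonnegative for bosons). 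So under the literal reading the inequality $u_1\le u_0$ actually fails for large $t$. Relatedly, Lemma \ref{vacuum-lemma: gain-estimate} is stated in terms of $\nnm{\fs}$, the norm of the \emph{transported} function, and for this $u_0$ one has $\nnm{\uus_0}=R_0\sup_{t,x,v}\ue^{\beta(|x|^2-|x+tv|^2)}=\infty$, so the invocation ``$\nnm{u_0}=R_0$'' is not available.

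The fix is exactly the paper's observation: the barrier must be constant along characteristics, i.e. one should take $\uus_0(t,x,v)=R_0\,w^{-1}(x,v)$, equivalently $u_0(t,x,v)=R_0\ue^{-\beta(|x-tv|^2+|v|^2)}$. With that reading every one of your estimates is correct: $\nnm{\uus_0}=R_0$, Lemma \ref{vacuum-lemma: gain-estimate} gives $\uus_1\le\big(\nnm{F_0}+C\beta^{-2}(R_0^2+R_0^3)\big)w^{-1}\le R_0\,w^{-1}=\uus_0$ under the stated smallness (together with $F_0\ge0$ and $\nnm{F_0}\le R_0/2$, which the paper also uses implicitly), and your steps $0\le\lls_1$ and $\lls_1\le\uus_1$ coincide with the paper's ($\ell_0=0$ makes the gain vanish and the loss rate $\rs[u_0,u_0]\ge0$). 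Once corrected, your argument is a genuine streamlining of the paper's proof: the paper likewise takes $\ell_0=0$ and seeks $u_0=\tilde v(x-tv,v)$ with $\tilde v(x,v)=\ue^{-\beta|x|^2}w(v)$, but it produces $w$ as the fixed point of a contraction $\mathcal{T}$ in an auxiliary space $W$; your choice amounts to the explicit ansatz $w(v)=R_0\ue^{-\beta|v|^2}$, verified directly by the already-proved gain estimate, which avoids the contraction argument entirely.
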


\begin{proof}
We take $\ell_0=0$, which implies $\rs[\ell_0,\ell_0]=0$ and $\qs_{\text{gain}}[\ell_0,\ell_0;\ell_0]=0$. Hence, we have
\begin{align}
    \dt\lls_{1}+\lls_{1}\rs[u_0,u_0]&=0,\\
    \dt\uus_{1}&=\qs_{\text{gain}}[u_0,u_0;u_0].
\end{align}
Due to the positivity of $\rs[u_0,u_0]$ and $\qs_{\text{gain}}[u_0,u_0;u_0]$, this naturally implies
\begin{align}
    0\leq \lls_1\leq F_0\leq \uus_1.
\end{align}
Hence, it remains to show $\uus_1\leq \uus_0$. This does not hold for arbitrary $u_0$, so we need a delicate construction.

Let 
\begin{align}
    \psi(v)=\sup_x\ue^{\beta\abs{x}^2}\abs{F_0(x,v)}.
\end{align}
Then we know
\begin{align}
    \psi(v)\ls \ue^{-\beta\abs{v}^2}.
\end{align}
We know
\begin{align}
    \uus_1(t,x,v)=F_0+\int_0^t\qs_{\text{gain}}[u_0,u_0;u_0](\tau,x,v)\ud\tau,
\end{align}
or equivalently
\begin{align}
    u_1(t,x+tv,v)=\,&F_0+\int_0^t\int_{\r^3}\int_{\s^2}\abs{\o\cdot(v-u)}u_0(\tau,x+\tau v,u')u_0(\tau,x+\tau v,v')\\
    \,&\times\big(1+\th u_0(\tau,x+\tau v,u)+\th u_0(\tau,x+\tau v,v)\big)\ud\o\ud u\ud\tau.\no
\end{align}
We will look for $u_0=\tilde v(x-tv,v)$, Therefore, $u_1\leq u_0$ if and only if 
\begin{align}
    \int_0^t\int_{\r^3}\int_{\s^2}\abs{\o\cdot(v-u)}\tilde v(x+\tau (v-u'),u')\tilde v(x+\tau (v-v'),v')\\
    \times\big(1+\th \tilde v(x+\tau (v-u),u)+\th \tilde v(x,v)\big)\ud\o\ud u\ud\tau&\,\leq\, \tilde v(x,v)-F_0(x,v).\no
\end{align}
Then we further require $\tilde v(x,v)=\ue^{-\beta\abs{x}^2}w(v)$. Then we know
\begin{align}
    \tilde v(x+\tau (v-u'),u')\tilde v(x+\tau (v-v'),v')
    =&\;w(u')w(v')\ue^{-\beta\abs{x+\tau (v-u')}^2}\ue^{-\beta\abs{x+\tau (v-v')}^2}\\
    =&\;w(u')w(v')\ue^{-\beta\abs{x}^2}\ue^{-\beta\abs{x+\tau (v-u)}^2}.\no
\end{align}
Hence, $u_1\leq u_0$ if and only if
\begin{align}
\\
    \int_0^t\int_{\r^3}\int_{\s^2}\abs{\o\cdot(v-u)}w(u')w(v')\ue^{-\beta\abs{x+\tau (v-u)}^2}\times\big(1+\th \ue^{-\beta\abs{x+\tau (v-u)}^2}w(u)+\th w(v)\big)\ud\o\ud u\ud\tau\leq w(v)-\psi(v).\no
\end{align}
Using Lemma \ref{vacuum-lemma: prelim 1}, we know
\begin{align}
    \sqrt{\frac{\pi}{\beta}}\big(1+\th w(v)\big)\int_{\r^3}\int_{\s^2}w(u')w(v')\ud\o\ud u+\th\sqrt{\frac{\pi}{2\beta}}\int_{\r^3}\int_{\s^2}w(u')w(v')w(u)\ud\o\ud u\leq w(v)-\psi(v).
\end{align}
To prove the existence of solution $w(v)\geq0$, we introduce the space
\begin{align}
    W=\bigg\{w\in C(\r^3): \text{there exists}\ c>0\ \text{such that}\ \abs{w(v)}\leq c\ue^{-\beta\abs{v}^2} \bigg\},
\end{align}
equipped with norm
\begin{align}
    \nnm{w}_W=\sup_v\ue^{\beta\abs{v}^2}\abs{w(v)}.
\end{align}
Define operator
\begin{align}
    \mathcal{T}[w]=\psi(v)+\sqrt{\frac{\pi}{\beta}}\big(1+\th w(v)\big)\int_{\r^3}\int_{\s^2}w(u')w(v')\ud\o\ud u+\th\sqrt{\frac{\pi}{2\beta}}\int_{\r^3}\int_{\s^2}w(u')w(v')w(u)\ud\o\ud u.
\end{align}
If $w\geq0$, then naturally $T[w]\geq0$. Then similar to the proof of Lemma \ref{vacuum-lemma: gain-estimate} and Lemma \ref{vacuum-lemma: loss-estimate}, we have
\begin{align}
    \nnm{T[w]}_W\ls \nnm{\psi}+\beta^{-2}\Big(\nnm{w}_W^2+\nnm{w}_W^3\Big).
\end{align}
Hence, when $\nnm{F_0}$ and $\beta^{-2}(R_0+R_0^2)$ are sufficiently small, we can easily justify that $\mathcal{T}$ maps a small ball under $W$ norm into the same ball and it is a contraction. Therefore, such $w$ must exist.
\end{proof}

\begin{lemma}\label{vacuum-lemma: positivity 2}
If $\ell_0,u_0\in S_T$ such that the Beginning Condition (BC)
\begin{align}
    0\leq \ell_0\leq \ell_1\leq u_1\leq u_0
\end{align}
for $t\in[0,T]$, then the iterative sequence $(\ell_k,u_k)$ are always well-defined for $t\in[0,T]$ and satisfies 
\begin{align}
    \ell_{k}\leq \ell_{k+1}\leq u_{k+1}\leq u_{k}.
\end{align}
\end{lemma}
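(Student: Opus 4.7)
The plan is to prove the chain $\ell_k\leq \ell_{k+1}\leq u_{k+1}\leq u_k$ by induction on $k$, using the Beginning Condition as the base case and exploiting the monotonicity properties of $\qs_{\text{gain}}$ and $\rs$ when $\th=1$.

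The key observation is that each step of the iteration is a linear first-order ODE in $t$ in the transported frame (with $(x,v)$ treated as parameters), so writing $E_g^k(t;\tau):=\exp\!\bigl(-\int_\tau^t \rs[g_k,g_k]\,\ud s\bigr)$ for $g=\ell$ or $u$, the Duhamel formula gives
\begin{align}
\lls_{k+1}^{\#}(t) &= F_0\, E_u^k(t;0) + \int_0^t E_u^k(t;\tau)\, \qs_{\text{gain}}[\ell_k,\ell_k;\ell_k](\tau)\,\ud\tau, \\
\uus_{k+1}^{\#}(t) &= F_0\, E_\ell^k(t;0) + \int_0^t E_\ell^k(t;\tau)\, \qs_{\text{gain}}[u_k,u_k;u_k](\tau)\,\ud\tau.
\end{align}
For bosons, so long as the iterates are non-negative, both $\qs_{\text{gain}}[F,F;F]$ and $\rs[G,H]$ are monotone non-decreasing in each slot, because every factor $1+F(\cdot)$ stays above $1$. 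This is the structural reason the scheme was designed with the ``crossed'' pairing $(\lls_{k+1},\rs[u_k,u_k])$ and $(\uus_{k+1},\rs[\ell_k,\ell_k])$.

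The induction step assumes $\ell_{k-1}\leq \ell_k\leq u_k\leq u_{k-1}$ and proves the three comparisons:
\begin{itemize}
\item[(i)] $\lls_{k+1}\leq \uus_{k+1}$: the source satisfies $\qs_{\text{gain}}[\ell_k,\ell_k;\ell_k]\leq \qs_{\text{gain}}[u_k,u_k;u_k]$ while the decay factor obeys $E_u^k\leq E_\ell^k$ since $\rs[u_k,u_k]\geq \rs[\ell_k,\ell_k]$. Comparing Duhamel formulas (both ingredients push the same direction) yields the inequality pointwise.
\item[(ii)] $\lls_k\leq \lls_{k+1}$: the inductive hypothesis gives $u_k\leq u_{k-1}$, whence $\rs[u_{k-1},u_{k-1}]\geq \rs[u_k,u_k]$ and consequently $E_u^{k-1}\leq E_u^k$; and $\ell_{k-1}\leq \ell_k$ gives $\qs_{\text{gain}}[\ell_{k-1},\ldots]\leq \qs_{\text{gain}}[\ell_k,\ldots]$. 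Both comparisons go the same way, so the Duhamel representation yields $\lls_k\leq \lls_{k+1}$.
\item[(iii)] $\uus_{k+1}\leq \uus_k$: symmetric to (ii), with the roles of $\ell$ and $u$ exchanged.
\end{itemize}
Chaining (i)--(iii) produces the next layer of the BC.

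Well-definedness is handled in parallel: Lemma \ref{vacuum-lemma: gain-estimate} bounds $\qs_{\text{gain}}[g_k,g_k;g_k]$ in the weighted space, and since $0\leq E_{u}^k, E_\ell^k\leq 1$, the Duhamel formulas place $\lls_{k+1},\uus_{k+1}$ back in $S_T$ with the same Gaussian weight (they are dominated by the near-vacuum bound of Theorem \ref{vacuum-theorem: well-posedness}), and they are continuous in $(t,x,v)$ since the integrands are. Non-negativity of $\lls_{k+1}$ follows because $F_0\geq 0$ and the source term is non-negative; positivity of $\uus_{k+1}$ is inherited the same way.

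The only point that requires a bit of care---and which I expect to be the main technical obstacle---is justifying the monotonicity of $\rs$ and $\qs_{\text{gain}}$ pointwise in $(t,x,v)$ after the transport change of variables: the shifts $x+\tau(v-u')$, $x+\tau(v-v')$ inside $\qs_{\text{gain}}^{\#}$ are different across the two iterates being compared only through the values of $\ell_k$ versus $u_k$ at those shifted points, so the non-negativity of the difference $u_k-\ell_k$ needs to be propagated through \emph{all} evaluation points simultaneously. This is fine because the inductive hypothesis is pointwise in $(t,x,v)$, hence also at the transported arguments, but it should be written out explicitly when comparing the two Duhamel formulas term by term.
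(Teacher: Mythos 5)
Your proposal is correct and follows essentially the same route as the paper: rewrite each iterate via the Duhamel (mild) formulation, then use the monotonicity of $\qs_{\text{gain}}$ and $\rs$ for non-negative arguments together with the inductive hypothesis $\ell_{k-1}\leq\ell_k\leq u_k\leq u_{k-1}$ to compare the exponential factors and source terms pointwise. Your items (i)--(iii) and the remark about propagating the pointwise ordering through the transported evaluation points are exactly the content of the paper's term-by-term comparison of the two mild formulas.
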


\begin{proof}
The sequence is naturally well-defined due to basic ODE theory. We will focus on the inequality. Rewrite the iteration into mild formulation
\begin{align}
    \lls_{k+1}(t)&=F_0\ue^{-\int_0^t\rs[u_k,u_k]}+\int_0^t\ue^{-\int_{\tau}^t\rs[u_k,u_k]}\qs_{\text{gain}}[\ell_k,\ell_k;\ell_k]\ud\tau,
\end{align}
and
\begin{align}
    \lls_{k}(t)&=F_0\ue^{-\int_0^t\rs[u_{k-1},u_{k-1}]}+\int_0^t\ue^{-\int_{\tau}^t\rs[u_{k-1},u_{k-1}]}\qs_{\text{gain}}[\ell_{k-1},\ell_{k-1};\ell_{k-1}]\ud\tau.
\end{align}
Also, we assume 
\begin{align}
    \ell_{k-1}\leq \ell_k\leq u_k\leq u_{k-1}.
\end{align}
Then
\begin{align}
    \lls_{k+1}(t)-\lls_{k}(t)=\,&F_0\bigg(\ue^{-\int_0^t\rs[u_k,u_k]}-\ue^{-\int_0^t\rs[u_{k-1},u_{k-1}]}\bigg)\\
    \,&+\int_0^t\bigg(\ue^{-\int_{\tau}^t\rs[u_k,u_k]}-\ue^{-\int_{\tau}^t\rs[u_{k-1},u_{k-1}]}\bigg)\qs_{\text{gain}}[\ell_k,\ell_k;\ell_k]\ud\tau\no\\
    \,&+\int_0^t\ue^{-\int_{\tau}^t\rs[u_{k-1},u_{k-1}]}\bigg(\qs_{\text{gain}}[\ell_k,\ell_k;\ell_k]-\qs_{\text{gain}}[\ell_{k-1},\ell_{k-1};\ell_{k-1}]\bigg)\ud\tau.\no
\end{align}
Due to the monotonicity of $R$ and $Q_{\text{gain}}$, we know all the three terms on the RHS are nonnegative. Hence, we know $\lls_{k+1}\geq\lls_{k}$. Similarly, we have $\uus_{k+1}\leq\uus_{k}$. By induction, we know the desired inequality holds.
\end{proof}

\begin{theorem}\label{vacuum-theorem: positivity}
There exists a constant $R_0$ such that if $\nnm{F_0}$ and $\beta^{-2}(R_0+R_0^2)$ are sufficiently small with $F_0\geq0$, then the equation \eqref{equation: vacuum} for bosons has a unique mild solution $F\in S_{R_0}$ with $F\geq0$.
\end{theorem}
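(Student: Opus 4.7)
The strategy is to combine the existence and uniqueness already supplied by Theorem \ref{vacuum-theorem: well-posedness} with the monotone iteration apparatus built in Lemmas \ref{vacuum-lemma: positivity 1}--\ref{vacuum-lemma: positivity 2}: I sandwich the unique mild solution between two non-negative approximating sequences and collapse the sandwich.

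Concretely, I would first invoke Lemma \ref{vacuum-lemma: positivity 1} with $\ell_0 \equiv 0$ and the explicit upper envelope $u_0(t,x,v) = w(v)\,\ue^{-\beta\abs{x-tv}^2}$, whose construction is guaranteed by the smallness of $\nnm{F_0}$ and $\beta^{-2}(R_0+R_0^2)$. This yields the Beginning Condition $0\le \ell_0\le \ell_1\le u_1\le u_0$ on any $[0,T]$; since every constant in the construction depends only on $\beta$ and $R_0$ (not on $T$), the initialization is actually global in time. Lemma \ref{vacuum-lemma: positivity 2} then delivers the monotone chain
\[
0 \le \ell_0 \le \ell_1 \le \cdots \le \lls_k \le \uus_k \le \cdots \le u_1 \le u_0 ,
\]
so pointwise limits $\bar\ell, \bar u$ exist with $0\le\bar\ell\le\bar u\le u_0\in S$. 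Dominated convergence (using the $S$-bound on $u_0$) allows me to pass $k\rightarrow\infty$ inside the Duhamel representation
\[
\lls_{k+1}(t) = F_0\,\ue^{-\int_0^t \rs[u_k,u_k]} + \int_0^t \ue^{-\int_\tau^t \rs[u_k,u_k]}\,\qs_{\text{gain}}[\ell_k,\ell_k;\ell_k](\tau)\,\ud\tau
\]
and the analogous identity for $\uus_{k+1}$, producing a coupled pair of integral equations for $(\bar\ell,\bar u)$.

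The main obstacle is to collapse the sandwich, i.e.\ to show $\bar\ell=\bar u$. Setting $W := \bar u - \bar\ell \ge 0$, with $W(0)\equiv0$, subtraction of the two limiting equations gives
\[
\dt W + W\,\rs[\bar\ell,\bar\ell] \,=\, \bar\ell\,\big(\rs[\bar u,\bar u]-\rs[\bar\ell,\bar\ell]\big) + \qs_{\text{gain}}[\bar u,\bar u;\bar u] - \qs_{\text{gain}}[\bar\ell,\bar\ell;\bar\ell] .
\]
Each term on the right is multilinear and contains at least one factor of $W$; applying the dispersion identities of Lemmas \ref{vacuum-lemma: prelim 1}--\ref{vacuum-lemma: prelim 2} exactly as in the proofs of Lemmas \ref{vacuum-lemma: gain-estimate}--\ref{vacuum-lemma: loss-estimate} (with one argument everywhere replaced by $W$ and the remaining arguments bounded by $\nnm{\bar u}+\nnm{\bar\ell}\le 2R_0$) yields
\[
\nnm{W} \,\le\, C\beta^{-2}\big(R_0 + R_0^2\big)\,\nnm{W}.
\]
Choosing $R_0$ small enough that $C\beta^{-2}(R_0+R_0^2)<1$ forces $W\equiv 0$, so $\bar\ell=\bar u =: \tilde F\ge 0$. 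This common limit satisfies $\tilde F = F_0 + \int_0^t \qs[\tilde F,\tilde F;\tilde F]\,\ud\tau$ with $\tilde F\in S_{R_0}$, and the uniqueness in Theorem \ref{vacuum-theorem: well-posedness} identifies $\tilde F$ with $F$, giving $F\ge 0$ as claimed.
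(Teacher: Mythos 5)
Your proposal is correct and follows essentially the same route as the paper: initialize the monotone scheme via Lemma \ref{vacuum-lemma: positivity 1} with $\ell_0=0$ and the constructed upper envelope, use Lemma \ref{vacuum-lemma: positivity 2} plus monotone/dominated convergence to pass to the limit in the mild formulation, and collapse the sandwich through the contraction estimate $\nnm{\uus-\lls}\ls\beta^{-2}\big(R_0+R_0^2\big)\nnm{\uus-\lls}$, identifying the common nonnegative limit with the unique mild solution of Theorem \ref{vacuum-theorem: well-posedness}. The only cosmetic differences are that the paper subtracts the two limiting equations directly in integrated form rather than your Duhamel-exponential form and records the smallness factor as $\beta^{-2}(R_0^2+R_0^3)$.
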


\begin{proof}
Let $k\rt\infty$ in the iteration, since we know $\ell_k$ and $u_k$ are pointwise monotone with proper upper and lower bounds, dominated convergence theorem implies $\ell_k\rt\ell$ and $u_k\rt u$ satisfying
\begin{align}
    \lls-F_0+\int_0^t\lls \rs[u,u]&=\int_0^t\qs_{\text{gain}}[\ell,\ell;\ell],\\
    \uus-F_0+\int_0^t\uus \rs[\ell,\ell]&=\int_0^t\qs_{\text{gain}}[u,u;u].
\end{align}
Taking the difference, we have
\begin{align}
    \uus-\lls=&\bigg(\int_0^t\lls \rs[u,u]-\int_0^t\uus \rs[\ell,\ell]\bigg)+\bigg(\int_0^t\qs_{\text{gain}}[u,u;u]-\int_0^t\qs_{\text{gain}}[\ell,\ell;\ell]\bigg).
\end{align}
Hence, we have
\begin{align}
    \nnm{\uus-\lls}\ls \beta^{-2}\Big(R_0^2+R_0^3\Big)\nnm{\uus-\lls},
\end{align}
which implies $\uus=\lls$. They both converge to the solution $F$ to the equation \eqref{equation: vacuum}. Based on our construction, we know $F$ is nonnegative.
\end{proof}


\smallskip
\subsection{Positivity of $F$ for Fermions $\th=-1$}

Recall \begin{align}
    Q_{\text{gain}}[F,G;H]=&\int_{\r^3}\int_{\s^2}q(\omega,\abs{v-u})F(u' )G(v')\big(1+\th H(u)+\th H(v)\big)\ud\o\ud u,
\end{align}
and
\begin{align}
    Q_{\text{loss}}[F,G;H](v)=F(v)\cdot R[G,H](v),
\end{align}
where
\begin{align}
    R[G,H]=\int_{\r^3}\int_{\s^2}q(\o,\abs{v-u})G(u)\big(1+\th H(u')+\th H(v')\big)\ud\o\ud u.
\end{align}

Since $\th=-1$, we have to define the iterative sequence in a different fashion
\begin{align}
    \dt\lls_{k+1}+\lls_{k+1}\rs[u_k,\ell_k]&=\qs_{\text{gain}}[\ell_k,\ell_k;u_k],\\
    \dt\uus_{k+1}+\uus_{k+1}\rs[\ell_k,u_k]&=\qs_{\text{gain}}[u_k,u_k;\ell_k],
\end{align}
with initial data $\ell_{k+1}=F_0$ and $u_{k+1}=F_0$.

We would likee to select some special starting point $(\ell_0,u_0)$ and study the convergence property of $(\lls_k,\uus_k)$.

\begin{lemma}\label{vacuum-lemma: positivity 1.}
If $\nnm{F_0}$ and $\beta^{-2}(R_0+R_0^2)$ are sufficiently small, then there exists $\ell_0,u_0\in S_T$ such that the Beginning Condition (BC)
\begin{align}
    0\leq \ell_0\leq \ell_1\leq u_1\leq u_0
\end{align}
holds for $t\in[0,T]$.
\end{lemma}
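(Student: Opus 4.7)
The plan is to follow the strategy of Lemma \ref{vacuum-lemma: positivity 1} for bosons almost verbatim, exploiting the drastic simplification that occurs in the fermion iteration when we choose $\ell_0 \equiv 0$. With $\ell_0 = 0$, one has $\rs[0, u_0] = 0$ and $\qs_{\text{gain}}[0, 0; u_0] = 0$, so the equations at level $k = 1$ reduce to
\begin{align*}
    \dt \lls_1 + \lls_1\, \rs[u_0, 0] &= 0, & \lls_1(0) &= F_0, \\
    \dt \uus_1 &= \qs_{\text{gain}}[u_0, u_0; 0], & \uus_1(0) &= F_0.
\end{align*}
Since $F_0 \geq 0$ and $\rs[u_0, 0] = \int q\, u_0(u)\,\ud\o\ud u \geq 0$ (no sign issue here because the only $\th$-dependent factor has $H = 0$), the first line yields $0 \leq \lls_1 \leq F_0$, so $\ell_0 = 0 \leq \lls_1$. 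Similarly, $\qs_{\text{gain}}[u_0, u_0; 0] = \int q\, u_0(u')u_0(v')\,\ud\o\ud u \geq 0$ gives $F_0 \leq \uus_1$. Chaining these, $\lls_1 \leq F_0 \leq \uus_1$ is automatic, and the only nontrivial condition remaining is $\uus_1 \leq u_0$.

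I will construct $u_0$ exactly as in the boson case, by seeking $u_0(t,x,v) = \tilde v(x - tv, v)$ with $\tilde v(y, v) = \ue^{-\beta |y|^2} w(v)$ for some $w \geq 0$ to be determined. Writing out $\uus_1 \leq \uus_0$ and applying Lemma \ref{vacuum-lemma: prelim 2} to collapse the two transported weights in the gain, then Lemma \ref{vacuum-lemma: prelim 1} for the $\tau$-integral, one reduces the inequality to the pointwise condition
\begin{equation*}
\psi(v) + \sqrt{\pi/\beta}\int_{\r^3}\int_{\s^2} w(u')\, w(v')\,\ud\o\ud u \,\leq\, w(v), \qquad \psi(v) := \sup_x \ue^{\beta |x|^2}|F_0(x,v)|.
\end{equation*}
This is handled by a fixed-point argument for $\mathcal{T}[w] := \psi + \sqrt{\pi/\beta}\int\int w(u')w(v')\,\ud\o\ud u$ in the weighted space $W$ from Lemma \ref{vacuum-lemma: positivity 1}. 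Crucially, because $\ell_0 = 0$ kills every cubic contribution in $\qs_{\text{gain}}[u_0, u_0; \ell_0]$, the map $\mathcal{T}$ here is strictly simpler than its bosonic counterpart: purely quadratic, with no cubic term and no $(1 + \th w)$ multiplier. Hence $\nnm{\mathcal{T}[w]}_W \ls \nnm{\psi}_W + \beta^{-2}\nnm{w}_W^2$, and the contraction mapping theorem yields a nonnegative fixed point $w$ with $\nnm{w}_W \leq R_0$ once $\nnm{F_0}$ and $\beta^{-2} R_0^2$ are small enough; in particular the smallness hypothesis $\beta^{-2}(R_0+R_0^2)$ small is more than adequate.

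The main subtlety, compared to the bosonic case, is the Pauli-type constraint: we must ensure $u_0 < 1$, indeed $u_0 < 1/2$, pointwise, so that later iterates (in the analogue of Lemma \ref{vacuum-lemma: positivity 2}) preserve nonnegativity through the factors $1 - \ell_k(u') - \ell_k(v')$ in $\rs$ and $1 - \ell_k(u) - \ell_k(v)$ in $\qs_{\text{gain}}$. This is automatic from $\nnm{u_0} \leq R_0$ once $R_0 < 1/2$. Thus the analytical work is strictly easier than in the boson case, and the main thing to verify carefully is the sign bookkeeping — that the construction simultaneously enforces $\uus_1 \leq u_0$ and the Pauli bound $u_0 < 1/2$, so that the fermion iteration is genuinely well-defined on the sequence being built. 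With this in hand, BC follows with $(\ell_0, u_0) = (0,\, \ue^{-\beta |x - tv|^2} w(v))$.
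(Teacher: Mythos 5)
Your proposal is correct and follows essentially the same route as the paper: take $\ell_0=0$, observe that the fermion iteration at $k=1$ reduces to $\dt\lls_1+\lls_1\rs[u_0,0]=0$ and $\dt\uus_1=\qs_{\text{gain}}[u_0,u_0;0]$ with nonnegative coefficients (so $0\leq\lls_1\leq F_0\leq\uus_1$), and then build $u_0=\ue^{-\beta\abs{x-tv}^2}w(v)$ by the same fixed-point construction as in the boson case, here even simpler since the cubic terms vanish. Your additional remark about keeping $u_0<\tfrac12$ is not needed for the Beginning Condition itself (it only matters for the monotonicity argument in the next lemma), but it is harmless and consistent with the smallness hypotheses.
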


\begin{proof}
We take $\ell_0=0$, which implies $\rs[\ell_0,\ell_0]=0$ and $\qs_{\text{gain}}[\ell_0,\ell_0;\ell_0]=0$. Hence, we have
\begin{align}
    \dt\lls_{1}+\lls_{1}\rs[u_0,0]&=0,\\
    \dt\uus_{1}&=\qs_{\text{gain}}[u_0,u_0;0].
\end{align}
Due to the positivity of $\rs[u_0,0]$ and $\qs_{\text{gain}}[u_0,u_0;0]$, this naturally implies
\begin{align}
    0\leq \lls_1\leq F_0\leq \uus_1.
\end{align}
The rest of the proof follows from that of Lemma \ref{vacuum-lemma: positivity 1}.
\end{proof}

\begin{lemma}\label{vacuum-lemma: positivity 2.}
If $\ell_0,u_0\in S_T$ such that the Beginning Condition (BC)
\begin{align}
    0\leq \ell_0\leq \ell_1\leq u_1\leq u_0
\end{align}
for $t\in[0,T]$, then the iterative sequence $(\ell_k,u_k)$ are always well-defined for $t\in[0,T]$ and satisfies 
\begin{align}
    \ell_{k}\leq \ell_{k+1}\leq u_{k+1}\leq u_{k}.
\end{align}
\end{lemma}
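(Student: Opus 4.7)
The plan is to mirror the proof of Lemma \ref{vacuum-lemma: positivity 2}, but exploiting the careful design of the fermionic iteration: the mixed arguments $\rs[u_k,\ell_k]$, $\qs_{\text{gain}}[\ell_k,\ell_k;u_k]$ (for $\lls_{k+1}$) and $\rs[\ell_k,u_k]$, $\qs_{\text{gain}}[u_k,u_k;\ell_k]$ (for $\uus_{k+1}$) are chosen precisely to restore the one-sided monotonicity that fails for $\theta=-1$ in the naive scheme. First I would record the key monotonicity facts: for $\theta=-1$, $R[G,H]$ is nondecreasing in $G$ and nonincreasing in $H$ (through the factor $1-H(u')-H(v')$), and $Q_{\text{gain}}[F,G;H]$ is nondecreasing in $F,G$ and nonincreasing in $H$. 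These hold provided $1-H(u')-H(v')\geq 0$ and $1-H(u)-H(v)\geq 0$ pointwise, which is ensured once we know $\nnm{u_k}$ stays small, say $\leq R_0$.

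Next I would perform the induction. The well-definedness of $(\lls_{k+1},\uus_{k+1})$ follows from solving linear ODEs in $t$ with non-negative coefficients, as in the bosonic case. Assuming $\ell_{k-1}\leq\ell_k\leq u_k\leq u_{k-1}$ (with $0\leq\ell_{k-1}$ and all four functions bounded by $R_0$), the monotonicity above gives
\begin{align*}
\rs[u_k,\ell_k]\leq\rs[u_{k-1},\ell_{k-1}],\qquad \qs_{\text{gain}}[\ell_k,\ell_k;u_k]\geq \qs_{\text{gain}}[\ell_{k-1},\ell_{k-1};u_{k-1}],
\end{align*}
because in the first inequality the first argument decreased and the second increased (both pushing $R$ down), and in the second the first two arguments increased while the third decreased (both pushing $Q_{\text{gain}}$ up). Writing the mild formulation
\begin{align*}
\lls_{k+1}(t)=F_0\,\ue^{-\int_0^t\rs[u_k,\ell_k]}+\int_0^t\ue^{-\int_\tau^t\rs[u_k,\ell_k]}\,\qs_{\text{gain}}[\ell_k,\ell_k;u_k]\,\ud\tau,
\end{align*}
and the analogous expression for $\lls_k$, a termwise comparison (using $F_0\geq 0$ and $\qs_{\text{gain}}\geq 0$) yields $\lls_{k+1}\geq\lls_k$. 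The symmetric argument gives $\uus_{k+1}\leq \uus_k$.

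It remains to establish $\lls_{k+1}\leq \uus_{k+1}$. Setting $w:=\uus_{k+1}-\lls_{k+1}$ and rearranging,
\begin{align*}
\dt w + w\,\rs[\ell_k,u_k]=\bigl(\qs_{\text{gain}}[u_k,u_k;\ell_k]-\qs_{\text{gain}}[\ell_k,\ell_k;u_k]\bigr)+\lls_{k+1}\bigl(\rs[u_k,\ell_k]-\rs[\ell_k,u_k]\bigr).
\end{align*}
Since $\ell_k\leq u_k$, both bracketed differences are nonnegative by the monotonicity properties (each swap moves an increasing-slot argument up and a decreasing-slot argument down), and $\lls_{k+1}\geq \lls_k\geq \ell_0\geq 0$ by the first part of the induction. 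With $w(0)=F_0-F_0=0$ and nonnegative coefficient and source, Duhamel's formula forces $w\geq 0$.

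The main technical point, and the place most vulnerable to subtle error, is maintaining the pointwise bounds $1-H(u')-H(v')\geq 0$ and $1-H(u)-H(v)\geq 0$ throughout the iteration; without them, the sign of $R$ and $\qs_{\text{gain}}$ can flip and all monotonicity collapses. The way around this is to carry along the quantitative bound $0\leq \lls_k\leq \uus_k\leq u_0$ with $\nnm{u_0}\ll 1$ (which is precisely what Lemma \ref{vacuum-lemma: positivity 1.} delivers under the smallness hypothesis on $\nnm{F_0}$ and $\beta^{-2}(R_0+R_0^2)$): this guarantees $H\leq 1/2$ uniformly and hence the fermionic factors stay in $[1/2,1]$ for every $k$, closing the induction.
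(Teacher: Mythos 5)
Your proof is correct and follows the same monotone-iteration strategy that the paper invokes by deferring to the bosonic Lemma \ref{vacuum-lemma: positivity 2}. You supply two pieces of detail the paper leaves implicit but are genuinely needed: the explicit $w$-equation argument establishing the middle inequality $\lls_{k+1}\leq\uus_{k+1}$ (the paper's bosonic proof only verifies $\lls_{k+1}\geq\lls_k$ and $\uus_{k+1}\leq\uus_k$), and the observation that the monotonicity of $R$ and $Q_{\text{gain}}$ in the fermionic case hinges on propagating the pointwise bounds $1-H(u')-H(v')\geq 0$ and $1-H(u)-H(v)\geq 0$ along the iteration, which must be closed via the quantitative smallness $\nnm{u_k}\leq\nnm{u_0}\ll 1$.
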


\begin{proof}
This follows naturally from that of Lemma \ref{vacuum-lemma: positivity 2} based on the monotonicity of $R$ and $Q_{\text{gain}}$.
\end{proof}

\begin{theorem}\label{vacuum-theorem: positivity.}
There exists a constant $R_0$ such that if $\nnm{F_0}$ and $\beta^{-2}(R_0+R_0^2)$ are sufficiently small with $F_0\geq0$, then the equation \eqref{equation: vacuum} for fermions has a unique mild solution $F\in S_{R_0}$ with $F\geq0$.
\end{theorem}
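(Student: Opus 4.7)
The strategy mirrors the boson argument (Theorem \ref{vacuum-theorem: positivity}), but uses the fermion-specific iteration in which the Pauli factor $1+\theta H = 1 - H$ is decoupled so that monotonicity is preserved. Concretely, I would start from the pair $(\ell_0,u_0)$ provided by Lemma \ref{vacuum-lemma: positivity 1.} (with $\ell_0=0$ and $u_0(t,x,v)=\ue^{-\beta|x-tv|^2}w(v)$ as in the boson proof), and then iterate
\begin{align*}
\dt\lls_{k+1}+\lls_{k+1}\rs[u_k,\ell_k]=\qs_{\text{gain}}[\ell_k,\ell_k;u_k],\qquad
\dt\uus_{k+1}+\uus_{k+1}\rs[\ell_k,u_k]=\qs_{\text{gain}}[u_k,u_k;\ell_k],
\end{align*}
with initial value $F_0$. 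Each step is solved by the integrating factor
\begin{align*}
\lls_{k+1}(t)=F_0\,\ue^{-\int_0^t\rs[u_k,\ell_k]}+\int_0^t\ue^{-\int_\tau^t\rs[u_k,\ell_k]}\qs_{\text{gain}}[\ell_k,\ell_k;u_k]\,\ud\tau,
\end{align*}
and analogously for $\uus_{k+1}$, so the iteration is well-defined as long as $\rs[u_k,\ell_k]\geq 0$ and $\qs_{\text{gain}}[\ell_k,\ell_k;u_k]\geq 0$, which hold because the smallness of $R_0$ forces $\ell_k(u')+\ell_k(v')\leq 1$ (so the fermion factor stays nonnegative).

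Next I would combine Lemma \ref{vacuum-lemma: positivity 1.} (the Beginning Condition) with Lemma \ref{vacuum-lemma: positivity 2.} to conclude by induction that $0\leq \ell_k\leq \ell_{k+1}\leq u_{k+1}\leq u_k\leq u_0$ for all $k$. Since each sequence is monotone and uniformly dominated by $u_0\in S$, the pointwise limits $\lls_k\uparrow\lls$ and $\uus_k\downarrow\uus$ exist in $S_{R_0}$. Dominated convergence applied to the mild formulas above yields
\begin{align*}
\lls-F_0+\int_0^t\lls\,\rs[u,\ell]=\int_0^t\qs_{\text{gain}}[\ell,\ell;u],\qquad
\uus-F_0+\int_0^t\uus\,\rs[\ell,u]=\int_0^t\qs_{\text{gain}}[u,u;\ell].
\end{align*}

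The decisive step is to show $\uus=\lls$, which collapses this coupled pair into a single solution of \eqref{equation: vacuum}. Subtracting and regrouping via the identities
\begin{align*}
u\rs[\ell,u]-\ell\rs[u,\ell]&=(u-\ell)\rs[\ell,u]+\ell\,(\rs[\ell,u]-\rs[u,\ell]),\\
\qs_{\text{gain}}[u,u;\ell]-\qs_{\text{gain}}[\ell,\ell;u]&=\qs_{\text{gain}}[u+\ell,u-\ell;\ell]+\qs_{\text{gain}}[\ell,\ell;\ell-u]+(\text{symmetric}),
\end{align*}
each difference is linear in $u-\ell$ with coefficients controlled by $\|\ell\|+\|u\|\leq 2R_0$. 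Applying the trilinear bounds behind Lemma \ref{vacuum-lemma: gain-estimate} and Lemma \ref{vacuum-lemma: loss-estimate} gives
\begin{align*}
\nnm{\uus-\lls}\,\ls\,\beta^{-2}\bigl(R_0+R_0^2\bigr)\nnm{\uus-\lls},
\end{align*}
so for $\beta^{-2}(R_0+R_0^2)$ sufficiently small we conclude $\uus\equiv\lls=:F$. This $F$ is nonnegative by construction, lies in $S_{R_0}$, and satisfies the mild formulation of \eqref{equation: vacuum}; uniqueness follows from Theorem \ref{vacuum-theorem: well-posedness}.

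The main obstacle is verifying the monotonicity of the iteration in the fermion regime: unlike the bosonic case, $\qs_{\text{gain}}[F,F;F]$ is \emph{not} monotone in $F$ because the Pauli factor $1-F(u)-F(v)$ decreases while $F(u')F(v')$ increases. This is precisely why the iteration is arranged with the gain evaluated as $\qs_{\text{gain}}[\ell_k,\ell_k;u_k]$ (smallest product, largest Pauli factor) for $\lls_{k+1}$, and symmetrically for $\uus_{k+1}$. Verifying the inductive step of Lemma \ref{vacuum-lemma: positivity 2.} in this delicate arrangement — in particular that the nonnegativity of the Pauli factor $1-\ell_k(u')-\ell_k(v')$ is preserved along the iteration — is the content that carries the entire argument.
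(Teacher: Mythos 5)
Your proposal is correct and follows essentially the same route as the paper: the paper's proof of the fermion case simply repeats the boson argument with the decoupled iteration $\dt\lls_{k+1}+\lls_{k+1}\rs[u_k,\ell_k]=\qs_{\text{gain}}[\ell_k,\ell_k;u_k]$, $\dt\uus_{k+1}+\uus_{k+1}\rs[\ell_k,u_k]=\qs_{\text{gain}}[u_k,u_k;\ell_k]$, Lemmas \ref{vacuum-lemma: positivity 1.} and \ref{vacuum-lemma: positivity 2.}, monotone limits, and the smallness estimate forcing $\uus=\lls$. Your added discussion of why the gain must be evaluated as $\qs_{\text{gain}}[\ell_k,\ell_k;u_k]$ (Pauli factor paired with the opposite sequence) is exactly the design point the paper's redefined iteration encodes; only note that nonnegativity of that gain term requires $u_k(u)+u_k(v)\leq 1$, while nonnegativity of $\rs[u_k,\ell_k]$ requires $\ell_k(u')+\ell_k(v')\leq 1$, both of which follow from $0\leq\ell_k\leq u_k\leq u_0$ with $\nnm{u_0}$ small.
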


\begin{proof}
This follows from that of Theorem \ref{vacuum-theorem: positivity}.
\end{proof}

\begin{remark}
Since we consider the near vacuum case, $F\leq 1$ is naturally true when $R$ is small.
\end{remark}

\bigskip
\subsection*{Acknowledgements}

We would like to thank Ning Jiang for pointing out a mistake in the initial version of this paper. Also, we would like to thank Lingbing He and Xuguang Lu for suggesting some references. Lei Wu's research is supported in part by NSF grant DMS-1853002.

\bigskip



\phantomsection

\addcontentsline{toc}{section}{References}

\bibliographystyle{siam}
\bibliography{Reference}

\end{document}